\documentclass[12pt]{amsart}
\usepackage{enumerate}
\usepackage{amssymb, amsthm, mathrsfs}
\usepackage{graphicx}
\usepackage{tikz-cd}
\usepackage{biblatex}
\addbibresource{EquivarianceInAppxByCompactSets.bib}
\usepackage{hyperref}

\newtheorem{thm}{Theorem}[section]
\newtheorem{lemma}[thm]{Lemma}
\newtheorem{prop}[thm]{Proposition}
\newtheorem{cor}[thm]{Corollary}

\theoremstyle{definition}
\newtheorem{definition}[thm]{Definition}
\newtheorem{notation}[thm]{Notation}

\theoremstyle{remark}
\newtheorem{remark}[thm]{Remark}
\newtheorem{example}[thm]{Example}

\newcommand{\bb}[1]{\mathbb{#1}}
\newcommand{\bs}[1]{\mathbf{#1}}
\newcommand{\mc}[1]{\mathcal{#1}}
\newcommand{\epslon}{\varepsilon}
\newcommand{\paren}[1]{\left( #1 \right)}
\newcommand{\gen}[1]{\left< #1 \right>}
\newcommand{\abs}[1]{\lvert #1 \rvert}

\newcommand{\clos}[1]{\overline{#1}}
\newcommand{\rcf}{R}

\newcommand{\orth}[1]{\mathcal{O} (#1)}
\newcommand{\s}[1]{\mathfrak{S}_{#1}}
\newcommand{\w}[1]{\mathcal{W}^{(#1)}}
\newcommand{\wo}[1]{\mathcal{W}^{(#1),\mathrm{o}}}

\newcommand{\fpos}[1]{\mathscr{F} (#1)}
\DeclareMathOperator{\Parr}{Par}
\newcommand{\Par}[2]{\Parr_{#1}({#2})}
\DeclareMathOperator{\Dim}{dim}

\DeclareMathOperator{\id}{id}
\DeclareMathOperator{\rank}{rank}

\DeclareMathOperator{\length}{length}
\DeclareMathOperator{\card}{card}
\DeclareMathOperator{\br}{br}

\begin{document}

\title{Equivariance in Approximation by Compact Sets}
\author{Saugata Basu}
\author{Alison Rosenblum}

\begin{abstract}
We adapt a construction of Gabrielov and Vorobjov for use in the symmetric case. Gabrielov and Vorobjov had developed a means by which one may replace an arbitrary set $S$ definable in some o-minimal expansion of $\bb{R}$ with a compact set $T$. $T$ is constructed in such a way that for a given $m>0$ we have epimorphisms from the first $m$ homotopy and homology groups of $T$ to those of $S$. If $S$ is defined by a boolean combination of statements $h(x)=0$ and $h(x)>0$ for various $h$ in some finite collection of definable continuous functions, one may choose $T$ so that these maps are isomorphisms for $0\leq k\leq m-1$. In this case, $T$ is also defined by functions closely related to those defining $S$.

In this paper we study sets $S$ symmetric under the action of some finite reflection group $G$. One may see that in the original construction, if $S$ is defined by functions symmetric relative to the action of $G$, then $T$ will be as well. We show that there is an equivariant map $T\rightarrow S$ inducing the aforementioned epimorphisms and isomorphisms of homotopy and homology groups. We use this result to strengthen theorems of Basu and Riener concerning the multiplicities of Specht modules in the isotypic decomposition of the cohomology spaces of sets defined by polynomials symmetric relative to $\s{n}$.
\end{abstract}

\maketitle

\section{Introduction}\label{sect:Introduction}
Fix an o-minimal structure expanding a real closed field $\rcf$ (on occasion, we will specify that this real closed field is $\bb{R}$; one may if desired assume as much throughout). We will take all sets, maps, etc. to be definable in this structure.

Gabrielov and Vorobjov in \cite{gabrielov2009approximation} develop a construction by which one may replace an arbitrary definable set $S$ by a closed and bounded set $T$, such that we have epimorphisms (and in a special case isomorphisms) from the homotopy and homology groups of the approximating set to those of the original set. In this special case, termed the \emph{constructible case}, we take $S$ to be defined by a quantifier-free formula with atoms $h>0$ or $h=0$ for some finite number of continuous definable functions $h$. In this case, the procedure described in \cite{gabrielov2009approximation} gives an explicit description of, and control over the number of, the functions defining $T$. Gabrielov and Vorobjov's paper describes the ramifications of this in calculating upper bounds on Betti numbers for sets definable in certain structures.

In \cite{basu2021vandermonde}, Basu and Riener leverage properties of symmetry (particularly, symmetry relative to the standard action of $\s{n}$ on $\rcf^n$) to study the Betti numbers of semialgebraic sets defined by symmetric polynomials of bounded degree, developing algorithms with favorable complexity bounds. The paper makes use of the construction of Gabrielov and Vorobjov presented in \cite{gabrielov2009approximation}, replacing $S$ with a set $T$ defined by closed conditions and then applying various results requiring closedness in order to determine the structure of the cohomology spaces of $T$. From Gabrielov and Vorobjob's results, we know that the cohomology spaces of $T$ are isomorphic to those of $S$. If we assume $S$ is defined by symmetric polynomials of degree bounded by some $d$ at least 2, then the set produced by the Gabrielov-Vorobjov construction is also defined by symmetric polynomials of degree no more than $d$. However, it is not immediately clear whether the maps on the level of homotopy and homology are equivariant. Without equivariance, we cannot conclude that the cohomology spaces of $S$ and $T$ have the same structures as $\s{n}$-modules, meaning that results for $T$ only translate back to $S$ in part.

This paper establishes an equivariant version of the construction of Gabrielov and Vorobjov in \cite{gabrielov2009approximation}. We introduce a few pieces of notation in order to present our main theorem.

\begin{notation}\label{def:P-set}
Let $S=\{\bs{x}\in \rcf^n\mid \mc{F}(\bs{x})\}$ be a set defined by a formula $\mc{F}$. Say $\mc{P}=\{h_1,\ldots, h_s\}$ is a finite collection of continuous definable functions $\rcf^n\rightarrow \rcf$. If $\mc{F}$ is a boolean combination of statements of the form $h=0$ and $h>0$ for $h\in \mc{P}$, we call $\mc{F}$ a \emph{$\mc{P}$-formula} and $S$ a \emph{$\mc{P}$-set}. If $\mathcal{F}$ is a monotone boolean combination (i.e. without negations) of statements of the form $h\geq 0$ and $h\leq 0$ for $h\in \mc{P}$, then we say that $\mc{F}$ is a \emph{$\mc{P}$-closed formula} and $S$ is a \emph{$\mc{P}$-closed set}.
\end{notation}

\begin{notation}[\cite{gabrielov2009approximation} Definition 1.7]\label{def:Gabrielov1.7}
When we say a statement holds for
\[
0<\epslon_0\ll \epslon_1\ll\cdots\ll \epslon_m \ll 1
\]
we mean that for each $0\leq i\leq m$ there is a definable function $f_i:(0,1)^{m-i}\rightarrow (0,1)$ (i.e. $f_m$ is a constant in $(0,1)$) such that the statement holds for all $\epslon_0,\ldots, \epslon_m$ which satisfy, for $0\leq i\leq m$, the condition that $0<\epslon_i<f_i(\epslon_{i+1},\ldots,\epslon_m)$.
\end{notation}

Our equivariant adaptation of the main theorem of Gabrielov and Vorobjov in \cite{gabrielov2009approximation} is then as follows.

\begin{thm}\label{thm:MainTheorem}
Let $G$ be a finite reflection group acting on $\bb{R}^n$, and let $S\subset \bb{R}^n$ be a definable set symmetric under the action of $G$. Choose an integer $m>0$.
\begin{enumerate}[(a)]
    \item (Definable Case) Say that $S$ is represented in a compact symmetric set $A$ by families $\{S_\delta\}_{\delta>0}$ and $\{S_{\delta, \epslon}\}_{\delta,\epslon>0}$ of compact sets (see Definition \ref{def:Gabrielov1.1}), and that each $S_\delta$ and $S_{\delta,\epslon}$ is also symmetric under the action of $G$. Then for parameters $0<\epslon_0,\delta_0,\ldots,\epslon_m,\delta_m<1$, there exists a compact symmetric set $T=T(\epslon_0,\delta_0,\ldots,\epslon_m,\delta_m)$ such that, for $0<\delta_0\ll\epslon_0\ll\ldots\ll \epslon_m\ll\delta_m\ll 1$, we have an equivariant map $\psi:T\rightarrow S$ inducing equivariant epimorphisms
    \begin{align*}
        \psi_{\#,k}&:\pi_k(T,*)\rightarrow \pi_k(S,*')\\
        \psi_{*,k}&:H_k(T)\rightarrow H_k(S)
    \end{align*}
    for each $1\leq k\leq m$.
    \item (Constructible Case) Say that $S$ is a $\mc{P}$-set for $\mc{P}=\{h_1,\ldots,h_s\}$ a collection of continuous definable functions having the property that $h\circ g\in \mc{P}$ for all $g\in G$ and $h\in \mc{P}$. For parameters $r>0$ and $0<\epslon_0,\delta_0,\ldots,\epslon_m,\delta_m<1$, let
    \[
    \mc{P}'=\bigcup_{h\in \mc{P}\cup \{r^2-(X_1^2+\cdots+ X_n^2)\}} \bigcup_{j=0}^m\{h\pm \epslon_j, h\pm \delta_j\}
    \]
    Then there exists a $\mc{P}'$-closed and bounded set $T$ such that for sufficiently large $r$ and $0<\delta_0\ll\epslon_0\ll\ldots\ll \epslon_m\ll\delta_m$, there exists an equivariant map $\psi:T\rightarrow S$ inducing equivariant homomorphisms
    \begin{align*}
        \psi_{\#,k}&:\pi_k(T,*)\rightarrow \pi_k(S,*')\\
        \psi_{*,k}&:H_k(T)\rightarrow H_k(S)
    \end{align*}
    which are isomorphisms for $1\leq k\leq m-1$ and epimorphisms for $k=m$. If $m\geq\dim(S)$, then $\psi$ induces a homotopy equivalence $T\simeq S$. Note that in particular if all functions in $\mc{P}$ are symmetric relative to $G$, then the same is true for $\mc{P}'$.
\end{enumerate}
\end{thm}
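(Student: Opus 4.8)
The plan is to deduce part (b) from part (a), and to establish part (a) by an equivariant audit of the construction of Gabrielov and Vorobjov in \cite{gabrielov2009approximation}, exploiting the structural fact that a finite reflection group acts on $\bb{R}^n$ by orthogonal transformations, hence by isometries, and linearly, so that it commutes with scaling. Recall the shape of that construction: from the representing families $\{S_{\delta}\}$ and $\{S_{\delta,\epslon}\}$ inside the compact set $A$ (Definition~\ref{def:Gabrielov1.1}), and for parameters chosen in the regime of Notation~\ref{def:Gabrielov1.7}, one assembles a compact definable set $T$ sitting inside a product $(\bb{R}^n)^{m+1}$, built in an iterated fashion over $\{0,\dots,m\}$ out of the sets $S_{\delta_i}$ and $S_{\delta_i,\epslon_i}$, definable boolean operations, diagonals, and ``closeness'' conditions of the form $\norm{x_i-x_{i+1}}\le\rho$; the comparison map $\psi:T\to S$ is realized as a coordinate projection $(\bb{R}^n)^{m+1}\to\bb{R}^n$, possibly post-composed with the standard inclusions and deformation retractions relating consecutive stages.

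Now put the diagonal $G$-action on $(\bb{R}^n)^{m+1}$ and the original action on the factor $\bb{R}^n\supseteq S$. Every ingredient of the construction is equivariant for this action: the sets $S_{\delta_i}$, $S_{\delta_i,\epslon_i}$, and $A$ are $G$-invariant by the hypothesis of part (a); the conditions $\norm{x_i-x_{i+1}}\le\rho$ are preserved because $G$ acts diagonally by isometries (this is where orthogonality of the action enters — equivalently, one first averages any finite group action to an orthogonal one); and unions, intersections, diagonals, and coordinate projections are manifestly equivariant for a diagonal action. Consequently $g\cdot T=T$ for all $g\in G$ — that is, $T$ is symmetric — and $\psi$, being a composition of equivariant maps, is $G$-equivariant. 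From here the equivariance of the induced maps is purely formal: for each $g\in G$ the square relating $\psi$ to the two $g$-actions on $T$ and $S$ commutes, so applying the functors $\pi_k(-,*)$ and $H_k(-)$ produces commuting squares, which is precisely the assertion that $\psi_{\#,k}$ and $\psi_{*,k}$ are morphisms of $G$-modules. That these maps are the claimed epimorphisms — respectively, isomorphisms for $1\le k\le m-1$, respectively that $\psi$ is a homotopy equivalence once $m\ge\dim(S)$ — is exactly the conclusion of the non-equivariant theorem of \cite{gabrielov2009approximation} applied to the underlying spaces, and is simply quoted.

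For part (b) one feeds part (a) with $S$ together with its standard constructible representing families: these are obtained from the given $\mc{P}$-formula by perturbing each atom $h=0$ and $h>0$ by the parameters $\epslon_j$, $\delta_j$, and by intersecting with the ball $\clos{B}(0,r)$ to enforce compactness — the latter handled, as in \cite{gabrielov2009approximation}, through the condition $r^2-(X_1^2+\cdots+X_n^2)\ge 0$ and valid for $r$ large by the conic structure of $S$ at infinity. Since $\mc{P}$ is closed under precomposition by every $g\in G$, and $r^2-(X_1^2+\cdots+X_n^2)$ is $G$-invariant, every function appearing in these families lies in $\mc{P}'$ and every occurring set is $\mc{P}'$-closed and $G$-invariant, while $\clos{B}(0,r)$ is $G$-invariant because $G$ is orthogonal. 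Part (a) then yields the equivariant map $\psi$, while the $\mc{P}'$-closedness of $T$, the sharpened isomorphism range, and the homotopy equivalence when $m\ge\dim(S)$ are read off from the constructible-case refinement in \cite{gabrielov2009approximation}; that symmetry of $\mc{P}$, as opposed to mere closure under $G$, passes to $\mc{P}'$ is immediate from the displayed formula for $\mc{P}'$.

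The step I expect to be the main obstacle is the equivariance of $\psi$ itself: one must check that the comparison map of \cite{gabrielov2009approximation} is, or can be arranged to be, an honest composition of $G$-equivariant maps rather than a zig-zag in which a homotopy inverse or a locally defined conic retraction is available only non-equivariantly. This reduces to confirming that the deformation retractions linking consecutive stages are given by canonical (hence $G$-compatible) formulas, and that wherever \cite{gabrielov2009approximation} inverts a map one is inverting an equivariant homotopy equivalence of $G$-spaces; since only the induced map on $\pi_k$ or $H_k$ is ultimately needed, the relevant inverse exists there automatically as a $G$-module inverse, so no equivariant Whitehead theorem is required. A secondary technical point is the treatment of basepoints in the homotopy-group statement: for $\psi_{\#,k}$ to be genuinely equivariant one wants $*$ in the $G$-fixed locus of $T$ — available, for instance, as the origin after the conic compactification in the constructible case — or else one passes to the fundamental groupoid; for the homology and cohomology statements, which is all that the intended strengthening of the theorems of Basu and Riener requires, no such issue arises.
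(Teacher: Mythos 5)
The central flaw is that your mental picture of the Gabrielov--Vorobjov construction is wrong, and as a result you have not identified where the real work lies. The set $T$ is \emph{not} assembled inside a product $(\bb{R}^n)^{m+1}$ with diagonals and closeness conditions, and the comparison map $\psi$ is \emph{not} a coordinate projection. From Definition~\ref{def:Gabrielov1.8}, $T = S_{\delta_0,\epslon_0}\cup\cdots\cup S_{\delta_m,\epslon_m}$ is a finite union of compact subsets of $\bb{R}^n$ itself. In general $T\not\subset S$ (the constructible $S_{\delta,\epslon}$ thicken the equalities by $\epslon$), so there is no obvious map to $S$ at all. The map $\psi$ in \cite{gabrielov2009approximation} is obtained only after triangulating the ambient compact $A$ adapted to $S$ and to the representing families, introducing an intermediate open set $V$ (and in the definable case $V''$) built out of sets $K_B(\delta,\epslon)$ associated to pairs of simplices, invoking Bj\"orner's nerve theorem for the cover $\{V_B\}$, passing through barycentric retractions and order complexes, and, in the separable case, using Hardt triviality to produce a retraction $T^{(2)}\to T^{(1)}$. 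This is precisely the content of Sections~\ref{sect:Triangulation}--\ref{sect:ProofsOfEquivariance} of the paper, and the chain of maps is displayed in Subsection~\ref{sect:Summary}.

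Consequently, the step you flag as "the main obstacle" --- checking that a few deformation retractions are given by canonical formulas --- does not come close to capturing what must be done. The crucial inputs are (i)~an equivariant triangulation of a symmetric compact definable set adapted to symmetric subsets (Theorem~\ref{thm:SymmetricTriangulation}), (ii)~an equivariant triangulation of symmetric definable functions (Theorem~\ref{thm:SymmetricTriangulationFn}), (iii)~an equivariant Hardt triviality (Theorem~\ref{thm:EquivariantHardtTriviality}), (iv)~an equivariant nerve theorem for open covers of polyhedral CW complexes (Theorem~\ref{thm:EquivariantNerveTheoremOpen}), and (v)~an equivariant barycentric retraction (Proposition~\ref{thm:BREquivariance}). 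None of these are automatic: the standard triangulation theorem does not produce a symmetric triangulation of a symmetric set, and the homeomorphism $\abs{\Delta(\mathscr{F}(X))}\to X$ in the nerve theorem involves choices that are only canonical when the cells are convex polyhedra with centroids respected by the (linear) group action. The paper establishes all of these from scratch, by triangulating within a fundamental region and extending by the $G$-action.

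Two further points. First, your observation that a $G$-module inverse of $\psi_{\#,k}$ or $\psi_{*,k}$ is free once the map is an isomorphism is correct and indeed used implicitly in the paper, but it does not dissolve the problem: the theorem claims an actual equivariant continuous map $\psi:T\to S$, not merely equivariant group-level isomorphisms, and such a map must be built. Second, your plan to deduce part~(b) from part~(a) cannot give the sharpened isomorphism range or the homotopy equivalence when $m\ge\dim(S)$; part~(a) delivers only epimorphisms. The improvement in part~(b) comes from the separability of $(\Lambda,\{S_\delta\})$ in the constructible case (Definition~\ref{def:Gabrielov5.7}), which governs the hard/soft marking of subsimplices and hence the core $C(B)$ in the construction of $V$, and requires its own equivariant argument (Subsection~\ref{sect:MainTheoremSeparableCase}, in particular the equivariant Hardt triviality step in Lemma~\ref{thm:EquivariantTtoT'}). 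So part~(b) is not a corollary of part~(a) applied to a particular choice of representing families; the two cases share the equivariant map $\tau:V\to S$ but diverge in how $T$ is related to $V$.
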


Parts (a) and (b) of this theorem are proved in Theorem \ref{thm:EquivariantGabrielov1.10i} and Corollary \ref{thm:EquivariantGabrielov1.10ii} respectively. Our arguments follow the strategy of \cite{gabrielov2009approximation}, with adjustments to ensure equivariance. The maps composed to obtain $\psi$ in the definable and constructible cases are displayed in the summary in Subsection \ref{sect:Summary}.

To construct the maps of homotopy and homology groups in \cite{gabrielov2009approximation}, Gabrielov and Vorobjov first consider a triangulation adapted to $S$ of some larger compact set $A$. Thus, a major step in our paper concerns proving the existence of a triangulation with the needed symmetry and equivariance properties for a given symmetric, definable set. This is done in Theorem \ref{thm:SymmetricTriangulation}. We also establish equivariant versions of a few other theorems as needed.

Section \ref{sect:Background} contains background and definitions. Section \ref{sect:GVConstruction} describes the construction of the approximating set $T$ in the definable and constructible cases. The results relating to symmetric triangulation and equivariant versions of the other background theorems can be found in Sections \ref{sect:Triangulation} and \ref{sect:EquivarianceTopology} respectively. Finally, in Section \ref{sect:ProofsOfEquivariance}, we assemble our results to prove the existence of an equivariant map $T\rightarrow S$ inducing the promised epimorphisms and isomorphisms of homotopy and homology groups. Section \ref{sect:Applications} discusses the ramifications for the results of Basu and Riener in \cite{basu2021vandermonde}.

The authors wish to express heartfelt appreciation for the advice of Dr. Gabrielov throughout, and particularly concerning Theorem \ref{thm:EquivariantNerveTheoremOpen}.

\section{Background}\label{sect:Background}
Much of the information here is relatively standard. We include these definitions for completeness, and to establish a few conventions.

\subsection{Symmetry}\label{sect:SymmetryBackground}

\begin{definition}
Let $G$ be a group acting on a set $X$. A subset $Y\subset X$ is said to be \emph{symmetric} with respect to the action of $G$ on $X$ if for each $g\in G$ and $y\in Y$, we have $g(y)\in Y$.
\end{definition}

We extend the definition of a symmetric function, standard for the usual action of $\s{n}$ on $\bb{R}^n$, to more general group actions.

\begin{definition}\label{def:Symmetry}
Let $f:X\rightarrow Y$ for sets $X$ and $Y$, and let $G$ be a group acting on $X$. We will say that $f$ is \emph{symmetric} with respect to the action of $G$ if we have that $f(g(x))=f(x)$ for all $x\in X$ and $g\in G$.
\end{definition}

\begin{definition}\label{def:Equivariance}
Let $f:X\rightarrow Y$ for some sets $X$ and $Y$, and let $G$ be a group which acts on both $X$ and $Y$. We say $f$ is \emph{equivariant} with respect to the action of $G$ if for all $g\in G$, the diagram
\[
\begin{tikzcd}
X \arrow[d, "g"] \arrow[r, "f"] & Y \arrow[d, "g"]\\
X \arrow[r, "f"] & Y
\end{tikzcd}
\]
commutes.
\end{definition}

We do need to give consideration to symmetry and equivariance for homotopy groups. Let $X$ be a topological space, with $G$ a group acting on $X$. Then for $g\in G$ and basepoint $x_0\in X$, the map $g:X\rightarrow X$ induces maps $g_{\#,k}:\pi_k(X,x_0)\rightarrow \pi_k(X,g(x_0))$ of homotopy groups for each $k\geq 0$. Thus, for a pointed space $(X,*)$, we understand that the induced action of $g$ on any $\pi_k(X,*)$ will also change the basepoint. For this reason, we will be careful to maintain basepoint notation in reference to homotopy groups. In particular, we must give attention to our basepoint when considering equivariance of maps on the level of homotopy groups.

\begin{definition}\label{def:HomotopyEquivariance}
Let $f:\pi_k(X,*)\rightarrow \pi_k(Y,*')$ for topological spaces $(X,*)$ and $(Y,*')$, and let $G$ be a group acting on $X$ and $Y$. We will say that $f$ is equivariant if the diagram
\[
\begin{tikzcd}
\pi_k(X,*) \arrow[d, "g"] \arrow[r, "f"] &  \pi_k(Y,*')\arrow[d, "g"]\\
\pi_k(X,g(*)) \arrow[r, "f"] & \pi_k(Y, g(*'))
\end{tikzcd}
\]
commutes.
\end{definition}

\subsection{Reflection Groups}\label{sect:ReflectionBackground}

Let $V$ be a finite dimensional real vector space equipped with an inner product, and let $\orth{V}$ denote the group of all orthogonal linear transformations of $V$. We will for the sake of clarity distinguish a \emph{linear hyperplane} (a codimension $1$ vector subspace, which must therefore contain $\bs{0}$) from an \emph{affine hyperplane} (a codimension $1$ affine subspace, which need not pass through the origin). If $P$ is a hyperplane (in either sense), then the complement $V\setminus P$ has two connected components, which are called (linear or affine, resp.) half-spaces.

\begin{definition}[see \cite{grove1996finite} Chapter 3]
Let $G$ be a finite subgroup of $\orth{V}$. A subset $H\subset V$ is a \emph{fundamental region} of $V$ relative to $G$ provided
\begin{enumerate}
    \item $H$ is an open subset of $V$
    \item $H\cap g(H)=\emptyset$ for all $e\neq g\in G$
    \item $V=\bigcup_{g\in G} \clos{g(H)}$
\end{enumerate}
\end{definition}

\begin{thm}[\cite{grove1996finite} Theorem 3.1.2]
Let $G$ be a finite subgroup of $\orth{V}$. Then there exists a finite collection $\{H_1,\ldots, H_k\}$ of linear half-spaces such that the region
\[
H=\bigcap_{i=1}^k H_i
\]
is a fundamental region of $V$ relative to $G$.
\end{thm}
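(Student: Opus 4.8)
The plan is to take $H$ to be the \emph{Dirichlet cell} of a suitably generic point $v_0 \in V$ --- the set of $v$ that are strictly closer to $v_0$ than to any other point of the orbit $G v_0$ --- and to exploit the fact that, because $G$ acts by orthogonal transformations, each defining inequality of this cell is \emph{linear}, so that $H$ is automatically a finite intersection of linear half-spaces. To begin, note that for each $e \neq g \in G$ the fixed set $\ker(g - \id)$ is a proper linear subspace of $V$, and over an infinite field a finite union of proper subspaces cannot exhaust the whole space; hence there is a point $v_0$ with trivial stabilizer, and then the orbit $G v_0$ consists of $\abs{G}$ distinct points. (If $G$ is trivial one simply takes $H = V$.)

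Next, for each $e \neq g \in G$ I would set $H_g = \{\, v \in V : \gen{v,\, v_0 - g v_0} > 0 \,\}$. Expanding squared norms and using $\norm{g v_0} = \norm{v_0}$ gives $\norm{v - v_0}^2 - \norm{v - g v_0}^2 = -2\gen{v,\, v_0 - g v_0}$, so $H_g$ is exactly the set of points strictly closer to $v_0$ than to $g v_0$; it is a genuine linear half-space since $v_0 - g v_0 \neq 0$. Put $H = \bigcap_{e \neq g \in G} H_g$. By Cauchy--Schwarz, $\gen{v_0,\, v_0 - g v_0} = \norm{v_0}^2 - \gen{v_0, g v_0} > 0$ for $g \neq e$, so $v_0 \in H$ and $H$ is a nonempty open convex cone.

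It remains to verify the three axioms. Openness is immediate. For axiom (2): if $v \in H \cap g(H)$ with $g \neq e$, then on one hand $v \in H$ says $v$ is strictly closer to $v_0$ than to $g v_0$, while on the other hand $g^{-1} v \in H$, and applying the isometry $g$ (which permutes $G v_0$) shows $v$ is strictly closer to $g v_0$ than to $v_0$ --- a contradiction. For axiom (3), I would first establish $\clos{H} = \bigcap_{e \neq g \in G} \clos{H_g} = \{\, v : \norm{v - v_0} \leq \norm{v - g v_0} \text{ for all } g \in G \,\}$; the inclusion ``$\subseteq$'' is automatic, and for ``$\supseteq$'' one joins any point of the right-hand side to the interior point $v_0$ by a segment that lies in $H$ except possibly at its endpoint. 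Then, given an arbitrary $v \in V$, choosing $g_0 \in G$ that minimizes $\norm{v - g_0 v_0}$ over the finite orbit yields $g_0^{-1} v \in \clos{H}$, i.e. $v \in g_0(\clos{H}) = \clos{g_0(H)}$, so $V = \bigcup_{g \in G} \clos{g(H)}$.

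The point needing the most care is the identification $\clos{H} = \bigcap_{e \neq g \in G} \clos{H_g}$ inside the proof of axiom (3): in general the closure of a finite intersection of open half-spaces is strictly contained in the intersection of the corresponding closed half-spaces, and one must use convexity of $H$ together with the nonemptiness of its interior (equivalently, that all the $H_g$ contain the common point $v_0$) to obtain equality. Everything else is the routine translation, via orthogonality of the $G$-action, of ``$v_0$ is a point of $G v_0$ nearest to $v$'' into the linear conditions $\gen{v,\, v_0 - g v_0} \geq 0$.
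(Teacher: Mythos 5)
Your proposal is correct, and it is essentially the same argument that appears in the cited source (\cite{grove1996finite}, Theorem 3.1.2), which the paper invokes without reproducing a proof: one chooses a point $v_0$ with trivial stabilizer and takes $H$ to be the open Dirichlet cell $\{v : \gen{v, v_0 - gv_0} > 0 \text{ for all } e\neq g\in G\}$, using orthogonality of $G$ to see that the perpendicular bisectors of $v_0$ and $gv_0$ pass through the origin, so each $H_g$ is a \emph{linear} half-space. Your handling of the three axioms, including the convexity argument needed to identify $\clos{H}$ with $\bigcap_{g\neq e}\clos{H_g}$ before verifying $V=\bigcup_g \clos{g(H)}$, is sound and matches the standard proof.
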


We are interested in the case where $G$ is a finite reflection group acting on $\bb{R}^n$ (with the standard inner product), i.e. $G$ is generated by elements $g$ whose action on $\bb{R}^n$ are given by $g(\bs{x})$ reflecting $\bs{x}$ across some linear hyperplane $P_g$. We will frequently use the fact that any affine hyperplane $P$ in $\rcf^n$ coincides with some $\{\bs{x}\in \rcf^n\mid L(\bs{x})=0\}$, where $L:\rcf^n\rightarrow\rcf$ is of the form $L(x_1,\ldots,x_n)=a_0+a_1x_1+\cdots+a_nx_n$ for some $a_0,\ldots, a_n\in \rcf$. $P$ is a linear hyperplane iff $a_0=0$.

Say that $G$ is a finite reflection group acting on $\bb{R}^n$. Then, as detailed in \cite{grove1996finite} Chapter 4, we can select a subset $\{g_1,\ldots, g_k\}$ of non-identity elements of $G$ such that, if $g_i$ acts by reflecting through the hyperplane $P_i$, we can choose functions $L_i$ with $P_i$ given by $L_i(\bs{x})=0$ so that
\[
H=\bigcap_{i=1}^k \{\bs{x}\in \bb{R}^n\mid L_i(\bs{x})>0\}
\]
is a fundamental region of $\bb{R}^n$ relative to $G$. The sets of the form $(L_1\geq 0)\cap \ldots \cap (L_{i-1}\geq 0)\cap (L_i=0)\cap (L_{i+i}\geq 0)\cap\ldots\cap (L_k\geq 0)$ are called the $\emph{walls}$ of this fundamental region.

In this description, for $g\in G$ we have that there exists a subset $\lambda_g$ of $\{1,\ldots,k\}$ such that

\[
g(\clos{H})=\paren{\bigcap_{i\in \lambda_g}L_i(\bs{x})\leq 0}\cap\paren{\bigcap_{i\in \{1,\ldots,k\}\setminus \lambda_g} L_i(\bs{x})\geq 0}
\]

Then $\clos{H}\cap g(\clos{H})$ is given by
\[
H_{\lambda_g}=\paren{\bigcap_{i\in \lambda_g}L_i(\bs{x})=0}\cap\paren{\bigcap_{i\in \{1,\ldots,k\}\setminus \lambda_g} L_i(\bs{x})\geq 0}
\]
which is an intersection of walls of $H$. $H_{\lambda_g}$ is also the set of points of $H$ fixed by the action of $g$.

\begin{example}
In our primary example, $G=\s{n}$ acts on $\bb{R}^n$ via the standard action given by the permutation of coordinates, i.e. for $\sigma\in\s{n}$ and $\bs{x}=(x_1,\ldots,x_n)\in\bb{R}^n$ we have $\sigma(\bs{x})=(x_{\sigma(1)},\ldots, x_{\sigma(n)})$.
Here, we may take as a fundamental region the interior of the Weyl chamber
\[
\wo{n}=\{(x_1,\ldots, x_n)\in \bb{R}^n\mid x_1<\ldots<x_n\}
\]
(see \cite{basu2021vandermonde} Notation 9). The walls of the Weyl chamber correspond to the adjacent transpositions $s_i=(i\; i+1)\in \s{n}$ (i.e. the standard Coxeter generators of $\s{n}$). Each $s_i$ acts by reflecting through the hyperplane given by $x_i=x_{i+1}$, so we have that the walls of the Weyl chamber are the sets
\[
\w{n}_{s_i}=\{(x_1,\ldots, x_n)\in \bb{R}^n\mid x_1\leq\ldots\leq x_i=x_{i+1}\leq \ldots\leq x_n\}
\]
for $1\leq i\leq n-1$ (see \cite{basu2021vandermonde} Notation 11).
\end{example}

\subsection{Simplicial Complexes}\label{SimplicialComplexBackground}

Our proofs make reference to both abstract and concrete simplicial complexes. Accordingly, we present both viewpoints here, beginning with the concrete setting.

\begin{definition}[see \cite{coste2000introduction} Section 4.2]\label{def:SimplexInR}
Let the points $v_0,\ldots, v_d\in \rcf^n$ be affine independent. Then the \emph{open simplex} (of dimension $d$) with vertices $v_0,\ldots, v_d$ is the set
\begin{align*}
\Delta(v_0,\ldots, v_d)=\{x&\in \rcf^n\mid x=t_1v_1+\cdots+t_dv_d\\
&\text{ for some } t_1,\ldots,t_d\in (0,1] \text{ with } t_1+\cdots+t_d=1\}
\end{align*}
The corresponding \emph{closed simplex} is
\begin{align*}
\clos{\Delta}(v_0,\ldots, v_d)=\{x&\in \rcf^n\mid x=t_1v_1+\cdots+t_dv_d\\
&\text{ for some } t_1,\ldots,t_d\in [0,1] \text{ with } t_1+\cdots+t_d=1\}
\end{align*}
A \emph{face} of a simplex $\clos{\Delta}(v_0,\ldots,v_d)$ is a simplex $\clos{\Delta}(u_0,\ldots, u_{d'})$ with vertex set $\{u_0,\ldots,u_{d'}\}\subset \{v_0,\ldots,v_d\}$.
\end{definition}

If we are drawing our vertices from an indexed set $\{v_i\}_{i\in I}$, we may simply refer to a simplex $\Delta(v_{i_0},\ldots,v_{i_d})$ as $\Delta(i_0,\ldots, i_d)$.

\begin{definition}[see \cite{coste2000introduction} Section 4.2]\label{def:SimplicialComplexInR}
A (finite) \emph{simplicial complex} in $\rcf^n$ is a finite collection $\Lambda$ of open simplices in $\rcf^n$ with the following properties
\begin{enumerate}
    \item If $\Delta\in \Lambda$, then for each face $\clos{\Delta}'$ of $\clos{\Delta}$, we have that $\Delta'\in \Lambda$
    \item If $\Delta_1,\Delta_2\in \Lambda$, then $\clos{\Delta}_1\cap \clos{\Delta}_2=\clos{\Delta}_3$ for some $\Delta_3\in \Lambda$.
\end{enumerate}
Given a simplicial complex $\Lambda$ in $\rcf^n$, we call $\bigcup_{\Delta\in \Lambda}\Delta\subset \rcf^n$ the \emph{geometric realization} of $\Lambda$, and denote it $\abs{\Lambda}$.
\end{definition}

If not clear from context, we may include a subscript to indicate where the realization is taking place. For example, say $\Lambda$ is a simplicial complex in $\rcf^n$ seen as a subset of $\rcf^n\times \rcf^m$. Then we may wish to distinguish between $\abs{\Lambda}_{\rcf^n}$ and $\abs{\Lambda}_{\rcf^{n+m}}$, the realizations of $\Lambda$ in $\rcf^n$ and $\rcf^{n+m}$ respectively.

\begin{definition}
Let $\Delta,\Delta'\in \Lambda$ for some simplicial complex $\Lambda$. Then $\Delta'$ is a \emph{subsimplex} of $\Delta$ if $\Delta'\neq \Delta$ and $\Delta'\subset \clos{\Delta}$ (in other words, if $\clos{\Delta}'$ is a proper face of $\clos{\Delta}$).
\end{definition}

\begin{definition}\label{def:flag}
A \emph{$k$-flag} of cells in a CW complex (so in particular, of simplices in a simplicial complex) is a sequence $\sigma_0,\ldots, \sigma_k$ of cells such that $\sigma_i$ is contained in the boundary of $\sigma_{i-1}$ for each $1\leq i\leq k$.
\end{definition}

When speaking of symmetry for any CW complex (and in particular the geometric realization of a simplicial complex $\Lambda$ in $\bb{R}^n$), we will in general consider the action of $G$ on our decomposition to be that induced by a given action of $G$ on our larger space:

\begin{definition}\label{def:SymmetricSimplicialComplex}
Let $G$ be a group acting on $\bb{R}^n$. A CW complex $X$ is \emph{symmetric} with respect to the action of $G$ if for each cell $\sigma\in \Lambda$ and each $g\in G$, $g(\sigma)=\{g(x)\mid x\in \sigma\}$ is again a cell of $X$.
\end{definition}

In particular, this means that $X$ is symmetric as a subset of $\bb{R}^n$.

Now, we shift our attention to abstract simplicial complexes.

\begin{definition}[from \cite{spanier1989algebraic}]
An \emph{abstract simplicial complex} is a set $V$ of vertices and a set $\Lambda$ of finite nonempty subsets of $V$ which we consider to be simplices, having the properties
\begin{enumerate}
    \item $\{v\}\in \Lambda$ for each $v\in V$ (each vertex is a simplex)
    \item any nonempty subset of a simplex is a simplex
\end{enumerate}
\end{definition}

One can and often does ignore the distinction between an abstract simplicial complex and its set of simplices. An abstract simplicial complex also comes with a geometric realization, which we will explicitly describe for the sake of later use.

\begin{definition}[see \cite{spanier1989algebraic}]
Let $\Lambda$ be a nonempty abstract simplicial complex. The \emph{geometric realization} of $\Lambda$, denoted by $\abs{\Lambda}$, is the space whose points are functions $\alpha$ from the set of vertices of $\Lambda$ to the interval $[0,1]\subset \bb{R}$ such that
\begin{enumerate}
    \item The set $\{v\in \Lambda\mid \alpha(v)\neq 0\}$ is a simplex of $\Lambda$
    \item $\sum_{v\in \Lambda}\alpha(v)=1$
\end{enumerate}
appropriately topologized (as described in \cite{spanier1989algebraic}).
\end{definition}

It will be convenient to refer to the point $\alpha$ using the notation $\sum_{v\in \Lambda} \alpha(v) v$.

\begin{definition}[from \cite{spanier1989algebraic} Chapter 3 Section 2]
Let $\Lambda$ be any simplicial complex and let $X$ be a topological space which is a subset of a real vector space, appropriately topologized (see \cite{spanier1989algebraic} for details; in particular euclidean space and geometric realizations of simplicial complexes meet the criteria). A continuous map $f:\abs{\Lambda}\rightarrow X$ is said to be \emph{linear} (on $\Lambda$) if for $\alpha\in \abs{\Lambda}$, we have
\[
f(\alpha)=\sum_{v\in \Lambda} \alpha(v) f(v)
\]
\end{definition}

Following \cite{bredon1972introduction}, we will say an abstract simplicial complex is symmetric with respect to a group $G$ acting on its set of vertices if the map given by $g$ is simplicial (i.e. $g$ carries simplices to simplices) for each $g\in G$. An action on $\Lambda$ induces a linear action on $\abs{\Lambda}$: namely, $g(\alpha)$ is the function from $\Lambda$ to $[0,1]$ given by $g(\alpha)(v)=\alpha(g(v))$.

Any simplicial complex $\Lambda$ comes with a \emph{face poset} $\fpos{\Lambda}$, where the simplicies of $\Lambda$ are ordered by $\sigma_1\leq \sigma_2$ if $\clos{\sigma_1}$ is a face of $\clos{\sigma_2}$. For $X$ a CW complex, we will still use the notation $\fpos{X}$ for the cell poset of $X$ (which has as points the cells $\sigma$ of $X$ and order given by $\sigma_1\leq \sigma_2$ if $\sigma_1\subset \clos{\sigma_2}$).

\subsection{Miscellaneous Background}\label{MiscellaneousBackground}

The following notions will appear later in the paper.

\begin{definition}[from \cite{van1998tame} Chapter 9 Section 2]
Let $A\subset \rcf^n$ and $x\in \rcf^n\setminus A$. Then the \emph{cone} with vertex $x$ and base $A$ is the set
\[
\{ta+(1-t)x\mid a\in A, t\in [0,1]\}
\]
(the union of all line segments from $x$ to a point in $A$).
\end{definition}

\begin{definition}[from \cite{coste2000introduction} Section 3.2]
We say a family $\{S_x\}_{x\in\rcf^m}$ of subsets of some real closed field $\rcf^n$ is a \emph{definable family} (with respect to some o-minimal structure on $\rcf$) if there is a definable set $S'\subset \rcf^{m+n}$ such that $S_x$ is equal to $\{y\in \rcf^n\mid (x,y)\in S'\}$ for each $x\in \rcf^m$. It follows that each $S_x$ is a definable set.
\end{definition}

The reader unfamiliar with o-minimal geometry is also encouraged to review the cell decomposition theorem (\cite{van1998tame} Chapter 3 Section 2 or \cite{coste2000introduction} Section 2.2) before approaching our equivariant triangulation proofs in Section \ref{sect:Triangulation}.

For a connected topological space $X$, we let $\pi_k(X)$ denote the $k$th homotopy group. Let $H_k(X)$ be the $k$th singular homology group with coefficients in some fixed Abelian group, and denote by $b_k(X)=\rank(H_k(X))$ the $k$th Betti number of $X$. Throughout, we will use $\simeq$ to denote homotopy equivalence and $\cong$ to denote group isomorphism.

The following two theorems are used heavily in Gabrielov and Vorobjov's proofs in \cite{gabrielov2009approximation} and referenced also in our own arguments. These do not require equivariant versions; if a map $f:X\rightarrow Y$ is equivariant, the induced homomorphisms of homotopy and homology groups will be as well.

\begin{thm}[Whitehead Theorem on Weak Homotopy Equivalence, \cite{spanier1989algebraic} 7.6.24]\label{thm:Spanier7.6.24}
A map $f:X\rightarrow Y$ between connected CW complexes is a weak homotopy equivalence (i.e. the induced homomorphism of homotopy groups $f_{\#,k}:\pi_k(X)\rightarrow \pi_k(Y)$ is an isomorphism for each $k>0$) iff $f$ is a homotopy equivalence.
\end{thm}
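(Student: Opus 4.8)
The plan is to prove the nontrivial implication and dispatch the other one immediately. If $f$ is a homotopy equivalence with homotopy inverse $h$, then $h_{\#,k}\circ f_{\#,k}$ and $f_{\#,k}\circ h_{\#,k}$ are induced by self-maps homotopic to the identities, hence are themselves the identity, so every $f_{\#,k}$ is an isomorphism and $f$ is a weak homotopy equivalence. For the converse I would follow the classical mapping-cylinder-plus-compression argument. First I would replace $f\colon X\rightarrow Y$ by the inclusion $i\colon X\hookrightarrow M_f$ into the mapping cylinder $M_f=(X\times[0,1]\sqcup Y)/\!\sim$. Since $M_f$ deformation retracts onto $Y$ via a retraction $r$ with $r\circ i=f$, the map $i$ is again a weak homotopy equivalence and $f$ is a homotopy equivalence iff $i$ is; moreover, after homotoping $f$ to a cellular map we may arrange that $M_f$ carries a CW structure in which $X$ is a subcomplex, so $(M_f,X)$ is a CW pair.

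Next I would feed the hypothesis into the long exact homotopy sequence of the pair $(M_f,X)$. Because $i_{\#,k}\colon\pi_k(X)\rightarrow\pi_k(M_f)$ is an isomorphism for every $k$ (in degrees $0$ and $1$ using connectedness of $X$ and $Y$ together with the corresponding statements for $f$), the sequence forces $\pi_k(M_f,X,x_0)=0$ for all $k\geq 1$ and all basepoints $x_0\in X$. The remaining point is the compression lemma: if $(Z,A)$ is a CW pair with $\pi_k(Z,A,a_0)=0$ for every $k$ for which $Z\setminus A$ contains a $k$-cell, then $A$ is a deformation retract of $Z$. Applying this to $(M_f,X)$ yields that $X$ is a deformation retract of $M_f$, so $i$ is a homotopy equivalence, and composing with $r$ gives that $f$ is a homotopy equivalence.

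The hard part will be the compression lemma, which is the genuine technical core of Whitehead's theorem. One builds the deformation skeleton by skeleton: for each characteristic map $\Phi\colon(D^k,\partial D^k)\rightarrow(Z,A)$ of a $k$-cell of $Z\setminus A$, the vanishing of $[\Phi]\in\pi_k(Z,A)$ lets one homotope $\Phi$ rel $\partial D^k$ into $A$; the homotopy extension property promotes these cellwise compressions to a homotopy defined on all of the $k$-skeleton, and the weak topology of the CW complex lets one splice the countably (or transfinitely) many stages into a single deformation retraction. Care is needed with basepoints and path components when transporting classes between cells, and with the action of $\pi_1$ on the higher relative groups — this is precisely where connectedness of $X$ and $Y$ enters — but these are exactly the bookkeeping issues handled in the reference. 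Everything outside the compression lemma is formal: the mapping cylinder construction, cellular approximation, and the long exact sequence.
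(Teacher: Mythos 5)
The paper does not prove this result; it is cited verbatim from Spanier as background, with no argument supplied. Your outline is therefore being compared against a reference, not the paper itself. That said, your sketch is a correct rendering of the standard modern proof (the one in Hatcher): replace $f$ by the inclusion into the mapping cylinder after cellular approximation, use the long exact sequence of the pair $(M_f,X)$ to conclude the relative homotopy groups vanish, and invoke the compression lemma to deformation-retract $M_f$ onto $X$. Your handling of the low-degree cases is slightly terse but essentially right: iso on $\pi_1$ plus injection on $\pi_0$ (automatic here since both spaces are connected) is exactly what the five lemma/exact sequence needs to kill $\pi_1(M_f,X)$, and higher degrees are immediate. The remark about splicing infinitely many stage-wise homotopies via the weak topology is the correct point to flag for infinite complexes. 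Worth noting: Spanier's own route to his 7.6.24 is somewhat different in presentation — he first establishes that a weak homotopy equivalence induces bijections $[W,X]\rightarrow[W,Y]$ for every CW complex $W$ (his 7.6.22/7.6.23) and then specializes $W=X,Y$ to extract a homotopy inverse, rather than going directly through the mapping cylinder and compression. The two strategies prove the same thing and share the same obstruction-theoretic core; the Spanier route isolates a representability statement that is reusable elsewhere (e.g.\ for CW approximation), while your mapping-cylinder argument is more self-contained and geometric.
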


\begin{thm}[Whitehead Theorem on Homotopy and Homology, \cite{spanier1989algebraic} 7.5.9]\label{thm:Spanier7.5.9}
Let $f:X\rightarrow Y$ be a continuous map between path connected topological spaces. If there is a $k>0$ such that the induced homomorphism of homotopy groups $f_{\#,j}:\pi_j(X)\rightarrow \pi_j(Y)$ is an isomorphism for $j<k$ and an epimorphism for $j=k$, then the induced homomorphism of homology groups $f_{*,j}:H_j(X)\rightarrow H_j(Y)$ is an isomorphism for $j<k$ and an epimorphism for $j=k$.
\end{thm}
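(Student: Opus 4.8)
The plan is to reduce to the case of a cofibration via the mapping cylinder, convert the homotopy hypotheses into a connectivity statement about the resulting pair, apply the relative Hurewicz theorem to kill the relevant relative homology groups, and then read off the conclusion from the homology exact sequence of the pair.

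First I would replace $f\colon X\to Y$ by the inclusion $i\colon X\hookrightarrow M_f$ of $X$ into the mapping cylinder $M_f$ of $f$. Here $M_f$ is path connected, $i$ is a (closed) cofibration, and the canonical retraction $r\colon M_f\to Y$ is a homotopy equivalence satisfying $r\circ i=f$. Because $r$ induces isomorphisms on every homotopy and homology group, and $f_{\#,j}=r_{\#,j}\circ i_{\#,j}$ while $f_{*,j}=r_{*,j}\circ i_{*,j}$, both the hypotheses and the desired conclusion for $f$ are equivalent to the corresponding statements for $i$. So it suffices to prove the theorem when $f$ is an inclusion $X\subset Y$, which I now assume.

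Next I would run the hypotheses through the long exact homotopy sequence of the pair $(Y,X)$,
\[
\cdots\to\pi_j(X)\xrightarrow{f_{\#,j}}\pi_j(Y)\to\pi_j(Y,X)\xrightarrow{\ \partial\ }\pi_{j-1}(X)\xrightarrow{f_{\#,j-1}}\pi_{j-1}(Y)\to\cdots .
\]
For $2\le j\le k$: exactness at $\pi_{j-1}(X)$ and injectivity of $f_{\#,j-1}$ (valid since $j-1<k$) force $\partial$ to be trivial, so $\pi_j(Y)\to\pi_j(Y,X)$ is surjective; exactness at $\pi_j(Y)$ and surjectivity of $f_{\#,j}$ (valid since $j\le k$) force $\pi_j(Y)\to\pi_j(Y,X)$ to be trivial; hence $\pi_j(Y,X)=0$. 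The analogous low-degree bookkeeping, using that $X$ and $Y$ are path connected and $f_{\#,1}$ is surjective, gives $\pi_1(Y,X)=\ast$. Thus the pair $(Y,X)$ is $k$-connected. By the relative Hurewicz theorem, a $k$-connected pair of nonempty path connected spaces with $k\ge1$ satisfies $H_j(Y,X)=0$ for all $j\le k$; for $j=1$ this is elementary, since $H_1(Y,X)=\operatorname{coker}\bigl(H_1(X)\to H_1(Y)\bigr)=0$ as $\pi_1(X)\to\pi_1(Y)$ is surjective. Feeding $H_j(Y,X)=0$ for $j\le k$ into the long exact homology sequence of $(Y,X)$ then shows that $H_j(X)\to H_j(Y)$ is an isomorphism for $j<k$ and an epimorphism for $j=k$, which is the claim for the inclusion, and hence for the original $f$.

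The main obstacle is the relative Hurewicz step. Since $X$ is only assumed path connected, one must invoke relative Hurewicz in the form that asserts the vanishing of $H_j(Y,X)$ for $j$ up to and including the connectivity bound; the genuinely delicate part of that theorem — the description of $H_{k+1}(Y,X)$ as a quotient of $\pi_{k+1}(Y,X)$ by the $\pi_1(X)$-action — never enters, because we only need homology in degrees $\le k$, where the relative homotopy already vanishes. A secondary point requiring care is the bottom of the homotopy exact sequence, where $\pi_1(Y,X)$ is merely a pointed set and $\pi_2(Y,X)$ may be nonabelian, so ``exactness'' there must be read in the pointed-set sense.
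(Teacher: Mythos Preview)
Your argument is correct and is the standard proof of this result (mapping cylinder reduction, then relative Hurewicz applied to the $k$-connected pair). Note, however, that the paper does not actually prove this theorem: it is quoted without proof as a background result from Spanier, so there is no ``paper's own proof'' to compare against beyond the citation itself.
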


\section{The Gabrielov-Vorobjov Construction}\label{sect:GVConstruction}

In this section, we describe the construction of the approximating set $T$, as given by Gabrielov and Vorobjov in \cite{gabrielov2009approximation}. We also discuss the implications when symmetry is introduced to the construction.

In order to construct $T$, we begin with families of compact sets $\{S_\delta\}$ and $\{S_{\delta,\epslon}\}$ which represent $S$ in some larger compact set $A$.

\begin{definition}[\cite{gabrielov2009approximation} Definition 1.1]\label{def:Gabrielov1.1}
Let $S\subset \bb{R}^n$ be definable, and let $A\subset \bb{R}^n$ be a compact definable set with $S\subset A$. Let $\{S_\delta\}_{\delta>0}$ and $\{S_{\delta,\epslon}\}_{\delta,\epslon>0}$ be definable families of compact subsets of $A$. We say $S$ is \emph{represented} by $\{S_{\delta}\}_{\delta>0}$ and $\{S_{\delta,\epslon}\}_{\delta,\epslon>0}$ in $A$ if we have that
\begin{enumerate}[(a)]
    \item for all $\delta',\delta\in (0,1)$, if $\delta'>\delta$, then $S_{\delta'}\subset S_\delta$
    \item $S=\bigcup_{\delta>0} S_\delta$
\end{enumerate}
and furthermore for each $\delta>0$
\begin{enumerate}[(i)]
    \item for all $\epslon',\epslon\in (0,1)$, if $\epslon'>\epslon$, then $S_{\delta,\epslon}\subset S_{\delta,\epslon'}$
    \item $S_\delta=\bigcap_{\epslon>0} S_{\delta,\epslon}$
    \item for all $\delta'$ sufficiently smaller than $\delta$ and for all $\epslon'>0$ there exists a set $U\subset A$ with $U$ open in $A$ and $S_\delta\subset U\subset S_{\delta',\epslon'}$
\end{enumerate}
\end{definition}

We then take $T$ to be the union of a selection of finitely many of the sets $S_{\delta, \epslon}$.

\begin{definition}[\cite{gabrielov2009approximation} Definition 1.8]\label{def:Gabrielov1.8}
For any nonnegative integer $m$ and parameters $\epslon_0,\delta_0,\epslon_1,\delta_1,\ldots,\epslon_m,\delta_m$, we denote
\[
T=T(\epslon_0,\delta_0,\epslon_1,\delta_1,\ldots,\epslon_m,\delta_m):=S_{\delta_0,\epslon_0}\cup S_{\delta_1,\epslon_1}\cup\cdots\cup S_{\delta_m,\epslon_m}
\]
\end{definition}

Gabrielov and Vorobjov refer to the general case, in which $S$ is an arbitrary definable set together with any representing families $\{S_\delta\}$ and $\{S_{\delta,\epslon}\}$, as the \emph{definable case}. If $S$ is a $\mc{P}$-set for some collection $\mc{P}=\{h_1,\ldots,h_s\}$ of definable continuous functions $h_i:\bb{R}^n\rightarrow \bb{R}$, they prescribe particular choices of $A$, $\{S_\delta\}$, and $\{S_{\delta,\epslon}\}$ (see Definition \ref{def:ConstructibleCase}), which grants an extra property (termed separability; see Definition \ref{def:Gabrielov5.7} or \cite{gabrielov2009approximation} Section 5.2) that allows for the stronger version of the results. We call this case the \emph{constructible case}. The choices in the constructible case also guarantee that our sets $S_\delta$ and $S_{\delta,\epslon}$ are described by definable functions closely related to the original functions $h_i$.

In these settings, Gabrielov and Vorobjov prove the following theorem. Note that they show (\cite{gabrielov2009approximation} Lemma 1.9) that provided $m$ is at least $1$, there exists a one-to-one correspondence between the connected components of $S$ and $T$. This allows Gabrielov and Vorobjov to consider each connected component individually and so ignore basepoint considerations, a luxury we do not have.

\begin{thm}[\cite{gabrielov2009approximation} Theorem 1.10]\label{thm:Gabrielov1.10}
$\;$
\begin{enumerate}[(i)]
    \item (Definable Case) For $0<\epslon_0\ll\delta_0\ll\cdots\ll \epslon_m\ll\delta_m\ll 1$ and every $1\leq k\leq m$, there are epimorphisms
    \begin{align*}
        \psi_{\#,k}&:\pi_k(T)\rightarrow\pi_k(S)\\
        \psi_{*,k}&:H_k(T)\rightarrow H_k(S)
    \end{align*}
    and in particular, $\rank(H_k(S))\leq \rank(H_k(T))$
    \item (Constructible Case) In the constructible case, for $0<\epslon_0\ll\delta_0\ll\cdots\ll \epslon_m\ll\delta_m\ll 1$ and every $1\leq k\leq m-1$, $\psi_{\#,k}$ and $\psi_{*,k}$ are isomorphisms. In particular, $\rank(H_k(S))=\rank(H_k(T))$. Moreover, if $m\geq \dim(S)$, then $T\simeq S$.
\end{enumerate}
\end{thm}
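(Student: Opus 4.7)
The plan is to build $\psi:T\to S$ from the nesting structure of the sets $S_{\delta_j,\epslon_j}$, using a common triangulation of $A$, and then to read off the induced maps on homotopy and homology using Whitehead's theorems (Theorems~\ref{thm:Spanier7.5.9} and~\ref{thm:Spanier7.6.24}). For the parameter ordering $0<\epslon_0\ll\delta_0\ll\cdots\ll\epslon_m\ll\delta_m\ll 1$, property (iii) of Definition~\ref{def:Gabrielov1.1} gives $S_{\delta_{j+1}}\subset\mathrm{int}_A(S_{\delta_j,\epslon_j})$, and then property (ii) together with compactness of the $S_{\delta_{j+1},\epslon}$ lets one choose $\epslon_{j+1}$ so that $S_{\delta_{j+1},\epslon_{j+1}}\subset\mathrm{int}_A(S_{\delta_j,\epslon_j})$ as well. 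The o-minimal triangulation theorem then supplies a simplicial complex in $A$ whose subcomplexes realize $S_{\delta_j}$ and $S_{\delta_j,\epslon_j}$ for every $j$.

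To construct $\psi$ and prove the definable case, I would inductively collapse the shells $S_{\delta_j,\epslon_j}\setminus S_{\delta_j}$ onto $S_{\delta_j}$ along the nested tubes produced above---using property (ii) and the triangulation---to obtain a deformation retraction of $T$ onto $S_{\delta_0}$, then compose with the inclusion $S_{\delta_0}\hookrightarrow S$. Surjectivity of $\psi_{\#,k}$ on $\pi_k$ for $1\leq k\leq m$ follows by a direct lifting argument: any continuous $\alpha:S^k\to S$ has compact image, hence by property (b) lies in some $S_{\delta^\ast}$, and by the choice of parameters $S_{\delta^\ast}\subset S_{\delta_0}\subset T$, so $\alpha$ lifts through $\psi$. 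Theorem~\ref{thm:Spanier7.5.9} then delivers surjectivity of $\psi_{\ast,k}$ on $H_k$.

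For the constructible case, the extra rigidity comes from the separability property of the prescribed $S_{\delta_j},S_{\delta_j,\epslon_j}$ (see Definition~\ref{def:Gabrielov5.7}), which gives precise control over intersections $S_{\delta_j,\epslon_j}\cap S_{\delta_{j'},\epslon_{j'}}$. My plan would be a Mayer--Vietoris spectral sequence argument on the $(m+1)$-fold cover $\{S_{\delta_j,\epslon_j}\}_{j=0}^m$ of $T$, compared against the corresponding spectral sequence for $S$ arising from the filtration $\{S_{\delta_j}\}$; separability should imply that the two spectral sequences agree on sufficiently many columns to yield isomorphisms on $\pi_k$ and $H_k$ for $k\leq m-1$ and surjectivity at $k=m$, the single degree loss arising from the truncation to $m+1$ levels. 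When $m\geq\dim(S)$ all higher homotopy groups vanish, so Theorem~\ref{thm:Spanier7.6.24} upgrades the weak equivalence to a genuine homotopy equivalence $T\simeq S$. I expect the main obstacle to be precisely the last piece---showing $\psi_{\#,k}$ is \emph{injective} in the constructible case---since one must rule out that a loop in $T$ null-homotopic in $S$ requires ``more room'' than the $m+1$ nested layers of $T$ provide, which is exactly where the separability hypothesis is essential and most delicate to verify.
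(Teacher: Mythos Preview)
Your proposal rests on a nesting claim that is false, and this undermines both parts of the argument. You assert that, with the parameter ordering $0<\epslon_0\ll\delta_0\ll\epslon_1\ll\delta_1\ll\cdots$, one has $S_{\delta_{j+1},\epslon_{j+1}}\subset\mathrm{int}_A(S_{\delta_j,\epslon_j})$. But the sets $S_{\delta,\epslon}$ are monotone in \emph{opposite} directions in the two parameters: they grow as $\epslon$ increases (property (i) of Definition~\ref{def:Gabrielov1.1}) and, in the constructible case at least, they also grow as $\delta$ decreases. Since passing from $j$ to $j+1$ increases both $\delta$ and $\epslon$, there is no containment either way. Indeed, if the $S_{\delta_j,\epslon_j}$ were nested then $T$ would simply equal $S_{\delta_0,\epslon_0}$, and the entire multi-layer construction would be vacuous. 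Consequently your ``collapse the shells'' retraction of $T$ onto $S_{\delta_0}$ does not exist, and the lifting argument also fails: a map $S^k\to S$ has compact image contained in some $S_{\delta^\ast}$, but $\delta^\ast$ may need to be much \emph{smaller} than $\delta_0$, so $S_{\delta^\ast}\not\subset S_{\delta_0}$ in general.

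The paper's route (following Gabrielov--Vorobjov) avoids working directly with the cover $\{S_{\delta_j,\epslon_j}\}$ altogether. Instead it builds, from the barycentric subdivision of a triangulation of $A$ adapted to $S$, purely combinatorial open sets $V_B$ (Definitions~\ref{def:Gabrielov3.6}--\ref{def:Gabrielov3.7}) whose union $V$ is compared to $S$ via Bj\"orner's nerve theorem; this is where the $(m-1)$-connectedness and the loss of one degree enter. The relation between $T$ and $V$ is then obtained not by retraction but by a sandwich: in the definable case $\Phi^{-1}(T)\subset V''$ with $V''\simeq S$ (Lemmas~\ref{thm:EquivariantGabrielov5.3}--\ref{thm:Gabrielov5.6}), and in the separable case a four-term chain $V^{(1)}\subset\Phi^{-1}(T^{(2)})\subset V^{(3)}\subset\Phi^{-1}(T^{(4)})$ in which the outer two-step inclusions are homotopy equivalences forces the middle inclusion to be one as well (Lemmas~\ref{thm:Gabrielov5.10}--\ref{thm:Gabrielov5.11}, Theorem~\ref{thm:EquivariantGabrielov5.12}). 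Your Mayer--Vietoris spectral sequence idea for part (ii) is not unreasonable in spirit, but without correct control of the intersections $S_{\delta_j,\epslon_j}\cap S_{\delta_{j'},\epslon_{j'}}$---which are precisely the sets that fail to nest---it cannot get off the ground.
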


In applications, even in the definable case one may via this construction obtain improved upper bounds on the first $m$ Betti numbers of a given set $S$. Our primary setting of interest, the one considered by Basu and Reiner in \cite{basu2021vandermonde}, utilizes the constructible case.

Our main theorem (Theorem \ref{thm:MainTheorem}) is an equivariant version of the above theorem. It is clear from the definition of $T$ that, so long as each set in the family $\{S_{\delta,\epslon}\}$ is symmetric relative to the action of some group $G$, then $T$ will be as well. In the definable case, we need only assume that we have chosen our families to consist of symmetric sets. In the constructible case, we will show that if $S$ is symmetric and the collection $\{h_1,\ldots, h_s\}$ is invariant under the action of $G$, our choices will produce symmetric sets. In subsequent sections, we show that for $G$ a finite reflection group acting on $\bb{R}^n$, we can in fact construct an equivariant map $\psi:T\rightarrow S$ which induces the desired isomorphisms and epimorphisms $\psi_{\#,k}$ and $\psi_{*,k}$ on the level of homotopy and homology. The remainder of this section is devoted to describing the sets $S_\delta$ and $S_{\delta,\epslon}$ in the constructible case.

\subsection{Constructible Case: Bounding $S$}\label{sect:Boundedness}

Let $S\subset \rcf^n$ be definable, and assume $S$ is unbounded. Via the conical structure at infinity of definable sets, there exists an $r\in \rcf$, $r>0$, such that $S$ is (definably) homotopy equivalent to $S\cap \clos{B(0,r)}$. Basu, Pollack, and Roy in \cite{BasuSaugata2006AiRA} show this for semialgebraic sets. However, their proof holds for any o-minimal structure in which addition and mulitpication are definable. The proof in \cite{BasuSaugata2006AiRA} centers on the local conical structure specifically of semialgebraic sets, but this property (a consequence of Hardt triviality) holds for any o-minimal expansion of some real closed field (see \cite{van1998tame} Chapter 9 Theorem 2.3). The remaining steps of their proof may be performed in any o-minimal structure provided it contains addition and mulitpication.

\begin{prop}[Conical Structure at Infinity, see \cite{BasuSaugata2006AiRA} Proposition 5.49]\label{thm:ConicStructureAtInfty}
Let $A\subset \rcf^n$ be definable. Then there exists an $r\in \rcf$, $r>0$, such that $A$ is definably homotopy equivalent to $A\cap \clos{B(0,r)}$. Specifically, there exists a continuous definable function $h:[0,1]\times A\rightarrow A$ with $h(0,-)=\id_A$, $h(1,-)$ having image contained in $A\cap \clos{B(0,r)}$, and with $h(t,a)=a$ for each $t\in [0,1]$ and $a\in A\cap \clos{B(0,r)}$.
\end{prop}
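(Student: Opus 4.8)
The plan is to exhibit an explicit definable strong deformation retraction of $A$ onto $A\cap\clos{B(0,r)}$, following the strategy of \cite{BasuSaugata2006AiRA} but invoking Hardt triviality in place of the specifically semialgebraic ingredients used there. Consider the definable function $\rho:\rcf^n\to\rcf$ given by $\rho(\bs{x})=\norm{\bs{x}}$; this is definable because $\norm{\cdot}$ is semialgebraic and hence definable in any o-minimal expansion of a real closed field, and $\{\bs{x}\mid\rho(\bs{x})\leq r\}=\clos{B(0,r)}$. If $A$ is bounded there is nothing to prove: pick $r$ with $A\subset\clos{B(0,r)}$ and take $h(t,\bs{x})=\bs{x}$. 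So assume $A$ is unbounded.

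First I would apply Hardt triviality (\cite{van1998tame} Chapter 9 Theorem 2.3) to the definable map $\rho|_A:A\to\rcf$, obtaining a finite partition of $\rcf$ into definable pieces over each of which $\rho|_A$ is definably trivial. Since $A$ is unbounded, one of these pieces is an interval $(a,+\infty)$; fix any $r>a$. Restricting the trivialization over $(a,+\infty)$ to the definable subset $[r,+\infty)$ of the base and composing with a homeomorphism identifying the model fibre with the fibre over $r$, I obtain a definable homeomorphism $\theta:A\cap\{\rho\geq r\}\to[r,+\infty)\times F$ with $F:=A\cap\rho^{-1}(r)$ and $\rho=\pi_{[r,+\infty)}\circ\theta$. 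Next I would define $h:[0,1]\times A\to A$ by gluing: $h(t,\bs{x})=\bs{x}$ for $\bs{x}\in A\cap\{\rho\leq r\}$, and $h(t,\bs{x})=\theta^{-1}\paren{(1-t)\rho(\bs{x})+tr,\;\mathrm{pr}_F(\theta(\bs{x}))}$ for $\bs{x}\in A\cap\{\rho\geq r\}$, where $\mathrm{pr}_F$ denotes projection to $F$. On the overlap $\{\rho=r\}$ both formulas yield $\bs{x}$; since $A\cap\{\rho\leq r\}$ and $A\cap\{\rho\geq r\}$ form a finite closed cover of $A$ and each piece is continuous and definable, $h$ is continuous and definable. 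One then checks directly that $h(0,-)=\id_A$; that $h(1,\bs{x})=\bs{x}$ when $\rho(\bs{x})\leq r$ and $h(1,\bs{x})\in F$ when $\rho(\bs{x})\geq r$, so $h(1,-)$ has image in $A\cap\clos{B(0,r)}$; and that $h(t,\bs{a})=\bs{a}$ for all $t$ and all $\bs{a}\in A\cap\clos{B(0,r)}$. This is precisely the map asserted by the statement, and in particular $A\simeq A\cap\clos{B(0,r)}$ definably.

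The only substantive input is Hardt triviality together with the fact that it holds in an arbitrary o-minimal expansion of a real closed field; everything else is bookkeeping. The one point that needs a little care is that the trivialization $\theta$ is produced only over the \emph{open} interval $(a,+\infty)$, whereas the gluing of $h$ takes place along the sphere $\rho^{-1}(r)$: passing to the closed subinterval $[r,+\infty)$ — legitimate, since a definable trivialization restricts to any definable subset of the base — is what forces the two pieces of $h$ to agree on a closed set and hence makes $h$ a continuous definable map. I do not anticipate any genuine obstacle beyond this.
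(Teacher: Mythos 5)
Your argument is correct and is exactly the approach the paper intends: the paper gives no proof, instead citing \cite{BasuSaugata2006AiRA} Proposition~5.49 and observing that its only semialgebraic ingredient is (semialgebraic) triviality, which in an o-minimal expansion of a real closed field is supplied by Hardt triviality, with the remaining steps needing only addition and multiplication. Your construction --- apply Hardt triviality to the norm $\rho$, restrict the trivialization over the unbounded interval of the resulting partition to $[r,\infty)$ with model fibre $A\cap\rho^{-1}(r)$, and glue the radial deformation $(1-t)\rho(\bs{x})+tr$ in the trivializing coordinates to the identity on $A\cap\clos{B(0,r)}$ along $\rho^{-1}(r)$ --- is precisely that transfer carried out explicitly, and the verification that it produces a continuous definable $h$ with the stated properties is complete.
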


In our case, for any set $S\subset\bb{R}^n$ that is symmetric under the action of a finite reflection group $G$, certainly $S\cap \clos{B(0,r)}$ is as well. The inclusion $S\cap \clos{B(0,r)}\hookrightarrow S$ is also clearly equivariant, and so induces equivariant isomorphisms of homotopy and homology groups
\begin{align*}
    \pi_k(S\cap\clos{B(0,r)},*)&\rightarrow \pi_k(S, *)\\
    H_k(S\cap \clos{B(0,r)})&\rightarrow H_k(S)
\end{align*}
for each $k\geq 0$.

We now replace $S$ by $S\cap\clos{B(0,r)}$ to assume henceforth that $S$ is bounded. If $S$ was a $\mc{P}$-set for some $\mc{P}=\{h_1,\ldots,h_s\}$, we replace $\mc{P}$ with $\mc{P}\cup \{r^2-(X_1^2+\cdots X_n^2)\}$, and so increase $s$ by one. Since $r^2-(X_1^2+\cdots +X_n^2)$ is symmetric relative to the action of any finite reflection group $G$, our collection $\mc{P}$ remains invariant (and in fact if each $h_i\in \mc{P}$ was symmetric, the same remains true after including this new function in our collection). Note also that in the case that each $h_i(x)$ is a polynomial, the only instance in which we have increased the maximum degree among our collection is if each of $\{h_1,\ldots,h_s\}$ had degree one.

This is slightly different from the procedure employed by Gabrielov and Vorobjov in \cite{gabrielov2009approximation}, where the larger compact set $A$ was taken as the definable one-point compactification of $\bb{R}^n$. This choice aids certain results on Betti numbers in \cite{gabrielov2009approximation}, but we find our own method more convenient when tracking the group's action. Since Gabrielov and Vorobjov bound their sets $\{S_\delta\}$ and $\{S_{\delta,\epslon}\}$ in the constructible case by intersecting with closed balls of radius $1/\delta$, our method does not stray too far from the intuition of the original. Our assumption will increase the number of equations needed to define $T$, but only slightly.

\subsection{Constructible Case: the Sets $\{S_\delta\}$ and $\{S_{\delta,\epslon}\}$}

In the constructible case (with the assumption that $S$ is bounded), we form families $\{S_\delta\}_{\delta>0}$ and $\{S_{\delta,\epslon}\}_{\delta,\epslon>0}$ by decomposing $S$ into sign sets of the functions defining $S$, and then contracting inequalities and expanding equalities by a factor of $\delta$ or $\epslon$ respectively, in a manner we now proceed to describe.

\begin{definition}[\cite{gabrielov2009approximation} Definition 1.5]\label{def:Gabrielov1.5}
Let $\mc{P}=\{h_1,\ldots,h_s\}$ be a finite collection of functions with each $h_i:\rcf^n\rightarrow \rcf$. Let $\{I_0,I_+,I_-\}$ be a partition of $\{1,\ldots, s\}$. Then the set $B_{\mc{P},(I_0,I_+,I_-)}$ given by
\[
\{\bs{x}\in \rcf^n\mid \bigwedge_{i\in I_0} (h_i(\bs{x})=0) \wedge \bigwedge_{i\in I_+} (h_i(\bs{x})>0) \wedge \bigwedge_{i\in I_-} (h_i(\bs{x})<0) \}
\]
will be called the \emph{sign set} of $\mc{P}$ corresponding to the tuple $(I_0,I_+,I_-)$.
\end{definition}

Note that any two distinct sign sets of some $\mc{P}=\{h_1,\ldots,h_s\}$ are disjoint, and that for $S$ a $\mc{P}$-set, we may write $S$ as a union of sign sets of the functions in $\mc{P}$. Though for a given collection $\mc{P}$ of functions, some tuples $(I_0,I_+,I_-)$ will produce empty sign sets, we will exclude these tuples from the sign set decomposition of any $\mc{P}$-set.

In the constructible case, we make the following choices for $A$, $\{S_\delta\}$ and $\{S_{\delta,\epslon}\}$.

\begin{definition}[from \cite{gabrielov2009approximation}]\label{def:ConstructibleCase}
Let $S\subset\bb{R}^n$ be a bounded $\mc{P}$-set for some collection $\mc{P}=\{h_1,\ldots,h_s\}$ of continuous definable functions $h:\bb{R}^n\rightarrow \bb{R}$. Let $\mc{B}$ denote the set of tuples $(I_0,I_+,I_-)$ corresponding to elements in the sign set decomposition of $S$.

Let $r>0$ be such that $S\subset \clos{B(0,r)}$. Then take $A=\clos{B(0,r')}$ for some $r'$ sufficiently larger than $r$. For each $\delta>0$, we let $S_\delta$ be the union of sets defined by
\[
\bigwedge_{i\in I_0} (h_i=0) \wedge \bigwedge_{i\in I_+} (h_i\geq\delta) \wedge \bigwedge_{i\in I_-} (h_i\leq-\delta)
\]
over all tuples $(I_0,I_+,I_-)\in \mathcal{B}$. For $\delta,\epslon>0$, we let $S_{\delta,\epslon}$ be the union of the sets given by
\[
\bigwedge_{i\in I_0} (-\epslon\leq h_i\leq\epslon) \wedge \bigwedge_{i\in I_+} (h_i\geq\delta) \wedge \bigwedge_{i\in I_-} (h_i\leq-\delta)
\]
again over all tuples $(I_0,I_+,I_-)\in \mathcal{B}$.
\end{definition}

One may check that $S$ is indeed represented by the families $\{S_\delta\}_{\delta>0}$ and $\{S_{\delta,\epslon}\}_{\delta,\epslon>0}$ in $A$. It remains to consider the conditions needed to ensure that the sets $S_\delta$ and $S_{\delta,\epslon}$ are symmetric.

\begin{prop}
Assume we are in the constructible case. If $S$ is symmetric under the action of $G$ and furthermore we have that for each $g\in G$, $\{h_1\circ g,\ldots, h_s\circ g\}=\{h_1,\ldots, h_s\}$, then every set in the families $\{S_\delta\}_{\delta>0}$ and $\{S_{\delta,\epslon}\}_{\delta,\epslon>0}$ is symmetric.
\end{prop}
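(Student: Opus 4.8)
The plan is to show directly that $g(S_\delta)=S_\delta$ and $g(S_{\delta,\epslon})=S_{\delta,\epslon}$ for every $g\in G$ and every $\delta,\epslon>0$, by tracking how $g$ permutes the sign sets of $\mc{P}$ and the ``thickened'' sets that build $S_\delta$ and $S_{\delta,\epslon}$. First I would note that we may assume $h_1,\dots,h_s$ are pairwise distinct: removing a repeated function from $\mc{P}$ affects neither the hypothesis, nor the nonempty sign sets of $\mc{P}$, nor the families $\{S_\delta\}$, $\{S_{\delta,\epslon}\}$. Under this assumption, for each $g\in G$ the hypothesis $\{h_1\circ g,\dots,h_s\circ g\}=\{h_1,\dots,h_s\}$ provides a unique permutation $\pi_g\in\s{s}$ with $h_i\circ g=h_{\pi_g(i)}$; I write $\sigma_g=\pi_g^{-1}$ and let it act on index partitions by $(I_0,I_+,I_-)\mapsto(\sigma_g(I_0),\sigma_g(I_+),\sigma_g(I_-))$.

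The first step is the block computation. Since $h_j(g(\bs y))=(h_j\circ g)(\bs y)=h_{\pi_g(j)}(\bs y)$ for all $j$ and $\bs y$, unwinding definitions gives $g\bigl(B_{\mc{P},(I_0,I_+,I_-)}\bigr)=B_{\mc{P},(\sigma_g(I_0),\sigma_g(I_+),\sigma_g(I_-))}$, and likewise that $g$ carries the set defined by $\bigwedge_{i\in I_0}(h_i=0)\wedge\bigwedge_{i\in I_+}(h_i\geq\delta)\wedge\bigwedge_{i\in I_-}(h_i\leq-\delta)$ onto the set obtained from it by replacing $(I_0,I_+,I_-)$ with $(\sigma_g(I_0),\sigma_g(I_+),\sigma_g(I_-))$, and similarly for the $\epslon$-thickened sets defining $S_{\delta,\epslon}$. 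The second step uses the symmetry of $S$ to show $\sigma_g(\mc{B})=\mc{B}$: since $g$ is a bijection, a sign set is nonempty iff its $\sigma_g$-image is, and since $g(B_{\mc{P},(I_0,I_+,I_-)})=B_{\mc{P},(\sigma_g(I_0),\sigma_g(I_+),\sigma_g(I_-))}$ and $g(S)=S$, a sign set is contained in $S$ iff its $\sigma_g$-image is; as $\mc{B}$ is exactly the set of tuples with nonempty sign set contained in $S$, this yields $\sigma_g(\mc{B})=\mc{B}$. The third step combines these: $g(S_\delta)$ is the union, over $(I_0,I_+,I_-)\in\mc{B}$, of the blocks with index partition $(\sigma_g(I_0),\sigma_g(I_+),\sigma_g(I_-))$, which by $\sigma_g(\mc{B})=\mc{B}$ is again $S_\delta$; the identical argument applies to $S_{\delta,\epslon}$. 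Since $g$, $\delta$, $\epslon$ were arbitrary, every set in both families is symmetric.

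I expect no genuine obstacle here, as the whole argument is an unwinding of the definitions. The points requiring the most care are the preliminary reduction to distinct $h_i$ — without it no single index permutation $\pi_g$ need exist, since the multiplicities of repeated functions among $\{h_i\circ g\}$ and $\{h_i\}$ can differ — and the bookkeeping of $\pi_g$ versus $\sigma_g=\pi_g^{-1}$, together with the direction in which $\sigma_g$ acts on partitions.
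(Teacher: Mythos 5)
Your proposal is correct and follows essentially the same route as the paper: compute the image of each sign set (and of each thickened building block) under $g$ in terms of an induced index permutation, then use the symmetry of $S$ to show the index set $\mc{B}$ is preserved. The only difference is your preliminary reduction to pairwise-distinct $h_i$, which cleanly handles well-definedness of the index map that the paper introduces more informally; the underlying argument is the same.
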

\begin{proof}
Let $\mc{P}=\{h_1,\ldots, h_s\}$. Note that for a sign set $B=B_{\mc{P}, (I_0,I_+,I_-)}$ and any $g\in G$, $g(B)=\{g(\bs{x})\mid \bs{x}\in B\}$ is given by
\begin{multline*}
    \{\bs{x}\in \bb{R}^n\mid \bigwedge_{i\in I_0} (h_i(g^{-1}(\bs{x}))=0) \\
    \wedge \bigwedge_{i\in I_+} (h_i(g^{-1}(\bs{x}))>0) \wedge \bigwedge_{i\in I_-} (h_i(g^{-1}(\bs{x}))<0) \}
\end{multline*}

For each $g\in G$, we define a function on the indices  $\{1,\ldots,s\}\rightarrow\{1,\ldots,s\}$, given by $g(i)=j$ if $h_j=h_i\circ g^{-1}$.

Let $\mathcal{B}$ be the collection of tuples $(I_0,I_+,I_-)$ corresponding to the sign sets contained in $S$. We claim $\mathcal{B}$ has the property that for each $g\in G$, if $(I_0,I_+,I_-)\in \mathcal{B}$ then $(g(I_0), g(I_+), g(I_-))\in \mathcal{B}$. Indeed, if $B$ is the sign set corresponding to $(I_0,I_+,I_-)$, we have that $B\subset S$ and hence $g(B)\subset S$. But we have seen above that $g(B)$ is the sign set corresponding to $(g(I_0), g(I_+), g(I_-))$. The symmetry of each $S_\delta$ and each $S_{\delta,\epslon}$ follows from this property and the descriptions of each $S_\delta$ and $S_{\delta,\epslon}$ given in Definition \ref{def:ConstructibleCase}.
\end{proof}

In particular, if each $h_i$ is a symmetric function, then $h_i\circ g=h_i$, and so clearly $\{h_1\circ g,\ldots, h_s\circ g\}=\{h_1,\ldots, h_s\}$ for every $g\in G$.

\subsection{Connected Components}
From the definitions (in both the definable and constructible cases), we see a clear correspondence between the connected components of $T$ and those of $S$. Let $S$ be represented in $A$ by any compact families $\{S_\delta\}$ and $\{S_{\delta,\epslon}\}$, and let $T=T(\epslon_0,\delta_0,\ldots,\epslon_m,\delta_m)$ be as described in Definition \ref{def:Gabrielov1.8}. For $0<\epslon_0\ll\delta_0\ll\cdots\ll\epslon_m\ll\delta_m\ll 1$, let $\bs{S}$ and $\bs{T}$ denote the sets of connected components of $S$ and $T$ respectively. Gabrielov and Vorobjov describe a map $C:\bs{T}\rightarrow \bs{S}$. If $S'$ is a connected component of $S$, then those elements of $\bs{T}$ which map to $S'$ under $C$ are exactly those components which would make up the approximating set defined relative to $S'$ alone. Thus, by the symmetry of $T$ and $S$, $C$ is equivariant. Gabrielov and Vorobjov demonstrate in \cite{gabrielov2009approximation} Lemma 1.9 that, provided $m>0$, we may choose the conditions upon the parameters in such a way that $C$ is bijective. Hence, though we can't simply reduce to the case of a connected set without risking a loss of symmetry, we may when needed use $C$ to pair the connected components of $S$ and $T$ and then apply the needed theorems to each pair individually.

\section{Symmetric Triangulation}\label{sect:Triangulation}

The construction of the maps in \cite{gabrielov2009approximation} relies upon triangulating the given set $S$ inside the larger compact set $A$. Definable sets are triangulable (see for example \cite{coste2000introduction} Theorem 4.4, quoted \href{thm:Coste4.4}{below}). However, in order to ensure equivariance, we will need a triangulation that respects the action of our group. That is the aim of this section.

\subsection{Triangulation of Definable Sets}\label{sect:TriangulationDefinableSets}

\begin{definition}[\cite{coste2000introduction} Section 4.3]\label{def:CosteTriangulation}
Let $A$ be a closed bounded definable subset of $\rcf^n$. Then a \emph{triangulation} $(\Lambda,\Phi)$ of $A$ is a finite simplicial complex $\Lambda$ in $\rcf$ together with a definable homeomorphism $\Phi:\abs{\Lambda}\rightarrow A$.
\end{definition}

If we wish to decompose a set which is not necessarily closed into images of simplices, we must do so within some larger compact set. For $A\subset \rcf^n$ compact and $S\subset A$, we will say a triangulation $(\Lambda, \Phi)$ of $A$ is \emph{adapted} to $S$ if $S$ is a union of images by $\Phi$ of simplices of $\Lambda$. Triangulations of compact definable sets always exist, and may be adapted to any finite collection of subsets.

\begin{thm}[Triangulation of Definable Sets (\cite{coste2000introduction} Theorem 4.4)]\label{thm:Coste4.4}
Let $A$ be a closed and bounded definable subset of $\rcf^n$, and let $S_1,\ldots, S_l$ be definable subsets of $A$. Then there exists a triangulation $(\Lambda,\Phi)$ of $A$ adapted to each of $S_1,\ldots, S_l$. Furthermore, we may choose all vertices of $\Lambda$ to be in $\bb{Q}^n$.
\end{thm}

We need to triangulate the symmetric set $S$ in a manner that retains symmetry relative to our group of interest. Specifically, we would like a triangulation with the following properties:

\begin{definition}\label{def:EquivariantTriangulation}
Let $G$ be a finite reflection group acting on $\rcf^n$ and let $A\subset \rcf^n$ be a closed bounded definable set symmetric relative to $G$. A triangulation $(\Lambda, \Phi)$ of $A$ is said to be an \emph{equivariant triangulation} if $\Lambda$ is symmetric as a simplicial complex and the map $\Phi:\abs{\Lambda}\rightarrow A$ is equivariant.
\end{definition}

Our general strategy for obtaining an equivariant triangulation of $A$ adapted to $S$ involves triangulating a fundamental region of $\bb{R}^n$, and then applying the action of $G$. This strategy has appeared in other contexts; see in particular \cite{illman1978smooth} which proves such a result for smooth manifolds. We would like to establish a proof in the definable case. Furthermore, we would like our proof to follow the spirit of \cite{coste2000introduction} Theorem 4.4, in which we create a concrete simplicial complex in $\bb{R}^n$ symmetric relative to our existing action of $G$.

In order to make this approach feasible in the concrete setting, we must triangulate the portion of $A$ lying in our fundamental region in such a way that the resulting simplicial complex lies within the same fundamental region of $\rcf^n$, with points lying in its walls remaining as such. This motivates the following definition.

\begin{definition}
Let $B\subset \rcf^n$. If a triangulation $(\Lambda,\Phi)$ of $A\subset \rcf^n$ is adapted to $A\cap B$ and furthermore we have $x\in A\cap B$ if and only if $\Phi^{-1}(x)\in \abs{\Lambda}\cap B$, then we say the triangulation \emph{respects} the set $B$.
\end{definition}

To show that we may find a triangulation respecting our fundamental region and its walls, we will in fact prove something slightly stronger: for an arrangement of affine hyperplanes given by some collection $\{L_1(\bs{x})=0, \ldots, L_k(\bs{x})=0\}$ of linear functions, we can find a triangulation of $A$ that respects any of the various half-spaces determined by our hyperplanes (for convenience, we will phrase this as respecting sign sets of $\{L_1,\ldots,L_k\}$). We lose the property that all vertices lie in $\bb{Q}^n$, but the vertices of our triangulation will be rational expressions in the coefficients of $L_1,\ldots, L_k$. Since our application does not in fact require any such condition on the vertices, this is not an issue here.

The reader may wish to study the proof of the following lemma in conjunction with Example \ref{eg:RestrictedTriangulation}, in order to see concrete usage of the notation set forth in the proof.

\begin{lemma}\label{thm:RestrictedTriangulation}
Let $A\subset \rcf^n$ be closed, bounded, and definable, and let $S_1,\ldots, S_l$ be definable subsets of $A$. Let $\{L_1,\ldots, L_k\}$ be a collection of functions with each $L_i:\rcf^n\rightarrow \rcf$ given by $L_i(\bs{x})=a_{i,0}+a_{i,1}x_1+\cdots+a_{i,n}x_n$ for some $a_{i,j}\in \rcf$. Then there exists a triangulation $(\Lambda,\Phi)$ of $A$ a which is adapted to each $S_1,\ldots, S_l$ and which respects all sign sets of $\{L_1,\ldots, L_k\}$. Furthermore, we can choose this triangulation so that the coordinates of all vertices are in $\bb{Q}(\{a_{i,j}\mid 1\leq i\leq k, 0\leq j\leq n\})^n$.
\end{lemma}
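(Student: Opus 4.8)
The plan is to run the inductive proof of Theorem~\ref{thm:Coste4.4} (Coste's triangulation theorem) on $n$, enlarging the inductive statement so that the affine data $\{L_i\}$ is propagated through every dimension drop, and making one genuine change to the step in which the triangulation of the base is lifted. The starting observation is that, since $\Phi$ is a homeomorphism, $(\Lambda,\Phi)$ respects every sign set of $\{L_1,\dots,L_k\}$ if and only if $\mathrm{sgn}\,L_i(\Phi(x))=\mathrm{sgn}\,L_i(x)$ for all $x\in\abs{\Lambda}$ and all $i$; this forces $\Lambda$ to be compatible with the hyperplane arrangement (each open simplex lies in a single sign set) and forces $\Phi$ to carry each simplex into the matching sign set of $A$. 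In the base case $n=1$, $A$ is a finite union of points and closed intervals; I would take as vertex set a finite subset of $\mathbb{Q}(\{a_{i,j}\})\cap A$ containing every point $\{L_i=0\}$ lying in $A$, and pick $\Phi$ to be the piecewise linear homeomorphism which fixes each such point and realizes the combinatorial type of $A$ with respect to $S_1,\dots,S_l$. Fixing the finitely many points $\{L_i=0\}$, the map $\Phi$ respects those points and the open half-lines they cut out.

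For the inductive step I would follow Coste's argument. After a linear change of coordinates putting $A$ and the $S_j$ in good position with respect to the projection $\pi\colon\rcf^n\to\rcf^{n-1}$, I would insist that this change be rational (good position holds on a dense open set of changes, so a rational one exists) and carry each $L_i$ along as an affine function with coefficients in $\mathbb{Q}(\{a_{i,j}\})$; the very last step will undo this rational change without enlarging the coefficient field. Take a cell decomposition of $\rcf^n$ compatible with $\pi$ and adapted to $A$, to each $S_j$, and to each $\{L_i=0\}$. Over a cell $D\subset\rcf^{n-1}$ write $\xi^D_1<\dots<\xi^D_{p_D}$ for the slice functions describing $A$; whenever the $x_n$-coefficient $a_{i,n}$ of $L_i$ is nonzero, $\{L_i=0\}$ meets $D\times\rcf$ in the graph of the affine function $\ell_i(x')=-(a_{i,0}+a_{i,1}x_1+\dots+a_{i,n-1}x_{n-1})/a_{i,n}$, and adaptedness forces $\ell_i|_D$ to be one of the $\xi^D_j$; when $a_{i,n}=0$, $L_i$ is already affine in $x'$. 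I then apply the inductive hypothesis in $\rcf^{n-1}$ to $A'=\pi(A)$, taking the cells $D$ among the distinguished subsets and requiring the resulting triangulation $(\Lambda',\Phi')$ to respect all sign sets of the finite affine family $\{\ell_i:a_{i,n}\neq 0\}\cup\{\ell_i-\ell_{i'}:a_{i,n},a_{i',n}\neq 0\}\cup\{L_i:a_{i,n}=0\}$. The differences are included so that on each simplex of $\Lambda'$ the heights $\ell_i$ are of constant sign and strictly comparable; the $x_n$-free $L_i$ are included so that the ``vertical walls'' of the arrangement are respected after the lift. By induction the vertices of $\Lambda'$ lie in $\mathbb{Q}(\{a_{i,j}\})^{n-1}$.

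The genuine modification is in the lift. Over each simplex $\sigma$ of $\Lambda'$ one builds a tower of simplices whose union is carried by the extension $\Phi$ onto the portion of $A$ over the cell $D$ with $\Phi'(\sigma)\subset D$. In Coste's construction the heights of the successive floors are the pullbacks $\xi^D_j\circ\Phi'$; the trouble is that $\ell_i\circ\Phi'$ is in general \emph{not} the affine function $\ell_i$ restricted to $\sigma$, so the hyperplane $\{L_i=0\}$ gets bent (and ``adapted to'' is all one can conclude, not ``respects''). Instead I would define the heights over $\sigma$ directly: for floors indexed by a hyperplane ($j=j(i)$) use the honest affine function $\ell_i|_\sigma$, and for the remaining floors use affine functions on $\sigma$ obtained by interpolating (respectively extrapolating) between the neighbouring hyperplane heights so as to keep the list strictly increasing (if no $a_{i,n}$ is nonzero there is nothing to do in the $x_n$-direction and Coste's lift is used unchanged). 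These interpolants are globally prescribed affine combinations of the $\ell_i$, hence agree across shared faces; and they are strictly ordered on $\sigma$ precisely because the $\ell_i|_\sigma$ are, by the inductive property of $\Phi'$. Triangulating each tower by coning as in Coste, the new vertices are the points $(v,h_j(v))$ for $v$ a vertex of $\sigma$, together with barycentres, all with coordinates in $\mathbb{Q}(\{a_{i,j}\})^n$. Now the floor indexed by $j(i)$ is exactly the graph of $\ell_i$ over $\sigma$, which lies in $\{L_i=0\}$; every other floor and band lies strictly above or below that graph, hence in a single sign set of $L_i$; and $\Phi$, defined floor-by-floor and band-by-band, sends the $j(i)$-floor to the graph of $\ell_i|_D$ and sends ``above/below'' to ``above/below''. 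Hence $\mathrm{sgn}\,L_i\circ\Phi=\mathrm{sgn}\,L_i$ on each tower for every $i$; together with the inductive property of $\Phi'$ for the $x_n$-free $L_i$ this shows $(\Lambda,\Phi)$ respects all sign sets of $\{L_1,\dots,L_k\}$. Undoing the rational coordinate change completes the induction.

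I expect the lift to be the main obstacle. Replacing the pulled-back slice functions by affine heights forces one to redo Coste's frontier bookkeeping --- matching towers over adjacent cells of $\rcf^{n-1}$ and handling floors that collide over faces of $\sigma$ --- now with the affine interpolants and with the combinatorics of which hyperplane heights are present over each cell, and to verify that the floor-by-floor map is still a well-defined homeomorphism onto $A$ adapted to the $S_j$. By contrast the remaining ingredients --- the reduction to good position by a rational change, the choice of the enlarged affine family fed to the induction, and the tracking of the vertex field --- should be routine once the lift is set up correctly.
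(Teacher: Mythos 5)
Your proposal is in the right spirit and would plausibly yield the lemma, but it takes a genuinely different route from the paper's proof, and the step you flag as the ``main obstacle'' is precisely where the paper's argument is more economical.

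The paper does not modify Coste's lift at all: it keeps the curved floor heights $\xi_{\beta,\mu}=\zeta_{\alpha,\mu}\circ\Psi_{n-1}$ and the cone-based construction of the polyhedra $\clos{D}_\nu$ exactly as in \cite{coste2000introduction}. The \emph{only} change is in the choice of the cone-vertex $b_\nu$: over each simplex $\delta_\beta$ of $\Lambda_{n-1}$ the paper puts the $x'$-coordinate at the barycenter $b(\delta_\beta)$ and assigns the $x_n$-coordinate $(b_\nu)_n$ by a one-dimensional schema, pinning hyperplane floors to the exact values $c'_{\beta,i}$ where $L_i(b(\delta_\beta),x_n)=0$ and interpolating/extrapolating the remaining floors and bands between them. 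Because sign sets of $\{L_1,\dots,L_k\}$ are convex and all simplices of $\Lambda$ have vertices among the $b_\nu$, each open simplex then automatically lies in a single sign set. The remaining work is a short induction on $\dim\clos{D}_\nu$ showing that the glued homeomorphism $\Phi=\Psi\circ\Phi'$ preserves sign sets. In contrast, you propose to replace the curved heights by affine functions over each $\sigma$ and rebuild $A'$ with affine floors, which forces you to re-derive that the lifted towers glue to a homeomorphism onto $A$ --- the ``frontier bookkeeping'' you correctly identify as the sticking point. The paper sidesteps this entirely by reusing Coste's coning construction, which automatically handles boundary matching.

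Two further, smaller divergences. First, the paper does not do a coordinate change to ``good position''; it simply projects on the first $n-1$ coordinates and carries the affine data down by taking the family $\{L'_\lambda\}$ of linear functions whose zero sets are the $(n-2)$-dimensional projections $p\bigl(\bigcap_{i\in\lambda}\{L_i=0\}\bigr)$ --- which effectively contains your differences $\ell_i-\ell_{i'}$ (from pairs of non-vertical $L_i$) and the $x_n$-free $L_i$ (from vertical ones), so the two inductive families are morally equivalent. Second, your claim that the affine interpolants ``are globally prescribed affine combinations of the $\ell_i$, hence agree across shared faces'' is too quick: the set and ordering of hyperplane floors intersecting $A$ over $\sigma$ can change as you move to an adjacent simplex $\sigma'$, so the interpolation formula is not a single global expression. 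This is exactly the collision/frontier problem you raise but do not resolve; the paper avoids it because its interpolation is only performed pointwise at each barycenter $b(\delta_\beta)$, with inter-simplex compatibility supplied for free by Coste's recursive cone construction.
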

\begin{proof}
We can obtain a triangulation with the desired properties by way of a few slight modifications to the proof of Theorem \ref{thm:Coste4.4} presented in \cite{coste2000introduction}. Specifically, the proof there chooses a number of points $b_\nu$ which serve as vertices in the simplicial complex $\Lambda$. We must add the requirement that our points $b_\nu$ belong to the correct sign sets, and confirm that our additional claims hold with this adjustment.

The proof in \cite{coste2000introduction} proceeds by induction on dimension. In the case $n=1$, we have that $L_i=a_{i,0}+a_{i,1}x_1$ for each $i$. We may assume without loss of generality that $a_{i,1}\neq 0$ for every $i$. Set $c_i=-\frac{a_{i,0}}{a_{i,1}}$. Reorder and remove duplicates to assume we have points $c_1<\cdots<c_k\in \rcf$. Then the sign sets we must respect are the points and intervals $(-\infty, c_1)$, $\{c_i\}$ for each $i$, $(c_i,c_{i+1})$ for $1\leq i\leq k-1$, and $(c_k, \infty)$. As \cite{coste2000introduction} describes, we choose points $\xi_1<\cdots<\xi_p\in \rcf$ so that each of $A$, $S_1\ldots, S_l$, and each nonempty intersection of $A$ and a sign set of $\{L_1,\ldots, L_k\}$ can be written as a union of points and intervals $\{\xi_\mu\}$ and $(\xi_\mu, \xi_{\mu+1})$.

We define a map $\tau:\{\xi_1,\ldots, \xi_p\}\rightarrow \rcf$ as follows:
\begin{itemize}
    \item If $k=0$, then let $\tau(\xi_i)=i-1$ for each $i$.
\end{itemize}
Otherwise,
\begin{itemize}
    \item If $\xi_\mu=c_i$ for some $i$, let $\tau(\xi_\mu)=c_i$
    \item Let $\xi_{0,1}<\cdots<\xi_{0,p_0}$ with $\{\xi_{0,1},\ldots, \xi_{0,p_0}\}=\{\xi_1,\ldots, \xi_p\}\cap (-\infty, c_1)$. Then set $\tau(\xi_{0,\mu})=c_1-p_0+\mu-1$
    \item Let $\xi_{i,1}<\cdots<\xi_{i,p_i}$ with $\{\xi_{i,1},\ldots, \xi_{i,p_i}\}=\{\xi_1,\ldots, \xi_p\}\cap (c_i, c_{i+1})$ (where $1\leq i\leq k-1$). Then set $\tau(\xi_{i,\mu})=c_i+\mu\frac{c_{i+1}-c_i}{p_i+1}$
    \item Let $\xi_{k,1}<\cdots<\xi_{k,p_k}$ with $\{\xi_{k,1},\ldots, \xi_{k,p_k}\}=\{\xi_1,\ldots, \xi_p\}\cap (c_k, \infty)$. Then set $\tau(\xi_{k, \mu})=c_k+\mu$
\end{itemize}

In either case, we have that $\tau$ preserves order and respects containment in sign sets of $\{L_1,\ldots, L_k\}$. Each $\tau(\xi_\mu)$ is also a rational expression in the coefficients of the $L_i$'s. We extend $\tau$ piecewise linearly to obtain a definable order preserving homeomorphism $\tau': \rcf\rightarrow \rcf$, and take our simplicial complex $\Lambda$ to be $\tau'(A)$ and $\Phi=(\tau')^{-1}_{\restriction\abs{\Lambda}}:\abs{\Lambda}\rightarrow A$.

Assume now that $n>1$, and that our claim holds in dimension $n-1$. We can safely follow the procedure in \cite{coste2000introduction} to assume each $S_i$ is closed. We denote the boundary of $A$ by $F_0$, and the boundary of $S_i$ by $F_i$ for each $1\leq i\leq l$. Choose a cell decomposition of $\rcf^n$ adapted to each of $F_0,\ldots, F_l$ and also to the sign sets of $\{L_1,\ldots, L_k\}$. If we let $p:\rcf^n\rightarrow \rcf^{n-1}$ be the projection onto the first $n-1$ coordinates, we have that our cell decomposition of $\rcf^n$ gives us a decomposition of $p(A)$ into definably connected definable subsets $X_\alpha$, and for each $X_\alpha$ a finite number of continuous functions $\zeta_{\alpha,1}<\ldots<\zeta_{\alpha,m_\alpha}:X_\alpha\rightarrow \rcf$. This means each $F_i$ can be written as a union of graphs $\zeta_{\alpha,\mu}$. Note that because our cell decomposition of $\rcf^n$ is adapted to the sign sets of $\{L_1,\ldots, L_k\}$, each cell in $A$ is contained in precisely one such sign set. Furthermore for each sign set $B$ of $\{L_1,\ldots, L_k\}$, we have that $A\cap B$ is a union of cells in the decomposition.

Consider the sets given by $\bigcap_{i\in \lambda}\{L_i(\bs{x})=0\}$ for various selections of $\lambda\subset \{1,\ldots, k\}$. If for a given $\lambda$, $P'_\lambda=p\paren{\bigcap_{i\in \lambda}\{L_i(\bs{x})=0\}}$ has dimension $n-2$, then $P'_\lambda$ is an affine hyperplane of $\rcf^{n-1}$, and therefore is the zero set for some linear function $L_\lambda'(\bs{x})=a_{\lambda,0}'+a_{\lambda,1}'x_1+\cdots+a_{\lambda,n-1}'x_{n-1}:\rcf^{n-1}\rightarrow \rcf$.
Furthermore, we can choose $L_\lambda'$ such that each $a_{\lambda,j}'\in \rcf$ is contained in $\bb{Q}(\{a_{i,j}\mid i\in \lambda, 0\leq j\leq n\})$. Removing any duplicates, we obtain a collection $\{L_1',\ldots, L_{k'}'\}$ of functions $L_i'(\bs{x})=a_{i,0}'+a_{i,1}'x_1+\cdots+a_{i,n-1}'x_{n-1}:\rcf^{n-1}\rightarrow \rcf$ having the property that all coefficients $a_{i,j}'$ are rational expressions in the coefficients $\{a_{i,j}\}$ of the functions $L_i$. Furthermore, if $B$ is a sign set of $\{L_1,\ldots,L_k\}$, then $p(B)$ is a union of sign sets of $\{L_1',\ldots,L_{k'}'\}$. Since our cell decomposition was adapted to all sign sets $B$ of $\{L_1,\ldots, L_k\}$, we have that each $X_\alpha$ is contained in exactly one sign set of $\{L_1',\dots, L_{k'}'\}$.

Let $(\Lambda_{n-1}, \Psi_{n-1})$ be a triangulation of $p(A)\subset \rcf^{n-1}$ which is adapted to each $X_\alpha$ and respects each sign set of $\{L_1',\ldots, L_{k'}'\}$. Consider (as \cite{coste2000introduction} does) the set
\[
A'=\{(x',x_n)\in \abs{\Lambda_{n-1}}\times \rcf\mid (\Psi_{n-1}(x'),x_n)\in A\}
\]
Note that the map $\Psi=(\Psi_{n-1}, \id):A'\rightarrow A$ is a definable homeomorphism. Let $\{\delta_\beta\}$ denote the (finite) collection of all simplices of $\Lambda_{n-1}$. Since $\Psi_{n-1}$ is such that for each cell $X_\alpha\subset p(A)$, $\Psi_{n-1}^{-1}(X_\alpha)$ is a union of simplices of $\Lambda_{n-1}$, we have that for each $\delta_\beta$ there is precisely one $\alpha$ so that $\Psi_{n-1}(\delta_{\beta})\subset X_\alpha$. Consider the graphs $\zeta_{\alpha,1}<\cdots<\zeta_{\alpha, m_\alpha}$ defined on each $X_\alpha$ in our cell decomposition of $A$. Then for a given $\beta$ and $1\leq \mu\leq m_\beta:=m_\alpha$, $\xi_{\beta, \mu}=\zeta_{\alpha,\mu}\circ\Psi_{n-1}\restriction_{\delta_\beta}:\delta_\beta\rightarrow \rcf$ is such that $\Psi\circ\xi_{\beta, \mu}=\zeta_{\alpha,\mu}\circ\Psi_{n-1}$ on $\delta_\beta$. We also have that on each $\delta_\beta$, $\xi_{\beta, 1}<\cdots <\xi_{\beta, m_\beta}$. Let $\{C_\nu\}$ be the (finite) collection of all graphs $\xi_{\beta,\mu}:\delta_\beta\rightarrow \rcf$ and bands $(\xi_{\beta, \mu}, \xi_{\beta, \mu+1})\subset \delta_\beta\times \rcf$ which are contained in $A'$. Note that for each $\nu$, there exists one sign set $B$ of $\{L_1,\ldots, L_m\}$ such that $\Psi(C_{\nu})\subset B$.

For each $\delta_\beta$, let $b(\delta_\beta)=((b(\delta_\beta))_1,\ldots,(b(\delta_\beta))_{n-1})$ denote the barycenter of the simplex $\delta_\beta$. For each cell $C_\nu$ in $A'$, we will choose a point $b_\nu$ to serve as a vertex in our new simplicial complex $\Lambda$. Consider a simplex $\delta_\beta$ in $\Lambda_{n-1}$. Let
\begin{align*}
    I&=\{1\leq i\leq k\mid L_i(\Psi_{n-1}(b(\delta_\beta)), x_n)=0 \text{ has a unique solution}\}\\
    &= \{1\leq i\leq k\mid L_i(b(\delta_\beta), x_n)=0\text{ has a unique solution}\}
\end{align*}
For $i\in I$, let $c_{\beta, i}\in \rcf$ be such that $L_i(\Psi_{n-1}(b(\delta_\beta)), c_{\beta, i})=0$, and let $c_{\beta, i}'\in \rcf$ be such that $L_i(b(\delta_\beta), c_{\beta, i}')=0$. Because our triangulation of $A'$ respects $\{L_1',\ldots, L_{k'}'\}$, we know that the map of subsets $\{c_{\beta,i}\mid i\in I\}\rightarrow \{c'_{\beta,i}\mid i\in I\}$ of $\rcf$ given by $c_{\beta,i}\mapsto c'_{\beta,i}$ is a well-defined, order preserving bijection. So, let $\card(\{c_{\beta,i}\mid i\in I\})=k_\beta$, and reorder and remove skipped indices and duplicates, to assume $c_{\beta, 1}<\cdots <c_{\beta, k_\beta}$ (and in this same indexing $c_{\beta, 1}'<\cdots < c_{\beta, k_\beta}'$).

For each cell $C_\nu$ defined on $\delta_\beta$, let
\[
b_\nu=((b(\delta_\beta))_1,\ldots,(b(\delta_\beta))_{n-1}, (b_\nu)_n)
\]
where $(b_\nu)_n$ is assigned as follows. First, assume $C_\nu$ is the graph of the function $\xi_{\beta,\mu}$. Then we assign $(b_\nu)_n$ according to a schema similar to the one appearing in the $n=1$ case:
\begin{itemize}
    \item If $k_\beta=0$, let $(b_\nu)_n=\mu-1$
\end{itemize}
Otherwise,
\begin{itemize}
    \item If $\xi_{\beta,\mu}(b(\delta_\beta))=c_{\beta,i}$ for some $i$, let $(b_\nu)_n=c_{\beta,i}'$
    \item Let $\xi_{\beta,0,1}<\cdots<\xi_{\beta,0,p_0}$ be those graphs with $\xi_{\beta,0,\mu}(b(\delta_\beta))<c_{\beta,1}$. Then for $C_\nu$ corresponding to $\xi_{\beta,0,\mu}$, set $(b_\nu)_n=c_{\beta,1}'-p_0+\mu-1$
    \item For each $1\leq i<k_{\beta}$, let $\xi_{\beta,i,1}<\cdots<\xi_{\beta,i,p_i}$ be those graphs with $c_{\beta,i}<\xi_{\beta,0,\mu}(b(\delta_\beta))<c_{\beta,i+1}$. Then for $C_\nu$ corresponding to $\xi_{\beta,i,\mu}$, set $(b_\nu)_n=c_{\beta,i}'+\mu\frac{c_{\beta,i+1}'-c_{\beta,i}'}{p_i+1}$
    \item Let $\xi_{\beta,k_\beta,1}<\cdots<\xi_{\beta,k_\beta,p_{k_\beta}}$ be those graphs such that $c_{\beta,k_\beta}<\xi_{\beta,0,\mu}(b(\delta_\beta))$. Then for $C_\nu$ corresponding to $\xi_{\beta,k_\beta,\mu}$, set $(b_\nu)_n=c_{\beta, k_\beta}'+\mu$
\end{itemize}
Finally, if $C_\nu$ is a band $(\xi_{\beta, \mu}, \xi_{\beta, \mu+1})$, set $(b_\nu)_n=\frac{(b_{\nu_0})_n+(b_{\nu_1})_n}{2}$, where $C_{\nu_0}$ and $C_{\nu_1}$ correspond to the graphs $\xi_{\beta,\mu}$ and $\xi_{\beta,\mu+1}$ respectively (note that since $A'$ is bounded, all of our bands appearing among the sets $C_\nu$ are bounded). Then for any cell $C_\nu$, we have that the point $b_\nu$ is in the same sign set of $\{L_1,\ldots, L_k\}$ as $\Psi(C_\nu)$ is. Note also that each $c_{\beta,i}'$ is a rational expression in the coordinates of $b(\delta_\beta)$ and the coefficients of $L_i$. By our inductive hypothesis, all vertices in $\Lambda_{n-1}$ have coordinates which are rational expressions in the coefficients of $L_1',\ldots, L_{k'}'$, and hence of $L_1,\ldots, L_k$. Hence the same applies to our barycenters $b(\delta_\beta)$ for $\delta_\beta\in \Lambda_{n-1}$, and so by the definition of $(b_\nu)_n$ above, each $b_\nu$ has coefficients which are in $\bb{Q}(\{a_{i,j}\})^n$.

For each cell $C_\nu$, \cite{coste2000introduction} next builds a polyhedron $\clos{D}_\nu$ together with its subdivision into simplices. The procedure occurs inductively on the dimension of $\clos{C}_\nu$: if $\clos{C}_\nu$ is a point, take $\clos{D}_\nu$ to be $\{b_\nu\}$. Otherwise, take $\clos{D}_\nu$ as the cone from $b_\nu$ to the union of all $\clos{D}_{\nu'}$ with $\clos{C}_{\nu'}\subset \partial(\clos{C}_\nu)$. The decomposition of $\clos{D}_\nu$ into simplices comes from taking cones with vertex $b_\nu$ and base a simplex contained in $\clos{D}_{\nu'}$ for some $D_{\nu'}\subset \partial(\clos{D}_\nu)$.

Taking all these simplices together, we obtain our desired simplicial complex $\Lambda$ with $\abs{\Lambda}=\bigcup\clos{D}_\nu$. Of primary note is the fact that the vertices of all simplices come from among the points $b_\nu$. Hence because $L_1=0,\ldots, L_k=0$ define affine subspaces of $\rcf^n$, we have that for a given sign set $B$ of $\{L_1,\ldots, L_k\}$, a (relatively open) simplex $\Delta(b_{\nu_1},\ldots, b_{\nu_m})$ in $\Lambda$ is either contained in $B$ (if all $b_{\nu_i}$ are in $B$) or disjoint from $B$ (else).

For each $\clos{D}_\nu$, we define a preparatory homeomorphism $\theta_\nu: \clos{D}_\nu\rightarrow\clos{C}_\nu$ as follows: if $\clos{C}_\nu$ is a graph $\clos{\xi}_{\beta, \mu}$ (see \cite{coste2000introduction} for a justification of why we may continuously extend $\xi_{\beta, \mu}$ to the closed simplex $\clos{\delta}_\beta$), let $\theta_\nu(x',x_n)=(x',\clos{\xi}_{\beta,\mu}(x'))$. If $C_\nu$ is a closed band $[\clos{\xi}_{\beta,\mu},\clos{\xi}_{\beta, \mu+1}]$, we map each segment $(\{x'\}\times \rcf)\cap \clos{D}_\nu$ affinely to the corresponding segment $\{x'\}\times [\clos{\xi}_{\beta,\mu}(x'),\clos{\xi}_{\beta,\mu+1}(x')]$. Composing with $\Psi$ (and by convexity of our sign sets), we have that for a simplex $\Delta$ of $\Lambda$ with $\Delta\subset\clos{D}_\nu$, $\Psi\circ \theta_\nu(\Delta)\subset B\cap A$ iff $\Delta \subset B$.

Unfortunately, we cannot simply piece together our maps $\Psi\circ\theta_\nu$ to obtain the desired homeomorphism $\Phi:\abs{\Lambda}\rightarrow A$. Instead, we construct a new $\Phi':\abs{\Lambda}\rightarrow A'$, inducting on the dimension of $\clos{D}_\nu$. If $\clos{D}_\nu$ is a point, take $\Phi'_\nu:\clos{D}_\nu\rightarrow \clos{C}_\nu$ to be $\theta_\nu$. Otherwise, we can construct a homeomorphism $\rho_\nu:\partial(\clos{D}_\nu)\rightarrow \partial(\clos{D}_\nu)$ by specifying that ${\rho_\nu}\restriction_{\clos{D}_{\nu'}}=\theta_\nu^{-1}\circ\Phi'_{\nu'}$ for each $\clos{D}_{\nu'}\subset \partial(\clos{D}_\nu)$. We use the conic structure of $\clos{D}_\nu$ to extend $\rho_\nu$ to a homeomorphism $\eta_\nu:\clos{D}_\nu\rightarrow\clos{D}_\nu$, and set $\Phi'_\nu=\theta_\nu\circ\eta_\nu$. Now, $\Phi'$ given by $\Phi'_{\restriction\clos{D}_\nu}=\Phi'_\nu$ is well defined even on the boundaries of the sets $\clos{D}_\nu$. Finally, set $\Phi:\abs{\Lambda}\rightarrow A$ to be $\Psi\circ \Phi'$.

That the triangulation $(\Lambda,\Phi)$ is adapted to $S_1,\ldots,S_l$ and also $A\cap B$ for each sign set $B$ of $\{L_1,\ldots, L_m\}$ follows as from the proof in \cite{coste2000introduction}. We must still check that the triangulation respects all of our sign sets. Let $x\in \abs{\Lambda}$, and let $\Delta\in \Lambda$ be the unique simplex with $x\in\Delta$. Take $\clos{D}_\nu$ to be of minimal dimension with $\Delta\subset \clos{D}_\nu$. If $\clos{D}_\nu$ is a point, then $\clos{D}_\nu=\Delta=\{x\}=\{b_\nu\}$, and we have already established that for a given sign set $B$, $x=b_\nu\in B$ iff $\Phi(x)=\Psi\circ\theta_\nu(x)\in B$. Say that $\Dim(\clos{D}_\nu)>0$. Then $\Delta=\Delta(b_{\nu_0},\ldots, b_{\nu_{q-1}},b_{\nu_q}=b_\nu)$, and there is some $\nu'$ such that $\clos{D}_{\nu'}\subset \partial(\clos{D}_{\nu})$ and $\clos{\Delta'}=\clos{\Delta}(b_{\nu_0},\ldots,b_{\nu_{q-1}})\subset \clos{D}_{\nu'}$. Then assuming we have established that $\Phi$ respects sign sets of $\{L_1,\ldots, L_m\}$ on $\clos{D}_{\nu'}$, we know that ${\rho_\nu}\restriction_{\clos{D}_{\nu'}}=\theta_\nu^{-1}\circ \Phi'_{\nu'}$ carries $\Delta'$ to a subset of some $B\cap \abs{\Lambda}$ iff $\Delta'$ is already a subset of $B\cap \abs{\Lambda}$. Writing any $y\in \clos{D}_\nu$ (uniquely) as $ty'+(1-t)b_\nu$ for some $t\in [0,1]$ and the proper choice of $y'\in \partial(\clos{D}_\nu)$, we have that $\eta_\nu(y)=t\rho(y')+(1-t)b_\nu$. Then for our given $x\in \Delta$, $x\in B$ iff $b_{\nu_1},\ldots,b_{\nu_q}\in B$ iff $x'\in \Delta'\subset B$ and $b_\nu\in B$ iff $\rho_\nu(x')\in B$ and $b_\nu\in B$ iff $\eta_\nu(x)\in B$. Then since $\Phi'_\nu=\theta_\nu\circ\eta_\nu$, we have that $x\in B$ iff $\Phi(x)=\Psi\circ\Phi'(x)\in B$.
\end{proof}

\begin{example}\label{eg:RestrictedTriangulation}
Let $A=\clos{B(0,1)}\subset \bb{R}^2$. We will illustrate how Lemma \ref{thm:RestrictedTriangulation} may be applied to give a triangulation of $A$ respecting sign sets of $\{L_1=y, L_2=x-y\}$. Throughout, we will associate objects to their notation in the proof of Lemma \ref{thm:RestrictedTriangulation}, so that the example may aid in the parsing of the proof.

\begin{figure}
    \centering
    \includegraphics[width=.5\textwidth]{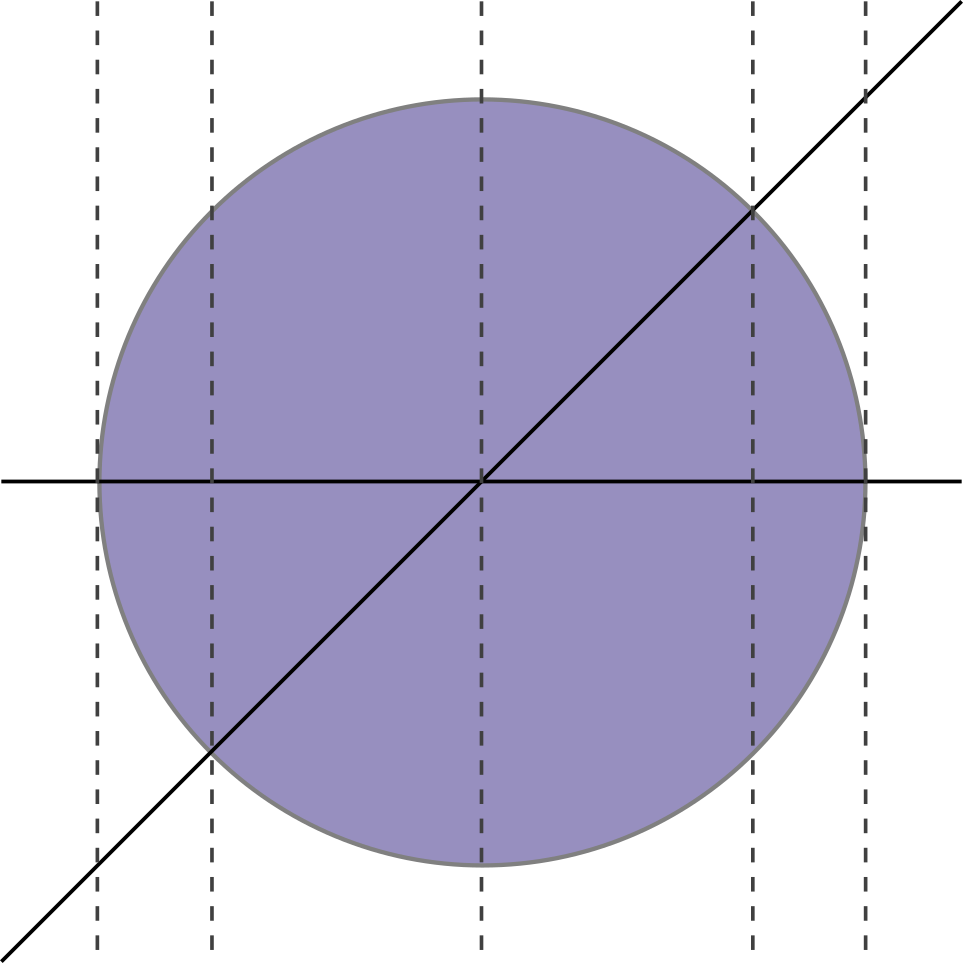}
    \caption{Cell decomposition adapted to $A$ and to sets $\{y=x\}$ and $\{y=0\}$}
    \label{fig:EgCellDecomp}
\end{figure}

We begin with a cell decomposition of $\bb{R}^2$ adapted to $A$ and to all sign sets of $\{L_1,L_2\}$. Following the most obvious choice of decomposition, we obtain a subdivision of $\bb{R}^1$ into points $\{-1\}$, $\{-\sqrt{2}/2\}$, $\{0\}$, $\{\sqrt{2}/2\}$, and $\{1\}$ and the intervals between them. The full cell decomposition is shown in Figure \ref{fig:EgCellDecomp}. Projecting to $\bb{R}^1$ for our induction, we must obtain a triangulation $(\Lambda_{n-1},\Psi_{n-1})$ of $[-1,1]$ adapted to the sets $\{-1\}$, $(-1,-\sqrt{2}/2)$, $\{-\sqrt{2}/2\}$, $(-\sqrt{2}/2,0)$, $\{0\}$, $(0,\sqrt{2}/2)$, $\{\sqrt{2}/2\}$, $(\sqrt{2}/2,1)$, and $\{1\}$ and respecting the sets $(-\infty,0)$, $\{0\}$, and $(0,\infty)$.

We apply the $n=1$ case of the algorithm outlined in the proof of Lemma \ref{thm:RestrictedTriangulation}. In that notation, we have $k=1$, $c_1=0$, and $\xi_1=-1,\xi_2=-\sqrt{2}/2,\xi_3=0,\xi_4=\sqrt{2}/2, \xi_5=1$. Then the map $\tau$ is given by $\xi_1\mapsto -2, \xi_2\mapsto -1, \xi_3\mapsto 0, \xi_4\mapsto 1, \xi_5\mapsto 2$. Hence our triangulation $(\Lambda_{n-1},\Psi_{n-1})$ of $p(A)$ is such that $\abs{\Lambda_{n-1}}=[-2,2]$ and $\Psi_{n-1}:\abs{\Lambda_{n-1}}\rightarrow p(A)$ is induced by piecewise linearly extending the pairings given by $\tau$.

To construct our triangulation of $A$ itself, we first identify our vertices $b_\nu$. For the sake of example, we will concentrate on the simplex $\delta_\beta=(0,1)$ of $\Lambda_{n-1}$. We have that $\Psi_{n-1}(\delta_\beta)\subset X_\alpha=(0,\sqrt{2}/2)$ in our cell decomposition of $p(A)$ (in fact, in this case $\Psi(\delta_\beta)=X_\alpha$), and that the barycenter $b(\delta_\beta)= \frac{1}{2}\in \bb{R}^1$. In the cell decomposition of $A$, we have four graphs defined on $(0,\sqrt{2}/2)$: let $\zeta_1$ be the lower semicircle, $\zeta_2$ be the line $y=0$, $\zeta_3$ be the line $y=x$, and $\zeta_4$ be the upper semicircle. There are a total of seven cells (graphs and bands) defined on this interval and contained in $A$. In $A$, the line $\{x=\Psi_{n-1}(b(\delta_\beta))\}$ (that is, $\{x=\sqrt{2}/4\}$) meets $\{L_1=0\}$ and $\{L_2=0\}$ at $y$-values of $c_{\beta,1}=0$ and $c_{\beta,2}= \sqrt{2}/4$ respectively. Translating to our triangulation built upon $\Lambda_{n-1}$, we have that $\{x=b(\delta_\beta)\}$ meets $\{L_1=0\}$ and $\{L_2=0\}$ at $c'_{\beta,1}=0$ and $c'_{\beta,2}= \frac{1}{2}$. In order to preserve this correspondence, we assign our vertices $b_\nu\in \bb{R}^2$ as follows.
\begin{itemize}
    \item $C_\nu$ corresponds to the graph $\zeta_1$: $b_{\nu}=(\frac{1}{2}, -1)$
    \item $C_\nu$ corresponds to the graph $\zeta_2$: $b_{\nu}=(\frac{1}{2}, 0)$
    \item $C_\nu$ corresponds to the graph $\zeta_3$: $b_{\nu}=(\frac{1}{2}, \frac{1}{2})$
    \item $C_\nu$ corresponds to the graph $\zeta_4$: $b_{\nu}=(\frac{1}{2}, \frac{3}{2})$
    \item $C_\nu$ corresponds to the band $(\zeta_1,\zeta_2)$: $b_{\nu}=(\frac{1}{2}, -\frac{1}{2})$
    \item $C_\nu$ corresponds to the band $(\zeta_2,\zeta_3)$: $b_{\nu}=(\frac{1}{2}, \frac{1}{4})$
    \item $C_\nu$ corresponds to the band $(\zeta_3,\zeta_4)$: $b_{\nu}=(\frac{1}{2}, 1)$
\end{itemize}
We use these vertices and those obtained by applying the same process to the remaining cells to build polyhedra $\clos{D}_\nu$, each of which comes with a subdivision into simplices. Taken together, these simplices give us our desired complex $\Lambda$ respecting sign sets of $\{L_1, L_2\}$, as shown in Figure \ref{fig:EgTriangulation}. There are a total of 10 2-dimensional polyhedra, with 64 2-dimensional simplices.

\begin{figure}
    \centering
    \includegraphics[width=.5\textwidth]{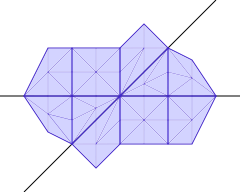}
    \caption{Triangulation of $A$ respecting sets $\{y=x\}$ and $\{y=0\}$}
    \label{fig:EgTriangulation}
\end{figure}
\end{example}

Now we prove our symmetric triangulation theorem.

\begin{thm}\label{thm:SymmetricTriangulation}
Let $A$ be a closed and bounded subset of $\bb{R}^n$ symmetric under the action of a finite reflection group $G$, and let $S_1,\ldots, S_l$ be symmetric sets which are subsets of $A$. Then there exists an equivariant triangulation $(\Lambda, \Phi)$ of $A$ adapted to $S_1,\ldots, S_l$.

Assume we have fixed a collection $\{L_1,\ldots,L_k\}$ of functions $L_i:\bb{R}^n\rightarrow \bb{R}$ given by $L_i(\bs{x})=a_{i,1}x_1+\cdots a_{i,n}x_n$, so that
\[
H=\bigcap_{i=1}^k \{L_i(\bs{x})\geq 0\}
\]
is a fundamental region of $\bb{R}^n$ with respect to $G$. Then we may choose our triangulation so that vertices of $\Lambda$ are in $\bb{Q}(\{a_{i,j}\mid 1\leq i\leq k, 1\leq j\leq n\})^n$.
\end{thm}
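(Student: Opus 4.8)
The plan is to triangulate $A\cap\clos{H}$ compatibly with the arrangement of hyperplanes $\{L_i=0\}$, and then to carry this triangulation around $\bb{R}^n$ by the action of $G$, the only real work being to check that the resulting pieces glue correctly along the shared walls of adjacent chambers. (For the first assertion, with no collection $\{L_1,\dots,L_k\}$ prescribed, one first fixes any such collection cutting out a fundamental region, which exists by the structure theory of finite reflection groups quoted above.)

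First I would apply Lemma \ref{thm:RestrictedTriangulation} to $A$, the subsets $S_1,\dots,S_l$, and the functions $\{L_1,\dots,L_k\}$, obtaining a triangulation $(\Lambda_0,\Phi_0)$ of $A$ adapted to each $S_i$, respecting every sign set of $\{L_1,\dots,L_k\}$, and with all vertices in $\bb{Q}(\{a_{i,j}\})^n$. Since the sign sets partition $\bb{R}^n$ and $\Phi_0$ respects them, every simplex of $\Lambda_0$ lies in a single sign set; moreover, as recorded in the proof of Lemma \ref{thm:RestrictedTriangulation}, a simplex lies in a sign set $B$ exactly when all its vertices do. Hence for any finite intersection $C$ of closed half-spaces $\{L_i\ge 0\}$ and hyperplanes $\{L_i=0\}$ (which is convex and a union of sign sets), the simplices of $\Lambda_0$ contained in $C$ form a subcomplex $\Lambda_0^C$ with $\abs{\Lambda_0^C}=\abs{\Lambda_0}\cap C=\Phi_0^{-1}(A\cap C)$. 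Writing $\Lambda_0^H:=\Lambda_0^{\clos{H}}$, the map $\Phi_0$ restricts to a definable homeomorphism $\abs{\Lambda_0^H}\to A\cap\clos{H}$ carrying the subcomplex realizing in any wall-intersection $H_\mu=\bigl(\bigcap_{i\in\mu}L_i=0\bigr)\cap\bigl(\bigcap_{i\notin\mu}L_i\ge 0\bigr)$ onto $A\cap H_\mu$.

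Next I would set $\Lambda=\bigcup_{g\in G}g(\Lambda_0^H)$ and define $\Phi\colon\abs{\Lambda}\to A$ chamber by chamber via $\Phi\restriction_{\abs{g(\Lambda_0^H)}}=g\circ\Phi_0\circ g^{-1}$, and show this is a well-defined equivariant triangulation. Because $\clos{H}$ is a fundamental region, the chambers $g(\clos{H})$ are pairwise distinct and cover $\bb{R}^n$; and from the discussion preceding the Weyl-chamber example, for $g\neq g'$ one has $g(\clos{H})\cap g'(\clos{H})=g(H_{\lambda_h})$ with $h=g^{-1}g'$ and $H_{\lambda_h}$ the pointwise fixed set of $h$ in $\clos{H}$, and symmetrically this set equals $g'(H_{\lambda_{h^{-1}}})$. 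Since $h$ and $h^{-1}$ have the same fixed set, both descriptions identify the common face with the same subcomplex $\Lambda_0^{H_{\lambda_h}}$ of $\Lambda_0^H$, once via $g$ and once via $g'$; but $g^{-1}g'=h$ fixes $H_{\lambda_h}$ pointwise, so $g$ and $g'$ agree on $H_{\lambda_h}$, and therefore the two triangulations induced on $g(\clos H)\cap g'(\clos H)$ — both the simplicial complexes $g(\Lambda_0^{H_{\lambda_h}})$, $g'(\Lambda_0^{H_{\lambda_h}})$ and the homeomorphisms $g\Phi_0g^{-1}$, $g'\Phi_0g'^{-1}$ restricted there — literally coincide. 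It follows that any two simplices of $\Lambda$ meet in a common face, so $\Lambda$ is a simplicial complex, and that $\Phi$ is well defined; it is continuous by the pasting lemma over the finitely many closed chambers, definable, and a bijection of the compact space $\abs{\Lambda}$ onto $A$, hence a homeomorphism. Symmetry of $\Lambda$ and equivariance of $\Phi$ are immediate, since $g'\bigl(g(\Lambda_0^H)\bigr)=(g'g)(\Lambda_0^H)$.

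It remains to check adaptedness and the vertex condition. Since $\Lambda_0$ is adapted to $S_i$ and respects the sign sets, $S_i\cap\clos{H}$ is a union of realizations, so $\Lambda_0^H$ is adapted to $S_i\cap\clos H$; applying $g$ and using that $S_i$ is symmetric gives $g(\Lambda_0^H)$ adapted to $S_i\cap g(\clos H)$, and unioning over $g\in G$ shows $\Lambda$ is adapted to $S_i$. Every vertex of $\Lambda$ has the form $g(b_\nu)$ for a vertex $b_\nu$ of $\Lambda_0$ and $g\in G$; as $G$ is generated by the reflections through the hyperplanes $\{L_i=0\}$, each $g\in G$ is represented by a matrix with entries in $\bb{Q}(\{a_{i,j}\})$ (the reflection in $\{\sum_j a_jx_j=0\}$ being $I-\tfrac{2}{\sum_j a_j^2}(a_ia_j)_{i,j}$), so $g(b_\nu)\in\bb{Q}(\{a_{i,j}\})^n$. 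The main obstacle, and the one step where the reflection-group hypothesis is genuinely used, is the gluing in the third paragraph: verifying that the copies of the fundamental-domain triangulation agree on the shared walls of adjacent chambers. Everything there hinges on the elementary observation that if two elements of $G$ carry $\clos H$ to chambers sharing a face, they must already agree on that face.
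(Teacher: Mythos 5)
Your proof is correct and follows essentially the same strategy as the paper: apply Lemma \ref{thm:RestrictedTriangulation} to obtain a sign-set-respecting triangulation near the fundamental chamber, push it around by the $G$-action, and verify the pieces glue along the intersections of walls $H_{\lambda_h}=\clos{H}\cap h(\clos{H})$ using the fact that $g$ and $g'$ agree pointwise there. The only cosmetic difference is that you apply the lemma to all of $A$ and then pass to the subcomplex $\Lambda_0^H$ over $\clos{H}$, whereas the paper applies the lemma directly to $A\cap\clos{H}$; this saves the paper the step of noticing that $\Lambda_0^H$ is a subcomplex of $\Lambda_0$ with $\abs{\Lambda_0^H}=\Phi_0^{-1}(A\cap\clos{H})$, but the two are logically equivalent.
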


\begin{proof}
Let $G$ be a finite reflection group acting on $\bb{R}^n$, and if not already selected, let $\{L_1,\ldots, L_k\}$ be linear functions defining a fundamental region $H$ of $\bb{R}^n$ with respect to $G$. We apply Lemma \ref{thm:RestrictedTriangulation} to the closed, bounded, definable set $A\cap \clos{H}$, definable subsets $S_1\cap \clos{H},\ldots, S_l\cap \clos{H}$, and our collection of functions $\{L_1,\ldots, L_k\}$. The lemma gives us a finite simplicial complex $\Lambda_{\id}$ with vertices in $\bb{Q}(\{a_{i,j}\})$ and a definable homeomorphism $\Phi_{\id}:\abs{\Lambda}\rightarrow A\cap \clos{H}$. Note that since $H$ and all its walls are sign sets of $\{L_1,\ldots, L_k\}$, the triangulation $(\Lambda_{\id}, \Phi_{\id})$ respects $H$ and all its walls. In particular, $\abs{\Lambda}\subset \clos{H}$.

We now define our proposed triangulation of $S$. Let
\[
\Lambda=\bigcup_{g\in G}\{g(\Delta)\mid \Delta\in \Lambda_{\id}\}
\]
and let $\Phi:\abs{\Lambda}\rightarrow A$ be given by $\Phi(\bs{x})=g(\Phi_{\id}(\bs{x}'))$ where $\bs{x}'\in \abs{\Lambda_{\id}}$ and $g\in G$ are such that $g(\bs{x}')=\bs{x}$. We claim that (everything is well-defined and) $\Lambda$ and $\Phi$ give an equivariant triangulation of $A$ adapted to $S_1,\ldots, S_l$.

$\Lambda$ is a symmetric simplicial complex: We have that if $\Delta\in \Lambda_{\id}$ and $g\in G$, $g(\Delta)$ remains a simplex in $\bb{R}^n$ by linearity of $g$, and so $\Lambda$ is a collection of simplices. The symmetry property for $\Lambda$ then holds by construction, so it remains to show that for $\Delta_i,\Delta_j\in \Lambda$, $\clos{\Delta}_i\cap\clos{\Delta}_j$ is the closure of some simplex in $\Lambda$. Without loss of generality, assume $\Delta_1\in \Lambda_{\id}$ and $\Delta_2=g(\Delta_2')$ for $\Delta_2'\in \Lambda_{\id}$ and $g\in G$. This means $\clos{\Delta}_1\cap \clos{g(\Delta_2')}\subset \clos{H}\cap g(\clos{H})$. As described in Subsection \ref{sect:SymmetryBackground}, $\clos{H}\cap g(\clos{H})=H_{\lambda_g}$ is a set of the form
\[
H_{\lambda_g}=\paren{\bigcap_{i\in \lambda_g}\{L_i(\bs{x})=0\}}\cap\paren{\bigcap_{i\in \{1,\ldots,k\}\setminus \lambda_g} \{L_i(\bs{x})\geq 0\}}
\]
which in particular is a sign set of $\{L_1,\ldots, L_k\}$. Since $(\Lambda_{\id},\Phi_{\id})$ is hence adapted to $A\cap H_{\lambda_g}$, we have that $\clos{\Delta}_1\cap H_{\lambda_g}$ is also the closure of a simplex of $\Lambda_{\id}$. Since points of $H_{\lambda_g}$ are fixed under the action of $g$, $\clos{\Delta}_2\cap H_{\lambda_g}=\clos{\Delta}_2'\cap H_{\lambda_g}$ is the closure of a simplex of $\Lambda_{\id}$ as well. Then $\clos{\Delta}_1\cap\clos{\Delta}_2=(\clos{\Delta}_1\cap H_{\lambda_g})\cap(\clos{\Delta}_2\cap H_{\lambda_g})$ is an intersection of closures of simplices of $\Lambda_{\id}$, and hence a common face of $\clos{\Delta}_1$ and $\clos{\Delta}_2$.

Vertices of $\Lambda$ are in $\bb{Q}(\{a_{i,j}\})^n$: We will show that, for $g\in G$ and any point $\bs{x}$ in $\bb{Q}(\{a_{i,j}\})^n$, $g(\bs{x})\in \bb{Q}(\{a_{i,j}\})^n$. Since any element $g$ of $G$ can be written as a product of those elements $g_1,\ldots, g_k$, where the action of $g_i$ is reflection through the linear hyperplane $L_i=0$, we may assume $g=g_i$ for some $1\leq i\leq k$. The reflection of $\bs{x}$ through $L_i=0$ is given by
\[
g_i(\bs{x})=\bs{x}-2\frac{(\bs{x},\bs{r}_i)\bs{r}_i}{(\bs{r}_i,\bs{r}_i)}
\]
where we take $\bs{r}_i$ to be the vector $\gen{a_{i,1},\ldots,a_{i,n}}$ (which is perpendicular to the hyperplane $L_i=0$). Then since we have assumed that we are using the standard inner product on $\bb{R}^n$, the coordinates of $g_i(\bs{x})$ are all also in $\bb{Q}(\{a_{i,j}\})$, as desired.

$\Phi$ is well-defined: Let $\bs{x}\in \abs{\Lambda}$ and say that $\bs{x}=g_1(\bs{x}_1)=g_2(\bs{x}_2)$ for $\bs{x}_1,\bs{x}_2\in \abs{\Lambda_{\id}}$ and $g_1,g_2\in G$. We must show that $g_1(\Phi_{\id}(\bs{x}_1))=g_2(\Phi_{\id}(\bs{x}_2))$. Because $\bs{x}_1=g_1^{-1}(g_2(\bs{x}_2))$ with $\bs{x}_1,\bs{x}_2\in \abs{\Lambda_{\id}}$, we have that $\bs{x}_1\in \abs{\Lambda_{\id}}\cap g_1^{-1}\circ g_2(\abs{\Lambda_{\id}}) \subset H_{\lambda_{g_1^{-1}g_2}}$. Since $g_1^{-1}\circ g_2$ fixes the points of $H_{\lambda_{g_1^{-1}g_2}}$, we obtain that $\bs{x}_1=\bs{x}_2$. Since $\Phi_{\id}$ carries $H_{\lambda_{g_1^{-1}g_2}}$ to itself, we have that $\Phi_{\id}(\bs{x}_1)\in H_{\lambda_{g_1^{-1}g_2}}$, as is $g_1^{-1}\circ g_2(\Phi_{\id}(\bs{x}_1))$, and so $\Phi_{\id}(\bs{x}_1)=g_1^{-1}\circ g_2(\Phi_{\id}(\bs{x}_1))$, i.e. we have obtained that $g_1(\Phi_{\id}(\bs{x}_1))=g_2(\Phi_{\id}(\bs{x}_2))$. That the image of $\Phi$ is $A$ follows from the symmetry of $A=\{g(\bs{x})\mid \bs{x}\in A\cap \clos{H}\text{ and } g\in G\}$.

$\Phi$ is a homeomorphism: We have already established that the surjectivity of $\Phi$ follows from the surjectivity of $\Phi_{\id}$ onto $A\cap \clos{H}$. To show injectivity, say $\Phi(\bs{x}_1)=\Phi(\bs{x}_2)$ for some $\bs{x}_1,\bs{x}_2\in \abs{\Lambda}$, i.e.  $g_1(\Phi_{\id}(\bs{x}_1'))=g_2(\Phi_{\id}(\bs{x}_2'))$ for some $\bs{x}_1',\bs{x}_2'\in \abs{\Lambda_{\id}}$ and $g_1,g_2\in G$. Then since $\Phi_{\id}(\bs{x}_1')=g_1^{-1}\circ g_2(\Phi_{\id}(\bs{x}_2'))$, both are in $H_{\lambda_{g_1^{-1}g_2}}$, and so $\Phi_{\id}(\bs{x}_1')=\Phi_{\id}(\bs{x}_2')$. By the injectivity of $\Phi_{\id}$, this means $\bs{x}_1'=\bs{x}_2'$. Finally, since $\bs{x}_1'=\bs{x}_2'$ is in $H_{\lambda_{g_1^{-1}g_2}}$, $\bs{x}_1'=g_1^{-1}\circ g_2(\bs{x}_2')$, so we have $\bs{x}_1=g_1(\bs{x}_1')=g_2(\bs{x}_2')=\bs{x}_2$. Continuity of $\Phi$ follows from continuity on $g(\Lambda_{\id})$ for each $g\in G$ and agreement on the boundaries.

$\Phi$ is equivariant under the action of $G$ by construction. The preimage under $\Phi$ of each set $S_i$ among $S_1,\ldots, S_l$ is the union of the simplices $g(\Delta)$ for $g\in G$ and $\Delta\in \Lambda_{\id}$ such that $\Delta\subset\Phi_{\id}^{-1}(S_i\cap \clos{H})$. Hence, $(\Lambda,\Phi)$ gives our desired triangulation.
\end{proof}

\subsection{Triangulation of Definable Functions}\label{sect:TriangulationDefinableFns}

In the proofs in \cite{gabrielov2009approximation}, to ensure that the triangulation we use is properly compatible with the family of sets $\{S_\delta\}_{\delta>0}$, the authors invoke the triangulation of definable functions. The original theorem from \cite{coste2000introduction} is below. We will proceed to prove a version for functions symmetric relative to the action of some finite reflection group $G$.

\begin{thm}[Triangulation of Definable Functions, \cite{coste2000introduction} Theorem 4.5]\label{thm:Coste4.5}
Let $A$ be a closed and bounded definable subset of $\rcf^n$ and $f:A\rightarrow \rcf$ a continuous definable function. Then there exists a finite simplicial complex $\Lambda$ in $\rcf^{n+1}$ and a definable homeomorphism $\rho: \abs{\Lambda}\rightarrow A$ such that $f\circ \rho$ is an affine function on each simplex of $\Lambda$. Moreover, given $S_1,\ldots, S_l$ definable subsets of $A$, we may choose the triangulation $\rho:\abs{\Lambda}\rightarrow A$ to be adapted to the $S_i$.
\end{thm}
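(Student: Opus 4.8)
The plan is to reduce the statement, via the graph of $f$, to the triangulation theorem for sets (Theorem~\ref{thm:Coste4.4}) already available, the one genuinely new ingredient being a ``vertical straightening'' of $f$ carried out within an induction that mirrors the proof of that theorem.

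First I would pass to the graph $\Gamma_f=\{(\bs{x},f(\bs{x}))\mid \bs{x}\in A\}\subset\rcf^{n+1}$. Since $A$ is closed and bounded and $f$ is continuous and definable, $\Gamma_f$ is a closed, bounded, definable subset of $\rcf^{n+1}$, and the projection $\pi\colon\rcf^{n+1}\to\rcf^{n}$ onto the first $n$ coordinates restricts to a definable homeomorphism $\Gamma_f\to A$ (inverse $\bs{x}\mapsto(\bs{x},f(\bs{x}))$). The useful consequences are threefold: a triangulation $\Psi\colon\abs\Lambda\to\Gamma_f$ yields a triangulation $\rho:=\pi\circ\Psi\colon\abs\Lambda\to A$; this $\rho$ is adapted to $S_i$ precisely when $\Psi$ is adapted to $\Gamma_f\cap(S_i\times\rcf)$; and, because on $\Gamma_f$ the last coordinate equals $f$ of the first $n$, the composite $f\circ\rho$ is the restriction to $\abs\Lambda$ of the linear functional $x_{n+1}$ applied after $\Psi$. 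As the restriction of $x_{n+1}$ to an (affine) simplex is automatically affine, the theorem is reduced to: triangulate the compact definable set $\Gamma_f\subset\rcf^{n+1}$, adapted to each $\Gamma_f\cap(S_i\times\rcf)$, by a complex $\Lambda$ and homeomorphism $\Psi$ whose last coordinate $x_{n+1}\circ\Psi$ is affine on every simplex of $\Lambda$ -- a condition that holds, for instance, whenever $\Psi$ alters only the first $n$ coordinates of each point.

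To produce such a $\Psi$ I would run the inductive (on ambient dimension) proof of Theorem~\ref{thm:Coste4.4} applied to $\Gamma_f$, arranging the iterated projections so that the coordinate eliminated first is $x_{n+1}$. The point of ordering it this way is that $\Gamma_f$ is literally a graph over $A$: forgetting $x_{n+1}$ maps $\Gamma_f$ bijectively onto $A$, so over each cell of a cell decomposition of $A$ the set $\Gamma_f$ has exactly one ``roof'' function, namely $f$, and no bands at all. One first triangulates $A$ by Theorem~\ref{thm:Coste4.4}, adapted to the $S_i$ and to those cells, obtaining $\Psi_0\colon\abs{\Lambda_0}\to A$; over each simplex $\delta$ of $\Lambda_0$ the relevant piece of $\Gamma_f$ is then the single graph $\bar\delta\ni\bs{x}\mapsto(\Psi_0(\bs{x}),f(\Psi_0(\bs{x})))$. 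One then lifts $\Lambda_0$ into $\rcf^{n+1}$ and builds the homeomorphism $\Psi$ by the same coning procedure as in Coste's argument, except that at the stage where the vertical coordinate is introduced one subdivides and reparametrizes so that the last coordinate of $\Psi$ becomes affine on each simplex -- the higher-dimensional analogue of the elementary move that turns a monotone arc into an affinely parametrized segment in ambient dimension one. Adaptedness to the $S_i$ is carried along exactly as in that proof, including the preliminary step of replacing each $S_i$ by its closure and adjoining all frontiers to the list of sets to which the triangulation is adapted.

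I expect the real obstacle to be exactly that straightening step: one must subdivide $\Lambda_0$ and choose the reparametrizations compatibly across shared faces, so that the homeomorphisms built over a simplex $\delta$ and over a face $\delta'$ of $\delta$ agree on their overlap and glue into a genuine simplicial homeomorphism onto $\Gamma_f$. This rests on the elementary facts that an affine function on a simplex is determined by its vertex values and restricts on a face to the affine interpolant of that face's vertices, combined with taking the cell decomposition of $A$ fine enough that the level-set structure of $f$ over each cell is trivial. It is worth noting as a check on the argument that it genuinely uses the hypothesis of a \emph{single} function: two functions cannot in general be straightened by one triangulation (for instance $x$ and $x^2$ on $[0,1]$), which is why Theorem~\ref{thm:Coste4.4} permits arbitrarily many subsets but Theorem~\ref{thm:Coste4.5} is stated for just one function.
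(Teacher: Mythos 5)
Your reduction to the graph is the right first move, and you're also right that the heart of the matter is making one coordinate of the triangulating homeomorphism affine on each simplex. But the order in which you eliminate coordinates is backwards from what the argument needs, and that reversal is where the gap lives. You propose projecting $\Gamma_f\subset\rcf^{n+1}$ by forgetting the value coordinate $x_{n+1}$ \emph{first}, then triangulating $A\subset\rcf^n$ by Theorem~\ref{thm:Coste4.4}, and finally ``reparametrizing'' when you lift back up to $\Gamma_f$. At that last stage you are handed a simplex $\delta$ of $\Lambda_0$ and a continuous definable function $f\circ\Psi_0|_{\clos\delta}$, and you must subdivide $\delta$ and reparametrize it so that this function becomes affine on each new simplex, with the reparametrizations matching on shared faces across the entire complex. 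That is not ``the elementary move that turns a monotone arc into an affinely parametrized segment'' --- it is essentially the full statement of Theorem~\ref{thm:Coste4.5} again, now posed on each simplex with the additional global compatibility constraint, so the reduction is circular rather than progressive. Nothing in the coning machinery of Theorem~\ref{thm:Coste4.4} produces fiber coordinates that are affine; the maps $\theta_\nu$ composed in that proof carry the graphs $\zeta_{\alpha,\mu}$ along, which are not affine, and you have supplied no mechanism for repairing that.

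The fix --- and what the proof in the source the paper follows (reproduced in equivariant form in Lemma~\ref{thm:RestrictedTriangulationFn} and Theorem~\ref{thm:SymmetricTriangulationFn}) actually does --- is the opposite ordering: place the value coordinate in the position that is \emph{retained} throughout the induction, not the one eliminated first. Concretely one works with $A'=\{(f(\bs{x}),\bs{x})\}\subset\rcf\times\rcf^n$ and inducts on $n$ by forgetting the \emph{last} domain coordinate $x_n$ at each step, so the base case $n=0$ is a finite subset of the value line $\rcf$. There one defines a single globally-defined, order-preserving, piecewise-linear homeomorphism $\tau:\rcf\to\rcf$ and carries it unchanged through the entire induction; the invariant $\tau\circ\pi\circ\Phi=\pi|_{\abs\Lambda}$ then forces $f\circ\rho=\tau^{-1}\circ\pi|_{\abs\Lambda}$ to be affine simplex-by-simplex, and because $\tau$ is defined once and for all on $\rcf$, the face-compatibility you worried about is automatic rather than an obstacle. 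So the order of elimination is not a bookkeeping convenience; it is precisely what converts the straightening from a hard simultaneous subdivision problem into a one-variable base case. Your closing observation that two functions (e.g.\ $x$ and $x^2$) cannot be simultaneously straightened is a good sanity check, but it also points at why your deferred straightening step cannot be waved away: there is a genuine global constraint being satisfied, and Coste's ordering is what discharges it.
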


Following \cite{coste2000introduction}, we will prove a more general result concerning the triangulation of symmetric definable subsets of $\bb{R}\times\bb{R}^n$, which we will apply to what is essentially the graph of our function $f$. Our procedure is similar to the one used for symmetric definable sets. Let $\pi:\rcf\times \rcf^n\rightarrow \rcf$ denote projection on the first coordinate.

\begin{lemma}[ref \cite{coste2000introduction} Proposition 4.6 and Proposition 4.8]\label{thm:RestrictedTriangulationFn}
Let $A$ be a closed, bounded, definable subset of $\rcf\times \rcf^n$, and let $S_1,\ldots, S_l$ be definable subsets of $A$. Let $\{L_1,\ldots, L_k\}$ be a collection of functions with $L_i:\rcf\times \rcf^n\rightarrow\rcf$ given by $L_i:(y,x_1,\ldots, x_n)\mapsto a_{i,0}+a_{i,1}x_1+\cdots+a_{i,n}x_n$. Then there exists a triangulation $(\Lambda,\Phi)$ of $A$ adapted to $S_1,\ldots, S_l$, respecting all sign sets of $\{L_1,\ldots, L_k\}$, and having vertices of $\Lambda$ in $\bb{Q}\times \bb{Q}(\{a_{i,j}\mid 1\leq i\leq k, 0\leq j\leq n\})^n$, as well as a definable homeomorphism $\tau:\rcf\rightarrow \rcf$, such that $\tau\circ\pi\circ \Phi=\pi_{\restriction\abs{\Lambda}}$.
\end{lemma}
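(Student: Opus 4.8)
The plan is to adapt the proof of Lemma \ref{thm:RestrictedTriangulation} to the fibered setting over $\rcf$, in exactly the way \cite{coste2000introduction} passes from triangulation of sets (Theorem \ref{thm:Coste4.4}) to triangulation of functions (Theorem \ref{thm:Coste4.5}). The key observation is that the functions $L_i$ here depend only on the $\rcf^n$-coordinates $x_1,\ldots,x_n$ and not on the distinguished coordinate $y\in\rcf$; so when we set up the induction on $n$ (the dimension of the \emph{non-distinguished} factor), the last-coordinate recursion in the proof of Lemma \ref{thm:RestrictedTriangulation} is completely unconstrained by the $L_i$'s, while the treatment of the $\rcf$-factor is governed by the extra requirement $\tau\circ\pi\circ\Phi=\pi_{\restriction\abs{\Lambda}}$. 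Concretely, I would perform the projection $p:\rcf\times\rcf^n\to\rcf\times\rcf^{n-1}$ \emph{onto all but the last $\rcf^n$-coordinate} (so that $\pi\circ p=\pi$), exactly as in Lemma \ref{thm:RestrictedTriangulation}, and at the base of the induction one is triangulating a closed bounded definable subset of $\rcf\times\rcf$ with the single distinguished coordinate $y$ and with the $L_i$ now constant in $y$ — here one invokes the $n=0$ version, i.e. the ordinary triangulation of a function $\rcf\supset A\to\rcf$ from \cite{coste2000introduction} Proposition 4.6, which already produces the homeomorphism $\tau$ of the $y$-line. Since at this base stage there are no $L_i$-constraints in the fiber direction, nothing new is needed there beyond \cite{coste2000introduction}.

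The main steps, in order, are: (1) reduce as in \cite{coste2000introduction} to the case where each $S_i$ is closed, and fix a cell decomposition of $\rcf\times\rcf^n$ adapted to the boundaries $F_0,\ldots,F_l$ of $A,S_1,\ldots,S_l$ and to all sign sets of $\{L_1,\ldots,L_k\}$, compatible with the projection $p$; (2) project and push through the hyperplane data: the images $p\bigl(\bigcap_{i\in\lambda}\{L_i=0\}\bigr)$ of codimension one give a new collection $\{L'_1,\ldots,L'_{k'}\}$ of affine functions on $\rcf\times\rcf^{n-1}$ which still do not involve the $y$-coordinate and whose coefficients are rational expressions in the $a_{i,j}$, so the inductive hypothesis applies to $p(A)$ with these $L'_i$, furnishing $(\Lambda_{n-1},\Psi_{n-1})$ and the definable homeomorphism $\tau:\rcf\to\rcf$ with $\tau\circ\pi\circ\Psi_{n-1}=\pi_{\restriction\abs{\Lambda_{n-1}}}$; (3) over each simplex $\delta_\beta$ of $\Lambda_{n-1}$, choose the vertices $b_\nu$ for the graphs and bands of the last coordinate exactly as in Lemma \ref{thm:RestrictedTriangulation}, placing the $c'_{\beta,i}$-values at the appropriate heights so that each $b_\nu$ lies in the same sign set of $\{L_1,\ldots,L_k\}$ as $\Psi(C_\nu)$, noting that the recursion for $(b_\nu)_n$ uses only barycenter coordinates from $\Lambda_{n-1}$ and hence stays in $\bb{Q}\times\bb{Q}(\{a_{i,j}\})^n$; (4) build the polyhedra $\clos{D}_\nu$ by coning, subdivide into simplices, and construct $\Phi=\Psi\circ\Phi'$ exactly as before; (5) observe that the identity $\tau\circ\pi\circ\Phi=\pi_{\restriction\abs{\Lambda}}$ is inherited — since $\Phi$ and $\Psi=(\Psi_{n-1},\id)$ fix the $\rcf$-coordinate through $\pi$ and the recursive construction of $\Phi'$ only modifies coordinates in the $\rcf^n$-factor, we get $\pi\circ\Phi=\pi\circ\Psi\circ\Phi'=\pi\circ\Psi_{n-1}\circ(\text{projection of }\Phi')$, which composed with $\tau$ yields $\pi_{\restriction\abs{\Lambda}}$ by the inductive hypothesis; (6) verify that the triangulation respects all sign sets of $\{L_1,\ldots,L_k\}$ by the same cone-and-convexity argument used at the end of the proof of Lemma \ref{thm:RestrictedTriangulation}.

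The main obstacle, and the only place the argument is not a transcription, is bookkeeping the two separate recursions simultaneously: the $\rcf^n$-factor is being reduced dimension by dimension to carry the $L_i$-data, while the distinguished $\rcf$-factor must be left alone until the very base of the induction, at which point the \emph{function-triangulation} recursion of \cite{coste2000introduction} (which itself reduces on the $\rcf^n$-side while tracking a distinguished coordinate) takes over to supply $\tau$. One must be careful that the two inductions are threaded consistently — i.e. that projecting away the last $\rcf^n$-coordinate at each stage never touches $y$, so that the sign-set constraints and the condition $\tau\circ\pi\circ\Phi=\pi_{\restriction\abs{\Lambda}}$ are imposed on disjoint groups of coordinates and hence do not interfere. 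Once this is set up correctly, every individual verification (well-definedness, homeomorphism, adaptedness, vertex coordinates, respecting sign sets) is the corresponding step from Lemma \ref{thm:RestrictedTriangulation} or from \cite{coste2000introduction} Proposition 4.6 with no essential change.
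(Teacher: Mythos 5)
Your proposal matches the paper's proof in structure and substance: both induct on $n$ by projecting away the last $\rcf^n$-coordinate (so that $\pi$ is never disturbed), invoke a piecewise-affine map $\tau$ on the distinguished $\rcf$-line at the base case $n=0$, thread $\tau'$ unchanged through the inductive step as $\tau$, and reuse the vertex-placement and coning machinery of Lemma~\ref{thm:RestrictedTriangulation} verbatim since the $L_i$ are constant in the $y$-coordinate. The one detail you elide — replacing the raw boundaries $F'_i$ by the modified sets $F_i$ and adapting the cell decomposition to the hyperplanes $\{c\}\times\rcf^n$ for the finitely many exceptional $c$ — is precisely the \cite{coste2000introduction}~Proposition~4.8 mechanism you say you would follow, so the proposal is correct and takes essentially the same route as the paper.
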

\begin{proof}
We induct on $n$. In the $n=0$ case, since we are assuming all functions in our collection $\{L_1,\ldots, L_k\}$ to be independent of the first coordinate, we have nothing more to prove than \cite{coste2000introduction} does, and so may as there choose a finite partition of $\rcf$ adapted to $A$ and $S_1,\ldots, S_l$. Letting $x_1<\cdots< x_p$ be the points in $\rcf$ defining the partition, we take $\Lambda$ to be the points $i\in \{1,\ldots, p\}$ and intervals $[i,i+1]$ such that ${x_i}\in A$ or $[x_i,x_{i+1}]\subset A$. We let $\tau(x_i)=i$ and extend piecewise affinely to a map $\tau: \rcf\rightarrow \rcf$.

Assume $n>0$ and that the statement holds for $n-1$. We may again assume all $S_i$ are closed (as described in the proof of \cite{coste2000introduction} Theorem 4.4). We set $F'_0$ to be the boundary of $A$, $F'_i$ to be the boundary of $S_i$ for each $i$, and $F'=F'_0\cup F'_1\cup\ldots\cup F'_l$. Since $F'$ is definable with dimension at most $n$, we have finitely many $c\in \rcf$ for which the set $\{\bs{x}\in \rcf^n\mid (c,\bs{x})\in F'\}$ has dimension $n$. Let $C$ be the set of all such $c$. For reasons of dimension, the proof in \cite{coste2000introduction} considers sets $F_i$ given by taking the union of $\clos{F'_i\setminus(C\times \rcf^n)}$ with the boundary of $F'_i\cap (C\times \rcf^n)$ in $C\times \rcf^n$. We choose a cell decomposition of $\rcf\times \rcf^n$ adapted to $F_0,\ldots, F_l$, the sets $\{c\}\times \rcf^n$ for each $c\in C$, and all sign sets of $\{L_1,\ldots, L_k\}$.

Let $p:\rcf\times \rcf^n\rightarrow \rcf\times \rcf^{n-1}$ be the projection on the first $n$ coordinates. Our cell decomposition partitions $p(A)$ into definably connected subsets $X_\alpha$. By our inductive hypothesis, we have a triangulation $(\Lambda_{n-1}, \Psi_{n-1})$ of $\rcf\times \rcf^{n-1}$ which is adapted to each $X_\alpha$, respects sign sets of $\{L_1',\ldots, L_{m'}'\}$ (where this collection is defined in a manner analogous to the proof of Lemma \ref{thm:RestrictedTriangulation}), and with vertices of $\Lambda_{n-1}$ in $\bb{Q}\times \bb{Q}(\{a_{i,j}'\mid 1\leq i\leq m', 0\leq j\leq n-1\})^{n-1}= \bb{Q}\times \bb{Q}(\{a_{i,j}\mid 1\leq i\leq m, 0\leq j\leq n\})^{n-1}$. We also have a map $\tau':\rcf\rightarrow \rcf$, having the property that $\tau'\circ\pi_{n,1}\circ \Psi_{n-1}={\pi_{n,1}}\restriction_{\abs{\Lambda_{n-1}}}$ (where $\pi_{n,1}$ is the projection $\rcf\times \rcf^{n-1}\rightarrow \rcf$ on the first coordinate). Now, if we follow the remaining steps in the proof of Lemma \ref{thm:RestrictedTriangulation}, we obtain a triangulation $(\Lambda,\Phi)$ of $A$ which is adapted to $S_1,\ldots, S_l$, respects sign sets of $\{L_1,\ldots, L_k\}$, and has vertex coordinates in $\bb{Q}\times \bb{Q}(\{a_{i,j}\})^n$. $\Lambda$ and $\Phi$ are such that $p\circ \Phi=\Psi_{n-1}\circ p_{\restriction\abs{\Lambda}}$. Taking $\tau=\tau'$, this means that our property $\tau\circ\pi\circ \Phi=\pi_{\restriction\abs{\Lambda}}$ holds.
\end{proof}

\begin{lemma}[ref \cite{coste2000introduction} Proposition 4.6 and Proposition 4.8]\label{thm:SymmetricTriangulationFnPrep}
Let $A$ be a closed, bounded, definable subset of $\bb{R}\times\bb{R}^n$ symmetric under the action of some finite reflection group $G$ on $\bb{R}^n$ extended to $\bb{R}\times \bb{R}^n$, and let $S_1,\ldots, S_l$ be definable symmetric subsets of $A$. Then there exists an equivariant triangulation $(\Lambda, \Phi)$ of $A$ adapted to $S_1,\ldots, S_l$ and a definable homeomorphism $\tau:\bb{R}\rightarrow \bb{R}$ having the property that $\tau\circ\pi\circ \Phi=\pi_{\restriction\abs{\Lambda}}$.

Assume we have fixed a collection $\{L_1,\ldots,L_k\}$ of functions $L_i:\bb{R}^n\rightarrow \bb{R}$ given by $L_i(\bs{x})=a_{i,1}x_1+\cdots a_{i,n}x_n$, so that
\[
H=\bigcap_{i=1}^k \{L_i(\bs{x})>0\}
\]
is a fundamental region of $\bb{R}^n$ with respect to $G$. Then we may choose our triangulation so that vertices of $\Lambda$ are in $\bb{Q}\times \bb{Q}(\{a_{i,j}\mid 1\leq i\leq k, 1\leq j\leq n\})^n$.
\end{lemma}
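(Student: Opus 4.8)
The plan is to transplant the proof of Theorem \ref{thm:SymmetricTriangulation}, with Lemma \ref{thm:RestrictedTriangulationFn} playing the role that Lemma \ref{thm:RestrictedTriangulation} played there, and then to check separately that the extra clause involving $\tau$ survives the process. If a collection $\{L_1,\ldots,L_k\}$ of homogeneous linear forms cutting out the fundamental region $H$ has not already been fixed, choose one. View each $L_i$ as a function $\bb{R}\times\bb{R}^n\to\bb{R}$ that is independent of the first coordinate, and let $G$ act on $\bb{R}\times\bb{R}^n$ by $g(y,\bs{x})=(y,g(\bs{x}))$; this is again a reflection action (each generator reflects through the linear hyperplane $\bb{R}\times P_g$), and $\bb{R}\times H$ is a fundamental region for it, with $(\bb{R}\times\clos{H})\cap g(\bb{R}\times\clos{H})=\bb{R}\times H_{\lambda_g}$ a sign set of $\{L_1,\ldots,L_k\}$ whose points are fixed by $g$. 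First I would apply Lemma \ref{thm:RestrictedTriangulationFn} to the closed bounded definable set $A\cap(\bb{R}\times\clos{H})$, the subsets $S_i\cap(\bb{R}\times\clos{H})$, and the collection $\{L_1,\ldots,L_k\}$, obtaining a simplicial complex $\Lambda_{\id}$ with vertices in $\bb{Q}\times\bb{Q}(\{a_{i,j}\})^n$, a definable homeomorphism $\Phi_{\id}:\abs{\Lambda_{\id}}\to A\cap(\bb{R}\times\clos{H})$ respecting all sign sets of $\{L_1,\ldots,L_k\}$ (hence $\bb{R}\times H$ and each $\bb{R}\times(\text{wall})$), and a definable homeomorphism $\tau:\bb{R}\to\bb{R}$ with $\tau\circ\pi\circ\Phi_{\id}=\pi_{\restriction\abs{\Lambda_{\id}}}$.

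Second, I would set $\Lambda=\bigcup_{g\in G}\{g(\Delta)\mid\Delta\in\Lambda_{\id}\}$ and define $\Phi:\abs{\Lambda}\to A$ by $\Phi(\bs{x})=g(\Phi_{\id}(\bs{x}'))$ whenever $\bs{x}'\in\abs{\Lambda_{\id}}$ and $g(\bs{x}')=\bs{x}$. Every verification made in the proof of Theorem \ref{thm:SymmetricTriangulation} now goes through verbatim: $\Lambda$ is a symmetric simplicial complex because for $\Delta_1\in\Lambda_{\id}$ and $\Delta_2=g(\Delta_2')$ one has $\clos{\Delta}_1\cap\clos{\Delta}_2\subset\bb{R}\times H_{\lambda_g}$, a sign set of $\{L_1,\ldots,L_k\}$ to which $(\Lambda_{\id},\Phi_{\id})$ is adapted and which $g$ fixes pointwise; $\Phi$ is well defined and a homeomorphism by the same argument using that $g_1^{-1}g_2$ fixes $\bb{R}\times H_{\lambda_{g_1^{-1}g_2}}$ pointwise and that $\Phi_{\id}$ maps that set to itself; $\Phi$ is equivariant by construction and adapted to $S_1,\ldots,S_l$; and the vertices of $\Lambda$ stay in $\bb{Q}\times\bb{Q}(\{a_{i,j}\})^n$ because each generating reflection acts on the $\bb{R}^n$ factor by a matrix with entries rational in the $a_{i,j}$ and trivially on the first coordinate (and since the $L_i$ are homogeneous, $\bb{Q}(\{a_{i,j}\mid 1\le i\le k,\,0\le j\le n\})=\bb{Q}(\{a_{i,j}\mid 1\le i\le k,\,1\le j\le n\})$).

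Third, I would verify the only point not already covered, namely $\tau\circ\pi\circ\Phi=\pi_{\restriction\abs{\Lambda}}$. This is immediate from $G$-invariance of $\pi$: for $\bs{x}=g(\bs{x}')$ with $\bs{x}'\in\abs{\Lambda_{\id}}$ we have $\pi(\Phi(\bs{x}))=\pi(g(\Phi_{\id}(\bs{x}')))=\pi(\Phi_{\id}(\bs{x}'))$, and hence $\tau(\pi(\Phi(\bs{x})))=\tau(\pi(\Phi_{\id}(\bs{x}')))=\pi(\bs{x}')=\pi(g(\bs{x}'))=\pi(\bs{x})$.

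I do not expect a serious obstacle here: the whole point is that extending the $G$-action trivially in the first coordinate leaves $\pi$ invariant and leaves the forms $\{L_1,\ldots,L_k\}$ (and therefore all the fundamental-region bookkeeping of Theorem \ref{thm:SymmetricTriangulation}) untouched, so the argument is a routine transplant with the $\tau$-clause handled by the one-line computation above. The mildest friction is confirming that $\bb{R}\times\clos{H}$ genuinely plays the role that $\clos{H}$ did — that its $G$-translates cover $\bb{R}\times\bb{R}^n$ and meet pairwise in sign sets of $\{L_1,\ldots,L_k\}$ fixed pointwise by the relevant group elements — which is exactly what independence of the $L_i$ from the first coordinate provides.
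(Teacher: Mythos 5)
Your proof is correct and follows essentially the same approach as the paper: apply Lemma \ref{thm:RestrictedTriangulationFn} to $A\cap(\bb{R}\times\clos{H})$ with the linear forms lifted to $\bb{R}\times\bb{R}^n$, glue the pieces over $G$ exactly as in Theorem \ref{thm:SymmetricTriangulation}, and close with the one-line computation that $\pi$ is $G$-invariant so the $\tau$-property survives. (You also correctly identified that the relevant lemma is \ref{thm:RestrictedTriangulationFn}, despite the paper's own proof containing an apparent typo referring to Lemma \ref{thm:SymmetricTriangulationFn}, i.e.\ to the theorem being deduced rather than the lemma being invoked.)
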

\begin{proof}
This is analogous to the proof of Theorem \ref{thm:SymmetricTriangulation}. Let $H$ be a fundamental region of $\bb{R}^n$ with respect to $G$. If not already specified, we take $\{L_1,\ldots, L_k\}$ with $L_i(\bs{x})=a_{i,1}x_1+\cdots+a_{i,n}x_n$ to be a collection functions defining $H$. For each $i$, let $\tilde{L}_i:\bb{R}\times \bb{R}^n\rightarrow \bb{R}$ with $\tilde{L}_i(y,\bs{x})=L_i(\bs{x})$. We apply Lemma \ref{thm:SymmetricTriangulationFn} to $A\cap (\bb{R}\times \clos{H})$, subsets $S_1\cap (\bb{R}\times \clos{H}),\ldots, S_l\cap(\bb{R}\times \clos{H})$, and linear functions $\{\tilde{L}_1,\ldots, \tilde{L}_k\}$. We obtain a triangulation $(\Lambda_{\id},\Phi_{\id})$ with $\abs{\Lambda_{\id}}\subset \bb{R}\times \clos{H}$ and coordinates of vertices of $\Lambda_{\id}$ in $\bb{Q}\times \bb{Q}(\{a_{i,j}\})^n$, and $\Phi_{\id}:\abs{\Lambda_{\id}}\rightarrow A\cap (\bb{R}\times \clos{H})$, and also obtain a map $\tau:\bb{R}\rightarrow \bb{R}$, having the property that $\tau\circ \pi\circ \Phi_{\id}=\pi_{\restriction\abs{\Lambda_{\id}}}$. Again, we take
\[
\Lambda=\bigcup_{g\in G}\{g(\Delta)\mid \Delta\in \Lambda_{\id}\}
\]
and $\Phi:\abs{\Lambda}\rightarrow A$ given by $\Phi(\bs{x})=g(\Phi_{\id}(\bs{x}'))$, where $g\in G$ and $\bs{x}'\in \abs{\Lambda_{\id}}$ are such that $\bs{x}=g(\bs{x}')$. As in the proof of Theorem \ref{thm:SymmetricTriangulation}, this provides a symmetric triangulation of $A$ adapted to $S_1,\ldots, S_l$, with all vertices of $\Lambda$ in $\bb{Q}\times \bb{Q}(\{a_{i,j}\})^n$.

It remains to show that $\tau\circ\pi\circ \Phi=\pi_{\restriction\abs{\Lambda}}$. Take $\bs{x}\in \abs{\Lambda}$. Then we have that $\bs{x}=g(\bs{x}')$ for some $g\in G$ and $\bs{x}'\in \abs{\Lambda_{\id}}$. Note that by the definition of our action of $G$ extended to $\bb{R}\times \bb{R}^n$, we have that $\pi$ is symmetric relative to this action. Then
\begin{align*}
    \tau\circ\pi\circ\Phi(\bs{x})&=\tau\circ\pi(g(\Phi_{\id}(\bs{x}')))\\
    &=\tau\circ\pi(\Phi_{\id}(\bs{x}'))\\
    &=\pi(\bs{x}')\\
    &=\pi(g(\bs{x}'))=\pi(\bs{x})
\end{align*}
as desired.
\end{proof}

Our theorem now follows, applying Lemma \ref{thm:SymmetricTriangulationFnPrep} to $A'=\{(f(\bs{x}),\bs{x})\mid \bs{x}\in A\}$.

\begin{thm}\label{thm:SymmetricTriangulationFn}
Let $A$ be a closed, bounded, definable subset of $\bb{R}^n$ symmetric under the action of
a finite reflection group $G$, and let $f:A\rightarrow \bb{R}$ be a continuous definable function symmetric relative to the action of $G$. Then there exists a finite symmetric simplicial complex $\Lambda$ in $\bb{R}^{n+1}$ and a definable equivariant homeomorphism $\rho: \abs{\Lambda}\rightarrow A$ such that $f\circ \rho$ is an affine function on each simplex of $\Lambda$. Moreover, given $S_1,\ldots, S_l$ definable symmetric subsets of $A$, we may choose the triangulation $\rho: \abs{\Lambda}\rightarrow A$ to be adapted to the sets $S_i$.

Assume we have fixed a collection $\{L_1,\ldots,L_k\}$ of functions $L_i:\bb{R}^n\rightarrow \bb{R}$ given by $L_i(\bs{x})=a_{i,1}x_1+\cdots a_{i,n}x_n$, so that
\[
H=\bigcap_{i=1}^k \{L_i(\bs{x})>0\}
\]
is a fundamental region of $\bb{R}^n$ with respect to $G$. Then we may choose our simplicial complex $\Lambda$ so that all vertices of $\Lambda$ are in $\bb{Q}\times \bb{Q}(\{a_{i,j}\mid 1\leq i\leq k, 1\leq j\leq n\})^n$.
\end{thm}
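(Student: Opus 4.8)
The plan is to deduce the theorem from Lemma \ref{thm:SymmetricTriangulationFnPrep} applied to the graph-type set $A'=\{(f(\bs{x}),\bs{x})\mid \bs{x}\in A\}\subset \bb{R}\times\bb{R}^n$, in the same way that Coste deduces Theorem \ref{thm:Coste4.5} from his Propositions 4.6 and 4.8. Let $G$ act on $\bb{R}\times\bb{R}^n$ trivially on the first factor and in the given way on $\bb{R}^n$. First I would check the hypotheses of Lemma \ref{thm:SymmetricTriangulationFnPrep}: $A'$ is definable (the image of $A$ under $\bs{x}\mapsto(f(\bs{x}),\bs{x})$), closed and bounded (since $A$ is compact and $f$ continuous), and symmetric, because $f$ being symmetric gives $g\cdot(f(\bs{x}),\bs{x})=(f(\bs{x}),g(\bs{x}))=(f(g(\bs{x})),g(\bs{x}))\in A'$; the same computation shows each $S_i':=\{(f(\bs{x}),\bs{x})\mid \bs{x}\in S_i\}$ is a symmetric definable subset of $A'$. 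Applying the lemma to $A'$, the $S_i'$, and the given collection $\{L_1,\ldots,L_k\}$ produces an equivariant triangulation $(\Lambda,\Phi)$ of $A'$ adapted to the $S_i'$, with all vertices of $\Lambda$ in $\bb{Q}\times\bb{Q}(\{a_{i,j}\})^n$, together with a definable homeomorphism $\tau:\bb{R}\rightarrow\bb{R}$ satisfying $\tau\circ\pi\circ\Phi=\pi_{\restriction\abs{\Lambda}}$, where $\pi:\bb{R}\times\bb{R}^n\rightarrow\bb{R}$ is the projection onto the first coordinate.

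Next I would set $\rho:=q\circ\Phi:\abs{\Lambda}\rightarrow A$, where $q:\bb{R}\times\bb{R}^n\rightarrow\bb{R}^n$ is the projection onto the last $n$ coordinates. Since $q$ restricts to a homeomorphism $A'\rightarrow A$ with inverse $\bs{x}\mapsto(f(\bs{x}),\bs{x})$, the map $\rho$ is a definable homeomorphism, and it is equivariant because $\Phi$ is equivariant and $q$ is equivariant for the chosen action (as $G$ fixes the first coordinate). Adaptedness passes down, since $\rho^{-1}(S_i)=\Phi^{-1}(q^{-1}(S_i)\cap A')=\Phi^{-1}(S_i')$ is a union of simplices of $\Lambda$; and $\Lambda$ is a symmetric simplicial complex in $\bb{R}^{n+1}=\bb{R}\times\bb{R}^n$ with the required vertex coordinates, both directly from the lemma. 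So everything reduces to showing that $f\circ\rho$ is affine on each simplex of $\Lambda$.

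For that last point, note that for $(f(\bs{x}),\bs{x})\in A'$ one has $f(q(f(\bs{x}),\bs{x}))=f(\bs{x})=\pi(f(\bs{x}),\bs{x})$, so $f\circ q=\pi$ on $A'$, whence $f\circ\rho=\pi\circ\Phi=\tau^{-1}\circ\pi_{\restriction\abs{\Lambda}}$. As $\pi$ is a linear functional on $\bb{R}^{n+1}$, its restriction to any simplex $\Delta\in\Lambda$ is affine, with image the interval $\pi(\Delta)$ spanned by the (rational) first coordinates of the vertices of $\Delta$, so it is enough to know that $\tau^{-1}$ is affine on each such interval. This is the step I expect to be the real obstacle: it is not recorded in the statement of Lemma \ref{thm:SymmetricTriangulationFnPrep}, but it holds for the $\tau$ produced by the construction — in the proof of Lemma \ref{thm:RestrictedTriangulationFn}, $\tau$ is obtained by extending a finite order-preserving assignment piecewise affinely, and an induction on $n$ shows the breakpoints can be arranged so that the $\pi$-image of every simplex of $\Lambda$ lies in a single interval of affinity of $\tau^{-1}$ (the first coordinates of the vertices $b_\nu$ are barycentric averages of first coordinates of the base triangulation $\Lambda_{n-1}$, which by the inductive hypothesis stay within one affinity interval, and coning only combines vertices lying over a single simplex $\delta_\beta$). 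I would therefore strengthen Lemma \ref{thm:RestrictedTriangulationFn}, and hence Lemma \ref{thm:SymmetricTriangulationFnPrep}, to record this compatibility of $\tau$ with $\Lambda$; with it, $\tau^{-1}\circ\pi_{\restriction\abs{\Lambda}}$ is a composition of affine maps on each simplex, and the theorem follows.
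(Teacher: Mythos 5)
Your proof is the paper's proof: apply Lemma \ref{thm:SymmetricTriangulationFnPrep} to the graph $A'=\{(f(\bs{x}),\bs{x})\mid\bs{x}\in A\}$ and the lifted subsets $S_i'$, set $\rho=q\circ\Phi$ (projection to the last $n$ coordinates composed with $\Phi$), and compute $f\circ\rho=\tau^{-1}\circ\pi_{\restriction\abs{\Lambda}}$. You are also right to flag the one place where the paper is terser than it should be: the affineness of $\tau^{-1}\circ\pi$ on each simplex is not entailed by the stated conclusion $\tau\circ\pi\circ\Phi=\pi_{\restriction\abs{\Lambda}}$ of the lemma (the paper simply says ``by construction''), and your proposed strengthening of Lemma \ref{thm:RestrictedTriangulationFn} --- that $\pi$ carries each closed simplex of $\Lambda$ into a single affinity interval of $\tau^{-1}$, which the inductive construction does deliver since each simplex of $\Lambda$ lies over a single closed simplex of $\Lambda_{n-1}$ --- is exactly the right supplement.
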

\begin{proof}
Consider the set $A'=\{(f(\bs{x}), \bs{x})\in \bb{R}\times \bb{R}^n\mid \bs{x}\in A\}$. Since $f$ is symmetric relative to the action of $G$ on $\bb{R}^n$, $A'$ is a symmetric set relative to the action induced by $G$ on $\bb{R}\times \bb{R}^n$, and projection on the last $n$ coordinates gives an equivariant definable homeomorphism $p:A'\rightarrow A$. By Lemma \ref{thm:SymmetricTriangulationFnPrep}, we have a symmetric triangulation $(\Lambda, \Phi)$ of $A'$ which is adapted to the sets $S_1',\ldots, S_l'$ (with $S_i'=\{(f(\bs{x}),\bs{x})\mid \bs{x}\in S_l\}$) and has vertices in $\bb{Q}\times \bb{Q}(\{a_{i,j}\})^n$, and a definable function $\tau:\bb{R}\rightarrow \bb{R}$ such that $\tau\circ\pi\circ\Phi=\pi_{\restriction\abs{\Lambda}}$ (where $\pi:\bb{R}\times \bb{R}^n\rightarrow \bb{R}$ is projection on the first coordinate). Applying $f$ to $\bs{x}\in A$ is equivalent to applying $\pi$ to $(f(\bs{x}),\bs{x})\in A'$, and so taking $\rho:\abs{\Lambda}\rightarrow A$ to be $p\circ\Phi$, we have that $f\circ\rho=\tau^{-1}\circ\pi_{\restriction\abs{\Lambda}}$, which is an affine map on each simplex of $\Lambda$ by construction.
\end{proof}

\subsection{Equivariance and Hardt Triviality}\label{sect:HardtTriviality}

We will also want an equivariant version of Hardt Triviality for o-minimal sets. Because the proof uses the same sort of argument as equivariant triangulation, we include it within this section.

\begin{definition}[\cite{van1998tame} Chapter 9 Definitions 1.1 and 1.8]\label{def:VanDenDries9.1.1}
Let $X\subset \rcf^n$ and $A\subset \rcf^m$ be definable sets and $f:X\rightarrow A$ a continuous definable function. For $A'\subset A$, we say that $f$ is \emph{definably trivial} over $A'$ if for any $a\in A'$ there is a definable homeomorphism $h:f^{-1}(A')\rightarrow f^{-1}(a)\times A'$ such that the diagram
\[
\begin{tikzcd}
f^{-1}(A') \arrow[dr, "f"] \arrow[rr, "h"] & & f^{-1}(a)\times A'\arrow[dl, "\pi"']\\
& A'
\end{tikzcd}
\]
commutes (where $\pi$ is the projection on the second coordinate).

For $X_1,\ldots, X_l$ definable subsets of $X$, we say that the definable trivialization $h$ \emph{respects} $X_1,\ldots, X_l$ if $h$ maps each $X_j\cap f^{-1}(A')$ homeomorphically to $(X_j\cap f^{-1}(a))\times A'$.
\end{definition}

\begin{thm}[\cite{van1998tame} Chapter 9 Theorem 1.7]\label{thm:VanDenDries9.1.7}
Let $X\subset \rcf^n$ and $A\subset \rcf^m$ be definable sets, $f:X\rightarrow A$ a continuous definable function, and $X_1, \ldots, X_l$ definable subsets of $X$. Then we may partition $A$ into a finite number of definable subsets $A_i$ such that $f$ is definably trivial over each $A_i$ in a manner that respects each of $X_1,\ldots, X_l$.
\end{thm}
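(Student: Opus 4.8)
This is quoted from \cite{van1998tame} (Chapter 9, Theorem 1.7), so no argument is strictly needed here; but since the equivariant strengthening we are about to establish will mimic the equivariant triangulation arguments above, it is worth recalling how the classical statement is obtained, as that is the template I would follow.

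The plan is first to reduce to the case where $f$ is a coordinate projection: replace $X$ by its graph $\Gamma=\{(f(x),x)\mid x\in X\}\subset \rcf^m\times\rcf^n$, replace each $X_j$ by its image in $\Gamma$, and replace $f$ by the projection $\pi_m\colon\Gamma\to\rcf^m$ onto the first $m$ coordinates; a trivialization of $\pi_m$ respecting the transported subsets descends to one of $f$ respecting $X_1,\dots,X_l$. Then one inducts on $n$, the number of fiber coordinates, the case $n=0$ being trivial. For the inductive step, let $q\colon\rcf^m\times\rcf^n\to\rcf^m\times\rcf^{n-1}$ forget the last coordinate, and invoke the cell decomposition theorem to produce a finite cell decomposition of $\rcf^m\times\rcf^n$ that is compatible with $\Gamma$ and every $X_j$ and cylindrical over $q$ (so the $q$-images of its cells form a cell decomposition of $\rcf^m\times\rcf^{n-1}$). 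Over each base cell $D$, the set $\Gamma\cap q^{-1}(D)$, and likewise each $X_j\cap q^{-1}(D)$, is a disjoint union of graphs and bands of continuous definable functions $D\to\rcf$; fixing a point $d_0\in D$, one straightens these fiberwise, sending each graph to the graph of a constant and each band affinely onto the corresponding band over $d_0$ (rescaling unbounded bands through a fixed definable homeomorphism $\rcf\cong(-1,1)$, such as $t\mapsto t/\sqrt{1+t^2}$). This yields, over each $D$, a definable homeomorphism $\Gamma\cap q^{-1}(D)\cong Y_D\times D$ commuting with $q$ and carrying each $X_j$ to a product. Finally, apply the inductive hypothesis to the projection $\rcf^m\times\rcf^{n-1}\to\rcf^m$ restricted to the union of the base cells $D$, taking the $D$'s themselves as the distinguished subsets: this partitions $\rcf^m$ into definable sets $A_1,\dots,A_k$ over which that family, and hence each $D$, is definably trivial. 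Composing the fiberwise straightenings with these lower-dimensional trivializations and restricting to each $A_i$ produces the required trivialization of $\Gamma$ over $A_i$ respecting $X_1,\dots,X_l$.

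The one genuinely delicate point, and the reason this is not purely formal, is the same obstruction met in Lemma \ref{thm:RestrictedTriangulation}: the fiberwise straightening maps are defined cell by cell over the various base cells $D$ and need not, a priori, combine into a single definable homeomorphism, since along the common walls of the $D$'s the straightened last coordinate must match up. As in the triangulation proofs, one orders the construction by $\dim D$ and extends the already-built homeomorphism from $\partial\clos{D}$ into $\clos{D}$ using the conical structure of a closed cell, which forces the pieces to agree; compatibility with $X_1,\dots,X_l$ is then automatic, since the ambient decomposition was chosen compatible with them.
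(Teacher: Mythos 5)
This statement is quoted by the paper directly from van den Dries without proof, as you correctly note at the outset, so there is no argument in the paper to compare against. Your optional reconstruction captures the broad strategy of an induction on fiber dimension using cell decomposition, and you are right that compatibility of the fiberwise straightenings across cell boundaries is the genuinely delicate point. However, the proposed fix — extending from $\partial\clos{D}$ into $\clos{D}$ ``using the conical structure of a closed cell'' — does not transfer directly: closures of o-minimal cells (bands and graphs) are not cones, unlike the polyhedra $\clos{D}_\nu$ of Lemma \ref{thm:RestrictedTriangulation}, which are explicitly \emph{built} as iterated cones from chosen barycenters, a device particular to that triangulation construction. Van den Dries reconciles the straightenings by different bookkeeping internal to the induction. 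None of this affects the paper, which uses Theorem \ref{thm:VanDenDries9.1.7} strictly as a black box; and for the equivariant version (Theorem \ref{thm:EquivariantHardtTriviality}) the paper does exactly what you anticipate, applying the classical statement to the restriction of $f$ to a closed fundamental region with the walls among the respected subsets, then propagating by the group action.
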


We would like to show that, if $f$ is a symmetric function, then the homeomorphisms guaranteed by the trivialization are equivariant.

\begin{thm}\label{thm:EquivariantHardtTriviality}
Let $G$ be a finite reflection group acting on $\bb{R}^n$, and let $X\subset \bb{R}^n$ and $X_1,\ldots, X_l\subset X$ be symmetric relative to $G$. Say that $A\subset\bb{R}^m$ is definable  and $f:X\rightarrow A$ is a continuous, definable function symmetric relative to $G$. Then we may partition $A$ into a finite number of definable subsets $A_i$ such that  for each $i$ there is an equivariant homeomorphism $h_i:f^{-1}(A_i)\rightarrow f^{-1}(a_i)\times A_i$ giving a definable trivialization of $f$ over $A_i$ (where $a_i$ is any element of $A_i$, and the action on $f^{-1}(a_i)\times A$ is given by $g(x,y)=(g(x),y)$). Furthermore, each trivialization $h_i$ respects the subsets $X_1,\ldots, X_l$.
\end{thm}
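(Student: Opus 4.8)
The plan is to mimic the strategy of Theorem \ref{thm:SymmetricTriangulation}: perform the (non-equivariant) Hardt trivialization on the portion of $X$ lying over a fundamental region, then spread the result across all of $\bb{R}^n$ by the action of $G$, checking that the glued-together homeomorphisms are well-defined on the overlaps. Fix a fundamental region $H=\bigcap_{i=1}^k\{L_i(\bs{x})>0\}$ of $\bb{R}^n$ relative to $G$, with the associated sets $H_{\lambda_g}=\clos{H}\cap g(\clos{H})$ being the intersections of walls on which $g$ acts as the identity. Apply Theorem \ref{thm:VanDenDries9.1.7} to the restricted map $f\restriction_{f^{-1}(A)\cap (\clos{H}\times\bb{R}^{m})}$ — more precisely to $f$ with domain $X\cap \clos{H}$ — but with the list of subsets enlarged to include not only $X_1\cap\clos{H},\ldots, X_l\cap \clos{H}$ but also all sets of the form $X_j\cap H_\lambda$ (and $X\cap H_\lambda$ itself) for $\lambda\subset\{1,\ldots,k\}$ an index subset arising as some $\lambda_g$. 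This yields a finite partition of $A$ into definable pieces $A_i$ and, over each, a definable homeomorphism $h_i^{\id}:f^{-1}(A_i)\cap(X\cap\clos{H})\rightarrow (f^{-1}(a_i)\cap(X\cap\clos{H}))\times A_i$ over $A_i$, respecting all the listed subsets.

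Next I would define, for each piece $A_i$, the candidate trivialization $h_i:f^{-1}(A_i)\rightarrow (f^{-1}(a_i))\times A_i$ by $h_i(\bs{x},\cdot)=\big(g(\mathrm{pr}_1(h_i^{\id}(\bs{x}'))),\, f(\bs{x})\big)$, where $\bs{x}=g(\bs{x}')$ for some $g\in G$ and $\bs{x}'\in X\cap\clos{H}$, and $\mathrm{pr}_1$ is the projection to the fiber factor. (Here I use that $f$ symmetric forces $f(\bs{x})=f(\bs{x}')\in A_i$, so $\bs{x}'$ does indeed lie over $A_i$, and I should first argue that the reference fiber can itself be taken symmetric — e.g. by choosing $a_i$ so that $f^{-1}(a_i)$ decomposes compatibly, or simply by noting $f^{-1}(a_i)$ is automatically $G$-symmetric since $f$ is symmetric, so $g$ acts on it.) The verification that $h_i$ is well-defined is the crux: if $\bs{x}=g_1(\bs{x}_1')=g_2(\bs{x}_2')$ with $\bs{x}_1',\bs{x}_2'\in X\cap\clos{H}$, then $\bs{x}_1'=g_1^{-1}g_2(\bs{x}_2')$ lies in $\clos{H}\cap g_1^{-1}g_2(\clos{H})=H_{\lambda_{g_1^{-1}g_2}}$, so $\bs{x}_1'=\bs{x}_2'$ and both lie in this wall-intersection; because $h_i^{\id}$ was chosen to respect $X\cap H_{\lambda_{g_1^{-1}g_2}}$ and the trivialization is fiber-preserving, the fiber-coordinate $\mathrm{pr}_1(h_i^{\id}(\bs{x}_1'))$ again lies in the $g_1^{-1}g_2$-fixed locus of $f^{-1}(a_i)$, hence is fixed by $g_1^{-1}g_2$, giving $g_1(\mathrm{pr}_1(h_i^{\id}(\bs{x}_1')))=g_2(\mathrm{pr}_1(h_i^{\id}(\bs{x}_2')))$. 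This is exactly the "$\Phi$ is well-defined" argument from Theorem \ref{thm:SymmetricTriangulation}, transported to the fiber direction.

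The remaining points are routine once well-definedness is in hand. Each $h_i$ is a homeomorphism: surjectivity follows from that of $h_i^{\id}$ together with the symmetry $f^{-1}(A_i)=\bigcup_{g\in G} g\big(f^{-1}(A_i)\cap(X\cap\clos{H})\big)$; injectivity follows by the same wall-fixing argument applied in reverse (if $h_i(\bs{x})=h_i(\bs{y})$ then their $A_i$-coordinates agree and their fiber coordinates agree, forcing the $\clos{H}$-representatives to coincide and then $\bs{x}=\bs{y}$); continuity is continuity on each closed piece $g(X\cap\clos{H})$ with agreement on overlaps. Commutativity of the triangle with $\pi$ is immediate since the second coordinate of $h_i$ is just $f$. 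Equivariance $h_i\circ g' = (g'\times\id)\circ h_i$ holds by construction: replacing $\bs{x}$ by $g'(\bs{x})$ replaces the chosen pair $(g,\bs{x}')$ by $(g'g,\bs{x}')$, so the fiber coordinate is hit by $g'g$ in place of $g$. Finally, $h_i$ respects each $X_j$: since $h_i^{\id}$ maps $X_j\cap\clos{H}\cap f^{-1}(A_i)$ onto $(X_j\cap f^{-1}(a_i)\cap\clos{H})\times A_i$ and everything is built by applying group elements, $h_i$ maps $X_j\cap f^{-1}(A_i)=\bigcup_{g} g(X_j\cap\clos{H}\cap f^{-1}(A_i))$ onto $(X_j\cap f^{-1}(a_i))\times A_i$.

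**Main obstacle.** The one place requiring genuine care, rather than transcription of the triangulation argument, is ensuring that the fiber $f^{-1}(a_i)$ and the trivialization $h_i^{\id}$ interact correctly with the wall strata: I must include enough sets in the list fed to Theorem \ref{thm:VanDenDries9.1.7} (all the $X\cap H_\lambda$ and $X_j\cap H_\lambda$, ranging over the finitely many relevant index sets $\lambda$) so that $h_i^{\id}$ carries the $g$-fixed locus inside $X\cap\clos{H}$ into the $g$-fixed locus inside $f^{-1}(a_i)$, which is what makes the well-definedness computation close. A subtlety worth spelling out is that "being fixed by $g$" for a point of the reference fiber $f^{-1}(a_i)$ should be read as: lying in a stratum that $h_i^{\id}$ identified with $(X\cap H_\lambda\cap f^{-1}(a_i))\times A_i$ for the appropriate $\lambda$, and on such a stratum $g$ — which is a linear reflection composite fixing $H_\lambda$ pointwise — acts trivially on the $\bb{R}^n$-coordinates. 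Since $g$ acts on $f^{-1}(a_i)\subset\bb{R}^n$ by its (linear) action on $\bb{R}^n$, this is literally fixing those points, so the argument goes through.
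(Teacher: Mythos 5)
Your proposal is correct and follows essentially the same route as the paper: apply Hardt triviality to $f$ restricted to $X\cap\clos{H}$ with the respected-subsets list enlarged to include the wall intersections $X\cap H_\lambda$, then glue the resulting trivializations across translates $g(\clos{H})$ of the fundamental region and check well-definedness via the fact that the trivialization respects the wall strata fixed by $g_1^{-1}g_2$. Your inclusion of the extra sets $X_j\cap H_\lambda$ in the list fed to Hardt triviality is harmless but unnecessary (respecting a collection of subsets is automatically closed under finite intersection, since the trivializing map is a bijection), and the paper relies only on the $X\cap H_\lambda$ strata for the well-definedness argument.
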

\begin{proof}
Let $H$ be a fundamental region of $\bb{R}$ with respect to $G$. We apply Theorem \ref{thm:VanDenDries9.1.7} to $f_{\restriction\clos{H}}:\clos{H}\cap X\rightarrow A$, asking that our definable trivializations respect the sets $\clos{H}\cap X_1,\ldots, \clos{H}\cap X_l$ as well as each $H_\lambda \cap X$ for $H_{\lambda}$ an intersection of walls of $H$. We obtain a finite partition $\{A_i\}$ of $A$, as well as homeomorphisms $h_{\id,i}:f^{-1}_{\restriction\clos{H}}(A_i)\rightarrow f^{-1}_{\restriction\clos{H}}(a_i)\times A_i$ which respect both the portions of each $X_j$ within $\clos{H}$ and the intersections of $X$ with the walls of $H$.

Since $f$ is a symmetric function, we know that for $a_i\in A_i$,
\[
f^{-1}(a_i)=\bigcup_{g\in G} g(f^{-1}_{\restriction\clos{H}}(a_i))
\]
and that the same holds true for $f^{-1}(A_i)$. We define $h_i:f^{-1}(A_i)\rightarrow f^{-1}(a_i)\times A_i$ by $h_i(\bs{x})=g(h_{\id,i}(\bs{x}'))$, where $\bs{x}'\in f^{-1}(A_i)\cap \clos{H}$ and $g\in G$ are such that $g(\bs{x}')=\bs{x}$. We must show that $h_i$ is a definable trivialization of $f$.

We may use arguments similar to those of Theorem \ref{thm:SymmetricTriangulation} to establish that $h_i$ is a homeomorphism from $f^{-1}(A_i)$ to $f^{-1}(a_i)\times A_i$. The map $h_i$ is equivariant by construction, and $h_i$ sends each $X_j\cap f^{-1}(A_i)$ homeomorphically to $(X_j\cap f^{-1}(a_i))\times A_i$ by construction and the symmetry of each $X_j$. Say that $\bs{x}\in f^{-1}(A_i)$ with $f(\bs{x})=\bs{y}$. Then if $\bs{x}=g(\bs{x}')$ for some $g\in G$ and $\bs{x}'\in f^{-1}(A_i)\cap \clos{H}$, by symmetry we know that $f(\bs{x}')=\bs{y}$ as well, and so $h(\bs{x}')=(\bs{z},\bs{y})$ for some $\bs{z}\in f^{-1}(a_i)$. Then $h(\bs{x})=g(\bs{z},\bs{y})=(g(\bs{z}),\bs{y})$, so $h_i$ is indeed a definable trivialization of $f$ over $A_i$.
\end{proof}

\section{Equivariant Versions of some Topological Results}\label{sect:EquivarianceTopology}

We address some topological aspects of the construction in this section. Specifically, for $X$ a regular CW complex whose cells are convex polyhedra (so specifically, for $X$ a simplicial complex), we explicitly describe certain maps between $X$ and the order complex of the face poset of $X$, in aid of demonstrating equivariance. We also establish an equivariant version of a Nerve Theorem due to Bj\"{o}rner.

\begin{definition}[\cite{bjorner2005combinatorics} Appendix A2]
Let $P$ be a partially ordered set. The \emph{order complex} of $P$, $\Delta(P)$, is the abstract simplicial complex with vertex set the elements of $P$ and simplices given by finite chains $x_0<\cdots<x_d$ of elements of $P$.
\end{definition}

The order complex of the face poset of a simplicial complex coincides with the first barycentric subdivision of that simplicial complex. For a regular CW complex, then, taking the order complex of the face poset serves as a generalized barycentric subdivision:

\begin{thm}[\cite{lundell2012topology} Theorem 1.7]\label{thm:Lundell1.7}
Let $X$ be a regular CW complex. Then $\Delta(\fpos{X})$ is homeomorphic to $X$.
\end{thm}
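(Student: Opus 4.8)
This is a classical fact; the argument I would give builds the homeomorphism $\abs{\Delta(\fpos X)}\to X$ one dimension at a time, by induction over the skeleta $X^{(0)}\subseteq X^{(1)}\subseteq\cdots$ of $X$, using two consequences of regularity. First, for each $n$-cell $e$ the characteristic map $\Phi_e\colon D^n\to\clos e$ is a homeomorphism onto the closed cell; second, each $\clos e$ is a subcomplex. Together these show that $\clos e\setminus e$ is a subcomplex, homeomorphic to $S^{n-1}$ via $\Phi_e|_{\partial D^n}$, and that the cells contained in it are exactly the cells strictly below $e$, so $\fpos{\clos e\setminus e}=\{c\in\fpos X:c<e\}=:\fpos X_{<e}$, whereas $\fpos X_{\le e}$ has $e$ as a maximum.

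Because $\fpos X_{\le e}$ has a top element, every chain in it is a chain in $\fpos X_{<e}$ possibly augmented by $e$, so $\Delta(\fpos X_{\le e})$ is the cone on $\Delta(\fpos X_{<e})$ with apex the vertex $e$; thus $\abs{\Delta(\fpos X_{\le e})}$ is the topological cone on $\abs{\Delta(\fpos X_{<e})}$, exactly as $D^n$ is the topological cone on $S^{n-1}$. The induction I would run asserts that for each $n$ there is a homeomorphism $f_n\colon\abs{\Delta(\fpos{X^{(n)}})}\to X^{(n)}$ that \emph{respects subcomplexes}, i.e. $f_n(\abs{\Delta(\fpos Y)})=Y$ for every subcomplex $Y\subseteq X^{(n)}$; here $\Delta(\fpos{X^{(n)}})$ is the subcomplex of $\Delta(\fpos X)$ whose simplices are the chains of cells of dimension $\le n$. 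The case $n=0$ is immediate. For the step, fix an $n$-cell $e$: the subcomplex clause for $f_{n-1}$ gives a homeomorphism $\abs{\Delta(\fpos X_{<e})}\to\clos e\setminus e$, which composed with $(\Phi_e|_{\partial D^n})^{-1}$ is a homeomorphism $\abs{\Delta(\fpos X_{<e})}\to S^{n-1}$; coning this map (legitimate since the two sides are precisely $\abs{\Delta(\fpos X_{\le e})}$ and $D^n$) and post-composing with $\Phi_e$ yields a homeomorphism $\abs{\Delta(\fpos X_{\le e})}\to\clos e$ agreeing with $f_{n-1}$ on the shared boundary. Performing this simultaneously for all $n$-cells and gluing to $f_{n-1}$ produces $f_n$ --- the pieces are compatible because for distinct $n$-cells $e,e'$ one has $\fpos X_{\le e}\cap\fpos X_{\le e'}\subseteq\fpos{X^{(n-1)}}$, where every map involved equals $f_{n-1}$ --- and one checks directly that $f_n$ again respects subcomplexes, using that property for $f_{n-1}$ and the fact that any subcomplex of $X^{(n)}$ is the union of its trace on $X^{(n-1)}$ with the closed cells it contains.

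When $X$ is finite-dimensional (which covers all our applications) this concludes the proof with $f=f_{\dim X}$; otherwise one passes to the colimit over skeleta, where homeomorphisms compatible along the closed inclusions $X^{(n-1)}\hookrightarrow X^{(n)}$ assemble to a homeomorphism for the weak topologies. I expect the gluing in the inductive step to be the only subtle point, and the auxiliary ``respects subcomplexes'' hypothesis is precisely what makes it go through; this same canonicity is what later permits an equivariant refinement, since one can run the construction on the part of $X$ inside a fundamental region and then propagate it over $X$ by the action of $G$.
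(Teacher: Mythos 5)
Your argument is correct and is essentially the same one the paper sketches after citing Lundell--Weingram: induct over skeleta, using the fact that each characteristic map of a regular cell endows $\clos e$ with a cone structure that matches the cone structure of $\abs{\Delta(\fpos{X}_{\le e})}$ over $\abs{\Delta(\fpos{X}_{< e})}$, and glue cell-by-cell. The paper delegates the details to the cited reference and instead develops a more rigid, canonical variant (Definition~\ref{def:CentroidalHomeomorphism}) for polyhedral cells precisely because, as you implicitly rely on the characteristic maps $\Phi_e$ (which are not canonical), the resulting homeomorphism need not be equivariant -- your closing remark about propagating from a fundamental region is a plausible fix but differs from what the paper actually does.
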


The proof involves choosing a point in the interior of each cell $\sigma$ to act as the barycenter of the cell, and sending the vertex of $\Delta(\fpos{X})$ corresponding to $\sigma$ to that barycenter. Elsewhere, the homeomorphism arises from the fact that the characteristic maps used to assemble $X$ as a regular CW complex endow each cell with the structure of a cone with the barycenter as vertex and boundary as base. Unfortunately, this construction as it stands is not specific enough to ensure equivariance without some sort of equivariance condition on the characteristic maps. However, if we know that each cell of $X$ is, for example, a convex polyhedron, we may make our choices in a manner sufficiently canonical to ensure equivariance. Note that by polyhedron in this context we mean a bounded subset of $\rcf^n$ obtained by intersecting a finite number of affine half planes.

\begin{definition}\label{def:CentroidalHomeomorphism}
Let $X\subset \rcf^n$ be a regular CW complex in which each cell is a convex polyhedron. Then we will refer to the homeomorphism $\Psi$ described below as the \emph{centroidal} homeomorphism $\abs{\Delta(\fpos{X})}\rightarrow X$.

We define $\Psi$ inductively. Let $X^k$ denote the $k$-skeleton of $X$. In the $k=0$ case, let $\Psi^0: \abs{\Delta(\fpos{X^0})}\rightarrow X^0$ send the vertex $v_\sigma$ of $\Delta(\fpos{X^0})$ corresponding to the $0$-dimensional cell $\sigma=\{x\}$ to the point $x\in X$.

Now say $k\geq 1$. Assume we have defined $\Psi^{k-1}:\abs{\Delta(\fpos{X^{k-1}})}\rightarrow X^{k-1}$ and established that $\Psi^{k-1}$ is a homeomorphism. We may identify $\Delta(\fpos{X^{k-1}})$ with the subset of $\Delta(\fpos{X^k})$ consisting of simplices whose vertices correspond to cells of $X$ of dimension less than $k$; on this subset, let $\Psi^k$ agree with $\Psi^{k-1}$. Now, assume $\sigma$ is a cell of dimension $k$, and let $v_\sigma$ be the vertex of $\Delta(\fpos{X^k})$ corresponding to $\sigma$. Define $\Psi^k(v_\sigma)=b_\sigma$, where $b_\sigma$ is the centroid of the cell $\sigma$ (which is in the interior of $\sigma$ by convexity). Say $x\in \abs{\Delta(\fpos{X^k})}$. Then we may uniquely write $x=tv_\sigma+(1-t)x'$, where $\sigma$ is a cell of $X$ of dimension $k$, $x'$ is in the realization of some simplex of $\Delta(\fpos{X^{k-1}})$, and $t\in [0,1]$. Set $\Psi^k(x)=tb_\sigma+(1-t)\Psi^{k-1}(x')$. Since each cell $\sigma$ of $X$ can be seen as a cone with base $\partial \sigma$ and vertex $b_\sigma$, this gives a well-defined homeomorphism to $X^k$. Since $X=\bigcup_k X^k$, we inductively obtain our desired homeomorphism $\Psi:\abs{\Delta(\fpos{X})}\rightarrow X$.
\end{definition}

When we refer to the barycentric subdivision of a polyhedral CW complex $X\subset \rcf^n$, we will mean the simplicial complex in $\rcf^n$ whose geometric realization is equal as a set to $X$ and whose cell structure is inherited from this map.

\begin{remark}\label{thm:SymmetricOrderComplex}
Let $X\subset \rcf^n$ be a regular CW complex whose cells are all convex polyhedra. If $G$ is a group acting linearly on $X$ such that $X$ is symmetric as a CW complex under the action of $G$, then the action of $G$ on $X$ induces an action of $G$ on $\Delta(\fpos{X})$ under which $\Delta(\fpos{X})$ is symmetric, and the homeomorphism $\Psi:\abs{\Delta(\fpos{X})}\rightarrow X$ of Definition \ref{def:CentroidalHomeomorphism} is equivariant.
\end{remark}

We will also want to consider subsets of simplicial or polyhedral CW complexes which are not full subcomplexes. Let $X$ be polyhedral CW complex and let $Y$ be a union of cells of $X$. Gabrielov and Vorobjov in \cite{gabrielov2009approximation} Remark 2.12 describe how we may consider only those cells of the barycentric subdivision of $X$ which are contained with their closures in $Y$. This gives us a subcomplex of the barycentric subdivision of $X$ which is homotopy equivalent to $Y$, and which may replace $Y$ in our applications. Though this procedure is relatively standard, we will describe the contraction explicitly so that we may ensure equivariance.

\begin{definition}\label{def:BarycentricRetraction}
Let $X\subset \rcf^n$ be a regular CW complex whose cells are all convex polyhedra, and let $Y$ be a union of cells of $X$. Let $\widehat{X}$ be the barycentric subdivision of $X$, and let $\widehat{Y}=\{\Delta\in \widehat{X}\mid \Delta\subset Y\}$. Define the \emph{barycentric retraction} of $Y$ to be
\[
\br(Y)=\{\Delta\in \widehat{X}\mid \clos{\Delta}\subset Y\}
\]
\end{definition}

\begin{prop}\label{thm:BarycentricRetractingMap}
     Say $Y$ is a union of cells of some regular CW complex $X\subset \rcf^n$ whose cells are all convex polyhedra. Then $\br(Y)$ is homotopy equivalent to $Y$
\end{prop}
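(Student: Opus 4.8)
The plan is to construct an explicit deformation retraction of $Y$ onto $\abs{\br(Y)}$ inside the barycentric subdivision $\widehat{X}$. Recall that $\widehat{Y}=\{\Delta\in\widehat{X}\mid\Delta\subset Y\}$ is a simplicial subcomplex of $\widehat{X}$ whose realization equals $Y$ as a set (since $Y$ is a union of cells of $X$ and the barycentric subdivision refines $X$), and $\br(Y)$ is the full subcomplex of $\widehat{Y}$ spanned by those vertices $b_\sigma$ with $\clos\sigma\subset Y$ — equivalently, those cells $\sigma$ of $X$ all of whose faces are contained in $Y$. Thus $\br(Y)$ is exactly the set of simplices of $\widehat Y$ whose closure lies in $Y$, i.e. the \emph{core} of $\widehat Y$ in the standard sense, and the claim is the standard fact that a regular CW complex (here simplicial) deformation retracts onto the full subcomplex spanned by its ``deep'' vertices.

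The key steps, in order: \textbf{(1)} Identify the vertices of $\widehat Y$ not lying in $\br(Y)$: these are exactly the barycenters $b_\sigma$ of cells $\sigma$ of $X$ with $\sigma\subset Y$ but $\clos\sigma\not\subset Y$. \textbf{(2)} Order these ``bad'' vertices $b_{\sigma_1},\dots,b_{\sigma_N}$ so that $\dim\sigma_1\le\dim\sigma_2\le\cdots\le\dim\sigma_N$ (any linear extension of the face order, reversed, works). \textbf{(3)} For each bad vertex $b_{\sigma_i}$, in turn, observe that its link in the current complex is a cone: concretely, the open star of $b_{\sigma_i}$ in $\widehat Y$ consists of simplices $\clos\Delta(b_{\tau_0},\dots,b_{\tau_r},b_{\sigma_i})$ where $\tau_0<\cdots<\tau_r$ is a flag of proper faces of $\sigma_i$; because $\sigma_i$ is a convex polyhedron, its boundary complex is a combinatorial sphere, and the subcomplex of the boundary barycentric subdivision whose simplices (together with their closures) lie in $Y$ is nonempty and — crucially — contractible. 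Then one can elementarily deformation retract the closed star of $b_{\sigma_i}$ onto the union of its faces that avoid $b_{\sigma_i}$, removing $b_{\sigma_i}$. \textbf{(4)} Compose these retractions over $i=1,\dots,N$; the result is a deformation retraction of $Y=\abs{\widehat Y}$ onto $\abs{\br(Y)}$, hence $\br(Y)\simeq Y$.

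Alternatively, and perhaps more cleanly, one can argue by a downward induction on the skeleta of $X$: let $X^{(j)}$ be the union of those cells $\sigma$ of $X$ with $\dim\sigma\le j$, and show that $Y$ deformation retracts, one dimension at a time starting from the top, collapsing away each maximal-dimensional cell $\sigma\subset Y$ with $\clos\sigma\not\subset Y$ onto the part of its boundary inside $Y$. The engine in either version is the same local fact: if $\sigma$ is a convex polyhedron and $Z\subsetneq\clos\sigma$ is a union of (relatively open) faces of $\sigma$ with $\sigma\not\subset Z$ but $Z$ containing enough of $\partial\sigma$ that $Z\cap\partial\sigma$ is a nonempty deformation retract of $\partial\sigma$ minus a point, then $\sigma\cup(Z\cap\partial\sigma)$ deformation retracts onto $Z\cap\partial\sigma$; and one must check $Z\cap\partial\sigma$ is nonempty, which follows because any face of $\sigma$ minimal among faces \emph{not} contained in $Y$ has all its proper faces in $Y$, so its barycenter lies in $\br(Y)$.

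The main obstacle I expect is \textbf{step (3)}: verifying that, for each cell $\sigma$ of $X$ contained in $Y$, the relevant link inside $\widehat Y$ is genuinely contractible (and that the retractions at successive stages are compatible, so they compose). This amounts to checking that the subcomplex of $\partial\sigma$'s barycentric subdivision consisting of closed simplices inside $Y$ is a deformation retract of the appropriate punctured boundary sphere — intuitively clear from convexity of $\sigma$ and the fact that $Y$ is a union of faces, but it requires a careful poset/combinatorial argument (e.g. showing this subcomplex is a cone, or using discrete Morse theory / a nerve-type argument on the faces of $\sigma$ inside $Y$). Since the present paper in any case requires the \emph{equivariant} refinement of this statement immediately afterward, one should phrase the construction so that all the choices (the ordering of bad vertices may be taken $G$-invariant on orbits, the centroid-based local retractions are canonical) are compatible with a linear $G$-action, but for this proposition alone the non-equivariant statement suffices.
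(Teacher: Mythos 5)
Your proposal rests on a misreading of the definitions, and the iterative collapse you outline in steps (1)--(4) would not get off the ground. First, $\widehat{Y}$ is not a simplicial subcomplex of $\widehat{X}$: since $Y$ need not be closed, a simplex of $\widehat{X}$ contained in $Y$ may have faces lying outside $Y$. (Take $X=[0,1]$ with cells $\{0\}$, $(0,1)$, $\{1\}$ and $Y=[0,1)$; then $(1/2,1)\in\widehat{Y}$ but its face $\{1\}\notin\widehat{Y}$.) Second, the $0$-simplices belonging to $\br(Y)$ are precisely those $\{b_\sigma\}$ with $\sigma\subset Y$, not with $\clos{\sigma}\subset Y$: since $\clos{\{b_\sigma\}}=\{b_\sigma\}$ and the barycenter $b_\sigma$ lies in $Y$ exactly when the open cell $\sigma$ does. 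This coincides with the set of $0$-simplices of $\widehat{Y}$, so the ``bad vertices'' you seek to remove in step (1) do not exist. (In the example, $\abs{\br(Y)}=[0,1/2]$, whereas the full subcomplex on your proposed vertex set would be just the point $\{0\}$.) The real difference between $\widehat{Y}$ and $\br(Y)$ lives in higher-dimensional simplices: a simplex of $\widehat{Y}$ corresponds to a flag of cells of $X$ whose top cell lies in $Y$, while a simplex of $\br(Y)$ corresponds to a flag all of whose cells lie in $Y$.

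The paper avoids iterative collapses, link contractibility, and orderings entirely by writing down the homotopy in closed form. For $\Delta=\Delta(v_0,\ldots,v_d)\in\widehat{Y}$ it sets $J_\Delta=\{v_i\mid v_i\in\br(Y)\}$ --- nonempty since the top cell of the associated flag lies in $Y$ --- and observes that $\clos{\Delta}\cap\abs{\br(Y)}$ is exactly the face of $\clos{\Delta}$ spanned by $J_\Delta$. Then, for $x=\sum_i t_{v_i}v_i\in\clos{\Delta}\cap Y$,
\[
h_{\br}(t,x)=t\sum_{v_i\notin J_\Delta}t_{v_i}v_i+\frac{1-t\sum_{v_i\notin J_\Delta}t_{v_i}}{\sum_{v_i\in J_\Delta}t_{v_i}}\sum_{v_i\in J_\Delta}t_{v_i}v_i
\]
is the straight-line homotopy in barycentric coordinates from the identity at $t=1$ to the renormalized projection onto the good face at $t=0$. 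The denominator is positive because the open face of $\clos{\Delta}$ containing $x$ has its top vertex in $J_\Delta$, and the formula agrees across shared faces, so it globalizes to $h_{\br}:[0,1]\times Y\to Y$ with $h_{\br}(0,-)$ a retraction onto $\abs{\br(Y)}$. Besides being shorter, this construction is manifestly equivariant under linear group actions (Proposition~\ref{thm:BREquivariance}), whereas your order-dependent collapse sequence would need additional care to arrange.
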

\begin{proof}
    We construct a homotopy $h_{\br}:[0,1]\times Y\rightarrow Y$, which we will term the \emph{barycentric retracting map}.

    Observe that $\br(Y)\subset Y$ is a subcomplex of $\widehat{X}$, whose vertex set is precisely those $0$-simplices of $\widehat{X}$ corresponding to cells of $Y$. In fact, the simplices of $\br(Y)$ correspond precisely to flags of cells in $Y$. This means that $Y=\emptyset$ iff $\br(Y)=\emptyset$.

    Let $\Delta=\Delta(v_0,\ldots, v_d)$ be a simplex of $\widehat{Y}$, and define the set $J_\Delta=\{v_{i_0},\ldots,v_{i_{d'}}\}=\{v_i\in\{v_0,\ldots,v_d\}\mid v_i\in \br(Y)\}$. Then if we let $\Delta'=\Delta(v_{i_0},\ldots, v_{i_{d'}})$, it is clear from the correspondence of simplices of $\br(Y)$ to flags of cells in $Y$ that $J_\Delta$ is nonempty and that $\clos{\Delta}\cap \abs{\br(Y)}=\clos{\Delta}'$. Say that $x\in Y$ with $x\in \clos{\Delta}$. Then we may write $x$ uniquely as $x=\sum_{i=0}^d t_{v_i} v_i$, where $0\leq t_{v_i}\leq 1$ for each $i$ and $\sum t_{v_i} = 1$. Then, for $t\in [0,1]$, let
    \[
    h_{\br}(t,x)=t \sum_{v_i\not\in J_\Delta} t_{v_i} v_{i} + \frac{1-t\sum_{v_i\not\in J_\Delta} t_{v_i}}{\sum_{v_i\in J_\Delta} t_{v_i}} \sum_{v_i\in J_\Delta}t_{v_i} v_i
    \]
    From this, we obtain our desired map $h_{\br}:[0,1]\times Y\rightarrow \abs{\widehat{Y}}$, noting that the definition of $h_{\br}$ agrees on the boundaries of simplices. Observe that $h_{\br}(0,-):Y\rightarrow \abs{\br(Y)}$ serves as homotopy inverse to the inclusion $\abs{\br(Y)}\hookrightarrow Y$.
\end{proof}

\begin{prop}\label{thm:BREquivariance}
Let $X\subset\rcf^n$ be a regular CW complex whose cells are all convex polyhedra, and let $Y$ be a union of cells of $X$. If $X$ is symmetric as a CW complex under the action of some group $G$ acting linearly on $X$ and $Y$ is also symmetric relative to $G$, then $h_{\br}:[0,1]\times Y\rightarrow Y$ is equivariant (where the action of $G$ on $[0,1]\times Y$ is given by $g(t,x)=(t,g(x))$ for all $g\in G$).
\end{prop}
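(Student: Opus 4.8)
The plan is to prove equivariance directly from the explicit formula for $h_{\br}$ produced in the proof of Proposition \ref{thm:BarycentricRetractingMap}. Since the $G$-action on $[0,1]\times Y$ is trivial on the first coordinate, it suffices to show $h_{\br}(t,g(\bs{x}))=g(h_{\br}(t,\bs{x}))$ for every $g\in G$, $t\in[0,1]$, and $\bs{x}\in Y$.

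First I would record the equivariance of the combinatorial data underlying the definition. Because $G$ acts linearly on $X$ and $X$ is symmetric as a CW complex, each $g\in G$ permutes the cells of $X$ and preserves the face relation; being linear, $g$ carries the centroid $b_\sigma$ of a cell $\sigma$ to the centroid $b_{g(\sigma)}$ of $g(\sigma)$. Hence $g$ restricts to a simplicial automorphism of the barycentric subdivision $\widehat{X}$, sending the vertex of $\widehat{X}$ associated to $\sigma$ to the one associated to $g(\sigma)$ --- this is the action of $G$ on $\Delta(\fpos{X})$ furnished by Remark \ref{thm:SymmetricOrderComplex}. Since $Y$ is a symmetric union of cells of $X$ and $g$ is a homeomorphism (so commutes with closure), it follows that $\widehat{Y}=\{\Delta\in\widehat{X}\mid \Delta\subset Y\}$ and $\br(Y)=\{\Delta\in\widehat{X}\mid\clos{\Delta}\subset Y\}$ are each symmetric under $G$; in particular a vertex $v$ of $\widehat{X}$ lies in $\br(Y)$ if and only if $g(v)$ does.

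Next I would transport the barycentric-coordinate expression of a point through $g$. Fix $\bs{x}\in Y$ and a simplex $\Delta=\Delta(v_0,\dots,v_d)\in\widehat{Y}$ with $\bs{x}\in\clos{\Delta}$, so that $\bs{x}=\sum_{i=0}^d t_{v_i}v_i$. Applying the linear map $g$ gives $g(\bs{x})=\sum_{i=0}^d t_{v_i}\,g(v_i)$, and $g(\Delta)=\Delta(g(v_0),\dots,g(v_d))$ is a simplex of $\widehat{Y}$ containing $g(\bs{x})$ in its closure; since $g$ is invertible and linear it preserves affine independence, so the $t_{v_i}$ are precisely the barycentric coordinates of $g(\bs{x})$ on $\clos{g(\Delta)}$. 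By the preceding paragraph $J_{g(\Delta)}=\{g(v_i)\mid v_i\in J_\Delta\}$, so ``$g(v_i)\in J_{g(\Delta)}$'' is equivalent to ``$v_i\in J_\Delta$''. Substituting these identifications into the defining formula for $h_{\br}$ and pulling $g$ out by linearity yields
\[
h_{\br}(t,g(\bs{x}))=g\!\left(t\sum_{v_i\notin J_\Delta}t_{v_i}v_i+\frac{1-t\sum_{v_i\notin J_\Delta}t_{v_i}}{\sum_{v_i\in J_\Delta}t_{v_i}}\sum_{v_i\in J_\Delta}t_{v_i}v_i\right)=g(h_{\br}(t,\bs{x})).
\]
I would emphasize that this computation relies on the value of $h_{\br}$ being independent of the choice of closed simplex of $\widehat{Y}$ containing the argument (established in the proof of Proposition \ref{thm:BarycentricRetractingMap}), which is what legitimizes evaluating $h_{\br}(t,g(\bs{x}))$ by means of the simplex $g(\Delta)$.

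The only step demanding genuine attention --- the main obstacle, such as it is --- is the first one: the canonicity of the barycentric subdivision under the $G$-action, i.e.\ the fact that $g$ sends the chosen barycenter of each cell to the chosen barycenter of its image. This is exactly why the centroid (center of mass) is the right choice of barycenter and why linearity of the $G$-action is essential; once this is in place, the remainder is the bookkeeping with barycentric coordinates already carried out in the displayed identity above.
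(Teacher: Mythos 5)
Your proof is correct and follows the same route as the paper's: establish symmetry of $\widehat{X}$, $\widehat{Y}$, and $\br(Y)$, deduce $g(J_\Delta)=J_{g(\Delta)}$, and conclude by linearity; the paper states this in three sentences while you spell out the intermediate bookkeeping (centroids map to centroids, barycentric coordinates are preserved, well-definedness across simplices) that the paper leaves implicit.
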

\begin{proof}
From the symmetry of $X$ and $Y$, we have that $\widehat{X}$, $\widehat{Y}$, and $\br(Y)$ are all symmetric. Hence, for $\Delta\in \widehat{Y}$, $g(J_\Delta)=J_{g(\Delta)}$. Equivariance of $h_{\br}$ then follows from the linearity of the action of $G$.
\end{proof}

Even when $Y$ is not a full subcomplex of $X$, we may make sense of the simplicial complex $\Delta(\fpos{Y})$. As described in Definition \ref{def:BarycentricRetraction}, we may identify $\Delta(\fpos{Y})$ with $\br(Y)$. Furthermore, if $X$ and $Y$ are both symmetric, then the homeomorphism $\abs{\Delta(\fpos{Y})}\rightarrow \abs{\br(Y)}$ is equivariant.

We conclude by addressing equivariance in B\"{o}rner's Nerve Theorem, which will be used in Section \ref{sect:Tau}.

\begin{definition}[\cite{gabrielov2009approximation} Definition 2.9]\label{def:Gabrielov2.9}
Let $\{X_i\}_{i\in I}$ be a family of sets. The \emph{nerve} of $\{X_i\}_{i\in I}$ is the abstract simplicial complex $\mc{N}$ having vertex set $I$ and simplices given by the finite subsets $\sigma$ of $I$ such that
\[
\bigcap_{i\in \sigma} X_i\neq \emptyset
\]
\end{definition}

A variety of theorems exist which relate a space covered by a family of sets to the nerve of that cover. Bj\"{o}rner's Nerve Lemma has the advantage of only requiring the triviality of sufficiently many homotopy groups of intersections of sets in the cover (with a slightly weaker conclusion in exchange).

\begin{thm}[Nerve Theorem, \cite{bjorner2003nerves} Theorem 6 and Remark 7]\label{thm:NerveTheorem}
Let $X$ be a regular connected CW complex and $\{X_i\}_{i\in I}$ a family of subcomplexes with $X=\bigcup_{i\in I} X_i$. Let $\mc{N}$ be the nerve of $\{X_i\}$
\begin{enumerate}[(i)]
    \item Say that every finite nonempty intersection $X_{i_1}\cap\ldots\cap X_{i_t}$ is $(k-t+1)$-connected. Then there is a map $f:X\rightarrow \abs{\mc{N}}$ such that the induced homomorphism $f_{\#,j}:\pi_j(X)\rightarrow \pi_j(\abs{\mc{N}})$ is an isomorphism for all $j\leq k$ and an epimorphism for $j=k+1$.
    \item If every finite nonempty intersection $X_{i_1}\cap\ldots\cap X_{i_t}$ is contractible, then $f$ gives a homotopy equivalence $X\simeq \abs{\mc{N}}$.
\end{enumerate}
\end{thm}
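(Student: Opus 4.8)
The statement is Bj\"{o}rner's, so in the paper it is quoted rather than reproved; still, here is the route I would take, since the construction involved recurs implicitly later. The plan is to interpose between $X$ and $\abs{\mc{N}}$ the homotopy colimit (``blow-up'') of the diagram of intersections. For a nonempty simplex $\sigma=\{i_0,\ldots,i_{t-1}\}$ of $\mc{N}$ set $X_\sigma=X_{i_0}\cap\cdots\cap X_{i_{t-1}}$; an intersection of subcomplexes of a regular CW complex is again a subcomplex, hence a CW complex. Writing $\clos{\Delta}_\sigma$ for the closed geometric simplex on the vertex set $\sigma$, form
\[
\Delta X = \Bigl( \bigsqcup_{\sigma \in \mc{N}} \clos{\Delta}_\sigma \times X_\sigma \Bigr) / {\sim},
\]
gluing $\clos{\Delta}_\tau \times X_\sigma$ into $\clos{\Delta}_\sigma \times X_\sigma$ whenever $\tau \subseteq \sigma$, via the face inclusion $\clos{\Delta}_\tau \hookrightarrow \clos{\Delta}_\sigma$ together with the inclusion $X_\sigma \hookrightarrow X_\tau$. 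This $\Delta X$ is a CW complex, and the two projections give maps $p \colon \Delta X \to X$ (to the $X_\sigma$ factor, followed by $X_\sigma \hookrightarrow X$) and $q \colon \Delta X \to \abs{\mc{N}}$ (to the $\clos{\Delta}_\sigma$ factor).

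I would then carry out three steps. \emph{Step 1: $p$ is a weak homotopy equivalence.} The point-inverse $p^{-1}(x)$ is the realization of the nerve of $\{X_i \mid x \in X_i\}$, a full simplex, hence contractible; the standard projection lemma for homotopy colimits over a closed cover by subcomplexes (proved by induction over the cells of $X$) promotes this to a weak equivalence, and uses none of the connectivity hypotheses. \emph{Step 2: $q$ is $(k+1)$-connected} (i.e.\ $\pi_j$-iso for $j \le k$, $\pi_{k+1}$-epi). Filter $\Delta X$ by the preimages $q^{-1}(\abs{\mc{N}}^{(d)})$ of the skeleta of $\abs{\mc{N}}$; the $d$-th subquotient of $q$ is, up to $d$-fold suspension, a wedge over the $d$-simplices $\sigma$ (which have $t=d+1$ vertices) of the collapses $X_\sigma \to \mathrm{pt}$. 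By hypothesis each such $X_\sigma$ is $(k-t+1)=(k-d)$-connected, so each collapse is $(k+1)$-connected after $d$ suspensions, and comparing the two filtrations (equivalently, invoking Bj\"{o}rner's fiber theorem, a relative of Quillen's Theorem~A) yields that $q$ itself is $(k+1)$-connected. \emph{Step 3: assemble.} By Theorem~\ref{thm:Spanier7.6.24} the weak equivalence $p$ has a genuine homotopy inverse $p' \colon X \to \Delta X$; take $f = q \circ p' \colon X \to \abs{\mc{N}}$, so that $f_{\#,j} = q_{\#,j} \circ (p_{\#,j})^{-1}$ is an isomorphism for $j \le k$ and an epimorphism for $j=k+1$, giving (i). Part (ii) is the limiting case ``$k=\infty$'': if every $X_\sigma$ is contractible then $q$ is a weak, hence (again by Theorem~\ref{thm:Spanier7.6.24}) an honest homotopy equivalence, and so is $f=q\circ p'$.

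The main obstacle is Step 2 — converting control of the connectivity of $q$ over each cell of $\abs{\mc{N}}$ into control of how highly connecting $q$ is globally. The cleanest route is the suspension/filtration argument above, but it requires care: one must identify the filtration subquotients correctly (K\"{u}nneth for the pairs $(\clos{\Delta}_\sigma, \partial\clos{\Delta}_\sigma)$), track the basepoint bookkeeping in the collapses $X_\sigma \to \mathrm{pt}$, and run the comparison of filtrations to pass from subquotient connectivity to connectivity of $q$; this is precisely where the numerology $k-t+1$ in the hypothesis is forced. Step~1 is routine once one is fluent with homotopy colimits over covers, and Step~3 is immediate from Steps~1--2 and Whitehead's theorem, with connectedness of $X$ (assumed in the statement) carrying over to $\Delta X$ and $\abs{\mc{N}}$ so that Theorem~\ref{thm:Spanier7.6.24} applies throughout.
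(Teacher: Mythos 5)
The paper does not prove this statement: it is quoted directly from Bj\"{o}rner's article, and your sketch is offered where the authors simply cite. So there is no ``paper's proof'' to compare against, but it is worth assessing your outline on its own terms, since it does follow a standard modern route (the Mayer--Vietoris blow-up, i.e.\ the homotopy colimit of the intersection diagram) that is close in spirit to the one Bj\"{o}rner and the Welker--Ziegler--\v{Z}ivaljevi\'{c} school use.

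Steps~1 and~3 are fine. The genuine technical content is Step~2, and your account there is slightly off in a way worth flagging. The subquotient of the skeletal filtration over a $d$-simplex $\sigma$ is $(\clos{\Delta}_\sigma \times X_\sigma)/(\partial\clos{\Delta}_\sigma \times X_\sigma) \cong \Sigma^d\bigl((X_\sigma)_+\bigr)$, and the map to the corresponding subquotient $\bigvee_\sigma S^d$ of $\abs{\mc{N}}$ is the $d$-fold suspension of $(X_\sigma)_+ \to S^0$, not of the pointed collapse $X_\sigma\to\mathrm{pt}$. The numerology is the same --- if $X_\sigma$ is $(k-d)$-connected, that collapse is $(k-d+1)$-connected, and $d$ suspensions bring it to $(k+1)$-connected --- but the basepoint bookkeeping matters precisely where you warn that it does, because $(X_\sigma)_+$ is disconnected and one must check the $\pi_0$ and $\pi_1$ cases separately before the suspension/Hurewicz calculus takes over. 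More importantly, ``each filtration subquotient of $q$ is $(k+1)$-connected'' does not by itself give that $q$ is $(k+1)$-connected; one needs either a Blakers--Massey/relative Hurewicz comparison of the two filtrations or (as you note as the alternative) Bj\"{o}rner's fiber theorem. Those are not interchangeable phrasings of one argument --- the fiber theorem is a Quillen-Theorem-A-style statement about poset maps with highly connected fibers, whereas the filtration route is a cofiber-side comparison --- and either one is a nontrivial lemma you would have to import or prove. With that caveat, the outline is sound and does give a correct route to Theorem~\ref{thm:NerveTheorem}; it just front-loads into ``requires care'' the part that actually carries the load, which is why the paper is content to cite Bj\"{o}rner rather than reprove it.
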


A version of Bj\"{o}rner's nerve lemma sufficiently equivariant for our purposes reads thus:

\begin{thm}\label{thm:EquivariantNerveTheorem}
Let $X$ be a regular connected CW complex which is a subset of a real vector space, and whose cells are all convex polyhedra. Let $G$ act linearly on $X$ in such a way that $X$ is symmetric as a CW complex under the action of $G$. Let $\{X_i\}_{i\in I}$ be a family of subcomplexes with $X=\bigcup_{i\in I}X_i$, and say that for each $g\in G$ and $i\in I$, $g(X_i)\in \{X_i\}_{i\in I}$. Then if $\mc{N}$ is the nerve of $\{X_i\}_{i\in I}$, we have the following:
\begin{enumerate}[(i)]
    \item Say every finite nonempty intersection $X_{i_1}\cap\ldots\cap X_{i_t}$ is $(k-t+1)$-connected. Then there is an equivariant map $f:X\rightarrow\abs{\mc{N}}$ such that the induced (equivariant) homomorphism $f_{\#,j}:\pi_j(X,*)\rightarrow \pi_j(\abs{\mc{N}},f(*))$ is an isomorphism for all $j\leq k$ and an epimorphism for $j=k+1$.
    \item If every finite nonempty intersection $X_{i_1}\cap\ldots\cap X_{i_t}$ is contractible, then $f$ gives a homotopy equivalence $X\simeq \abs{\mc{N}}$.
\end{enumerate}
\end{thm}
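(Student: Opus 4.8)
The plan is to reduce Theorem~\ref{thm:EquivariantNerveTheorem} to the non-equivariant Nerve Theorem~\ref{thm:NerveTheorem} by replacing $X$ with a suitable barycentric-type subdivision and $\abs{\mc{N}}$ with the order complex of a poset on which $G$ acts, and then to check by hand that the map produced in Bj\"orner's proof is the one induced by a poset map, hence is equivariant. Concretely, first I would pass to the barycentric subdivision $\widehat{X}$ of $X$ (which makes sense since the cells of $X$ are convex polyhedra), so that each $X_i$ is replaced by the subcomplex $\br(X_i)$ of Definition~\ref{def:BarycentricRetraction}; by Proposition~\ref{thm:BarycentricRetractingMap} each $\br(X_i)$ is homotopy equivalent to $X_i$ (and more to the point their intersection lattice is unchanged, since $\br(X_{i_1})\cap\cdots\cap\br(X_{i_t})=\br(X_{i_1}\cap\cdots\cap X_{i_t})$), and by Proposition~\ref{thm:BREquivariance} the homotopy equivalences are equivariant. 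Thus we may assume each $X_i$ is a genuine full subcomplex of a simplicial complex, and that $G$ acts simplicially on $X$ permuting the $X_i$.

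The second step is to replace $\abs{\mc{N}}$ by the geometric realization of the poset $P$ whose elements are the nonempty simplices $\sigma\subset I$ of $\mc{N}$, ordered by inclusion; then $\abs{\Delta(P)}$ is the barycentric subdivision of $\abs{\mc{N}}$ and in particular homeomorphic to it, $G$ acts on $P$ (by $g\cdot\{i_1,\dots,i_t\}=\{g(i_1),\dots,g(i_t)\}$, using the permutation action of $G$ on $I$ induced by $g(X_i)\in\{X_i\}$), and this action is order-preserving, so $\abs{\Delta(P)}$ is a symmetric simplicial complex and the induced homeomorphism $\abs{\mc{N}}\xrightarrow{\sim}\abs{\Delta(P)}$ is equivariant by Remark~\ref{thm:SymmetricOrderComplex}. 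Now I would recall that the map $f:X\to\abs{\mc{N}}$ constructed in Bj\"orner's proof (and in Gabrielov--Vorobjov's exposition) is, up to the above homeomorphism, the realization of the poset map $\fpos{\widehat{X}}\to P$ sending a simplex $\tau$ of $\widehat{X}$ to the set $\{i\in I\mid \tau\subset X_i\}$ of indices of the members of the cover containing it — this is exactly the "index map" underlying the standard proof. The key point is then purely formal: this index map is $G$-equivariant, because $\tau\subset X_i\iff g(\tau)\subset g(X_i)$, so it commutes with the $G$-actions on $\fpos{\widehat{X}}$ and on $P$; hence its realization $f$ is equivariant, and the induced maps on homotopy groups are equivariant in the sense of Definition~\ref{def:HomotopyEquivariance} (keeping track of basepoints, since a general $g$ moves $*$ and $f(*)$). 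The connectivity/contractibility hypotheses are unchanged under the above replacements, so Theorem~\ref{thm:NerveTheorem} applies verbatim to give that $f$ has the stated homotopy-theoretic properties, and part (ii) follows since a $G$-equivariant homotopy equivalence is in particular a homotopy equivalence.

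I would include as a lemma the statement that for a regular CW complex $X$ with convex polyhedral cells and a $G$-permuted family of subcomplexes $\{X_i\}$, the standard nerve map may be taken to be the realization of the index poset map described above — either by citing the proof in \cite{bjorner2003nerves} and observing that the map constructed there is of this form, or, if a self-contained argument is preferred, by building $f$ directly: on each simplex $\tau$ of $\widehat{X}$ (equivalently each flag of cells of $X$), $f$ maps $\tau$ linearly onto the simplex of $\abs{\Delta(P)}$ spanned by the chain of index sets $\{i:\tau'\subset X_i\}$ as $\tau'$ ranges over the faces of $\tau$; one checks these agree on overlaps and that the resulting $f$ satisfies the hypotheses of whichever comparison tool (e.g.\ a Mayer--Vietoris/homotopy-colimit spectral sequence argument, or a direct appeal to \cite{bjorner2003nerves}) yields the conclusion.

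I expect the main obstacle to be the second of these: namely, verifying that the nerve map of Theorem~\ref{thm:NerveTheorem} can be chosen to coincide with the realization of the combinatorial index map, rather than being some ad hoc map depending on non-canonical choices (partitions of unity, orderings of $I$, etc.). If \cite{bjorner2003nerves}'s construction already has this canonical simplicial form the argument is immediate; if not, the work lies in re-deriving the nerve comparison in the equivariant simplicial setting — essentially an equivariant homotopy-colimit argument — which is where care with basepoints and with the precise formulation of Definition~\ref{def:HomotopyEquivariance} is needed. Everything else (the subdivision reductions, the poset actions, linearity of the $G$-action preserving all the relevant convexity) is routine given the machinery already assembled in Section~\ref{sect:EquivarianceTopology}.
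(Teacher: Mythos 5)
Your approach matches the paper's: both construct $f$ as the composite of the centroidal homeomorphism $X\cong\abs{\Delta(\fpos{X})}$, the realization of the equivariant index poset map $\varphi(\sigma)=\{i\in I:\sigma\in X_i\}$ on order complexes, and the equivariant homeomorphism $\abs{\Delta(\fpos{\mc{N}})}\cong\abs{\mc{N}}$, with equivariance of $f$ (and of the induced maps on homotopy groups, basepoints tracked as in Definition~\ref{def:HomotopyEquivariance}) immediate from equivariance of $\varphi$. Your preliminary detour through $\widehat{X}$ and $\br(X_i)$ is unnecessary here since the $X_i$ are already subcomplexes (so $\br(X_i)=X_i$ as sets), and your stated main worry — whether the map of \cite{bjorner2003nerves} can be taken to have exactly this canonical combinatorial form — is precisely what the paper asserts, citing Bj\"orner's construction directly, so the argument closes as you expected.
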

\begin{proof}
(i): We claim that we may construct an equivariant map in the manner described in the proof in \cite{bjorner2003nerves}. By Proposition \ref{thm:SymmetricOrderComplex}, we have an equivariant homeomorphism $X\rightarrow \abs{\Delta(\fpos{X})}$. Let $\varphi: \fpos{X}\rightarrow \fpos{\mathcal{N}}$ be given by $\varphi(\sigma)=\{i\in I\mid \sigma\in X_i\}$ for each cell $\sigma$ of $X$. This is an order-reversing map of posets. It is also equivariant, since for a given cell $\sigma$, $\{g(i)\mid i\in I\text{ with }\sigma\in X_i\}=\{i\in I\mid g(\sigma)\in X_i\}$. Then $\varphi$ induces an equivariant continuous function $\abs{\Delta(\fpos{X})}\rightarrow\abs{\Delta(\fpos{\mathcal{N}})}$. Since Remark \ref{thm:SymmetricOrderComplex} gives an equivariant homeomorphism $\abs{\Delta(\fpos{\mathcal{N}})}\rightarrow \abs{\mathcal{N}}$, composing gives us an equivariant continuous function
\[
f: X\rightarrow \abs{\Delta(\fpos{X})} \rightarrow \abs{\Delta(\fpos{\mathcal{N}})} \rightarrow \abs{\mathcal{N}}
\]
The equivariance of $f$ gives the equivariance of each induced $f_{\#,j}:\pi_j(X)\rightarrow \pi_j(\abs{\mathcal{N}})$.

Since our equivariant map $f$ is the map from the original proof of the nerve theorem, part (ii) follows automatically.
\end{proof}

Note that though, if following the proof for (ii), one would obtain a map $g:\abs{\mc{N}}\rightarrow X$ which serves as a homotopy inverse to $f$, we have not guaranteed the equivariance of this or any map in the opposite direction. Thus there is more work to be done before we may term this a true equivariant version of Bj\"{o}rner's nerve theorem.

The barycentric retracting map described in Definition \ref{def:BarycentricRetraction} allows us to apply this version of the nerve theorem to coverings by sets open in our larger space. This version of the nerve theorem is used in \cite{gabrielov2009approximation}.

\begin{thm}\label{thm:NerveTheoremOpen}
Let $X$ be a regular connected CW complex. Let $\{Y_i\}_{i\in I}$ be a family of subsets of $X$. Assume each $Y_i$ may be written as a union of cells of $X$, and that each $Y_i$ is open in $X$. Let $Y=\bigcup_{i\in I} Y_i$, and let $\mc{N}_Y$ be the nerve of this family.
\begin{enumerate}
    \item Say that every finite nonempty intersection $Y_{i_1}\cap\ldots\cap Y_{i_t}$ is $(k-t+1)$-connected. Then there is a map $f:Y\rightarrow \abs{\mc{N}_Y}$ such that the induced homomorphism $f_{\#,j}:\pi_j(Y)\rightarrow \pi_j(\abs{\mc{N}_Y})$ is an isomorphism for all $j\leq k$ and an epimorphism for $j=k+1$.
    \item If every finite nonempty intersection $Y_{i_1}\cap\ldots\cap Y_{i_t}$ is contractible, then $f$ gives a homotopy equivalence $X\simeq \abs{\mc{N}_Y}$.
\end{enumerate}
\end{thm}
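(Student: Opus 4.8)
The plan is to reduce everything to the subcomplex Nerve Theorem (Theorem \ref{thm:NerveTheorem}) by replacing each open set $Y_i$ with the subcomplex $\br(Y_i)$ of the barycentric subdivision $\widehat{X}$ of $X$. Since that theorem is stated for connected complexes, I would first assume $Y$ is connected (otherwise run the argument on each connected component of $Y$ separately). The two things to establish are that $\br(Y)=\bigcup_{i\in I}\br(Y_i)$ and that the nerve of $\{\br(Y_i)\}_{i\in I}$ is exactly $\mc{N}_Y$, with all of the required connectivity of intersections preserved.

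The crux — and the only place where openness of the $Y_i$ is used — is the identity $\br(Y)=\bigcup_{i\in I}\br(Y_i)$ of subcomplexes of $\widehat X$. Here $\supseteq$ is immediate from $Y_i\subset Y$. For $\subseteq$, I would take a simplex $\Delta\in\br(Y)$ and write it as $\Delta(b_{\sigma_0},\dots,b_{\sigma_d})$ for a flag $\sigma_0<\dots<\sigma_d$ of cells of $X$ (so $\sigma_{j-1}$ is a face of $\clos{\sigma_j}$ and $b_\sigma$ denotes the barycenter of the cell $\sigma$). Since $b_{\sigma_0}\in\clos\Delta\subset Y$ and $Y$ is a union of cells, $\sigma_0$ lies in some $Y_i$; then, climbing the flag, if $\sigma_{j-1}\subset Y_i$ a point of $\sigma_{j-1}$ lies in $Y_i\cap\clos{\sigma_j}$ with $Y_i$ \emph{open}, so $Y_i$ meets $\sigma_j$ and hence, being a union of cells, contains it. Thus every $\sigma_j\subset Y_i$, and since every point of $\clos\Delta$ lies in $\bigcup_j\sigma_j$, we get $\clos\Delta\subset Y_i$, i.e.\ $\Delta\in\br(Y_i)$. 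I expect this to be the delicate point: if openness is dropped, $\bigcup_i\br(Y_i)$ can be strictly smaller than $\br(Y)$ and the whole reduction fails.

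Next I would compute the nerve. Directly from the definition of $\br$, for a finite $\{i_1,\dots,i_t\}\subset I$ one has $\br(Y_{i_1})\cap\dots\cap\br(Y_{i_t})=\br(Y_{i_1}\cap\dots\cap Y_{i_t})$, and $Y_{i_1}\cap\dots\cap Y_{i_t}$ is again open in $X$ and a union of cells. By Proposition \ref{thm:BarycentricRetractingMap} this intersection is homotopy equivalent to $Y_{i_1}\cap\dots\cap Y_{i_t}$, hence is $(k-t+1)$-connected (resp.\ contractible) whenever the latter is, and it is nonempty exactly when $Y_{i_1}\cap\dots\cap Y_{i_t}$ is. Consequently the nerve of $\{\br(Y_i)\}_{i\in I}$ coincides with $\mc{N}_Y$.

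Finally I would apply Theorem \ref{thm:NerveTheorem} to the complex $\br(Y)$ (a subcomplex of $\widehat X$, hence a regular connected CW complex) with the cover $\{\br(Y_i)\}_{i\in I}$: under hypothesis (1) this gives a map $f'\colon\abs{\br(Y)}\to\abs{\mc{N}_Y}$ inducing $\pi_j$-isomorphisms for $j\le k$ and a $\pi_{k+1}$-epimorphism, and under hypothesis (2) a homotopy equivalence $\abs{\br(Y)}\simeq\abs{\mc{N}_Y}$. By Proposition \ref{thm:BarycentricRetractingMap}, $h_{\br}(0,-)\colon Y\to\abs{\br(Y)}$ is a homotopy equivalence (homotopy inverse to the inclusion $\abs{\br(Y)}\hookrightarrow Y$), so $f=f'\circ h_{\br}(0,-)\colon Y\to\abs{\mc{N}_Y}$ has the asserted effect on homotopy groups, and in case (2) it exhibits $Y\simeq\abs{\mc{N}_Y}$ as a composite of homotopy equivalences.
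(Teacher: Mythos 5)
Your proposal is correct and takes essentially the same route as the paper: replace each $Y_i$ by $\br(Y_i)$, prove $\br(Y)=\bigcup_i\br(Y_i)$ using openness (the paper runs the same argument but applies openness once to the lowest-dimensional cell of the flag rather than iterating step by step), identify $\bigcap_j\br(Y_{i_j})=\br(\bigcap_j Y_{i_j})$ to see the nerves and connectivity conditions agree, apply Theorem \ref{thm:NerveTheorem} to $\abs{\br(Y)}$, and compose with $h_{\br}(0,-)$. The only addition you make is explicitly flagging the reduction to connected components of $Y$ before invoking the nerve theorem, a small point the paper leaves implicit.
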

\begin{proof}
From $h_{\br}$ in \ref{def:BarycentricRetraction}, we obtain a map $f_1:Y\rightarrow \abs{\br(Y)}$ which induces a homotopy equivalence. Observe that $\abs{\br(Y)}$ is itself a regular CW complex, and that each $\abs{\br(Y_i)}$ is a subcomplex of $\abs{\br(Y)}$. It is clear that $\abs{\br(Y)}\supset\bigcup_{i\in I} \abs{\br(Y_i)}$. To show the opposite inclusion, say that $\Delta=\Delta(v_0,\ldots, v_d)\in \br(Y)$. This means that $\clos{\Delta}\in\widehat{Y}$, i.e. that the vertices of $\Delta$, suitably ordered, correspond to a flag $\sigma_0,\ldots, \sigma_d$ of cells of $X$, all of which are contained in $Y$. Assume $\sigma_d$ has lowest dimension amongst the cells in this chain, and say that $i\in I$ is such that $\sigma_d\subset Y_i$. Since $Y_i$ is open in $X$, this means that each of $\sigma_0,\ldots, \sigma_d$ are contained in $Y_i$, and therefore $\Delta\in\br(Y_i)$. Hence as desired, $\abs{\br(Y)}=\bigcup_{i\in I} \abs{\br(Y_i)}$.

For $Y_{i_1},\ldots,Y_{i_t}\in \{Y_i\}_{i\in I}$, we can see that
\begin{align*}
    \bigcap_{j=1}^t \br(Y_{i_j})&=\bigcap_{j=1}^t\{\sigma\in \widehat{X}\mid \clos{\sigma}\subset Y_{i_j}\}\\
    &=\{\sigma\in \widehat{X}\mid \clos{\sigma}\subset \bigcap_{j=1}^t Y_{i_j}\}\\
    &=\br(\cap_{j=1}^t Y_{i_j})
\end{align*}
If $\mc{N}_{\br(Y)}$ denotes the nerve of the covering of $\abs{\br(Y)}$ by the family $\{\abs{\br(Y_i)}\}_{i\in I}$, this means that $\mc{N}_Y=\mc{N}_{\br(Y)}$.
Furthermore, since via $h_{\br}$ we have that $\bigcap_{j=1}^t Y_{i_j}$ is homotopy equivalent to $\abs{\br(\bigcap_{j=1}^t Y_{i_j})}=\bigcap_{j=1}^t \abs{\br(Y_{i_j})}$, we may conclude that for any $k$. a finite intersection intersection $Y_{i_1}\cap\cdots\cap Y_{i_t}$ is $k$-connected iff $\abs{\br(Y_{i_1})}\cap \cdots\cap \abs{\br(Y_{i_t})}$ is.

Let $f_2:\br(Y)\rightarrow \abs{\mc{N}_{\br(Y)}}$ be as given by the standard Nerve Theorem (Theroem \ref{thm:NerveTheorem}). Then composing
\[
Y\overset{f_1}{\rightarrow}\abs{\br(Y)}\overset{f_2}{\rightarrow}\abs{\mc{N}_{\br(Y)}}=\abs{\mc{N}_Y}
\]
we obtain our map $f:Y\rightarrow \abs{\mc{N}_Y}$ with the desired properties.
\end{proof}

\begin{cor}\label{thm:EquivariantNerveTheoremOpen}
Let $X$, $Y$, and $\{Y_i\}_{i\in I}$ be as in Theorem \ref{thm:NerveTheoremOpen}. Assume further that $X$ is a subset of a real vector space and all cells of $X$ are convex polyhedra. Let the group $G$ act linearly on our space in such a way that $X$ is symmetric as a CW complex and $\{Y_i\}$ has the property that for each $g\in G$ and $i\in I$, $g(Y_i)\in\{Y_i\}_{i\in I}$. Let $\mc{N}_Y$ denote the nerve of the covering of $Y$ by $\{Y_i\}_{i\in I}$. Then the map $f:Y\rightarrow \abs{\mc{N}_Y}$ of Theorem \ref{thm:NerveTheoremOpen} is equivariant.
\end{cor}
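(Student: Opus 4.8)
The plan is to follow, verbatim, the construction of the map $f$ in the proof of Theorem \ref{thm:NerveTheoremOpen}: there $f$ is obtained as a composition $f = f_2 \circ f_1$, where $f_1 = h_{\br}(0,-) : Y \to \abs{\br(Y)}$ is the restriction at $t=0$ of the barycentric retracting map of Proposition \ref{thm:BarycentricRetractingMap}, and $f_2 : \abs{\br(Y)} \to \abs{\mc{N}_{\br(Y)}} = \abs{\mc{N}_Y}$ is the map furnished by the (non-equivariant) Nerve Theorem applied to the covering of $\abs{\br(Y)}$ by the subcomplexes $\{\abs{\br(Y_i)}\}_{i\in I}$. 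It therefore suffices to check that each of $f_1$ and $f_2$ is equivariant, since a composition of equivariant maps is equivariant.

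For $f_1$: first I would note that $Y = \bigcup_{i\in I} Y_i$ is symmetric, because the hypothesis $g(Y_i)\in\{Y_i\}_{i\in I}$ means $G$ merely permutes the members of the cover, so $g(Y)=Y$; moreover $Y$ is a union of cells of $X$. Since $X$ is a regular CW complex whose cells are convex polyhedra and on which $G$ acts linearly and symmetrically, Proposition \ref{thm:BREquivariance} applies and shows that $h_{\br} : [0,1]\times Y \to \abs{\widehat{Y}}$ is equivariant for the action $g(t,x)=(t,g(x))$. Restricting to $t=0$ gives that $f_1$ is equivariant.

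For $f_2$: here I would invoke the equivariant nerve theorem, Theorem \ref{thm:EquivariantNerveTheorem}, applied not to $X$ but to the regular CW complex $\abs{\br(Y)}$, whose cells are simplices (in particular convex polyhedra), on which $G$ acts linearly, and which is symmetric as a CW complex because $X$ and $Y$ are. The family $\{\abs{\br(Y_i)}\}_{i\in I}$ covers $\abs{\br(Y)}$ by subcomplexes (this equality of covers is exactly what is verified in the proof of Theorem \ref{thm:NerveTheoremOpen}), and it is $G$-stable since $g(\abs{\br(Y_i)}) = \abs{\br(g(Y_i))}$ again lies in the family. One must also check that the identification $\mc{N}_{\br(Y)} = \mc{N}_Y$ from that proof intertwines the two $G$-actions on the common index set $I$; but both actions are the one induced on $I$ by $i\mapsto i'$ where $g(Y_i)=Y_{i'}$, equivalently $g(\abs{\br(Y_i)}) = \abs{\br(Y_{i'})}$, so they coincide. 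Because the map constructed in the proof of Theorem \ref{thm:EquivariantNerveTheorem} is precisely the Bj\"orner nerve map — assembled from the centroidal homeomorphism of Definition \ref{def:CentroidalHomeomorphism} and the order-reversing poset map $\varphi$ — it is a legitimate choice for the map $f_2$ in the proof of Theorem \ref{thm:NerveTheoremOpen}, and it is equivariant.

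The main obstacle is the bookkeeping in the $f_2$ step: one has to argue that the ``standard'' nerve map used inside the proof of Theorem \ref{thm:NerveTheoremOpen} may, without loss of generality, be taken to be the canonical centroidal one of Theorem \ref{thm:EquivariantNerveTheorem}, and that the abstract isomorphism of nerves $\mc{N}_Y \cong \mc{N}_{\br(Y)}$ is genuinely $G$-equivariant, not merely an isomorphism of simplicial complexes. Once those identifications are in place, equivariance of $f = f_2\circ f_1$ is immediate, and all the conclusions about $\pi_j$ and (in the contractible case) the homotopy equivalence carry over verbatim from Theorem \ref{thm:NerveTheoremOpen}, now with equivariant induced homomorphisms on homotopy groups.
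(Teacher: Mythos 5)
Your proposal is correct and takes essentially the same approach as the paper, whose own proof is simply the one-line citation ``This follows from Proposition \ref{thm:BREquivariance} and Theorem \ref{thm:EquivariantNerveTheorem}.'' You have just written out the intended decomposition $f = f_2\circ f_1$ and the routine verification that the hypotheses of those two results apply to $Y$, $\abs{\br(Y)}$, and the cover $\{\abs{\br(Y_i)}\}_{i\in I}$.
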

\begin{proof}
This follows from Proposition \ref{thm:BREquivariance} and Theorem \ref{thm:EquivariantNerveTheorem}.
\end{proof}

We will want one more equivariant Nerve Theorem. The statement we need was proved by Hess and Hirsch in \cite{hess2013topology}. This version is stated for $X$ a simplicial complex covered by a family for which nonempty intersections are contractible, but from it we may obtain equivariant maps in both directions.

For $G$ a group and $\sigma$ a simplex in some simplicial complex $X$ which is symmetric relative to $G$, let $G_\sigma$ be the subgroup of $G$ given by $G_\sigma=\{g\in G\mid g(\sigma)=\sigma\}$.

\begin{thm}[\cite{hess2013topology} Lemma 2.5]\label{thm:Hess2.5}
Let $X$ be a simplicial complex symmetric relative to the action of $G$, and let $\{X_i\}_{i\in I}$ be a family of subcomplexes with $\abs{X}=\bigcup_{i\in I} \abs{X_i}$ and with $g(X_i)\in \{X_i\}_{i\in I}$ for each $g\in G$ and $i\in I$. Say that every nonempty finite intersection $\bigcap_{i\in\sigma} \abs{X_i}$, for $\sigma\subset I$, is $G_\sigma$-contractible. Then if $\mc{N}$ is the nerve of $\{X_i\}_{i\in I}$, we have that $\abs{X}$ and $\abs{\mc{N}}$ are $G$-homotopy equivalent.
\end{thm}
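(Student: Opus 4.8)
The plan is to construct an explicit $G$-equivariant nerve map $f\colon\abs{X}\to\abs{\mc N}$ and then verify it is a $G$-homotopy equivalence by appealing to the equivariant Whitehead theorem for $G$-CW complexes: it is enough to show that for every subgroup $H\le G$ the restriction to fixed points $f^H\colon\abs{X}^H\to\abs{\mc N}^H$ is a (weak) homotopy equivalence. Since $G$ permutes $\{X_i\}_{i\in I}$ it acts on $I$, hence on $\mc N$, and I would build $f$ exactly as in Theorem \ref{thm:EquivariantNerveTheorem}: working with order complexes of face posets (these are barycentric subdivisions, on which the induced $G$-action is regular), the order-reversing equivariant poset map $\varphi(\sigma)=\{\,i\in I\mid \sigma\in X_i\,\}$ induces
\[
f\colon\abs{X}\xrightarrow{\ \cong\ }\abs{\Delta(\fpos X)}\xrightarrow{\ \varphi_*\ }\abs{\Delta(\fpos{\mc N})}\xrightarrow{\ \cong\ }\abs{\mc N}.
\]

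Next I would identify the fixed-point data. Regularity of the $G$-action on the barycentric subdivisions gives $\abs{X}^H=\abs{\Delta((\fpos X)^H)}$, the realization of the poset of $H$-invariant simplices of $X$, and $\abs{\mc N}^H=\abs{\Delta(Q)}$, where $Q$ is the poset of $H$-invariant finite $\tau\subseteq I$ with $\bigcap_{i\in\tau}\abs{X_i}\neq\emptyset$; under these identifications $f^H$ is induced by $\varphi^H\colon(\fpos X)^H\to Q$, $\sigma\mapsto\{i\mid\sigma\in X_i\}$, which is well defined since $\{i\mid\sigma\in X_i\}$ is $H$-invariant whenever $\sigma$ is. To see $f^H$ is a homotopy equivalence I would cover $\abs{X}^H$ by the subcomplexes $W_\omega:=\bigl(\bigcup_{i\in\omega}\abs{X_i}\bigr)^H$, one for each $H$-orbit $\omega\subseteq I$ (they cover: any $H$-fixed point of $\abs{X}$ lying in some $\abs{X_i}$ lies in $\abs{X_j}$ for every $j$ in the orbit of $i$). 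The same remark yields
\[
W_{\omega_1}\cap\cdots\cap W_{\omega_t}=\Bigl(\bigcap_{i\in\tau}\abs{X_i}\Bigr)^H,\qquad \tau=\omega_1\cup\cdots\cup\omega_t,
\]
so the geometric realization of the nerve of $\{W_\omega\}$ is homeomorphic to $\abs{\Delta(Q)}=\abs{\mc N}^H$ and $f^H$ is homotopic to the associated nerve map. Each nonempty such intersection is contractible: $\tau$ is $H$-invariant, so $H\le G_\tau$, and by hypothesis $\bigcap_{i\in\tau}\abs{X_i}$ is $G_\tau$-contractible, hence $H$-contractible, hence has contractible $H$-fixed set. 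Applying Theorem \ref{thm:NerveTheorem}(ii) to the cover $\{W_\omega\}$ of $\abs{X}^H$ (componentwise, if $\abs{X}^H$ is disconnected) then shows the nerve map, and so $f^H$, is a homotopy equivalence. As this holds for every $H\le G$, $f$ is a $G$-homotopy equivalence, so $\abs{X}\simeq_G\abs{\mc N}$.

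The hard part will be the compatibility bookkeeping in the middle step: checking that passing to $H$-fixed points commutes with the barycentric subdivisions and nerve constructions used to define $f$, confirming that $f^H$ agrees up to homotopy with the nerve map of the cover $\{W_\omega\}$ (a simplicial-contiguity argument), and verifying that $H$-contractibility of $\bigcap_{i\in\tau}\abs{X_i}$ indeed passes to contractibility of its $H$-fixed set, which uses that this intersection is itself an $H$-CW complex. An alternative to the final step, avoiding the fixed-point cover altogether, is to show directly via Quillen's Theorem A that $\varphi^H$ induces a homotopy equivalence, the over-sets to be shown contractible being exactly the posets of $H$-invariant simplices of the $\bigcap_{i\in\tau}X_i$ — the same contractibility input in a different guise.
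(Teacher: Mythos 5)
The paper does not prove this statement; it is quoted verbatim from Hess and Hirsch (\cite{hess2013topology}, Lemma 2.5), so there is no in-paper proof to compare against. Your blueprint is the standard one for equivariant nerve theorems --- equivariant Whitehead plus a fixed-point-by-fixed-point reduction to the nonequivariant nerve lemma --- and as far as I can tell it is essentially sound; it is very likely close in spirit to what Hess--Hirsch actually do. Two places deserve more care than you give them.

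First, the identification $\abs{\mc N}^H\cong\abs{\text{nerve}(\{W_\omega\})}$ is not automatic; it quietly uses the contractibility hypothesis. Simplices of $\abs{\mc N}^H$ correspond to $H$-invariant $\tau\subseteq I$ with $\bigcap_{i\in\tau}\abs{X_i}\neq\emptyset$, whereas simplices of the nerve of $\{W_\omega\}$ require $\bigl(\bigcap_{i\in\tau}\abs{X_i}\bigr)^H\neq\emptyset$. These agree only because, for $H$-invariant $\tau$, $H\le G_\tau$ and the $G_\tau$-contractibility of $\bigcap_{i\in\tau}\abs{X_i}$ forces its $H$-fixed set to be nonempty (indeed contractible). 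You do use this contractibility at the end, but the nerve identification is where the hypothesis is load-bearing, and a reader would want to see it invoked there explicitly rather than discovering afterward that it was needed to make the earlier homeomorphism true.

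Second, the ``bookkeeping'' you defer is genuinely the bulk of the work: you need the $G$-action on $X$ and $\mc N$ to be regular (hence pass to barycentric subdivisions), to verify that $\abs{X}^H$ and its covering $\{W_\omega\}$ are a regular CW complex and subcomplexes in the sense required by Theorem~\ref{thm:NerveTheorem}, to confirm that $f^H$ and the Björner nerve map for $\{W_\omega\}$ agree up to homotopy (the poset maps $\varphi^H$ and $\varphi'$ differ by the reindexing $i\leftrightarrow\omega$ and need an explicit simplicial homotopy or contiguity argument), and to handle disconnected $\abs{X}^H$ by matching connected components of the space and the nerve. None of these is an obstruction, but a finished proof would have to discharge each of them. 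The Quillen's Theorem~A alternative you mention is a cleaner way to package the last two issues and is probably the preferable route to actually write out.
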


\section{Equivariance in the Gabrielov-Vorobjov Construction}\label{sect:ProofsOfEquivariance}

In what follows, we assume that $G$ is a finite reflection group acting on $\bb{R}^n$. Let $S$ be the definable set we intend to approximate. Assume we have also chosen a compact definable set $A\subset \bb{R}^n$ with $S\subset A$ and families $\{S_\delta\}_{\delta>0}$ and $\{S_{\delta, \epslon}\}_{\delta, \epslon>0}$ representing $S$ in $A$ as described in Section \ref{sect:GVConstruction}. The conclusions of Subsections \ref{sect:ConstructionV} and \ref{sect:Tau} (which concern relations between $S$ and an intermediate set $V$) apply both to the definable and constructible cases. The distinctions between these two cases are addressed in Subsections \ref{sect:MainTheoremDefinableCase} and \ref{sect:MainTheoremSeparableCase}, which describe relations between $V$ and our approximating set $T$.

\subsection{Symmetric construction for $V$}\label{sect:ConstructionV}

Gabrielov and Vorobjov' argument in \cite{gabrielov2009approximation} first uses a triangulation of $A$ adapted to $S$ to construct, for a given integer $m>0$ and sequence $0<\epslon_0,\delta_0,\ldots, \epslon_m,\delta_m<1$, an intermediate set $V=V(\epslon_0,\delta_0,\ldots,\epslon_m,\delta_m)$ and homomorphisms $\tau_{\#,k}: \pi_k(V)\rightarrow \pi_k(S)$ and $\tau_{*,k}:H_k(V)\rightarrow H_k(S)$. The set $V$ echoes the behavior of $T$ relative to the parameters $\epslon_i$ and $\delta_i$. However, it also has a cover that allows one to liken it to the triangulation of $S$ well enough to establish the aforementioned maps. We show that, if we start with a symmetric triangulation, the set $V$ is also symmetric and that there exists an equivariant map $\tau:V\rightarrow S$ which induces $\tau_{\#}$ and $\tau_{*}$.

Let $(\Lambda,\Phi)$ be a symmetric triangulation of $A$ adapted to $S$. We replace $S$ by $\Phi^{-1}(S)$ to assume that $S$ is a union of simplices of $\Lambda$. We will primarily work within the first barycentric subdivision of $\Lambda$, denoted $\widehat{\Lambda}$. If $\Delta(b_0,\ldots,b_p)$ corresponds to a simplex in $\widehat{\Lambda}$, we have that for each $b_i$ there is a unique simplex $\Delta_{b_i}$ in $\Lambda$ such that $b_i$ is the barycenter of $\Delta_{b_i}$. We will assume when we write $\Delta(b_0,\ldots,b_p)$ that $b_0,\ldots,b_p$ are ordered so that $\dim(\Delta_{b_0})>\cdots>\dim(\Delta_{b_p})$. By $\widehat{S}$ we mean the set of all simplices of $\widehat{\Lambda}$ which belong to $S$.

Gabrielov and Vorobjov in \cite{gabrielov2009approximation} construct $V$ as a union of sets $K_{B}(\delta_i,\epslon_i)$ defined for pairs of simplices $K\in \widehat{\Lambda}$ and $B\in \widehat{S}$. The set $K_B(\delta_i,\epslon_i)$ depends on something that Gabrielov and Vorobjov call the core of the simplex $B=B(b_0,\ldots, b_p)$. This in some sense is meant to capture which faces of $B$ continue to intersect the preimages of the sets $S_\delta$ as $\delta$ shrinks to $0$. To properly define this notion, though, we must reiterate some technical terminology from \cite{gabrielov2009approximation}.

\begin{definition}[\cite{gabrielov2009approximation} Definition 3.2]\label{def:Gabrielov3.2}
We say $S$ is \emph{marked} if for each pair of simplices $(\Delta',\Delta)$ in $S$ with $\Delta'$ a subsimplex of $\Delta$, we have designated $\Delta'$ as either a \emph{hard} or \emph{soft} subsimplex of $\Delta$. For a pair $(\Delta',\Delta)$ with $\Delta'$ not in $S$, we always designate $\Delta'$ as a soft subsimplex of $\Delta$.
\end{definition}

\begin{definition}
If $A$ and $S$ are symmetric and $(\Lambda,\Phi)$ is an equivariant triangulation of $A$ adapted to $S$, we say $S$ is \emph{symmetrically marked} if $S$ is marked in such a way that for each $g\in G$, $(g(\Delta'),g(\Delta))$ has the same hard/soft designation as $(\Delta',\Delta)$. Since $\Delta'$ is in $S$ iff $g(\Delta')$ is also in $S$ by our choice of triangulation, this stipulation does not interfere with the property that simplices $\Delta'$ not contained in $S$ are always designated as soft subsimplices.
\end{definition}

We will specify separate hard-soft relations for our triangulation depending on whether we are in the separable case or not (see Subsections \ref{sect:MainTheoremDefinableCase} and \ref{sect:MainTheoremSeparableCase}). For the moment, it is enough to assume that $S$ is symmetrically marked.

\begin{definition}[\cite{gabrielov2009approximation} Definition 3.3]\label{def:Gabrielov3.3}
For a simplex $B=B(b_0,\ldots,b_p)$ of $\widehat{\Lambda}$ contained in $\widehat{S}$, the \emph{core} of $B$, denoted $C(B)$, is the maximal subset $\{b_0,\ldots,b_{p'}\}$ of $\{b_0,\ldots,b_p\}$ so that for $0\leq \nu\leq p'$, we have $\Delta_{b_\nu}$ is a hard subsimplex of $\Delta_{b_\mu}$ for every $0\leq\mu<\nu$.
\end{definition}

Note that we always have $b_0\in C(B)$. We will establish the convention that if $B$ is not in $\widehat{S}$, $C(B)=\emptyset$.

\begin{definition}[\cite{gabrielov2009approximation} Definition 3.6]\label{def:Gabrielov3.6}
Let $B=B(b_0,\ldots,b_p)$ be a simplex in $\widehat{S}$ and $K=K(c_0,\ldots,c_q)$ a simplex in $\widehat{\Lambda}$ with $B\subset \clos{K}$. Let $I=\{b_0,\ldots,b_p\}$ and $J=\{c_0,\ldots,c_q\}$. Then for $0<\delta<1$ and $0<\epslon<1$, define
\begin{multline*}
    K_B(\delta,\epslon):=\left\{\sum_{c_\nu\in J}t_{c_\nu}c_\nu\in K(c_1,\ldots,c_q)\mid \sum_{b_\nu\in C(B)}t_{b_\nu} >\delta,\right.\\
    \left.\sum_{b_\nu\in I}t_{b_\nu}>1-\epslon, \text{ and }\forall b_\nu\in I\;\forall c_\mu\in (J\setminus I), t_{b_\nu}>t_{c_\mu}\right\}
\end{multline*}
\end{definition}

Given $B$ a simplex in $\widehat{S}$, let $S_B$ denote the set of all simplices $B'$ of $\widehat{\Lambda}$ with $B'\subset \clos{B}\cap \widehat{S}$.

\begin{definition}[\cite{gabrielov2009approximation} Definition 3.7]\label{def:Gabrielov3.7}
Fix $m>0$ and a sequence $0<\epslon_0,\delta_0,\epslon_1,\delta_1,\ldots, \epslon_m,\delta_m<1$. Then for $B$ a simplex in $\widehat{S}$, let
\[
V_B=\bigcup_{B'\in S_B}\;\bigcup_{\clos{K} \supset B'}
\;\bigcup_{i=0}^m K_{B'}(\delta_i,\epslon_i)
\]
(that is, for each $B'\in S_B$ the union is taken over all simplices $K$ in $\widehat{\Lambda}$ with $B'\subset\clos{K}$). Let
\[
V=\bigcup_{B\in \widehat{S}} V_B
\]
\end{definition}

\begin{prop}\label{thm:SymmetryV}
Say that $S$ and $A$ are symmetric under the action of $G$, $(\Lambda,\Phi)$ is an equivariant triangulation of $A$ adapted to $S$, and $S$ is symmetrically marked. Then the family $\{V_B\}_{B\in \widehat{S}}$ is symmetric under the induced action of $G$, and hence the set $V$ is symmetric.
\end{prop}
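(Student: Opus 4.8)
The plan is to trace the action of $G$ through each layer of the definitions---core, the sets $K_B(\delta,\epslon)$, and finally $V_B$---and check that each construction is natural enough to commute with the group action. The key observation, available to us because $(\Lambda,\Phi)$ is an equivariant triangulation, is that $g$ carries simplices of $\Lambda$ to simplices of $\Lambda$, hence barycenters to barycenters, hence simplices of $\widehat{\Lambda}$ to simplices of $\widehat{\Lambda}$, and since $S$ is a union of simplices of $\Lambda$ (so $\widehat{S}$ is a subcomplex of $\widehat{\Lambda}$ symmetric under $G$), $g$ restricts to a bijection of $\widehat{S}$ onto itself. Moreover, for a simplex $\Delta$ of $\Lambda$, $g(\Delta_{b})=\Delta_{g(b)}$, and $g$ preserves dimension, so $g$ respects the ordering convention $\dim(\Delta_{b_0})>\cdots>\dim(\Delta_{b_p})$ on the vertices of a simplex of $\widehat{\Lambda}$.

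First I would observe that the core is equivariant: for $B=B(b_0,\ldots,b_p)$ in $\widehat{S}$ and $g\in G$, we have $g(B)=B(g(b_0),\ldots,g(b_p))$ with the vertices still correctly ordered, and since $S$ is symmetrically marked, $\Delta_{b_\nu}$ is a hard subsimplex of $\Delta_{b_\mu}$ if and only if $\Delta_{g(b_\nu)}=g(\Delta_{b_\nu})$ is a hard subsimplex of $g(\Delta_{b_\mu})=\Delta_{g(b_\mu)}$. Applying Definition \ref{def:Gabrielov3.3} on both sides gives $g(C(B))=C(g(B))$ (and the convention $C(B)=\emptyset$ when $B\notin\widehat{S}$ is preserved because $g$ sends $\widehat{S}$ to itself). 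Next, for $B\subset\clos{K}$ with $K=K(c_0,\ldots,c_q)$, the linearity of the $G$-action on $\abs{\Lambda}$ means that if $x=\sum_{c_\nu\in J}t_{c_\nu}c_\nu$ then $g(x)=\sum_{c_\nu\in J}t_{c_\nu}g(c_\nu)$, i.e. the barycentric coordinate of $g(x)$ with respect to the vertex $g(c_\nu)$ of $g(K)$ equals the coordinate $t_{c_\nu}$ of $x$ with respect to $c_\nu$. All three defining conditions in Definition \ref{def:Gabrielov3.6}---the sum over $C(B)$ exceeding $\delta$, the sum over $I$ exceeding $1-\epslon$, and the inequalities $t_{b_\nu}>t_{c_\mu}$ for $b_\nu\in I$, $c_\mu\in J\setminus I$---are phrased purely in terms of these coordinates and the index sets $I$, $J$, $C(B)$, which all transform compatibly under $g$ (using $g(I)$ is the vertex set of $g(B)$, $g(J)$ the vertex set of $g(K)$, $g(C(B))=C(g(B))$). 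Hence $g(K_B(\delta,\epslon))=(g(K))_{g(B)}(\delta,\epslon)$.

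With this in hand, I would conclude: for $B\in\widehat{S}$ and $g\in G$, the index set $S_B$ of subsimplices of $\clos{B}\cap\widehat{S}$ satisfies $g(S_B)=S_{g(B)}$, and the collection of $K$ in $\widehat{\Lambda}$ with $B'\subset\clos{K}$ is carried bijectively by $g$ to the collection of $K$ with $g(B')\subset\clos{K}$; since the union in Definition \ref{def:Gabrielov3.7} also runs over the fixed finite set of parameters $\delta_i,\epslon_i$ (unchanged by $g$), we get $g(V_B)=V_{g(B)}$. Thus $\{V_B\}_{B\in\widehat{S}}$ is permuted by $G$, and $V=\bigcup_{B\in\widehat{S}}V_B$ is therefore symmetric. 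I do not anticipate a genuine obstacle here---the argument is a bookkeeping exercise---but the one point requiring care is making sure the vertex-ordering convention and the hard/soft marking interact correctly with $g$, which is exactly why the hypothesis that $S$ is \emph{symmetrically} marked (not merely marked) is needed; without it, $g(C(B))=C(g(B))$ could fail and the whole argument would collapse.
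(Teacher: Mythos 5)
Your proposal is correct and follows essentially the same route as the paper's (very terse) proof: trace the action of $G$ through the definitions, using linearity, the symmetry of $\widehat{\Lambda}$ and $\widehat{S}$, and the symmetric marking to establish $g(C(B))=C(g(B))$, hence $g(K_{B'}(\delta,\epslon))=g(K)_{g(B')}(\delta,\epslon)$, hence $g(V_B)=V_{g(B)}$. The only difference is that the paper states this in one line as following ``more or less immediately from construction,'' whereas you spell out the bookkeeping in full --- which is fine, and correctly isolates the point where the symmetric-marking hypothesis is actually used.
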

\begin{proof}
This follows more or less immediately from construction. By symmetry of $\widehat{\Lambda}, \widehat{S}$, and our marking and by linearity of the action of $G$, we can see that $g(K_{B'}(\delta_i,\epslon_i))=g(K)_{g(B')}(\delta_i,\epslon_i)$, and hence $g(V_B)=V_{g(B)}$.
\end{proof}

Some properties of the sets $K_B(\delta,\epslon)$, $V_B$ and $V$ are worth noting here.

\begin{lemma}\label{thm:UnionK_B}
Let $B=B(b_0,\ldots, b_p)$ be a simplex in $\widehat{S}$ and $K=K(c_0,\ldots, c_q)$ a simplex in $\widehat{\Lambda}$ with $B\subset \clos{K}$. Then for $0<\delta,\epslon,\delta',\epslon'<1$ we have
\begin{align*}
    K_B(\delta,\epslon)\cup K_B(\delta',\epslon')&=K_B(\min\{\delta,\delta'\}, \max\{\epslon, \epslon'\})\\
    K_B(\delta,\epslon)\cap K_B(\delta',\epslon')&=K_B(\max\{\delta,\delta'\}, \min\{\epslon, \epslon'\})
\end{align*}
\end{lemma}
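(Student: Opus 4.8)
The plan is to translate both equalities into statements about barycentric coordinates and read them off from the shape of the defining inequalities. Fix $B=B(b_0,\ldots,b_p)$ and $K=K(c_0,\ldots,c_q)$ as in the statement, set $I=\{b_0,\ldots,b_p\}$ and $J=\{c_0,\ldots,c_q\}$ (with $I\subseteq J$ since $B\subset\clos{K}$), and for a point $x=\sum_{c_\nu\in J}t_{c_\nu}c_\nu$ of the open simplex $K$ write $\alpha(x)=\sum_{b_\nu\in C(B)}t_{b_\nu}$ and $\beta(x)=\sum_{b_\nu\in I}t_{b_\nu}$. Let $R(x)$ abbreviate the parameter-free condition that $t_{b_\nu}>t_{c_\mu}$ for all $b_\nu\in I$ and $c_\mu\in J\setminus I$. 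By Definition \ref{def:Gabrielov3.6}, $x\in K_B(\delta,\epslon)$ if and only if $R(x)$ holds together with $\alpha(x)>\delta$ and $\beta(x)>1-\epslon$; note that $C(B)$ (Definition \ref{def:Gabrielov3.3}) is fixed once and for all, so $\alpha$ and $\beta$ have the same meaning in every set under discussion, and the dependence on the pair $(\delta,\epslon)$ enters only through these two threshold inequalities. In particular $K_B(\cdot,\cdot)$ is nonincreasing in its first argument and nondecreasing in its second, and these monotonicities are the only facts I will use.

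The intersection identity is then immediate. A point $x$ lies in $K_B(\delta,\epslon)\cap K_B(\delta',\epslon')$ exactly when $R(x)$ holds and $\alpha(x)$ exceeds both $\delta$ and $\delta'$ and $\beta(x)$ exceeds both $1-\epslon$ and $1-\epslon'$; the first pair of conditions says precisely $\alpha(x)>\max\{\delta,\delta'\}$ and the second says $\beta(x)>1-\min\{\epslon,\epslon'\}$, which together are exactly the conditions defining $K_B(\max\{\delta,\delta'\},\min\{\epslon,\epslon'\})$.

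For the union identity the inclusion $K_B(\delta,\epslon)\cup K_B(\delta',\epslon')\subseteq K_B(\min\{\delta,\delta'\},\max\{\epslon,\epslon'\})$ drops out of the two monotonicities, since $\min\{\delta,\delta'\}$ is at most each of $\delta,\delta'$ and $\max\{\epslon,\epslon'\}$ is at least each of $\epslon,\epslon'$. The reverse inclusion is the part I expect to be the main obstacle: starting from $x$ with $R(x)$, $\alpha(x)>\min\{\delta,\delta'\}$ and $\beta(x)>1-\max\{\epslon,\epslon'\}$ one must exhibit $x$ in $K_B(\delta,\epslon)$ or in $K_B(\delta',\epslon')$, and the bare information that $\alpha(x)$ clears the smaller $\delta$-threshold while $\beta(x)$ clears the threshold coming from the larger $\epslon$ need not be realized by a single one of the two pairs. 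My approach would be a case analysis on which of $\delta,\delta'$ and which of $\epslon,\epslon'$ is extreme, leaning on the ordering of the parameters that the construction imposes (cf.\ Notation \ref{def:Gabrielov1.7} and Definition \ref{def:Gabrielov3.7}); the delicate point, which I would want to pin down before regarding the identity as established, is exactly that this ordering forces $(\delta,\epslon)$ and $(\delta',\epslon')$ into comparable enough position for the reverse inclusion to go through, since with two incomparable pairs only the inclusion $\subseteq$ is available.
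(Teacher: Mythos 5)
Your treatment of the intersection identity is correct and is the same argument the paper intends (the paper simply cites \cite{gabrielov2009approximation} Lemma 4.1 for that line), and the inclusion $K_B(\delta,\epslon)\cup K_B(\delta',\epslon')\subseteq K_B(\min\{\delta,\delta'\},\max\{\epslon,\epslon'\})$ does drop out of monotonicity exactly as you say. Your hesitation about the reverse inclusion is well placed, and I would go further: the case analysis you sketch cannot close the gap, because the parameter ordering you hope to lean on works \emph{against} you. The ordering of Definition \ref{def:Gabrielov3.7} gives $\delta_0<\delta_1<\cdots<\delta_m$ together with $\epslon_0<\epslon_1<\cdots<\epslon_m$, so any two pairs $(\delta_i,\epslon_i)$, $(\delta_j,\epslon_j)$ are \emph{incomparable} for the relevant product order (decreasing $\delta$ and increasing $\epslon$ both enlarge $K_B$), which is precisely the situation in which the union of two quadrants is a genuine L-shape rather than a quadrant.

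Concretely, suppose $\delta<\delta'$ and $\epslon<\epslon'$. In your $(\alpha,\beta)$-coordinates, restricted to the region where $R$ holds, the left-hand side is $\{\alpha>\delta,\ \beta>1-\epslon\}\cup\{\alpha>\delta',\ \beta>1-\epslon'\}$ while the right-hand side is $\{\alpha>\delta,\ \beta>1-\epslon'\}$, and the difference is the rectangle $\delta<\alpha\le\delta'$, $1-\epslon'<\beta\le 1-\epslon$. This is nonempty, compatible with $\alpha\le\beta$ whenever $\delta'<1-\epslon$ (which holds throughout the intended regime, since all parameters are small), and it contains points actually realized in $K$ satisfying $R$: e.g.\ with $I=\{b_0,b_1\}$, $J=\{b_0,b_1,c_2\}$, $C(B)=\{b_0\}$, take $t_{b_0}=\alpha$, $t_{b_1}=\beta-\alpha$, $t_{c_2}=1-\beta$, for which $R$ reduces to $\alpha+\beta>1$ and $2\beta-\alpha>1$, easily met with $\alpha$ small and $\beta$ near $1$. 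So the reverse inclusion for the union fails, and the paper's one-line justification (``follows immediately from the definition,'' which it gives only for the intersection via the Gabrielov--Vorobjov citation) does not engage with this. The union identity as stated only holds when one of the two pairs dominates the other, in which case the union is already a single $K_B$ and the claim is vacuous. You were right not to regard it as established; the gap you flagged is real and, as far as I can see, is a flaw in the lemma as written rather than a missing trick.
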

\begin{proof}
This follows immediately from the definition. The second line appears as \cite{gabrielov2009approximation} Lemma 4.1.
\end{proof}

\begin{prop}[see the proof of \cite{gabrielov2009approximation} Theorem 4.8]\label{thm:IntersectionVB}
For any pair of simplices $B_1$ and $B_2$ in $\widehat{S}$, one of the following holds
\begin{enumerate}[(i)]
    \item $V_{B_1}\cap V_{B_2}=\emptyset$ ($\Leftrightarrow \clos{B}_1\cap \clos{B}_2\cap \abs{\widehat{S}}= \emptyset$)
    \item $V_{B_1}\cap V_{B_2}=V_{B_0}$ where $V_{B_0}$ is the unique simplex in $\widehat{S}$ with $\clos{B}_1\cap\clos{B}_2\cap\abs{\widehat{S}}=\clos{B}_0\cap \abs{\widehat{S}}$
\end{enumerate}
\end{prop}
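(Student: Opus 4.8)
The plan is to follow the argument in the proof of \cite{gabrielov2009approximation} Theorem 4.8, reducing the computation of $V_{B_1}\cap V_{B_2}$ to the combinatorics of faces. Set $W_{B'}:=\bigcup_K\bigcup_{i=0}^m K_{B'}(\delta_i,\epslon_i)$, the inner union taken over all simplices $K$ of $\widehat{\Lambda}$ having $B'$ as a face, so that $V_B=\bigcup_{B'\in S_B}W_{B'}$; note that if $B'$ is a face of $B$ and $B'\in\widehat{S}$ then $S_{B'}\subseteq S_B$, hence $W_{B'}\subseteq V_B$. Two observations drive everything. First, $K_{B'}(\delta,\epslon)\subseteq K$, and distinct open simplices of $\widehat{\Lambda}$ are disjoint, so $K_{B'}(\delta_i,\epslon_i)\cap K'_{B''}(\delta_j,\epslon_j)=\emptyset$ unless $K=K'$; hence $W_{B'}\cap W_{B''}$ is a union, over simplices $K$ having both $B'$ and $B''$ as faces, of the sets $K_{B'}(\delta_i,\epslon_i)\cap K_{B''}(\delta_j,\epslon_j)$. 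Second, the domination clause of Definition \ref{def:Gabrielov3.6} (for every vertex $b$ of $B'$ and every vertex $c$ of $K$ not a vertex of $B'$, $t_b>t_c$) forces comparability: if some point lies in both $K_{B'}(\delta_i,\epslon_i)$ and $K_{B''}(\delta_j,\epslon_j)$ for faces $B',B''$ of $K$, and neither of $B',B''$ were a face of the other, one could pick a vertex $b_1$ of $B'$ not in $B''$ and a vertex $b_2$ of $B''$ not in $B'$ and deduce both $t_{b_1}>t_{b_2}$ and $t_{b_2}>t_{b_1}$. Combining the two, $W_{B'}\cap W_{B''}\neq\emptyset$ with $B',B''\in\widehat{S}$ forces one of $B',B''$ to be a face of the other.

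Next I would analyze $B_*:=\clos{B}_1\cap\clos{B}_2$, which is a simplex of $\widehat{\Lambda}$ and a common face of $B_1$ and $B_2$. Writing $B_*=\Delta(b_0,\ldots,b_p)$ with the vertices ordered so that $\dim\Delta_{b_0}>\cdots>\dim\Delta_{b_p}$, a face $\Delta(b_{i_0},\ldots,b_{i_q})$ of $B_*$ lies in $\widehat{S}$ exactly when $\Delta_{b_{i_0}}\in S$ (the open simplex $\Delta(b_{i_0},\ldots,b_{i_q})$ is contained in the open $\Lambda$-simplex $\Delta_{b_{i_0}}$, and $S$ is a union of open $\Lambda$-simplices). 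If no $\Delta_{b_i}$ lies in $S$, then $\clos{B}_1\cap\clos{B}_2\cap\abs{\widehat{S}}=\clos{B}_*\cap\abs{\widehat{S}}=\emptyset$, and I claim $V_{B_1}\cap V_{B_2}=\emptyset$: any nonempty $W_{B'}\cap W_{B''}$ with $B'\in S_{B_1}$, $B''\in S_{B_2}$ would, by comparability, exhibit whichever of $B',B''$ is the smaller as a face of $B_1$ and of $B_2$, hence of $B_*$, lying in $\widehat{S}$ --- a contradiction. Otherwise let $j=\min\{i:\Delta_{b_i}\in S\}$ and put $B_0:=\Delta(b_j,\ldots,b_p)$. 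Then $B_0\in\widehat{S}$, and every $\widehat{S}$-face of $B_*$ has leading vertex index $i_0\geq j$, so all its vertices lie among $b_j,\ldots,b_p$ and it is a face of $B_0$; hence $\clos{B}_0\cap\abs{\widehat{S}}=\clos{B}_*\cap\abs{\widehat{S}}=\clos{B}_1\cap\clos{B}_2\cap\abs{\widehat{S}}$, and $B_0$ is the unique simplex of $\widehat{S}$ with this property (two such would each be a face of the other, hence equal). Finally $V_{B_0}\neq\emptyset$: taking $K=B_0$, the domination clause is vacuous and the condition $\sum_{b_\nu\in I}t_{b_\nu}>1-\epslon_0$ holds automatically, while $\sum_{b_\nu\in C(B_0)}t_{b_\nu}$ can be made to exceed $\delta_0<1$ since $C(B_0)\neq\emptyset$.

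It then remains to prove $V_{B_1}\cap V_{B_2}=V_{B_0}$ in the second case. For $\supseteq$: each $C\in S_{B_0}$ is a face of $B_0$, hence of $B_*$, hence of both $B_1$ and $B_2$, and $C\in\widehat{S}$, so $C\in S_{B_1}\cap S_{B_2}$ and $W_C\subseteq V_{B_1}\cap V_{B_2}$; taking the union over $C\in S_{B_0}$ gives $V_{B_0}\subseteq V_{B_1}\cap V_{B_2}$. For $\subseteq$: take $B'\in S_{B_1}$ and $B''\in S_{B_2}$ with $W_{B'}\cap W_{B''}\neq\emptyset$; by comparability, say $B'$ is a face of $B''$. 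Then $B'$ is a face of $B_1$ and of $B''$, hence of $B_2$, hence of $B_*$, and $B'\in\widehat{S}$, so $B'$ is a face of $B_0$, that is, $B'\in S_{B_0}$; thus $W_{B'}\cap W_{B''}\subseteq W_{B'}\subseteq V_{B_0}$ (and symmetrically if $B''$ is a face of $B'$). Unioning over all such $B',B''$ gives $V_{B_1}\cap V_{B_2}\subseteq V_{B_0}$. The equivalence in (i) drops out of this: $V_{B_1}\cap V_{B_2}=\emptyset$ precisely when $\clos{B}_1\cap\clos{B}_2\cap\abs{\widehat{S}}=\emptyset$, since in the remaining case the intersection equals $V_{B_0}\neq\emptyset$.

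The step requiring the most care is the comparability observation --- confirming that the domination condition genuinely forces one of $B',B''$ to be a face of the other, and then tracking that the smaller simplex lands in $S_{B_0}$, not merely in $S_{B_1}$ or $S_{B_2}$. The other subtle point is that $\widehat{S}$ need not be a subcomplex of $\widehat{\Lambda}$, so $B_*=\clos{B}_1\cap\clos{B}_2$ may itself fail to lie in $\widehat{S}$; this is exactly why one must pass to $B_0=\Delta(b_j,\ldots,b_p)$ and verify that it is the unique simplex of $\widehat{S}$ with the stated intersection property, which also makes the statement of case (ii) well-posed.
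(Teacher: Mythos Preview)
Your argument is correct and follows essentially the same route as the paper's proof. Both reduce to the comparability observation (the paper cites this as \cite{gabrielov2009approximation} Lemma 4.1, while you reprove it inline from the domination clause), then identify $V_{B_1}\cap V_{B_2}$ with $\bigcup_{B'\in S_{B_1}\cap S_{B_2}} W_{B'}$ and recognize $S_{B_1}\cap S_{B_2}$ as either empty or equal to $S_{B_0}$. Your version is more explicit: you actually construct $B_0=\Delta(b_j,\ldots,b_p)$ with $j=\min\{i:\Delta_{b_i}\in S\}$, verify its uniqueness, and check $V_{B_0}\neq\emptyset$, all of which the paper leaves implicit.
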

\begin{proof}
The bulk of this statement is taken from \cite{gabrielov2009approximation}. Because we assert something slightly more detailed, we include a proof.

From the definition, we have that for $B, B'\in \widehat{S}$ and $K,K'\in \widehat{\Lambda}$ with $B\subset \clos{K}$ and $B'\subset\clos{K}'$, and any $0< \epslon,\delta<1$, then $K_B(\delta,\epslon)\cap K'_{B'}(\delta,\epslon)\neq \emptyset$ implies that $K=K'$ and either $B\subset\clos{B}'$ or $B'\subset\clos{B}$ (this is \cite{gabrielov2009approximation} Lemma 4.1). This means that
\[
V_{B_1}\cap V_{B_2}=\bigcup_{B'\in S_{B_1}\cap S_{B_2}}\;\bigcup_{\clos{K}\supset B'}\;\bigcup_{i=0}^m K_{B'}(\delta_i,\epslon_i)
\]
Since $S_{B_1}\cap S_{B_2}=\emptyset$ $\Leftrightarrow$ $\clos{B}_1\cap \clos{B}_2\cap \widehat{S}=\emptyset$ and otherwise $S_{B_1}\cap S_{B_2}=S_{B_0}$ where $B_0$ is the simplex such that $\clos{B}_0\cap \widehat{S}=\clos{B}_1\cap \clos{B}_2\cap \widehat{S}$, the statements follow.
\end{proof}

\begin{lemma}[\cite{gabrielov2009approximation} Lemma 4.5]\label{thm:Gabrielov4.5}
For $B$ in $\widehat{S}$, $m\geq 1$, and $0<\epslon_0\ll \delta_0\ll\cdots\ll \epslon_m\ll\delta_m\ll 1$, the set $V_B$ is open in $\abs{\Lambda}$ and $(m-1)$-connected.
\end{lemma}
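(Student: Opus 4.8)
The statement has two halves — that $V_B$ is open in $\abs{\Lambda}$ and that it is $(m-1)$-connected — and I would handle them separately. For openness, observe first that each set $K_{B'}(\delta_i,\epslon_i)$ is cut out inside the simplex $K$ by finitely many \emph{strict} linear inequalities in the barycentric coordinates — the conditions $\sum_{b_\nu\in C(B')}t_{b_\nu}>\delta_i$, $\sum_{b_\nu\in I'}t_{b_\nu}>1-\epslon_i$, and $t_{b_\nu}>t_{c_\mu}$ of Definition \ref{def:Gabrielov3.6} — so it is relatively open in $\clos K$, but not a priori open in $\abs{\Lambda}$; the point is that the union $V_B$ fills in a neighborhood of each of its points. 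Given $x\in K_{B'}(\delta_i,\epslon_i)$ lying in the open simplex $K$, a nearby point $x'$ of $\abs{\Lambda}$ lies in some simplex $K'$ having $K$ as a face, and its barycentric coordinates in $K'$ agree with those of $x$ up to small error on the vertices of $K$ and are small on the remaining vertices of $K'$. Since $B'\subset\clos K\subset\clos{K'}$, the simplex $K'$ is an admissible ambient simplex for $B'$, and every defining inequality persists for $x'$: those involving only vertices of $K$ by continuity, and $t_{b_\nu}(x')>t_{c_\mu}(x')$ for a \emph{new} vertex $c_\mu$ of $K'$ because $t_{c_\mu}(x')$ is near $0$ while $t_{b_\nu}(x')$ is near the positive number $t_{b_\nu}(x)$. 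Hence a full neighborhood of $x$ lies in $\bigcup_{\clos{K'}\supseteq K}K'_{B'}(\delta_i,\epslon_i)\subseteq V_B$, so $V_B$ is open.

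For $(m-1)$-connectedness I would filter $V_B$ by the parameter index: set $W_i:=\bigcup_{B'\in S_B}\bigcup_{\clos K\supset B'}K_{B'}(\delta_i,\epslon_i)$, so $V_B=W_0\cup\cdots\cup W_m$ is an open cover by $m+1$ sets (after refining $\widehat\Lambda$ by the finitely many hyperplanes occurring in Definition \ref{def:Gabrielov3.6} one gets a regular polyhedral cell structure in which each $W_i$ is a union of cells and open in $\abs{\Lambda}$). The combinatorial heart is Lemma \ref{thm:UnionK_B} together with \cite{gabrielov2009approximation} Lemma 4.1 (a nonempty intersection $K_{B'}(\delta,\epslon)\cap K''_{B''}(\delta',\epslon')$ forces $K=K''$ and $B',B''$ comparable): these imply that for $i_0<\cdots<i_t$ every \emph{nonempty} intersection $W_{i_0}\cap\cdots\cap W_{i_t}$ is again a set of exactly the same shape, with the parameter pair replaced by $(\max_j\delta_{i_j},\min_j\epslon_{i_j})=(\delta_{i_t},\epslon_{i_0})$. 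So it suffices to prove that every such ``$W$-type'' set is contractible for the parameter ranges that can occur (which the hypothesis $0<\epslon_0\ll\delta_0\ll\cdots\ll\delta_m\ll 1$ keeps admissible). I would do this by an explicit deformation retraction onto the subcomplex spanned by the cores $C(B')$: the inequalities of Definition \ref{def:Gabrielov3.6} are arranged — via the notion of core and the convention $\dim\Delta_{b_0}>\cdots>\dim\Delta_{b_p}$ — so that the straight-line homotopy shifting mass onto $C(B')$ and off $J\setminus I'$ stays inside the set, and the resulting ``core complex'' is a cone, hence contractible. Since all $m+1$ of the $W_i$ and all their intersections are nonempty (they all contain points of the open simplex $B$ near $b_0$) and contractible, the nerve of $\{W_0,\dots,W_m\}$ is the full $m$-simplex, and the open-cover Nerve Theorem (Theorem \ref{thm:NerveTheoremOpen}) gives a homotopy equivalence $V_B\simeq\abs{\mc N}$ with $\abs{\mc N}$ contractible; in particular $V_B$ is $(m-1)$-connected.

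The step I expect to be the real obstacle is the contractibility of the ``$W$-type'' sets and of their intersection pieces: one must check that the candidate deformation retraction is well defined across the boundaries of the constituent sets $K_{B'}(\delta,\epslon)$ and that each strict inequality cutting them out is genuinely preserved along the homotopy. This is where the bookkeeping of cores, of hard versus soft subsimplices, and of the decreasing-dimension ordering of the barycenter-vertices must be used carefully, and where one verifies that the parameter pairs $(\delta_{i_t},\epslon_{i_0})$ produced by intersection still satisfy whatever nondegeneracy the retraction requires. By contrast, the openness argument, the reduction of finite intersections via Lemma \ref{thm:UnionK_B}, and the final appeal to the Nerve Theorem are routine once that piece is in place.
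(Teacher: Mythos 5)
Your openness argument is sound, and your instinct to reach for a nerve computation on a cover of $V_B$ is the right one. But the cover you choose, and the key combinatorial claim you rest on, do not match what the paper (and Gabrielov--Vorobjov's Lemma 4.5) actually do, and the argument as written has a genuine gap.

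The paper's discussion immediately following the lemma makes clear that the cover used in the proof is $\{U_{B',i}\}$ indexed by \emph{pairs} $(B',i)$ with $B'\in S_B$ and $0\le i\le m$, where $U_{B',i}=\bigcup_{\clos K\supset B'}K_{B'}(\delta_i,\epslon_i)$. You instead collapse over $B'$ and cover by the $m+1$ sets $W_i=\bigcup_{B'\in S_B}U_{B',i}$, and then assert that a nonempty intersection $W_{i_0}\cap\cdots\cap W_{i_t}$ ``is again a set of exactly the same shape, with the parameter pair replaced by $(\delta_{i_t},\epslon_{i_0})$,'' citing Lemma~\ref{thm:UnionK_B} and the disjointness Lemma~4.1 of \cite{gabrielov2009approximation}. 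That implication is not valid. Lemma~\ref{thm:UnionK_B} controls $K_{B'}(\delta,\epslon)\cap K_{B'}(\delta',\epslon')$ only for the \emph{same} $B'$, and Lemma~4.1 forces the $B'$'s appearing in a nonempty intersection to be comparable in $S_B$ but not equal. The cross terms $K_{B'_0}(\delta_{i_0},\epslon_{i_0})\cap K_{B'_1}(\delta_{i_1},\epslon_{i_1})$ with $B'_0\supsetneq B'_1$ are the problem: a point there satisfies $\sum_{I'_0}t>1-\epslon_{i_0}$ and $\sum_{C(B'_1)}t>\delta_{i_1}$, but to land in some $K_{B'}(\delta_{i_t},\epslon_{i_0})$ one needs both the tight $\epslon$-constraint and the tight $\delta$-constraint to hold for a \emph{single} $B'$, and neither $B'_0$ nor $B'_1$ works unless you additionally know, say, $C(B'_1)\subset C(B'_0)$. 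The cores of nested simplices of $\widehat S$ are not nested in general (they depend on the hard/soft marking along different prefixes of the vertex chain), so this fails. Hence the asserted normal form for $W_{i_0}\cap\cdots\cap W_{i_t}$ does not follow from the cited lemmas and is false in general, and the nerve of $\{W_i\}$ need not be the full $m$-simplex on grounds you've established.

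Separately, the contractibility of the ``$W$-type'' sets, which you correctly identify as the real work, is only gestured at (``shift mass onto $C(B')$... the core complex is a cone''). You acknowledge in your final paragraph that this is the obstacle, but you do not resolve it; since everything else is routed through that claim, the proof is incomplete even setting aside the issue above. The fix is to use the finer cover $\{U_{B',i}\}$ and analyze its nerve as the paper indicates: intersections within a fixed $B'$ are controlled by Lemma~\ref{thm:UnionK_B}, intersections across different $B'$ are controlled by Lemma~4.1 of \cite{gabrielov2009approximation} (comparability), and the resulting nerve factors in a way that yields $(m-1)$-connectedness directly.
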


Note that Gabrielov and Vorobjov give $V_B$ as
\[
V_B=\bigcup_{B'\in S_B}\;\bigcup_{\clos{K} \supset B}
\;\bigcup_{i=0}^m K_{B'}(\delta_i,\epslon_i)
\]
where the union is only taken over those $K\in \widehat{\Lambda}$ with $B\subset\clos{K}$. However, we need to define $V_B$ as it appears here in Definition \ref{def:Gabrielov3.7} in order for Proposition \ref{thm:IntersectionVB} to be stated and used as it is in \cite{gabrielov2009approximation}. Lemma \ref{thm:Gabrielov4.5} holds with the updated definition; in the proof of \cite{gabrielov2009approximation} Lemma 4.5, one needs only to define the sets $U_{B',i}$ for $B'\in S_B$ as
\[
U_{B',i}=\bigcup_{\clos{K}\supset B'} K_{B'}(\delta_i,\epslon_i)
\]
rather than
\[
U_{B',i}=\bigcup_{\clos{K}\supset B}K_{B'}(\delta_i.\epslon_i)
\]
(i.e., one must take the union over all simplices $K$ of $\widehat{\Lambda}$ with $B'\subset \clos{K}$, and not just those with $B\subset\clos{K}$). Then the updated sets $U_{B',i}$ for $B'\in S_B$ and $1\leq i\leq m$ cover the updated $V_B$, but the intersection condition and hence the nerve of this family remains unchanged, and so the argument in \cite{gabrielov2009approximation} continues to hold.

\subsection{Equivariance of the map $\tau$}\label{sect:Tau}

The existence of homomorphisms $\tau_{\#,k}: \pi_k(V)\rightarrow \pi_k(S)$ and $\tau_{*,k}: H_k(V)\rightarrow H_k(S)$ for $0\leq k\leq m$ is given in \cite{gabrielov2009approximation} Theorem 4.8. In order to show that we may construct these functions equivariantly, we must dissect the proofs there and describe more explicitly some of the details involved in applying the nerve theorem to $V$. We will in fact obtain an equivariant map $\tau:V\rightarrow S$ on the level of sets, inducing equivariant maps of (pointed) homotopy and homology groups which are isomorphisms or epimorphisms for the promised indices.

Our first goal is to endow $\abs{\Lambda}$ with a regular CW complex structure fine enough to allow us to write $V$ as a union of cells.

\begin{notation}\label{def:DecompsK}
Let $K$ be a simplex of some simplicial complex $\Lambda$, and say $K$ has vertex set $\{v_0,\ldots,v_d\}$.
\begin{itemize}
    \item Let $\mc{C}_{K,\text{vert}}=\{L\in \widehat{\Lambda}\mid L\subset K\}$.
    \item For $I\subset \{v_0,\ldots, v_d\}$ and a given $0<\epslon<1$, let
    \begin{itemize}
        \item[$\sim$] $C_{K,I,>,\epslon}=\{\sum_{i=1}^d t_iv_i\in K\mid \sum_{i\in I} t_i >\epslon\}$
        \item[$\sim$] $C_{K,I,<,\epslon}=\{\sum_{i=1}^d t_iv_i\in K\mid \sum_{i\in I} t_i <\epslon\}$
        \item[$\sim$] $C_{K,I,=,\epslon}=\{\sum_{i=1}^d t_iv_i\in K\mid \sum_{i\in I} t_i =\epslon\}$
    \end{itemize}
    We denote $\mc{C}_{K,I,\epslon}=\{C_{K,I,>,\epslon}, C_{K,I,<,\epslon},  C_{K,I,=,\epslon}\}$.
\end{itemize}
\end{notation}

\begin{lemma}\label{thm:CWDecompK}
Fix a simplex $K$ of some simplicial complex $\Lambda$, let $I$ be a subset of the vertex set of $K$, and let $0<\epslon<1$. For a given subsimplex $K'$ of $K$, let $I_{K'}$ be the intersection of $I$ with the vertex set of $K'$. Then
\[
\bigcup_{K'\subset\clos{K}}\mc{C}_{K', I_{K'}, \epslon}
\]
gives a regular CW decomposition of $\clos{K}$ into convex polyhedra, where the union is taken over all simplices $K'\subset\clos{K}$.
\end{lemma}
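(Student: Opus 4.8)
The plan is to recognize the proposed decomposition as the refinement of the simplicial face structure of $\clos K$ by a single affine hyperplane, and then to check the axioms of a regular CW complex directly, using only standard facts about relatively open bounded convex polyhedra.

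First I would fix notation. For a vertex $v$ of $K$, let $\lambda_v\colon\clos K\to[0,1]$ be the barycentric coordinate function attached to $v$ (the unique affine function with $\lambda_v(v')=\delta_{v,v'}$; recall $\sum_v\lambda_v\equiv 1$), and set $\ell=\sum_{v\in I}\lambda_v$, an affine function on $\clos K$ with values in $[0,1]$, and $H=\{\ell=\epslon\}$. Since $\lambda_v$ vanishes identically on $\clos{K'}$ whenever $v$ is not a vertex of $K'$, we get $\ell|_{\clos{K'}}=\sum_{v\in I_{K'}}\lambda_v|_{\clos{K'}}$, so that the three members of $\mc{C}_{K',I_{K'},\epslon}$ are precisely $K'\cap\{\ell>\epslon\}$, $K'\cap\{\ell<\epslon\}$ and $K'\cap H$, with $K'$ the open simplex. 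Thus the collection in question is obtained from the simplicial face decomposition of $\clos K$ by slicing each open face along the single hyperplane $H$ and discarding empty pieces. I would also record the two degenerate cases for later use: if $I_{K'}=\emptyset$ then $\ell\equiv 0<\epslon$ on $\clos{K'}$, so the only nonempty piece is $K'$ itself, and likewise if $I_{K'}$ is the whole vertex set of $K'$ then $\ell\equiv 1>\epslon$ and again the slice is trivial.

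Next I would observe that each nonempty $\sigma\in\mc{C}_{K',I_{K'},\epslon}$ is a bounded, relatively open, convex polyhedron: it is the intersection of an affine subspace (the affine hull of $\clos{K'}$, intersected also with $H$ in the ``$=$'' case) with the open half-spaces $\{\lambda_v>0\}$ over the vertices $v$ of $K'$, together with $\{\ell>\epslon\}$ or $\{\ell<\epslon\}$ in the corresponding cases. Hence $\sigma$ is homeomorphic to an open ball of dimension $\dim\sigma$; its closure is obtained by relaxing the strict inequalities, so $\clos{C_{K',I_{K'},>,\epslon}}=\clos{K'}\cap\{\ell\ge\epslon\}$ and similarly in the other cases, which is a bounded intersection of finitely many closed half-spaces, hence a convex polyhedron in the sense used here, with $\sigma$ as its relative interior. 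The nonempty pieces partition $\clos K$, since every point lies in a unique open face $K'$ of $\clos K$ (the one spanned by the vertices at which its barycentric coordinate is positive) and then in exactly one of $K'\cap\{\ell>\epslon\}$, $K'\cap\{\ell<\epslon\}$, $K'\cap H$; and the collection is finite because $\clos K$ has finitely many faces.

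Finally I would verify the regular CW axioms, which amounts to showing that for each cell $\sigma=C_{K',I_{K'},\#,\epslon}$ the set $\clos\sigma\setminus\sigma$ is a union of cells of the collection of strictly smaller dimension; the inclusion $\clos\sigma\hookrightarrow\clos K$ then serves as a characteristic map realizing regularity. Using the closure formulas above, a cell $C_{K'',I_{K''},\#',\epslon}$ is contained in $\clos\sigma$ exactly when $K''$ is a face of $\clos{K'}$ and the sign $\#'$ is compatible with $\#$ (for $\#={>}$ this means $\#'\in\{>,=\}$, symmetrically for $\#={<}$, and for $\#={=}$ only $\#'={=}$), and it is contained in $\sigma$ itself only in the trivial case $(K'',\#')=(K',\#)$; so $\clos\sigma\setminus\sigma$ is precisely the union of the remaining such cells. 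Each of these drops dimension: if $K''$ is a proper face of $K'$ then $\dim\clos{K''}<\dim\clos{K'}=\dim\sigma$, while if $K''=K'$ and $\#'={=}\neq\#$, then $\sigma$ being nonempty forces $\ell$ to be non-constant on $\clos{K'}$ (otherwise two of the three pieces on $K'$ would be empty, by the degenerate-case analysis), so this cell lies in $H$ and has dimension $\dim\sigma-1$. This completes the verification. The main obstacle is this last step: the bookkeeping that pins down exactly which members of the collection constitute the topological boundary of a given cell and confirms the drop in dimension, together with the need to dispose cleanly of the degenerate cases $I_{K'}=\emptyset$ and $I_{K'}$ full; everything else is routine once the slicing description and the standard facts about relatively open convex polyhedra are in place.
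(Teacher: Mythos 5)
Your argument is correct and follows essentially the same route as the paper: partition $\clos{K}$ by refining the simplicial faces along the single affine hyperplane $\{\ell=\epslon\}$, observe that the pieces are relatively open bounded convex polyhedra (hence regular cells via the inclusion of their closures), and enumerate exactly which cells make up each $\clos{\sigma}\setminus\sigma$ to confirm a drop in dimension. One small imprecision worth fixing: in the clause ``$\dim\clos{K''}<\dim\clos{K'}=\dim\sigma$,'' the equality $\dim\clos{K'}=\dim\sigma$ holds only when $\#\in\{>,<\}$; when $\#$ is the equality sign one has $\dim\sigma=\dim\clos{K'}-1$, and the conclusion still holds because in that case every boundary cell is of the form $C_{K'',I_{K''},=,\epslon}$ with dimension $\dim\clos{K''}-1<\dim\clos{K'}-1=\dim\sigma$, but the chain of inequalities as written does not literally cover this case.
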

\begin{proof}
Each cell of our collection is a convex polyhedron more or less by definition, and hence a regular cell in a fairly immediate manner. We have that for each simplex $K'\subset\clos{K}$, $\mc{C}_{K',I_{K'},\epslon}$ gives a partition of $K'$, and so since the simplices $K'$ partition $\clos{K}$, our collection indeed forms a partition of $\clos{K}$.

It remains to show that for a cell $C$ in our collection, $\partial C$ is contained in a union of cells of lower dimension. Say $C=C_{K',I_{K'},=,\epslon}$. Then the boundary of $C$ is the union of those cells $C_{K'',I_{K''},=,\epslon}$, $K''\subsetneq \clos{K'}$, which are nonempty (so, those for which the vertex set of $K''$ is neither contained in nor disjoint from $I$). If $C=C_{K', I_{K'}, >,\epslon}$, then $\partial C$ consists both of $C_{K',I_{K'},=,\epslon}$ together with the cells in its boundary and those nonempty cells $C_{K'',I_{K''},>,\epslon}$ corresponding to simplices $K''\subsetneq K'$ (i.e. where the vertex set of $K''$ is not disjoint from I). The case of $C=C_{K', I_{K'}, <,\epslon}$ is identical save that in the condition for nonemptiness we must replace $I$ with its complement. Hence this collection indeed gives a regular CW decomposition of $\clos{K}$.
\end{proof}

We return to our primary setting, in which $\Lambda$ is a symmetric triangulation of $A$ adapted to $S$. For $K=K(c_1,\ldots, c_q)$ a simplex of $\widehat{\Lambda}$ and $0< \delta, \epslon< 1$, we let
\begin{multline*}\label{eq:DecompC}
    \mc{C}_{K,\delta,\epslon}=\left\{C=L\cap \bigcap_{I\subset\{c_0,\ldots,c_q\}}C_{K,I,\delta} \cap \bigcap_{I\subset \{c_0,\ldots,c_q\}}C_{K,I,\epslon} \mid \right.\\
    L\in \mc{C}_{K, \text{vert}}, C_{K,I,\delta}\in \mc{C}_{K, I, \delta} \text{ and } C_{K,I,\epslon}\in \mc{C}_{K, I, \epslon}\\
    \left.\text{ for each } I, \text{ and } C\neq\emptyset\right\}
\end{multline*}
and let $\mc{C}_{\delta,\epslon}=\bigcup_{K\in \widehat{\Lambda}} \mc{C}_{K,\epslon,\delta}$.

\begin{figure}
    \centering
    \includegraphics{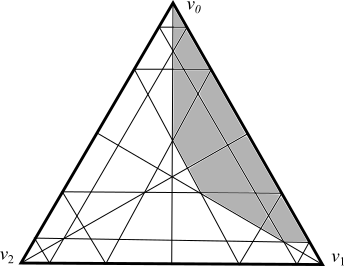}
    \caption{Decomposition $\mc{C}_{K,\delta,\epslon}$ for $K(v_0,v_1,v_2)$, assuming $\delta<\epslon$. $K_B(\delta,\epslon)$ is shaded for $B=B(v_0,v_1)$ and $C(B)=\{v_0\}$}
    \label{fig:DecompositionK}
\end{figure}

\begin{lemma}\label{thm:CWDecompA}
For any $0<\delta,\epslon<1$, $\mathcal{C}_{\delta,\epslon}$ gives a regular CW decomposition of $\abs{\widehat{\Lambda}}$ which is symmetric relative to $G$ and in which each cell is a convex polyhedron.
\end{lemma}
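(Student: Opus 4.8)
The plan is to reduce to a single simplex of $\widehat{\Lambda}$ by a common‑refinement argument and then reassemble globally. Fix a simplex $K$ of $\widehat{\Lambda}$ with vertex set $\{c_0,\ldots,c_q\}$ and set $\mc{E}_K=\bigcup_{K'\subset\clos{K}}\mc{C}_{K',\delta,\epslon}$, the union taken over all simplices $K'$ of $\widehat{\Lambda}$ with $K'\subset\clos{K}$. The first thing I would establish is that $\mc{E}_K$ is \emph{exactly} the common refinement of finitely many regular CW decompositions of the polytope $\clos{K}$ into convex polyhedra: by Lemma \ref{thm:CWDecompK}, for each subset $I$ of $\{c_0,\ldots,c_q\}$ and each $\gamma\in\{\delta,\epslon\}$ the family $\mc{D}_{I,\gamma}:=\bigcup_{K'\subset\clos{K}}\mc{C}_{K',I_{K'},\gamma}$ is such a decomposition of $\clos{K}$, and the barycentric subdivision $\widehat{\clos{K}}=\bigcup_{K'\subset\clos K}\mc{C}_{K',\mathrm{vert}}$ is another. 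Since every cell of any $\mc{C}_{K',\delta,\epslon}$ lies in the open simplex $K'$ and distinct open simplices $K'\subset\clos K$ are disjoint, a nonempty intersection of one cell chosen from each of these decompositions is forced to lie inside a single $K'$, where (after noting that distinct $I$'s with the same $I_{K'}$ contribute the same factor) it is precisely a cell of $\mc{C}_{K',\delta,\epslon}$; conversely every cell of $\mc{C}_{K',\delta,\epslon}$ arises in this way. This bookkeeping is routine but is the step where I would be most careful.

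Next I would invoke the standard fact that the common refinement of finitely many regular CW decompositions of a convex polyhedron, each into convex polyhedra, is again a regular CW decomposition into convex polyhedra: a nonempty intersection of bounded convex polyhedra is a bounded convex polyhedron, hence homeomorphic to a closed ball and so a regular cell, and its relative boundary is the union of the strictly smaller such intersections, hence is covered by lower‑dimensional cells of the refinement. Applying this to $\mc{E}_K$ shows $\mc{E}_K$ is a finite regular CW decomposition of $\clos{K}$ into convex polyhedra. In particular $\mc{C}_{K,\delta,\epslon}$, being exactly the subcollection of cells of $\mc{E}_K$ lying in the open simplex $K$, partitions $K$, and for each such cell $C$ its closure — taken in $\clos{K}$, equivalently in $\abs{\widehat{\Lambda}}$ since $\clos{K}$ is closed in $\abs{\widehat{\Lambda}}$ — is a union of cells of $\mc{E}_K\subset\mc{C}_{\delta,\epslon}$.

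The global statement then follows: $\widehat{\Lambda}$ is finite, its open simplices partition $\abs{\widehat{\Lambda}}$, and each $\mc{C}_{K,\delta,\epslon}$ partitions the corresponding open $K$, so $\mc{C}_{\delta,\epslon}=\bigcup_{K\in\widehat{\Lambda}}\mc{C}_{K,\delta,\epslon}$ is a partition of $\abs{\widehat{\Lambda}}$ into bounded convex polyhedra, and closure‑finiteness together with the regular‑cell condition holds cell by cell by the previous paragraph. For symmetry, note that since $\Lambda$ is a symmetric simplicial complex and $G$ acts linearly, $\widehat{\Lambda}$ is symmetric (the linear action permutes barycenters, hence simplices of $\widehat{\Lambda}$). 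For $g\in G$ and $K=K(c_0,\ldots,c_q)$ a simplex of $\widehat{\Lambda}$, linearity of $g$ gives $g(C_{K,I,\ast,\gamma})=C_{g(K),g(I),\ast,\gamma}$ for $\ast\in\{<,=,>\}$ (barycentric coordinates are transported unchanged), and $g$ maps $\mc{C}_{K,\mathrm{vert}}$ bijectively onto $\mc{C}_{g(K),\mathrm{vert}}$; as $I$ ranges over subsets of the vertex set of $K$, $g(I)$ ranges over subsets of the vertex set of $g(K)$, so $g$ carries each cell of $\mc{C}_{K,\delta,\epslon}$ to a cell of $\mc{C}_{g(K),\delta,\epslon}$. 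Hence $g$ permutes the cells of $\mc{C}_{\delta,\epslon}$, i.e.\ $\mc{C}_{\delta,\epslon}$ is symmetric as a CW complex under $G$. The main obstacle is really just the first step — pinning down $\mc{E}_K$ as a common refinement, in particular that intersections across distinct subsimplices vanish — together with the gluing across simplices, which relies on each $\clos{K}$ being closed; the symmetry claim is then immediate from linearity of the $G$‑action.
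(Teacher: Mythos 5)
Your proof is correct and takes essentially the same approach as the paper's: both build on Lemma~\ref{thm:CWDecompK}, realize cells of $\mc{C}_{\delta,\epslon}$ as intersections of cells from the constituent decompositions (hence convex polyhedra), and derive symmetry from linearity of the $G$-action. You package the boundary/regularity verification as an appeal to the standard common-refinement fact for polyhedral CW decompositions, whereas the paper carries out the boundary inclusion $\partial C\subset\bigcup_i\clos{C}_1\cap\cdots\cap\partial C_i\cap\cdots\cap\clos{C}_l$ explicitly and distributes; the two are equivalent, and your framing is a clean reorganization of the same argument.
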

\begin{proof}
We have that for each $K\in \widehat{\Lambda}$, the collection of cells of the second barycentric subdivision of $\Lambda$ contained in $\clos{K}$ gives a regular CW decomposition of $K$ into convex polyhedra. Lemma \ref{thm:CWDecompK} shows that for each subset $I$ of the vertex set of $K$, $\bigcup_{K'\subset\clos{K}}\mathcal{C}_{K', I_{K'}, \epslon}$ and $\bigcup_{K'\subset\clos{K}}\mathcal{C}_{K', I_{K'}, \delta}$ each gives a regular CW decomposition of $\clos{K}$. Since each set in $\mathcal{C}_{\delta,\epslon}$ is an intersection of one set from each of these decompositions across the various simplices $K$ of $\widehat{\Lambda}$, the cells of $\mathcal{C}_{\delta,\epslon}$ remain convex polyhedra. Also for each $K$, $\mathcal{C}_{K.\delta,\epslon}$ remains a partition of $K$, and so $\mathcal{C}_{\delta,\epslon}$ gives a partition of $\abs{\widehat{\Lambda}}$.

Let $C\subset K$ for some $C\in\mathcal{C}_{\delta,\epslon}$. We simplify the notation above to write $C=C_1\cap\cdots\cap C_l$ for $C_i$ cells of our various decompositions. Then we have
\begin{align*}
    \partial C &\subset \bigcup_{i=1}^l \clos{C}_1\cap \ldots \cap\clos{C}_{i-1}\cap\partial C_i\cap \clos{C}_{i+1}\cap \ldots\cap \clos{C}_l\\
    &=\bigcup_{J\subsetneq\{1,\ldots,l\}} \paren{\bigcap_{i\in J} C_i \cap \bigcap_{i\in \{1,\ldots, l\}\setminus J}\partial C_i }
\end{align*}
after decomposing each $\clos{C}_i=\partial C_i \cup C_i$ and distributing. We know $\delta C_i$ is a union of cells of dimension lower than that of $C_i$ from the decomposition of $\clos{K}$ corresponding to the decomposition of $K$ from which $C_i$ comes. Replacing each instance of $\partial C_i$ with this decomposition for each $1\leq i\leq l$ in the expression above and again distributing, rewriting, and discarding empty intersections, we obtain that $\partial C$ is contained in a union of cells of $\mathcal{C}_{\delta,\epslon}$ all of which have dimension lower than that of $C$. Hence $\mathcal{C}_{\delta,\epslon}$ gives the desired CW decomposition of $\abs{\Lambda}$.

To show symmetry, let $C\in \mc{C}_{\delta,\epslon}$ with $C\subset K$ for a simplex $K$ in $\widehat{\Lambda}$. Then $C$ is the set of points of the form $\sum t_{c_i} c_i$, with the $c_i$ being the vertices of $K$, such that the coefficients $t_{c_i}$ satisfy certain conditions. The linearity of the action of $G$ implies that $g(C)$ then consists of points $\sum t_{c_i} g(c_i)$ such that the coefficients $t_{c_i}$ satisfy those same conditions. From our definition of $\mc{C}_{\delta,\epslon}$, this means $C$ is a cell contained in $\mc{C}_{g(K),\delta,\epslon}\subset \mc{C}_{\delta,\epslon}$, as desired.
\end{proof}

Say $V=V(\epslon_0, \delta_0,\ldots, \epslon_m,\delta_m)$, and let $\delta=\min\{\delta_0,\ldots,\delta_m\}$ and $\epslon=\max\{\epslon_0,\ldots,\epslon_m\}$.

\begin{lemma}\label{thm:DecompV}
For each $B$ a simplex of $\widehat{S}$, $V_B$ can be written as a union of cells of $\mathcal{C}_{\delta,\epslon}$, and hence so can $V$. Furthermore, the set $\{C\in \mathcal{C}_{\delta,\epslon}\mid C\subset V\}$ is symmetric under the action of $G$.
\end{lemma}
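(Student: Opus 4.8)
The plan is to collapse the union over the index $i$ first, and then match the three types of conditions in Definition~\ref{def:Gabrielov3.6} against the three kinds of sets by which $\mathcal C_{\delta,\epslon}$ is built. First I would invoke Lemma~\ref{thm:UnionK_B}: since $\delta=\min_i\delta_i$ and $\epslon=\max_i\epslon_i$, an induction on the number of pairs gives $\bigcup_{i=0}^m K_{B'}(\delta_i,\epslon_i)=K_{B'}(\delta,\epslon)$ for every $B'\in\widehat S$ and every $K\in\widehat\Lambda$ with $B'\subset\clos K$. Hence $V_B=\bigcup_{B'\in S_B}\bigcup_{\clos K\supset B'}K_{B'}(\delta,\epslon)$ and $V=\bigcup_{B\in\widehat S}V_B$, so it suffices to show that each single set $K_{B'}(\delta,\epslon)$ is a union of cells of $\mathcal C_{\delta,\epslon}$; note that the parameters $\delta,\epslon$ now match exactly those used to define $\mathcal C_{\delta,\epslon}$.

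Next I would fix $B'=B'(b_0,\ldots,b_p)\subset\clos K$ with $K=K(c_0,\ldots,c_q)$, and write $I'=\{b_0,\ldots,b_p\}$, $J=\{c_0,\ldots,c_q\}$; since $B'$ is a face of $K$ we have $I'\subset J$, and by Definition~\ref{def:Gabrielov3.3} the core satisfies $C(B')\subset I'\subset J$. Because $K_{B'}(\delta,\epslon)\subset K$ and, by the proof of Lemma~\ref{thm:CWDecompA}, the family $\mathcal C_{K,\delta,\epslon}=\{C\in\mathcal C_{\delta,\epslon}\mid C\subset K\}$ partitions $K$, it is enough to check that every $C\in\mathcal C_{K,\delta,\epslon}$ is contained in $K_{B'}(\delta,\epslon)$ or disjoint from it. Writing $C=L\cap\bigcap_{I\subset J}C_{K,I,\delta}\cap\bigcap_{I\subset J}C_{K,I,\epslon}$ with $L\in\mathcal C_{K,\text{vert}}$ and one member chosen from each $\mathcal C_{K,I,\delta}$ and each $\mathcal C_{K,I,\epslon}$, I would observe that membership of a point $x=\sum_{c_\nu\in J}t_{c_\nu}c_\nu\in K$ in $K_{B'}(\delta,\epslon)$ is governed by: (i) $\sum_{b_\nu\in C(B')}t_{b_\nu}>\delta$; (ii) $\sum_{b_\nu\in I'}t_{b_\nu}>1-\epslon$, which on $K$ (where $\sum_{c_\nu\in J}t_{c_\nu}=1$) is equivalent to $\sum_{c_\nu\in J\setminus I'}t_{c_\nu}<\epslon$; and (iii) $t_{b_\nu}>t_{c_\mu}$ for all $b_\nu\in I'$ and $c_\mu\in J\setminus I'$. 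Condition (i) has a fixed truth value on $C$ because $C$ lies in one of the three sets of $\mathcal C_{K,C(B'),\delta}$; condition (ii) is fixed on $C$ because $C$ lies in one of the three sets of $\mathcal C_{K,J\setminus I',\epslon}$; and condition (iii) is fixed on $C$ because $C\subset L$ for a single $L\in\mathcal C_{K,\text{vert}}$, which (as used in the proof of Lemma~\ref{thm:CWDecompA}) is an open cell of the barycentric subdivision of $K$, on the interior of which every pairwise comparison between the barycentric coordinates of $K$ is constant --- concretely, the open cell corresponding to a chain of faces $F_0\subsetneq\cdots\subsetneq F_d$ fixes, for each pair $c_i,c_j$, the sign of $t_{c_i}-t_{c_j}$, by comparing the least indices $\ell$ with $c_i\in F_\ell$ and with $c_j\in F_\ell$. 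Thus membership in $K_{B'}(\delta,\epslon)$ is constant on $C$, so $C\subset K_{B'}(\delta,\epslon)$ or $C\cap K_{B'}(\delta,\epslon)=\emptyset$; taking unions then shows each $V_B$, and hence $V$, is a union of cells of $\mathcal C_{\delta,\epslon}$.

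For the symmetry statement I would combine two facts already in hand: by Lemma~\ref{thm:CWDecompA} the decomposition $\mathcal C_{\delta,\epslon}$ is symmetric under $G$, so $g(C)\in\mathcal C_{\delta,\epslon}$ for every cell $C$ and every $g\in G$; and by Proposition~\ref{thm:SymmetryV} the set $V$ is symmetric, whence $g(V)=V$. Therefore, if $C\in\mathcal C_{\delta,\epslon}$ with $C\subset V$, then $g(C)\in\mathcal C_{\delta,\epslon}$ and $g(C)\subset g(V)=V$, i.e.\ $g(C)\in\{C\in\mathcal C_{\delta,\epslon}\mid C\subset V\}$; since $g$ was arbitrary, this set is symmetric under $G$.

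The main obstacle is step (iii): one must recognize that the inequalities $t_{b_\nu}>t_{c_\mu}$ appearing in Definition~\ref{def:Gabrielov3.6} are exactly the ones resolved by the barycentric refinement $\mathcal C_{K,\text{vert}}$ rather than by the $\delta$- or $\epslon$-level hyperplanes, and to verify the combinatorial fact that a single open cell of the barycentric subdivision determines all pairwise coordinate comparisons. Everything else --- conditions (i) and (ii) --- is immediate once Lemma~\ref{thm:UnionK_B} has replaced the union over $i$ by the single pair $(\delta,\epslon)$.
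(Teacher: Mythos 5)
Your proposal is correct and takes essentially the same approach as the paper: collapse the union over $i$ via Lemma~\ref{thm:UnionK_B}, match the three kinds of conditions in Definition~\ref{def:Gabrielov3.6} against the barycentric cells and the $\delta$- and $\epslon$-level pieces of $\mathcal C_{\delta,\epslon}$ to show each cell is contained in or disjoint from each $K_{B'}(\delta,\epslon)$, and deduce symmetry of $\{C\in\mathcal C_{\delta,\epslon}\mid C\subset V\}$ from symmetry of $\mathcal C_{\delta,\epslon}$ and of $V$. The only difference is cosmetic: you carry out explicitly the condition-by-condition matching (including the combinatorial fact that a barycentric cell fixes all pairwise coordinate comparisons) that the paper compresses into a ``compare the notation'' instruction, and you cite Proposition~\ref{thm:SymmetryV} for the symmetry step where the paper re-derives $g(C)\subset g(K)_{g(B)}(\delta,\epslon)$ directly.
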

\begin{proof}
First, observe that if a simplex $K$ has vertex set $\{c_0,\ldots,c_q\}$, then the simplices of the barycentric subdivision of $\clos{K}$ belonging to $K$ correspond to the subsets of $\clos{K}$ given by
\begin{multline*}
0<t_{c_{i_1}}=\cdots=t_{c_{i_{\lambda_1}}}<t_{c_{i_{\lambda_1+1}}}=\cdots=t_{c_{i_{\lambda_2}}}<\\
\ldots<t_{c_{i_{\lambda_l}}}=\cdots=t_{c_{i_q}}<1
\end{multline*}
for various $1\leq\lambda_1\leq \cdots\leq \lambda_l\leq q$ and permutations $i_1,\ldots, i_q$ of $1,\ldots, q$. Using this observation to translate the condition ``$t_{b_\nu}>t_{c_\nu}$ for all $b_\nu\in I$ and $c_\mu\in (J\setminus I)$" from the definition of $K_B(\delta, \epslon)$, we see we have defined $\mc{C}_{\delta,\epslon}$ in such a way that for each pair $K$ and $B$ with $K$ a simplex of $\widehat{\Lambda}$, $B$ a simplex of $\widehat{S}$, and $B\subset\clos{K}$, each cell of $\mc{C}_{\delta,\epslon}$ is either contained in or disjoint from the set $K_B(\delta, \epslon)$ (compare Notation \ref{def:DecompsK} and Definition \ref{def:Gabrielov3.6}). However, by Lemma \ref{thm:UnionK_B}, we have that $K_B(\delta, \epslon)=K_B(\delta_0, \epslon_0)\cup\cdots \cup K_B(\delta_m, \epslon_m)$. Now, for $B$ any simplex contained in $\widehat{S}$, we have that $V_B$ is the union of sets $K_{B'}(\delta, \epslon)$ for $K$ in $\widehat{\Lambda}$ with $B\subset \clos{K}$ and $B'\in S_B$, and hence is a union of cells of $\mc{C}_{\delta,\epslon}$.

Let $C\in \mc{C}_{\delta,\epslon}$ with $C\subset K$ for $K$ a simplex in $\widehat{\Lambda}$. If $C\subset K_B(\delta, \epslon)$ for some appropriate $K$ and $B$, then $g(C)\subset g(K)_{g(B)}(\delta,\epslon)$. Hence the set of cells of $\mc{C}_{\delta,\epslon}$ contained in $V$ is symmetric under the action of $G$.
\end{proof}

Now, we may begin assembling our equivariant map $\tau: V\rightarrow S$.

\begin{lemma}\label{thm:VandNV}
Let $\mc{N}_V$ denote the nerve of the covering of $V$ by the family $\{V_B\}_{B\in \widehat{S}}$ (see Definition \ref{def:Gabrielov5.7}). Then for $m\geq 1$ and $0<\epslon_0\ll\delta_0\ll\cdots\ll\epslon_m\ll\delta_m\ll 1$, there exists an equivariant map $\psi_V:V\rightarrow \mc{N}_V$ such that the induced homomorphism $(\psi_V)_{\#,k}:\pi_{k}(V,*)\rightarrow \pi_{k}(\abs{\mc{N}_V},\psi_V(*))$ is an isomorphism for $k\leq m-1$ and an epimorphism for $k=m$.
\end{lemma}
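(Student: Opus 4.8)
The plan is to apply the equivariant open Nerve Theorem, Corollary \ref{thm:EquivariantNerveTheoremOpen}, to the covering $\{V_B\}_{B\in\widehat S}$ of $V$. The ambient regular CW complex will be $\abs{\widehat\Lambda}=\abs\Lambda\subset\bb{R}^n$, equipped with the decomposition $\mc{C}_{\delta,\epslon}$ for $\delta=\min\{\delta_0,\ldots,\delta_m\}$ and $\epslon=\max\{\epslon_0,\ldots,\epslon_m\}$. By Lemma \ref{thm:CWDecompA} this is a regular CW complex all of whose cells are convex polyhedra and which is symmetric as a CW complex under the linear action of $G$, so the standing hypotheses of Corollary \ref{thm:EquivariantNerveTheoremOpen} are in force. (If $\abs\Lambda$ is disconnected --- which does not occur in the constructible case, where $A$ is a closed ball --- one chooses a component in each $G$-orbit of components, applies the argument with the corresponding stabilizer subgroup, and transports by $G$ exactly as in the proof of Theorem \ref{thm:SymmetricTriangulation}; I suppress this below.)

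First I would check the hypotheses on the covering. Set $Y=V$ and $Y_B=V_B$ for $B\in\widehat S$; then $V=\bigcup_{B\in\widehat S}V_B$ by Definition \ref{def:Gabrielov3.7}. Each $V_B$ is a union of cells of $\mc{C}_{\delta,\epslon}$ by Lemma \ref{thm:DecompV} and is open in $\abs\Lambda$ by Lemma \ref{thm:Gabrielov4.5}. The family is $G$-invariant because $g(V_B)=V_{g(B)}$ with $g(B)\in\widehat S$ (Proposition \ref{thm:SymmetryV}). So every structural requirement of Theorem \ref{thm:NerveTheoremOpen} and Corollary \ref{thm:EquivariantNerveTheoremOpen} is met.

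Next I would verify the connectivity condition that gives the strong conclusion. I claim every nonempty finite intersection $V_{B_1}\cap\cdots\cap V_{B_t}$ is $(m-t)$-connected. By Proposition \ref{thm:IntersectionVB}, a nonempty pairwise intersection $V_{B_i}\cap V_{B_j}$ is again a set of the form $V_{B_0}$ with $B_0\in\widehat S$; iterating this (induction on $t$), any nonempty $V_{B_1}\cap\cdots\cap V_{B_t}$ equals $V_{B_0}$ for the simplex $B_0\in\widehat S$ with $\clos B_1\cap\cdots\cap\clos B_t\cap\abs{\widehat S}=\clos B_0\cap\abs{\widehat S}$. By Lemma \ref{thm:Gabrielov4.5}, for $0<\epslon_0\ll\delta_0\ll\cdots\ll\epslon_m\ll\delta_m\ll1$ each such $V_{B_0}$ is $(m-1)$-connected, and $m-1\geq m-t$ for every $t\geq1$; hence the hypothesis of Theorem \ref{thm:NerveTheoremOpen}(1) holds with $k=m-1$. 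Corollary \ref{thm:EquivariantNerveTheoremOpen} then yields an equivariant map $\psi_V\colon V\to\abs{\mc{N}_V}$ whose induced maps $(\psi_V)_{\#,k}$ are isomorphisms for $k\leq m-1$ and epimorphisms for $k=m$; equivariance of $\psi_V$ forces each $(\psi_V)_{\#,k}$ to be equivariant in the sense of Definition \ref{def:HomotopyEquivariance}, which handles the basepoint bookkeeping in the statement.

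The work here is bookkeeping rather than a new idea: the substance has been front-loaded into Lemma \ref{thm:CWDecompA} and Lemma \ref{thm:DecompV} (producing one $G$-symmetric polyhedral CW structure for which all the $V_B$ are simultaneously subcomplexes --- this is why $\mc{C}_{\delta,\epslon}$ with the extremal parameters and Lemma \ref{thm:UnionK_B} were arranged in advance) and into Proposition \ref{thm:IntersectionVB} together with Lemma \ref{thm:Gabrielov4.5} (which jointly guarantee that intersections of covering sets are highly connected). The one place that could snag is confirming that the single CW structure $\mc{C}_{\delta,\epslon}$ works uniformly for every $V_B$, i.e.\ that no $V_B$ cuts across a cell; this is exactly the content of Lemma \ref{thm:DecompV}, so modulo that lemma the proof is a direct invocation of Corollary \ref{thm:EquivariantNerveTheoremOpen}.
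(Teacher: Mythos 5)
Your proof is essentially the paper's: both invoke Corollary \ref{thm:EquivariantNerveTheoremOpen} on the covering $\{V_B\}_{B\in\widehat{S}}$ with the CW structure $\mc{C}_{\delta,\epslon}$, verifying the hypotheses via the same sequence of preparatory results (Lemmas \ref{thm:CWDecompA}, \ref{thm:DecompV}, \ref{thm:Gabrielov4.5} and Propositions \ref{thm:SymmetryV}, \ref{thm:IntersectionVB}) and noting $(m-1)$-connectedness suffices for $k=m-1$. The only quibble is in your parenthetical aside: the connectedness that needs attention is that of $V$, not of $\abs{\Lambda}$, and the paper handles it more lightly than your stabilizer-subgroup transport — it simply observes that $\psi_V$ restricts to the required isomorphism/epimorphism on each connected component of $V$, which together with equivariance gives the basepointed conclusion.
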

\begin{proof}
By Lemma \ref{thm:CWDecompA}, $\abs{\widehat{\Lambda}}$ together with the collection $\mc{C}$ is a symmetric regular CW complex whose cells are all convex polyhedra. Each set in $\{V_B\}_{B\in \widehat{S}}$ is open in $\abs{\Lambda}$ (Lemma \ref{thm:Gabrielov4.5}) and is a union of cells of $\mc{C}$ (Lemma \ref{thm:DecompV}), and the family $\{V_B\}_{B\in\widehat{S}}$ has the property that $g(V_B)\in \{V_B\}$ for each $g\in G$ and $B\in \widehat{S}$ (Proposition \ref{thm:DecompV}). Furthermore, for each finite nonempty intersection we have $V_{B_1}\cap \ldots \cap V_{B_t} = V_{B_0}$ for some $B_0\in \widehat{S}$ (Proposition \ref{thm:IntersectionVB}), and so this is $(m-1)$-connected (Lemma \ref{thm:Gabrielov4.5}). The map $\psi_V:V\rightarrow \abs{\mc{N}_V}$ given as in Corollary \ref{thm:EquivariantNerveTheoremOpen} is then equivariant. We also see that, when restricted to each connected component of $V$, $\psi_V$ induces a homomorphism of homotopy groups which is an isomorphism for $k\leq m-1$ and an epimorphism for $k=m$. Hence $(\psi_V)_{\#,k}:\pi_{k}(V,*)\rightarrow \pi_{k}(\abs{\mc{N}_V},\psi_V(*))$ is also an isomorphism for $k\leq m-1$ and an epimorphism for $k=m$.
\end{proof}

Let $\br(\widehat{S})$ be the set of simplices in the barycentric subdivision of $\widehat{\Lambda}$ which are contained with their closure in $\abs{\widehat{S}}$ (see Definition \ref{def:BarycentricRetraction}). For a simplex $B\in \widehat{S}$, let $\widetilde{B}=\clos{B}\cap \abs{\widehat{S}}$.

\begin{lemma}\label{thm:NVandNS}
Let $\mc{N}_V$ be the nerve of the covering of $V$ by the family $\{V_B\}_{B\in \widehat{S}}$, and let $\mc{N}_{
S}$ denote the nerve of the covering of $\abs{\br(\widehat{S})}$ by the family $\{\br(\widetilde{B})\}_{B\in\widehat{S}}$ Then there exists an equivariant homeomorphism $\xi: \abs{\mc{N}_V}\rightarrow \abs{\mc{N}_{\br(\widehat{S})}}$.
\end{lemma}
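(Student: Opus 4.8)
The plan is to prove that $\mc{N}_V$ and $\mc{N}_{\br(\widehat{S})}$ (the complex denoted $\mc{N}_S$ in the statement) are the \emph{same} abstract simplicial complex on the common index set $\widehat{S}$, so that the identity map on vertices furnishes the homeomorphism $\xi$, and then to observe that this $\xi$ automatically intertwines the two $G$-actions.

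First I would extend Proposition \ref{thm:IntersectionVB} from pairs to arbitrary finite subfamilies: by induction on $t$, for $B_1,\ldots,B_t\in\widehat{S}$ the intersection $V_{B_1}\cap\cdots\cap V_{B_t}$ is empty precisely when $\clos{B}_1\cap\cdots\cap\clos{B}_t\cap\abs{\widehat{S}}=\emptyset$, and is otherwise equal to $V_{B_0}$ for the unique $B_0\in\widehat{S}$ with $\clos{B}_0\cap\abs{\widehat{S}}=\clos{B}_1\cap\cdots\cap\clos{B}_t\cap\abs{\widehat{S}}$. The inductive step goes through because the identity $\clos{B}_0\cap\abs{\widehat{S}}=\clos{B}_1\cap\cdots\cap\clos{B}_{t-1}\cap\abs{\widehat{S}}$ may be intersected freely with $\clos{B}_t$ before applying the pairwise statement to $(B_0,B_t)$; the base case $t=1$ uses $V_B\neq\emptyset$ (Lemma \ref{thm:Gabrielov4.5}) and $\clos{B}\cap\abs{\widehat{S}}\supseteq B\neq\emptyset$. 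Consequently $\{B_1,\ldots,B_t\}$ is a simplex of $\mc{N}_V$ if and only if $\clos{B}_1\cap\cdots\cap\clos{B}_t\cap\abs{\widehat{S}}\neq\emptyset$.

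For the other nerve, note each $\widetilde{B}=\clos{B}\cap\abs{\widehat{S}}$ is a union of cells of $\widehat{\Lambda}$ (the open faces of $B$ lying in $\widehat{S}$), so the computation already carried out in the proof of Theorem \ref{thm:NerveTheoremOpen} (namely $\bigcap_j\br(Y_{i_j})=\br(\bigcap_j Y_{i_j})$) gives $\br(\widetilde{B}_1)\cap\cdots\cap\br(\widetilde{B}_t)=\br\big(\widetilde{B}_1\cap\cdots\cap\widetilde{B}_t\big)=\br\big(\clos{B}_1\cap\cdots\cap\clos{B}_t\cap\abs{\widehat{S}}\big)$. Since $\br(Y)=\emptyset$ iff $Y=\emptyset$ for a union of cells $Y$ (as observed in the proof of Proposition \ref{thm:BarycentricRetractingMap}), this intersection is nonempty exactly when $\clos{B}_1\cap\cdots\cap\clos{B}_t\cap\abs{\widehat{S}}\neq\emptyset$ — the same condition as for the $V$'s. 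Hence $\mc{N}_V$ and $\mc{N}_{\br(\widehat{S})}$ coincide as abstract simplicial complexes on vertex set $\widehat{S}$, and the identity on vertices induces a homeomorphism $\xi\colon\abs{\mc{N}_V}\to\abs{\mc{N}_{\br(\widehat{S})}}$.

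It remains to check equivariance. Both nerves carry the $G$-action induced by the action of $G$ on the index set $\widehat{S}$ given by $B\mapsto g(B)$: for $\mc{N}_V$ this is because $g(V_B)=V_{g(B)}$ (as in Proposition \ref{thm:SymmetryV}), and for $\mc{N}_{\br(\widehat{S})}$ because $g\big(\br(\widetilde{B})\big)=\br\big(g(\widetilde{B})\big)=\br\big(\widetilde{g(B)}\big)$, using the equivariance of $\br$ (Proposition \ref{thm:BREquivariance}; alternatively, directly from $g(\clos{B}\cap\abs{\widehat{S}})=\clos{g(B)}\cap\abs{\widehat{S}}$ and linearity of $g$) together with the symmetry of $\widehat{S}$. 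Since $\xi$ is the identity on this common vertex set, it commutes with both induced linear actions, so $\xi$ is equivariant. The step requiring the most care is the induction promoting Proposition \ref{thm:IntersectionVB} to $t$-fold intersections; everything else is formal.
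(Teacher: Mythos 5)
Your proposal is correct and follows essentially the same route as the paper: both identify the two nerves as equal abstract simplicial complexes on the common vertex set $\widehat{S}$ by comparing when finite subfamilies have nonempty intersection, and both observe that the resulting identity-on-vertices map $\xi$ is automatically equivariant because the $G$-actions on each nerve are induced by the same action $B\mapsto g(B)$ on the index set. Where you go beyond the paper's terse proof is in spelling out the promotion of Proposition \ref{thm:IntersectionVB} from pairwise to $t$-fold intersections (the paper cites the proposition and asserts the conclusion without exhibiting the induction), and in explicitly invoking $\bigcap_j\br(Y_{i_j})=\br(\bigcap_j Y_{i_j})$ together with ``$\br(Y)=\emptyset$ iff $Y=\emptyset$'' to handle the $\br(\widetilde{B})$ side; these details are the ones the paper leaves implicit, and you have supplied them correctly.
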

\begin{proof}
The homeomorphism $\xi: \abs{N_V}\rightarrow \abs{N_{\widehat{S}}}$ is described in the proof of \cite{gabrielov2009approximation} Theorem 4.8. Both
families $\{V_B\}$ and $\{\br(\widetilde{B})\}$ are indexed over the simplices of $\widehat{S}$. Furthermore, by Proposition \ref{thm:IntersectionVB} we see that $\sigma\subset \widehat{S}$ is a simplex of $\mc{N}_V$ iff $\sigma$ is a simplex of $\mc{N}_{\br(\widehat{S})}$. Thus, since $\xi$ is induced from the identity map on the vertex sets of $\mc{N}_V$ and $\mc{N}_{\br(\widehat{S})}$, $\xi$ is equivariant.
\end{proof}

\begin{lemma}\label{thm:NSandS}
Let $\mc{N}_{\br(\widehat{S})}$ denote the nerve of the covering of $\br(\widehat{S})$ by the family $\{\br(\widetilde{B})\}_{B\in\widehat{S}}$. Then there exists an equivariant map $\psi_{\widehat{S}}: \abs{\mc{N}_{\br(\widehat{S})}}\rightarrow S$ which induces a homotopy equivalence.
\end{lemma}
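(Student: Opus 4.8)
The goal is to produce an equivariant map $\psi_{\widehat{S}}\colon \abs{\mc{N}_{\br(\widehat{S})}} \to S$ inducing a homotopy equivalence. The plan is to first relate $S$ to $\abs{\br(\widehat{S})}$, and then relate $\abs{\br(\widehat{S})}$ to $\abs{\mc{N}_{\br(\widehat{S})}}$ via the equivariant Nerve Theorem. For the first step I would observe that $\widehat{S}$ is a union of cells of the regular CW complex $\widehat{\Lambda}$, and that the barycentric retracting map $h_{\br}$ of Definition~\ref{def:BarycentricRetraction} provides a map $\abs{\widehat{S}}\to\abs{\br(\widehat{S})}$ which is a homotopy inverse to the inclusion $\abs{\br(\widehat{S})}\hookrightarrow\abs{\widehat{S}}$; by Proposition~\ref{thm:BREquivariance}, since $\widehat{\Lambda}$ and $\abs{\widehat{S}}$ are symmetric, this map is equivariant. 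Composing with (the inverse of) the equivariant triangulation $\Phi\colon\abs{\Lambda}\to A$, restricted appropriately, identifies $S$ equivariantly up to homotopy with $\abs{\br(\widehat{S})}$ — more precisely, there is an equivariant homotopy equivalence $\abs{\br(\widehat{S})}\hookrightarrow\abs{\widehat{S}} \xrightarrow{\Phi} S$.

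\textbf{Applying the equivariant nerve theorem.} For the second step I would apply Theorem~\ref{thm:Hess2.5} (or Corollary~\ref{thm:EquivariantNerveTheoremOpen} / Theorem~\ref{thm:EquivariantNerveTheorem}) to the simplicial complex $\br(\widehat{S})$ covered by the family $\{\br(\widetilde{B})\}_{B\in\widehat{S}}$. One must check: (a) this family covers $\abs{\br(\widehat{S})}$ — every simplex of $\br(\widehat{S})$ corresponds to a flag of cells of $\widehat{\Lambda}$ all lying in $\abs{\widehat{S}}$, and the cell of lowest dimension in that flag lies in some $\widetilde{B}$, whence (as in the proof of Theorem~\ref{thm:NerveTheoremOpen}) the whole simplex lies in $\br(\widetilde{B})$; (b) the family is $G$-stable, which follows from symmetry of $\widehat{S}$ since $g(\br(\widetilde{B})) = \br(\widetilde{g(B)})$; and (c) every nonempty finite intersection $\bigcap_{i\in\sigma}\br(\widetilde{B_i})$ is contractible — indeed, as in Theorem~\ref{thm:NerveTheoremOpen}, $\bigcap_{j}\br(\widetilde{B_j}) = \br\bigl(\bigcap_j \widetilde{B_j}\bigr)$, and by Proposition~\ref{thm:IntersectionVB} the intersection $\bigcap_j\widetilde{B_j}$ is either empty or equals $\widetilde{B_0}=\clos{B_0}\cap\abs{\widehat{S}}$ for a single simplex $B_0$; such a set $\widetilde{B_0}$, being the closed-star-like intersection of a closed simplex with $\abs{\widehat{S}}$, is contractible, and so its barycentric retraction is too. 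If I use Theorem~\ref{thm:Hess2.5} I must upgrade contractibility to $G_\sigma$-contractibility of $\widetilde{B_0}$, which holds because $\widetilde{B_0}$ is star-shaped (a cone) about a $G_\sigma$-fixed point — the barycenter of $B_0$, or more carefully the barycenter of $B_0$ averaged over $G_\sigma$, which lies in the relative interior of $\widetilde{B_0}$ and is fixed by $G_\sigma$. This yields an equivariant map (indeed $G$-homotopy equivalence) $f\colon\abs{\br(\widehat{S})}\to\abs{\mc{N}_{\br(\widehat{S})}}$, hence an equivariant homotopy inverse $\psi_{\widehat{S}}'\colon\abs{\mc{N}_{\br(\widehat{S})}}\to\abs{\br(\widehat{S})}$.

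\textbf{Assembling the map.} Finally I would set $\psi_{\widehat{S}}$ to be the composite
\[
\abs{\mc{N}_{\br(\widehat{S})}} \xrightarrow{\psi_{\widehat{S}}'} \abs{\br(\widehat{S})} \hookrightarrow \abs{\widehat{S}} \xrightarrow{\Phi} S,
\]
each arrow being equivariant and a homotopy equivalence, so that $\psi_{\widehat{S}}$ is an equivariant map inducing a homotopy equivalence, as required. (I should take care with basepoints, since $\Phi$ and the nerve maps move basepoints, but equivariance is insensitive to this provided we track the basepoint and its orbit.)

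\textbf{Main obstacle.} The technical heart — and the step most likely to need care — is verifying the contractibility (and, for Theorem~\ref{thm:Hess2.5}, the $G_\sigma$-contractibility) of the intersections $\br(\widetilde{B_0})$. The set $\widetilde{B_0}=\clos{B_0}\cap\abs{\widehat{S}}$ need not be a simplex or even a full subcomplex, so one cannot simply quote that closed simplices are contractible; one needs the geometric fact that it is a cone (star-shaped about a suitable central point) together with the fact that coning commutes with barycentric retraction up to homotopy, and then that this coning can be made $G_\sigma$-equivariant by averaging the apex over $G_\sigma$. This is exactly the kind of "explicit contraction" reasoning the paper has been setting up in Section~\ref{sect:EquivarianceTopology}, so I expect the required lemmas are essentially in hand, but the bookkeeping of which simplices lie in $\widehat{S}$ versus merely in $\widehat{\Lambda}$ is where an error could creep in.
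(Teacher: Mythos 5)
Your proposal takes the same route as the paper: it applies Theorem~\ref{thm:Hess2.5}, whose hypothesis of $G_\sigma$-contractibility is verified by contracting each nonempty intersection $\br(\widetilde{B}_0)$ linearly to the barycenter $v_{B_0}$, and then composes with the equivariant homotopy equivalence $\abs{\br(\widehat{S})}\hookrightarrow\abs{\widehat{S}}=\Phi^{-1}(S)\rightarrow S$. Two small remarks. First, your parenthetical suggestion that Corollary~\ref{thm:EquivariantNerveTheoremOpen} or Theorem~\ref{thm:EquivariantNerveTheorem} could substitute for Theorem~\ref{thm:Hess2.5} would not work: those results only produce an equivariant map \emph{from} the space \emph{to} the nerve, and (as the paper notes immediately after Theorem~\ref{thm:EquivariantNerveTheorem}) no equivariance of a homotopy inverse is guaranteed there; here you need the map out of the nerve, which is exactly why the Hess--Hirsch lemma is used. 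Second, the "more careful" averaging of the apex over $G_\sigma$ is unnecessary: if $g$ stabilizes $\{\br(\widetilde{B}_1),\ldots,\br(\widetilde{B}_l)\}$ then $g(B_0)=B_0$, and since $g$ acts linearly it already fixes the barycenter $v_{B_0}$. The paper exploits this to write an explicit linear contraction directly on $\br(\widetilde{B}_0)$ (on each simplex of the second barycentric subdivision having $v_{B_0}$ as a vertex), thereby sidestepping your intermediate step of showing that $\widetilde{B}_0$ is star-shaped and then transferring contractibility across the barycentric retraction; either version is sound, but the paper's direct formula is tidier on the equivariance bookkeeping you flagged as the main obstacle.
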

\begin{proof}
Observe that $\br(\widehat{S})$ is a full simplicial complex, each $\br(\widetilde{B})$ is a subcomplex of $\br(\widehat{S})$, and that the family $\{\br(\widetilde{B})\}_{B\in\widehat{S}}$ is invariant under the action of $G$. Furthermore, any nonempty intersection $\br(\widetilde{B}_1)\cap\ldots\cap \br(\widetilde{B}_l)$ is equal to $\br(\widetilde{B}_0)$ for some $B_0\in \widehat{S}$.

Let $v_{B_0}$ be the vertex of the second barycentric subdivision of $\Lambda$ corresponding to the simplex $B_0\in \widehat{S}$. For $\Delta$ a simplex of the second barycentric subdivision of $\Lambda$ having $v_{B_0}$ as one of its vertices, we may define a map $h_{\restriction\clos{\Delta}}:[0,1]\times\clos{\Delta}\rightarrow \clos{\Delta}$ by
\[
h_{\restriction\clos{\Delta}}(t,x)=t\sum_{v_i\neq v_{B_0}} t_{v_i}v_i+\paren{1-t\sum_{v_i\neq v_{B_0}} t_{v_i}}v_{B_0}
\]
where $x=\sum t_{v_i}v_i\in \clos{\Delta}$. Since we have agreement on the boundaries, and since $\br(\widetilde{B}_0)$ is the union of all such $\clos{\Delta}$ which are contained entirely in $\abs{\widehat{S}}$, we may extend to obtain a map $h:[0,1]\times \abs{\br(\widetilde{B}_0)}\rightarrow \abs{\br(\widetilde{B}_0)}$. The map $h$ has the property that $h(1,-)=\id$ and $h(0,-)=v_{B_0}$, and by linearity together with the fact that for such $g$, $g(B_0)=B_0$, we see that $g(h(t,x))=h(t,g(x))$ for any $g$ with $\{g(\br(\widetilde{B}_1)),\ldots,g(\br(\widetilde{B}_l))\}=\{\br(\widetilde{B}_1),\ldots,\br(\widetilde{B}_l)\}$.

Thus by Theorem \ref{thm:Hess2.5}, we obtain an equivariant map $\abs{\mc{N}_{\br(\widehat{S})}}\rightarrow \abs{\br(\widehat{S})}$ which induces a homotopy equivalence. However, as described in Definition \ref{def:BarycentricRetraction}, the embedding $\abs{\br(\widehat{S})}\hookrightarrow \abs{\widehat{S}}=\Phi^{-1}(S)$ induces a homotopy equivalence (and is equivariant). Composing gives us the desired equivariant map $\varphi_{\br(\widehat{S})}:\abs{\mc{N}_{\br(\widehat{S})}}\rightarrow S$.
\end{proof}

Now, to establish the equivariance of $\\tau_{\#,k}$ and $\tau_{*,k}$ we need only compose these three maps.

\begin{thm}\label{thm:EquivariantGabrielov4.8}
(ref \cite{gabrielov2009approximation} Theorem 4.8) For $m>0$, $0<\epslon_0\ll\delta_0\ll\cdots\ll \epslon_m\ll\delta_m\ll 1$ and $V=V(\epslon_0,\delta_0,\ldots, \epslon_m, \delta_m)$, there is an equivariant map $\tau:V\rightarrow S$ inducing equivariant homomorphisms $\tau_{\#,k}:\pi_k(V,*'')\rightarrow \pi_k(S,*')$ and $\tau_{*,k}:H_k(V)\rightarrow H_k(S)$ with $\tau_{\#,k}$, $\tau_{*,k}$ isomorphisms for every $k\leq m-1$ and $\tau_{\#,m}$, $\tau_{*,m}$ epimorphisms. Moreover if $m\geq \dim(S)$, then $\tau$ induces a homotopy equivalence $V\simeq S$.
\end{thm}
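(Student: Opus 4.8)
The plan is to take $\tau$ to be the composition
\[
\tau\colon\ V\ \xrightarrow{\ \psi_V\ }\ \abs{\mc{N}_V}\ \xrightarrow{\ \xi\ }\ \abs{\mc{N}_{\br(\widehat{S})}}\ \xrightarrow{\ \psi_{\widehat{S}}\ }\ S
\]
of the three maps produced in Lemmas \ref{thm:VandNV}, \ref{thm:NVandNS}, and \ref{thm:NSandS} (valid for $0<\epslon_0\ll\delta_0\ll\cdots\ll\epslon_m\ll\delta_m\ll 1$, as required by Lemma \ref{thm:VandNV}; here $S$ is identified with $\Phi^{-1}(S)$ through the equivariant homeomorphism of our symmetric triangulation, and one post-composes with $\Phi$ if the original $S$ is wanted). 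Each of the three maps has been arranged to be equivariant, so $\tau$ is equivariant, and equivariance of $\tau$ on the level of spaces immediately yields equivariance of the induced maps $\tau_{\#,k}$ and $\tau_{*,k}$ in the sense of Definitions \ref{def:HomotopyEquivariance} and \ref{def:Equivariance}: for each $g\in G$ one obtains the required commuting square, with the basepoint shift $*''\mapsto g(*'')$ and $*'\mapsto g(*')$, by applying the functor $\pi_k$ (respectively $H_k$) to the identity $\tau\circ g=g\circ\tau$.

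Next I would read off the degrees. On each connected component, $\psi_V$ induces an isomorphism on $\pi_k$ for $k\le m-1$ and an epimorphism for $k=m$ (Lemma \ref{thm:VandNV}); $\xi$ is a homeomorphism, hence an isomorphism on every $\pi_k$; and $\psi_{\widehat{S}}$ is a homotopy equivalence (Lemma \ref{thm:NSandS}), hence likewise. Composing, $\tau_{\#,k}$ is an isomorphism for $k\le m-1$ and an epimorphism for $k=m$. Since each of the three maps carries connected components to connected components, $\tau$ restricts on every component of $V$ to a map of path-connected spaces, so the Whitehead homology theorem (Theorem \ref{thm:Spanier7.5.9}), applied componentwise, gives that $\tau_{*,k}$ is an isomorphism for $k\le m-1$ and an epimorphism for $k=m$; these identifications are automatically $G$-equivariant because $G$ merely permutes the components of $V$ and of $S$ in a way compatible with $\tau$. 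This recovers the content of \cite{gabrielov2009approximation} Theorem 4.8 with the added equivariance.

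For the final claim, suppose $m\ge\dim(S)$. The corresponding non-equivariant assertion is part of \cite{gabrielov2009approximation} Theorem 4.8, and its proof passes entirely through the maps $\psi_V$, $\xi$, $\psi_{\widehat{S}}$: in the regime $m\ge\dim(S)$ the nonempty finite intersections $V_{B_1}\cap\cdots\cap V_{B_t}$ (each equal to some $V_{B_0}$, by Proposition \ref{thm:IntersectionVB}) are in fact contractible, so $\psi_V$ is a homotopy equivalence by the contractible-intersection case of Theorem \ref{thm:NerveTheoremOpen}, and $\tau$ becomes a composition of homotopy equivalences. Since we have only reproduced these maps while additionally making them equivariant, the same argument applies and shows that $\tau$ is a homotopy equivalence $V\simeq S$; note that we do not assert that any homotopy inverse of $\tau$ is equivariant, only that $\tau$ itself is at once equivariant and a homotopy equivalence.

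The substantive work has already been absorbed into Lemmas \ref{thm:VandNV}--\ref{thm:NSandS}: putting the nerve-theoretic maps of \cite{gabrielov2009approximation} into equivariant form and exhibiting the $G$-equivariant identification of the three nerves involved. Granting those, the present theorem is \emph{bookkeeping}: checking that composition preserves equivariance, that basepoints move correctly through Definition \ref{def:HomotopyEquivariance}, and that the componentwise use of Theorem \ref{thm:Spanier7.5.9} respects the $G$-action on the sets of connected components. The point where I would be most careful is the last clause, since it rests on the (non-equivariant) contractibility of each $V_{B_0}$ when $m\ge\dim(S)$; I would want to verify that Gabrielov and Vorobjov's proof of the connectivity statement in Lemma \ref{thm:Gabrielov4.5} genuinely upgrades to contractibility in that range rather than only to $(m-1)$-connectivity.
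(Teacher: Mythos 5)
Your proposal is correct and follows essentially the same route as the paper's proof: the paper likewise defines $\tau$ as the composite $\psi_{\br(\widehat{S})}\circ\xi\circ\psi_V$, invokes Lemmas \ref{thm:VandNV}, \ref{thm:NVandNS}, and \ref{thm:NSandS} for equivariance, reads off degrees from the three factors, applies Theorem \ref{thm:Spanier7.5.9} to pass to homology, and for $m\geq\dim(S)$ uses part (ii) of the equivariant nerve theorem. Your closing caveat about contractibility of the $V_{B_0}$ when $m\geq\dim(S)$ is a reasonable scruple, but the paper handles it exactly as you do — by deferring to the proof of Theorem 4.8 in \cite{gabrielov2009approximation} — so it does not represent a gap relative to the source.
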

\begin{proof}
Lemmas \ref{thm:VandNV}, \ref{thm:NVandNS}, and \ref{thm:NSandS} demonstrate the equivariance of the maps
\[
V\overset{\psi_V}{\rightarrow} \abs{\mc{N}_V}\overset{\xi}{\rightarrow}\abs{\mc{N}_{\br(\widehat{S})}}\overset{\psi_{\widehat{S}}}{\rightarrow} S
\]
which are used in \cite{gabrielov2009approximation} Theorem 4.8 to construct the homomorphisms $\tau_k$. Accordingly, let $\tau=\psi_{\br(\widehat{S})}\circ\xi\circ\psi_V:V\rightarrow S$, and the induced (equivariant) homomorphisms of homotopy groups
\[
\tau_{\#,k}=(\psi_{\br(\widehat{S})})_{\#,k}\circ \xi_{\#}\circ (\psi_V)_{\#,k}:\pi_k(V,*'')\rightarrow \pi_k(S,*')
\]
Since $\psi_{\br(\widehat{S})}$ induces a homotopy equivalence and $\xi$ is a homeomorphism, the Whitehead Theorem on weak homotopy equivalence (cited as Theorem \ref{thm:Spanier7.6.24} here) states that on the level of homotopy groups both of these maps induce isomorphisms. Hence $\tau_{\#,k}$, like $(\psi_V)_{\#,k}$, is a isomorphism for $k\leq m-1$ and an epimorphism for $k=m$.

Gabrielov and Vorobjov in \cite{gabrielov2009approximation} next apply the Whitehead Theorem on homotopy and homology (cited as Theorem \ref{thm:Spanier7.5.9} here). The equivariant map $\tau:V\rightarrow S$ induces an equivariant homomorphism
\[
\tau_{*,k}=*\psi_{\br(\widehat{S})})_{*,k}\circ \xi_*\circ (\psi_V)_{*,k}: H_k(V)\rightarrow H_k(S)
\]
On each connected component of $V$ and $S$, $\tau_{*,k}$ is an isomorphism of homology groups for $1\leq k\leq m-1$ and an epimorphism when $k=m$. Hence $\tau_{*,k}$ itself is an isomorphism for $k\leq m-1$ and an epimorphism when $k=m$.

As observed in the proof of Theorem 4.8 in \cite{gabrielov2009approximation}, when $m\geq \dim(S)$ we may apply part (ii) of Theorem \ref{thm:EquivariantNerveTheoremOpen} to see that the map $\psi_V$ induces a homotopy equivalence. Thus, $\tau:V\rightarrow S$ also induces a homotopy equivalence.
\end{proof}

We have shown that there exists an equivariant map $V\rightarrow S$ which induces a homotopy equivalence between the two spaces. Unfortunately, this does not itself guarantee that there is an \emph{equivariant} homotopy inverse $S\rightarrow V$. Theorem \cite{hess2013topology} together with the contraction discussed in Definition \ref{def:BarycentricRetraction} give us an equivariant homotopy inverse of $\psi_{\br(\widehat{S})}$, and $\xi$ is a homeomorphism (and so $\xi^{-1}$ must also be equivariant). All that lacks is a fully equivariant version of Bj\"{o}rner's nerve theorem. It seems plausible that one could prove this using the methods of Hess and Hirsch in \cite{hess2013topology}, but for the purposes of this paper we do not need an equivariant map $S\rightarrow V$.

For the remainder of this section, we follow \cite{gabrielov2009approximation} and choose our triangulation in such a way as to account for our families of sets representing $A$. Consider the projection $\rho: A\times[0,1]\rightarrow [0,1]$. Since $A\times [0,1]$ is closed, bounded, definable, and symmetric under the induced action given by $g(\bs{x},t)=(g(\bs{x}),t)$, the projection $\rho$ is continuous, definable, and symmetric relative to this action of $G$ on $A\times [0,1]$. Since each $S_\delta$ is assumed to be a symmetric set, the set $S'=\bigcup_{\delta\in (0,1)} (S_\delta, \delta)\subset A\times [0,1]$ is also symmetric relative to our action on $A\times [0,1]$. So, via Theorem \ref{thm:SymmetricTriangulationFn}, let $(\Lambda', \Phi')$ be an equivariant triangulation of $\rho$ which is compatible with $S'$. Then we take $(\Lambda,\Phi)$ to be the triangulation induced by $\Lambda'$ on $\rho^{-1}(0)$. This is a symmetric triangulation of $A$ adapted to $S$, and so all conclusions stated thus far in Section \ref{sect:ProofsOfEquivariance} hold for this particular triangulation.

\subsection{The Main Theorem: Definable Case}\label{sect:MainTheoremDefinableCase}

Recall that in Subsection \ref{sect:ConstructionV}, we assumed that $S$ was symmetrically marked without specifying the relation. In the general definable case, we take $\Delta_1$ to be soft in $\Delta_2$ for each pair $(\Delta_1,\Delta_2)$ with $\Delta_1$ a (proper) subsimplex of $\Delta_2$. This means that for $B=B(b_0,\ldots, b_p)$ a simplex in $\widehat{S}$, we always have $C(B)=\{b_0\}$. This is trivially a symmetric marking.

In the definable case, while the proofs in \cite{gabrielov2009approximation} also utilize sets of the form $V(\epslon_0,\delta_0,\ldots,\epslon_m,\delta_m)$, they ultimately relate $S$ to a similarly defined set $V''=V''(\epslon'')$. We now outline the construction of $V''$.

\begin{definition}[\cite{gabrielov2009approximation} Definition 5.2]\label{def:Gabrielov5.2}
Say that $B=B(b_0,\ldots, b_p)$ is a simplex in $\widehat{S}$ and $K=K(c_0,\ldots, c_q)$ is a simplex in $\widehat{\Lambda}$ with $B\subset \clos{K}$. Let $I=\{b_0,\ldots, b_p\}$ and $J=\{c_0,\ldots, c_q\}$, and let $0<\epslon<1$. We define
\begin{multline*}
    K_B(\epslon):=\left\{\sum_{c_\nu\in J}t_{c_\nu}c_\nu\in K(c_0,\ldots,c_q) \mid \sum_{b_\nu\in I}t_{b_\nu}>1-\epslon,
    \text{ and }\right.\\
    \left.\forall b_\nu\in I\;\forall c_\mu\in (J\setminus I), t_{b_\nu}>t_{c_\mu}\right\}
\end{multline*}
\end{definition}

For a given parameter $0<\epslon''<1$, we let $V''(\epslon'')$ be the union of all $K_B(\epslon'')$ for $B$ a simplex of $\widehat{S}$ and $K$ a simplex of $\widehat{\Lambda}$ with $B\subset\clos{K}$.

We endow $\abs{\widehat{\Lambda}}$ with a cell structure adapted to $V''$ in a manner similar to that for $V$.

\begin{definition}\label{def:DecompV''}
For $K$ a simplex of $\widehat{\Lambda}$ with vertex set $\{v_0,\ldots, v_d\}$ and $0<\epslon<1$, let
\begin{multline*}
    \mc{C}_{K,\epslon}=\left\{C=L \cap \bigcap_{I\subset \{v_0,\ldots,v_d\}}C_{K,I,\epslon} \mid L\in \mc{C}_{K, \text{vert}},\right.\\
    \left.C_{K,I,\epslon}\in \mc{C}_{K, I, \epslon} \text{ for each } I, \text{ and } C\neq\emptyset\right\}
\end{multline*}
Let $\mc{C}_{\epslon}=\bigcup_{K\in \widehat{\Lambda}} \mc{C}_{K,\epslon}$.
\end{definition}

\begin{lemma}\label{thm:CWDecompAV''}
For any $0<\epslon<1$, $\mc{C}_{\epslon}$ gives a regular CW decomposition of $\abs{\widehat{\Lambda}}$ in which each cell is a convex polyhedron.
\end{lemma}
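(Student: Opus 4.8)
The plan is to repeat the argument used for Lemma \ref{thm:CWDecompA}, of which the present statement is essentially the one-parameter, symmetry-free shadow: one simply drops every occurrence of the parameter $\delta$ (and the verification of $G$-symmetry), so I expect no genuinely new difficulty.

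First I would observe that for each simplex $K \in \widehat{\Lambda}$ the following are regular CW decompositions of $K$ into convex polyhedra: the collection of simplices of the second barycentric subdivision of $\Lambda$ contained in $K$, i.e.\ $\mc{C}_{K,\text{vert}}$; and, by Lemma \ref{thm:CWDecompK}, for each subset $I$ of the vertex set of $K$, the collection $\bigcup_{K' \subset \clos{K}} \mc{C}_{K', I_{K'}, \epslon}$ (a regular CW decomposition of $\clos{K}$, which restricts to one of $K$ on the open faces). By Definition \ref{def:DecompV''}, a cell $C \in \mc{C}_{K,\epslon}$ is a nonempty intersection of one cell of $\mc{C}_{K,\text{vert}}$ with one cell of $\bigcup_{K' \subset \clos{K}} \mc{C}_{K', I_{K'}, \epslon}$ for each subset $I$ of the vertex set of $K$. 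Since a finite intersection of convex polyhedra is a convex polyhedron, every cell of $\mc{C}_\epslon = \bigcup_{K \in \widehat{\Lambda}} \mc{C}_{K,\epslon}$ is a convex polyhedron, hence a regular cell. Moreover $\mc{C}_{K,\epslon}$ partitions $K$, and because the open simplices of $\widehat{\Lambda}$ partition $\abs{\widehat{\Lambda}}$, the family $\mc{C}_\epslon$ partitions $\abs{\widehat{\Lambda}}$.

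It then remains to check the incidence condition. Fixing $C \in \mc{C}_\epslon$ with $C \subset K$ and writing $C = C_1 \cap \cdots \cap C_l$, where each $C_i$ is a cell of one of the decompositions above, one expands $\clos{C}_i = C_i \cup \partial C_i$ to obtain
\[
\partial C \;\subset\; \bigcup_{i=1}^l \clos{C}_1 \cap \cdots \cap \partial C_i \cap \cdots \cap \clos{C}_l \;=\; \bigcup_{J \subsetneq \{1,\ldots,l\}} \paren{\bigcap_{i \in J} C_i \cap \bigcap_{i \notin J} \partial C_i}.
\]
Replacing each $\partial C_i$ by the union of strictly lower-dimensional cells of the corresponding decomposition of $\clos{K}$ that it equals, then distributing and discarding empty intersections, shows that $\partial C$ is contained in a union of cells of $\mc{C}_\epslon$ each of dimension strictly less than $\dim C$. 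Hence $\mc{C}_\epslon$ is a regular CW decomposition of $\abs{\widehat{\Lambda}}$ into convex polyhedra. The whole argument is routine; the only step meriting care is the bookkeeping in the displayed boundary inclusion, and that computation is identical to the one already carried out in the proof of Lemma \ref{thm:CWDecompA}, so no new obstacle arises.
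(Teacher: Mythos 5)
Your proof is correct and is exactly the argument the paper intends: the paper's own proof consists of the single sentence ``This proof is analogous to that of Lemma \ref{thm:CWDecompA},'' and your write-up carries out that analogy, dropping the $\delta$-parameter and the $G$-symmetry check (which is indeed not part of the statement of Lemma \ref{thm:CWDecompAV''}).
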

\begin{proof}
This proof is analogous to that of Lemma \ref{thm:CWDecompA}.
\end{proof}

\begin{lemma}\label{thm:SymmetryV''}
Let $0<\epslon''<1$. Given $B$ a simplex in $\widehat{S}$, let $U_B$ be the union of all sets $K_B(\epslon'')$ for $K$ a simplex of $\widehat{\Lambda}$ with $B\subset \clos{K}$. Then the family $\{U_B\mid B\in \widehat{\Lambda}\text{ is contained in }\widehat{S}\}$ is symmetric under the induced action of $G$, and hence the set $V''(\epslon'')=\bigcup_{B\in \widehat{S}} U_B$ is symmetric.
\end{lemma}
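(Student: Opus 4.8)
The plan is to follow the proof of Proposition \ref{thm:SymmetryV} essentially verbatim, with the two-parameter sets $K_{B'}(\delta_i,\epslon_i)$ replaced by the one-parameter sets $K_B(\epslon'')$ of Definition \ref{def:Gabrielov5.2}. The only thing that needs checking is the equivariance of these building blocks: for every $g \in G$, every simplex $B = B(b_0,\ldots,b_p)$ of $\widehat{S}$, and every simplex $K = K(c_0,\ldots,c_q)$ of $\widehat{\Lambda}$ with $B \subset \clos{K}$, I claim
\[
g(K_B(\epslon'')) = g(K)_{g(B)}(\epslon'').
\]
This holds because $g$ acts linearly, so it carries the point $\sum_{c_\nu \in J} t_{c_\nu} c_\nu$ of $K$ to the point $\sum_{c_\nu \in J} t_{c_\nu}\, g(c_\nu)$ of $g(K)$; that is, $g$ preserves barycentric coordinates while relabeling the vertex set $J = \{c_0,\ldots,c_q\}$ as $\{g(c_0),\ldots,g(c_q)\}$ and the subset $I = \{b_0,\ldots,b_p\}$ as the vertex set of $g(B)$. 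Since the two defining inequalities of $K_B(\epslon'')$ --- namely $\sum_{b_\nu \in I} t_{b_\nu} > 1 - \epslon''$ and $t_{b_\nu} > t_{c_\mu}$ for all $b_\nu \in I$ and $c_\mu \in J \setminus I$ --- refer only to these barycentric coordinates, they transform into exactly the defining inequalities of $g(K)_{g(B)}(\epslon'')$.

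Given the identity, the remainder is bookkeeping. Because $\widehat{\Lambda}$ is a symmetric simplicial complex, $K \mapsto g(K)$ is a bijection from the simplices of $\widehat{\Lambda}$ whose closure contains $B$ onto those whose closure contains $g(B)$; hence
\[
g(U_B) = \bigcup_{\clos{K}\supset B} g(K_B(\epslon'')) = \bigcup_{\clos{K}\supset B} g(K)_{g(B)}(\epslon'') = U_{g(B)}.
\]
Since $\widehat{S}$ is symmetric (as $(\Lambda,\Phi)$ is an equivariant triangulation adapted to $S$, after replacing $S$ by $\Phi^{-1}(S)$), the simplex $g(B)$ is again contained in $\widehat{S}$, so the family $\{U_B \mid B \in \widehat{S}\}$ is carried to itself by each $g \in G$, i.e.\ it is symmetric. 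Finally $V''(\epslon'') = \bigcup_{B \in \widehat{S}} U_B$ is a union over a symmetric family, so $g(V''(\epslon'')) = \bigcup_{B} g(U_B) = \bigcup_{B} U_{g(B)} = V''(\epslon'')$, giving the symmetry of $V''(\epslon'')$.

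I do not expect any genuine obstacle here: this is the one-parameter specialization of the computation already carried out in the proof of Proposition \ref{thm:SymmetryV}, and the only subtlety is keeping straight that the conditions cutting out $K_B(\epslon'')$ depend solely on the barycentric coordinates (which $g$ preserves) together with the incidence data $B \subset \clos{K}$ (which $g$ respects by the symmetry of $\widehat{\Lambda}$ and $\widehat{S}$).
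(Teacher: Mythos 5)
Your proposal is correct and follows the same route the paper takes: the paper's proof of this lemma is just a one-line pointer to the proof of Proposition \ref{thm:SymmetryV}, and you correctly specialize that argument, noting that the key identity $g(K_B(\epslon'')) = g(K)_{g(B)}(\epslon'')$ follows from linearity and that (unlike the two-parameter case) no appeal to the symmetry of the marking is needed since the core $C(B)$ does not appear in Definition \ref{def:Gabrielov5.2}.
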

\begin{proof}
This is analogous to (and slightly simpler than) the proof of Proposition \ref{thm:SymmetryV}
\end{proof}

\begin{lemma}\label{thm:DecompV''}
Fix an $0<\epslon''<1$ and let $V''=V''(\epslon'')$. Then $U_B$ can be written as a union of cells of $\mathcal{C}_{\epslon''}$, and hence so can $V''$. Furthermore, the set $\{C\in \mathcal{C}_{\epslon''}\mid C\subset V''\}$ is symmetric under the action of $G$.
\end{lemma}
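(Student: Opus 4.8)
The plan is to mirror the proof of Lemma \ref{thm:DecompV}, now with the single parameter $\epslon''$ in place of the pair $(\delta,\epslon)$ and with the decomposition $\mc{C}_{\epslon''}$ of Definition \ref{def:DecompV''}. First I would recall the observation used there: for a simplex $K$ of $\widehat{\Lambda}$ with vertex set $\{c_0,\ldots,c_q\}$, the simplices of the barycentric subdivision of $\clos{K}$ belonging to $K$ are precisely the subsets of $K$ cut out by systems of the form $0<t_{c_{i_1}}=\cdots=t_{c_{i_{\lambda_1}}}<\cdots<t_{c_{i_{\lambda_l}}}=\cdots=t_{c_{i_q}}<1$, over permutations $i_1,\ldots,i_q$ of $1,\ldots,q$ and indices $1\leq\lambda_1\leq\cdots\leq\lambda_l\leq q$. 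This turns the order condition ``$t_{b_\nu}>t_{c_\mu}$ for all $b_\nu\in I$ and $c_\mu\in J\setminus I$'' of Definition \ref{def:Gabrielov5.2} into a union of cells of $\mc{C}_{K,\text{vert}}$, while the remaining condition $\sum_{b_\nu\in I}t_{b_\nu}>1-\epslon''$ is resolved by the relevant cells $C_{K,I,\ast,\epslon''}$ appearing in the definition of $\mc{C}_{\epslon''}$ (with $I$ the vertex set of $B$). Consequently --- exactly as in Lemma \ref{thm:DecompV} --- for each pair $(K,B)$ with $K$ a simplex of $\widehat{\Lambda}$, $B$ a simplex of $\widehat{S}$, and $B\subset\clos{K}$, every cell of $\mc{C}_{\epslon''}$ is either contained in or disjoint from $K_B(\epslon'')$. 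Hence $K_B(\epslon'')$, and so $U_B=\bigcup_{\clos{K}\supset B}K_B(\epslon'')$, and so $V''=\bigcup_{B\in\widehat{S}}U_B$, is a union of cells of $\mc{C}_{\epslon''}$; ``union of cells'' is legitimate here because, by Lemma \ref{thm:CWDecompAV''}, $\mc{C}_{\epslon''}$ is a genuine regular CW decomposition of $\abs{\widehat{\Lambda}}$ into convex polyhedra.

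For the symmetry statement I would argue as at the end of the proof of Lemma \ref{thm:DecompV}. If $C\in\mc{C}_{\epslon''}$ lies in a simplex $K$ of $\widehat{\Lambda}$, then $C$ is described by conditions on the barycentric coordinates $(t_{c_i})$ of the points $\sum t_{c_i}c_i\in K$, so by linearity of the $G$-action the image $g(C)$ consists of the points $\sum t_{c_i}g(c_i)$ whose coordinates satisfy the same conditions; that is, $g(C)\in\mc{C}_{g(K),\epslon''}\subset\mc{C}_{\epslon''}$. If moreover $C\subset K_B(\epslon'')$ for an appropriate pair $(K,B)$, then $g(C)\subset g(K)_{g(B)}(\epslon'')$; since $g(K)\in\widehat{\Lambda}$, $g(B)\in\widehat{S}$, and $g(B)\subset\clos{g(K)}$ by the symmetry of the triangulation, this set is one of those composing $V''$, so $g(C)\subset U_{g(B)}\subset V''$. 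This shows that $\{C\in\mc{C}_{\epslon''}\mid C\subset V''\}$ is symmetric under the action of $G$.

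I do not expect any genuinely new obstacle here beyond what was handled for $V$. The one point worth a moment's care is that passing from Definition \ref{def:Gabrielov3.6} to Definition \ref{def:Gabrielov5.2} merely deletes the core inequality $\sum_{b_\nu\in C(B)}t_{b_\nu}>\delta$, so the defining conditions of $K_B(\epslon'')$ form a subcollection of those of $K_B(\delta,\epslon)$, all of the relevant ``$>$'', ``$<$'', ``$=$'' events being signs of linear forms at the parameter $\epslon''$ that $\mc{C}_{\epslon''}$ is precisely built to record; hence the ``contained in or disjoint from'' dichotomy survives with this coarser decomposition.
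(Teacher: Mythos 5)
Your proposal is correct and takes essentially the same approach as the paper: the paper's own proof is a one-line appeal to the analogy with Lemma \ref{thm:DecompV}, and you fill in exactly that analogy (the barycentric-coordinate translation of the ordering condition, the $\epslon''$-threshold cells, and the linearity argument for symmetry). The only micro-imprecision is the parenthetical ``with $I$ the vertex set of $B$'': the condition $\sum_{b_\nu\in I}t_{b_\nu}>1-\epslon''$ is really resolved by the cells $C_{K,\,J\setminus I,\,\ast,\,\epslon''}$ for the complementary index set, since $\sum_{b_\nu\in I}t_{b_\nu}>1-\epslon''$ iff $\sum_{c_\mu\in J\setminus I}t_{c_\mu}<\epslon''$; this is harmless because $\mc{C}_{\epslon''}$ intersects over \emph{all} subsets $I$ of the vertex set.
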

\begin{proof}
Again, this is analogous to the corresponding statement, Lemma \ref{thm:DecompV}.
\end{proof}

\begin{lemma}[ref \cite{gabrielov2009approximation} Lemma 5.3]\label{thm:EquivariantGabrielov5.3}
For $0<\epslon''<1$, there is an equivariant homotopy equivalence $\tau'': V''\rightarrow S$ inducing equivariant isomorphisms of homotopy and homology groups $\tau_{\#,k}'':\pi_k(V'',*'')\rightarrow \pi_k(S,*')$ and $\tau_{*,k}'':H_k(V'')\rightarrow H_k(S)$.
\end{lemma}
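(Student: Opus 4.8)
The plan is to mirror the proof of Theorem~\ref{thm:EquivariantGabrielov4.8} step for step, replacing the sets $V_B$, $K_B(\delta,\epslon)$ and the decomposition $\mc{C}_{\delta,\epslon}$ by their one-parameter analogues $U_B$, $K_B(\epslon'')$ and $\mc{C}_{\epslon''}$. With the ``all soft'' marking fixed in Subsection~\ref{sect:MainTheoremDefinableCase} every core is $C(B)=\{b_0\}$, so the $U_B$ carry no $\delta$-dependence and --- as in Gabrielov and Vorobjov's proof of their Lemma~5.3 \cite{gabrielov2009approximation} --- each $U_B$ is actually contractible. This lets us realize $\tau''$ as a composite of equivariant homotopy equivalences, from which the full statement (isomorphisms in every degree, no hypothesis on $m$) follows at once.

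First, by Lemma~\ref{thm:CWDecompAV''} --- together with the symmetry argument of Lemma~\ref{thm:CWDecompA} --- the pair $(\abs{\widehat{\Lambda}},\mc{C}_{\epslon''})$ is a symmetric regular CW complex whose cells are convex polyhedra, and by Lemma~\ref{thm:DecompV''} each $U_B$ (hence $V''$) is a union of cells of $\mc{C}_{\epslon''}$. Each $U_B$ is open in $\abs{\Lambda}$ and contractible by the proof of Gabrielov and Vorobjov's Lemma~5.3, and $\{U_B\}_{B\in\widehat{S}}$ is $G$-stable by Lemma~\ref{thm:SymmetryV''}. Importing the one-parameter analogue of Proposition~\ref{thm:IntersectionVB}, every nonempty finite intersection $U_{B_1}\cap\cdots\cap U_{B_t}$ equals $U_{B_0}$ for the unique $B_0\in\widehat{S}$ with $\clos{B}_1\cap\cdots\cap\clos{B}_t\cap\abs{\widehat{S}}=\clos{B}_0\cap\abs{\widehat{S}}$, and so is contractible. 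The contractible case of Theorem~\ref{thm:NerveTheoremOpen}, made equivariant by Corollary~\ref{thm:EquivariantNerveTheoremOpen}, then yields an equivariant map $\psi_{V''}\colon V''\to\abs{\mc{N}_{V''}}$ which is a homotopy equivalence.

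Now the intersection pattern above matches that of the family $\{\br(\widetilde{B})\}_{B\in\widehat{S}}$ on $\abs{\br(\widehat{S})}$, so $\mc{N}_{V''}=\mc{N}_{\br(\widehat{S})}$ as abstract simplicial complexes and the identity on vertex sets induces an equivariant homeomorphism $\xi\colon\abs{\mc{N}_{V''}}\to\abs{\mc{N}_{\br(\widehat{S})}}$, exactly as in Lemma~\ref{thm:NVandNS}. Lemma~\ref{thm:NSandS} supplies an equivariant map $\psi_{\widehat{S}}\colon\abs{\mc{N}_{\br(\widehat{S})}}\to S$ inducing a homotopy equivalence. Putting $\tau''=\psi_{\widehat{S}}\circ\xi\circ\psi_{V''}$ gives an equivariant map $V''\to S$; being a composite of homotopy equivalences it is a homotopy equivalence, so each induced $\tau''_{\#,k}\colon\pi_k(V'',*'')\to\pi_k(S,*')$ and $\tau''_{*,k}\colon H_k(V'')\to H_k(S)$ is an isomorphism, and each is equivariant (Definitions~\ref{def:HomotopyEquivariance} and~\ref{def:Equivariance}) because $\tau''$ is, with basepoints tracked through the composite as in Theorem~\ref{thm:EquivariantGabrielov4.8}.

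The only substantive points, as opposed to bookkeeping, are the two facts borrowed from \cite{gabrielov2009approximation} in the ``all soft'' setting: contractibility of each $U_B$ (and hence, via the intersection identity, of every nonempty finite intersection), and the fact that these intersections reproduce the combinatorics of $\{\br(\widetilde{B})\}$; at each stage one must also check that the $G$-action permutes the cover and the cells as the equivariant nerve results require. None of this looks genuinely hard, only careful. As in the discussion following Theorem~\ref{thm:EquivariantGabrielov4.8}, we obtain an equivariant map only in the direction $V''\to S$; no equivariant homotopy inverse is asserted or needed.
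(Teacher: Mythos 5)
There is a genuine gap at the nerve-identification step, and it stems from misreading the combinatorics of the cover $\{U_B\}$. You claim that the intersection pattern of $\{U_B\}_{B\in\widehat{S}}$ matches that of $\{\br(\widetilde{B})\}_{B\in\widehat{S}}$, so that $\mc{N}_{V''}\cong\mc{N}_{\br(\widehat{S})}$, and then you appeal to Lemma~\ref{thm:NSandS}. But the two covers have genuinely different nerves. Unlike $V_B$, the set $U_B$ is \emph{not} a union over all $B'\in S_B$ --- it is just $\bigcup_{\clos{K}\supset B} K_B(\epslon'')$ for the single simplex $B$ --- so $K_{B_1}(\epslon'')\cap K'_{B_2}(\epslon'')\neq\emptyset$ forces $K=K'$ and either $B_1\subset\clos{B_2}$ or $B_2\subset\clos{B_1}$. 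Consequently $U_{B_1}\cap\cdots\cap U_{B_t}\neq\emptyset$ iff $B_1,\ldots,B_t$, suitably ordered, form a flag of simplices, so $\mc{N}_{V''}$ is the order complex $\Delta(\fpos{\widehat{S}})$. In contrast, $\br(\widetilde{B}_1)\cap\cdots\cap\br(\widetilde{B}_t)=\br(\clos{B}_1\cap\cdots\cap\clos{B}_t\cap\abs{\widehat{S}})$ is nonempty under the strictly weaker condition that $\clos{B}_1\cap\cdots\cap\clos{B}_t\cap\abs{\widehat{S}}\neq\emptyset$ (take two $1$-simplices $ab$, $bc$ of $\widehat{S}$ sharing the vertex $b$: $\{ab,bc\}$ is a simplex of $\mc{N}_{\br(\widehat{S})}$ but not of $\Delta(\fpos{\widehat{S}})$). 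So the ``identity on vertex sets'' is not a simplicial isomorphism, and the map $\xi$ you write down does not exist. The ``one-parameter analogue of Proposition~\ref{thm:IntersectionVB}'' you invoke is precisely the step that fails: that proposition's nonemptiness criterion uses the definition of $V_B$ as a union over $S_B$, which is exactly what $U_B$ lacks.

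The fix also simplifies the argument. Since $\mc{N}_{V''}\cong\Delta(\fpos{\widehat{S}})$, you should skip Lemma~\ref{thm:NSandS} entirely: compose $\psi_{V''}$ with the equivariant homeomorphism $\xi''\colon\abs{\mc{N}_{V''}}\rightarrow\abs{\Delta(\fpos{\widehat{S}})}$, then the equivariant centroidal homeomorphism $\abs{\Delta(\fpos{\widehat{S}})}\rightarrow\abs{\br(\widehat{S})}$ from Proposition~\ref{thm:BREquivariance} and the remark after Definition~\ref{def:BarycentricRetraction}, then the equivariant inclusion $\abs{\br(\widehat{S})}\hookrightarrow\abs{\widehat{S}}=\Phi^{-1}(S)$ (a homotopy equivalence by Proposition~\ref{thm:BarycentricRetractingMap}), and finally $\Phi$. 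This avoids Theorem~\ref{thm:Hess2.5} altogether. The rest of your argument --- the CW decomposition $\mc{C}_{\epslon''}$, openness and contractibility of the $U_B$, $G$-stability of the cover, and the application of Corollary~\ref{thm:EquivariantNerveTheoremOpen}(ii) --- is correct and matches the paper.
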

\begin{proof}
We apply Theorem \ref{thm:EquivariantNerveTheoremOpen}  to the covering of $V''$ by the family $\{U_B\}_{B\in \widehat{S}}$. Gabrielov and Vorobjov in the proof of Lemma 5.3 in \cite{gabrielov2009approximation} argue that any nonempty intersection $U_{B_1}\cap\ldots\cap U_{B_l}$ is contractible, and so part (ii) of Theorem \ref{thm:EquivariantNerveTheoremOpen} supplies an equivariant map $\psi_{V''}:V\rightarrow \abs{\mc{N}_{V''}}$ inducing a homotopy equivalence. Gabrielov and Vorobjov also assert that such an intersection $U_{B_1}\cap\ldots\cap U_{B_l}$ is nonempty iff $B_0,\ldots,B_l$, suitably reordered, form a flag of simplices of $\widehat{S}$. This means that there is an (equivariant) homeomorphism $\xi'':\abs{\mc{N}_{V''}}\rightarrow \abs{\Delta(\fpos{\widehat{S}})}$. By Proposition \ref{thm:BREquivariance} and the remark immediately following, we have an equivariant homeomorphism $\abs{\Delta(\fpos{\widehat{S}})}\rightarrow \abs{\br(\widehat{S})}$, and the (equivariant) inclusion $\abs{\br(\widehat{S})}\hookrightarrow \abs{\widehat{S}}=\Phi^{-1}(S)$ induces a homotopy equivalence. Hence, composing, we obtain the desired equivariant map $\tau'':V''\rightarrow S$, which induces equivariant isomorphisms $\tau_{\#,k}'':\pi_k(V'',*'')\rightarrow \pi_k(S,*')$ and $\tau_{*,k}'':H_k(V'')\rightarrow H_k(S)$ for all $k\geq 0$.
\end{proof}

We include the next statement from \cite{gabrielov2009approximation} for reference, though no claims of equivariance need be added here.

\begin{lemma}[\cite{gabrielov2009approximation} Lemma 5.4]\label{thm:Gabrielov5.4}
For $0<\epslon_0'\ll\cdots\ll \epslon_i'\ll\epslon_i\ll\delta_i\ll\delta_i'\ll\cdots\ll\delta_m'\ll\epslon''$, if  $V'=V(\epslon_0',\delta_0',\ldots, \epslon_m',\delta_m')$, $T=T(\epslon_0,\delta_0,\ldots, \epslon_m,\delta_m)$, and $V''=V''(\epslon'')$, then we have
\[
V'\subset \Phi^{-1}(T)\subset V''
\]
\end{lemma}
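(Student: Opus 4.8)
The statement in question, Lemma \ref{thm:Gabrielov5.4}, is a verbatim restatement of a result from \cite{gabrielov2009approximation}, so my plan is to reduce it to the original argument rather than to reprove it from scratch. The plan is to observe first that both inclusions are statements about containments of subsets of $\abs{\widehat{\Lambda}}$, built cell-by-cell from the combinatorial sets $K_B(\delta,\epslon)$, $K_B(\epslon)$, and the analogous pieces of $\Phi^{-1}(T)$, and that none of these containments involves the group action at all; hence the proof carries over word-for-word from \cite{gabrielov2009approximation} Lemma 5.4. The only thing I would verify is that our choices (bounding $S$ by intersecting with a ball, using the triangulation $(\Lambda,\Phi)$ adapted to $S$ via the equivariant triangulation of $\rho$, marking all subsimplices soft in the definable case so that $C(B)=\{b_0\}$ for every $B$) are consistent with the hypotheses under which \cite{gabrielov2009approximation} Lemma 5.4 is proved — they are, since the soft marking is exactly the marking used in the definable case there.

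For the first inclusion $V'\subset\Phi^{-1}(T)$, the key step is to recall that $\Phi^{-1}(T)=\Phi^{-1}(S_{\delta_0,\epslon_0})\cup\cdots\cup\Phi^{-1}(S_{\delta_m,\epslon_m})$, and that each $\Phi^{-1}(S_{\delta_i,\epslon_i})$, because the triangulation was chosen adapted to the families representing $S$ in $A$ (via the equivariant triangulation of the projection $\rho$ compatible with $S'=\bigcup_\delta (S_\delta,\delta)$), contains a neighborhood of the corresponding piece of $\Phi^{-1}(S_{\delta_i})$ inside $\abs{\widehat{\Lambda}}$. One then checks that for $\epslon_i',\delta_i'$ chosen sufficiently small relative to $\epslon_i,\delta_i$ (which is exactly the chain $\epslon_i'\ll\epslon_i\ll\delta_i\ll\delta_i'$ in the hypothesis), each cell $K_B(\delta_i',\epslon_i')$ making up $V'$ is swallowed by $\Phi^{-1}(S_{\delta_i,\epslon_i})$; this is the content of the relevant lemma in \cite{gabrielov2009approximation} and uses property (iii) of Definition \ref{def:Gabrielov1.1} (the existence of an open $U$ with $S_\delta\subset U\subset S_{\delta',\epslon'}$). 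For the second inclusion $\Phi^{-1}(T)\subset V''$, one checks dually that each $\Phi^{-1}(S_{\delta_i,\epslon_i})$, being contained in a small neighborhood of $\Phi^{-1}(S)=\abs{\widehat{S}}$ once $\delta_i,\epslon_i$ are small, is contained in $V''(\epslon'')=\bigcup_{B} U_B$ provided $\epslon''$ dominates all the $\delta_i'$ (hence all the $\epslon_i,\delta_i$); the set $V''$ is, by construction, a full neighborhood of $\abs{\widehat{S}}$ of "width" controlled by $\epslon''$, so this is immediate from the cell decompositions $\mc{C}_{\delta,\epslon}$ and $\mc{C}_{\epslon''}$ and Lemmas \ref{thm:DecompV}, \ref{thm:DecompV''}.

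The main obstacle, to the extent there is one, is purely bookkeeping: making sure the ordering of the parameters in the hypothesis $0<\epslon_0'\ll\cdots\ll\epslon_i'\ll\epslon_i\ll\delta_i\ll\delta_i'\ll\cdots\ll\delta_m'\ll\epslon''$ matches the nested quantifier structure of Notation \ref{def:Gabrielov1.7} correctly, so that each "sufficiently small" in the two inclusions above can be realized simultaneously by a single admissible choice of definable bounding functions. Since \cite{gabrielov2009approximation} Lemma 5.4 already carries out this verification, and since our construction differs from theirs only in the harmless replacement of the one-point compactification by a large closed ball (which, as noted in Subsection \ref{sect:Boundedness}, only enlarges $\mc{P}$ by the single symmetric function $r^2-(X_1^2+\cdots+X_n^2)$ and does not affect any of the cell-level arguments), I would simply cite \cite{gabrielov2009approximation} Lemma 5.4 for the proof, after the one-sentence remark that the argument there is insensitive to symmetry and applies verbatim to the triangulation $(\Lambda,\Phi)$ fixed above.
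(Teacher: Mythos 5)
Your proposal matches the paper's treatment exactly: the paper includes this lemma purely as a citation to \cite{gabrielov2009approximation} Lemma 5.4, with the explicit remark that "no claims of equivariance need be added here," and you correctly identify that the containments $V'\subset\Phi^{-1}(T)\subset V''$ are combinatorial statements about cells of $\widehat{\Lambda}$ that do not interact with the group action and hence transfer verbatim. Your additional consistency checks (soft marking so $C(B)=\{b_0\}$, the triangulation chosen via the equivariant triangulation of $\rho$, boundedness replacing one-point compactification) are all aligned with the paper's setup, and the sketch of why each inclusion holds is reasonable supporting context even though, like the paper, you ultimately defer to the original reference for the details.
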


\begin{lemma}[\cite{gabrielov2009approximation} Lemma 5.6]\label{thm:Gabrielov5.6}
For $0<\epslon_0\ll\delta_0\ll\cdots\ll\epslon_m\ll\delta_m\ll\epslon''\ll 1$ and for every $k\leq m$, the inclusion $\zeta:\Phi^{-1}(T)\hookrightarrow V''$ induces equivariant epimorphisms
\[
\zeta_{\#,k}:\pi_k(T,*)\rightarrow \pi_k(V'',*'') \text{ and } \zeta_{*,k}:H_k(T)\rightarrow H_k(V'')
\]
\end{lemma}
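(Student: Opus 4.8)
The plan is to reproduce the proof of \cite{gabrielov2009approximation} Lemma 5.6, inserting our equivariant ingredients in the appropriate slots. First note that every space involved is symmetric: $T=S_{\delta_0,\epslon_0}\cup\cdots\cup S_{\delta_m,\epslon_m}$ is a union of symmetric sets, $\Phi^{-1}(T)$ is symmetric because $\Phi$ is equivariant, $V''=V''(\epslon'')$ is symmetric by Lemma \ref{thm:SymmetryV''}, and — writing $V'=V(\epslon_0',\delta_0',\ldots,\epslon_m',\delta_m')$ — the inclusions $V'\subseteq\Phi^{-1}(T)\subseteq V''$ of Lemma \ref{thm:Gabrielov5.4} (the second of which is $\zeta$) are equivariant maps of $G$-spaces. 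Since $\zeta$ is an equivariant map of spaces, the induced homomorphisms $\zeta_{\#,k}$ and $\zeta_{*,k}$ are automatically equivariant in the sense of Definition \ref{def:HomotopyEquivariance}; thus the entire content of the lemma is the surjectivity of these maps for $k\le m$, and for that it suffices to run the argument of \cite{gabrielov2009approximation} with our equivariant maps in place.

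That argument is as follows. By Theorem \ref{thm:EquivariantGabrielov4.8} applied to $V'$ (whose defining parameters satisfy the required ordering by the choice in Lemma \ref{thm:Gabrielov5.4}), there is an equivariant map $\tau'\colon V'\to S$ inducing isomorphisms on $\pi_k$ and $H_k$ for $k\le m-1$ and epimorphisms for $k=m$; by Lemma \ref{thm:EquivariantGabrielov5.3} there is an equivariant map $\tau''\colon V''\to S$ inducing isomorphisms on $\pi_k$ and $H_k$ in all degrees. The covers used to build these maps are compatible: since each $\epslon_i',\delta_i'$ lies far below $\epslon''$, one reads off from Definitions \ref{def:Gabrielov3.6} and \ref{def:Gabrielov5.2} that $V'_B\subseteq U_B$ for every simplex $B$ of $\widehat S$ (in the definable case $C(B)=\{b_0\}$, so the only conditions defining $K_B(\delta_i',\epslon_i')$ not already present in $K_B(\epslon'')$ serve to shrink the set). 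Exactly as in \cite{gabrielov2009approximation}, this compatibility together with the naturality of the nerve-theorem construction under refinement of covers yields that $\tau''\circ\zeta\circ\iota$ is homotopic to $\tau'$, where $\iota\colon V'\hookrightarrow\Phi^{-1}(T)$ is the inclusion. Hence $\zeta_{\#,k}\circ\iota_{\#,k}=(\tau''_{\#,k})^{-1}\circ\tau'_{\#,k}$, which is an isomorphism for $k\le m-1$ and an epimorphism for $k=m$; surjectivity of this composite forces $\zeta_{\#,k}$ to be surjective for all $k\le m$, and the same reasoning applied to homology gives surjectivity of $\zeta_{*,k}$. As usual $T$ and $V''$ may be disconnected, so the homotopy-group assertions are understood componentwise, via the component correspondence of Subsection \ref{sect:ConstructionV}; this causes no difficulty.

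The step I expect to require the most care is the homotopy $\tau''\circ\zeta\circ\iota\simeq\tau'$: one must verify that the equivariant nerve maps supplied by Corollary \ref{thm:EquivariantNerveTheoremOpen}, the barycentric retracting maps, the centroidal homeomorphisms, and the identifications of $\mc{N}_{V'}$ and $\mc{N}_{V''}$ with the relevant subdivisions and order complexes of $\widehat S$ are natural with respect to the inclusion $V'\hookrightarrow V''$ and the refinement $V'_B\subseteq U_B$ — i.e. that our equivariant versions of these constructions fit together in the same way the non-equivariant ones do in \cite{gabrielov2009approximation}. This is essentially a bookkeeping task of matching our constructions to theirs, not a new theorem. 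It is worth emphasizing that equivariance in the conclusion is cheap here: the comparison triangle need only commute up to \emph{ordinary} homotopy, while equivariance of $\zeta_{\#,k}$ and $\zeta_{*,k}$ follows at once from $\zeta$ being an equivariant map.
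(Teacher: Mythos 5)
Your diagnosis is exactly right and matches the paper: equivariance is free because $\zeta$ is an inclusion of $G$-spaces, so the only real content is the non-equivariant surjectivity, which the paper simply cites from Gabrielov--Vorobjov (the paper's entire proof is two sentences: cite \cite{gabrielov2009approximation} Lemma 5.6 for the epimorphism claim, note equivariance is clear). You then go further and sketch a re-derivation of that surjectivity via the sandwich $V' \subset \Phi^{-1}(T) \subset V''$ and the maps $\tau'$, $\tau''$; this rests on the homotopy $\tau''\circ\zeta\circ\iota \simeq \tau'$, which you correctly flag as the delicate step but do not establish. Because the surjectivity is a purely topological fact unaffected by the group action, the paper does not reproduce Gabrielov--Vorobjov's argument at all, so nothing in the paper certifies your sketch; you should either cite \cite{gabrielov2009approximation} for surjectivity directly (as the paper does) or actually verify the compatibility-of-nerves homotopy before relying on it.
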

\begin{proof}
That $\zeta_{\#,k}$ and $\zeta_{*,k}$ are epimorphisms is justified in \cite{gabrielov2009approximation}. That they are equivariant is clear.
\end{proof}

This brings us to the equivariant version of the main theorem of \cite{gabrielov2009approximation} for the definable case.

\begin{thm}[ref \cite{gabrielov2009approximation} Theorem 1.10(i)]\label{thm:EquivariantGabrielov1.10i}
For $0<\epslon_0\ll\delta_0\ll\cdots\ll \epslon_m\ll\delta_m\ll 1$ and every $0\leq k\leq m$, there is an equivariant map $\psi:T\rightarrow S$ inducing equivariant epimorphisms
    \begin{align*}
        \psi_{\#,k}&:\pi_k(T,*)\rightarrow\pi_k(S,*')\\
        \psi_{*,k}&:H_k(T)\rightarrow H_k(S)
    \end{align*}
    and in particular, $\rank(H_k(S))\leq \rank(H_k(T))$.
\end{thm}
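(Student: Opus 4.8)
The plan is to build $\psi$ as a short composition of maps that have already been equipped with the required symmetry. Work throughout with the equivariant triangulation $(\Lambda,\Phi)$ of $A$ adapted to $S$ that was fixed at the end of Subsection~\ref{sect:ConstructionV} (compatible with $S'=\bigcup_{\delta}(S_\delta,\delta)$ via Theorem~\ref{thm:SymmetricTriangulationFn}), and equip $S$ with the ``all soft'' symmetric marking of Subsection~\ref{sect:MainTheoremDefinableCase}, so that $C(B)=\{b_0\}$ for every $B=B(b_0,\ldots,b_p)$ in $\widehat{S}$; this is the setting in which the definable-case lemmas were stated. Given $0<\epslon_0\ll\delta_0\ll\cdots\ll\epslon_m\ll\delta_m\ll 1$, choose an auxiliary parameter $\epslon''$ with $\delta_m\ll\epslon''\ll 1$ (and the interleaved primed parameters of Lemma~\ref{thm:Gabrielov5.4}); then Lemma~\ref{thm:Gabrielov5.4} gives the chain $V'\subset\Phi^{-1}(T)\subset V''$ with $V''=V''(\epslon'')$. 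First I would invoke Lemma~\ref{thm:Gabrielov5.6}, which says the inclusion $\zeta\colon\Phi^{-1}(T)\hookrightarrow V''$ induces equivariant epimorphisms $\zeta_{\#,k}$ and $\zeta_{*,k}$ for every $k\le m$, and Lemma~\ref{thm:EquivariantGabrielov5.3}, which supplies the equivariant homotopy equivalence $\tau''\colon V''\to S$ inducing equivariant isomorphisms $\tau''_{\#,k}$ and $\tau''_{*,k}$ for all $k\ge 0$.

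Next I would observe that $\Phi$, being a homeomorphism $\abs{\Lambda}\to A$, restricts to an equivariant homeomorphism $\Phi^{-1}_{\restriction T}\colon T\to\Phi^{-1}(T)$ (the equivariance of $\Phi$ is part of Theorem~\ref{thm:SymmetricTriangulation}), and define
\[
\psi=\tau''\circ\zeta\circ\Phi^{-1}_{\restriction T}\colon T\longrightarrow S.
\]
As a composite of equivariant maps $\psi$ is equivariant, and on homotopy and homology groups it factors as $\psi_{\#,k}=\tau''_{\#,k}\circ\zeta_{\#,k}\circ(\Phi^{-1}_{\restriction T})_{\#,k}$ and $\psi_{*,k}=\tau''_{*,k}\circ\zeta_{*,k}\circ(\Phi^{-1}_{\restriction T})_{*,k}$, where the inner and outer factors are equivariant isomorphisms and the middle factor an equivariant epimorphism for $k\le m$. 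Hence each $\psi_{\#,k}$ and $\psi_{*,k}$ is an equivariant epimorphism: composing the three commuting squares of Definition~\ref{def:HomotopyEquivariance} (and noting that an equivariant epimorphism pre- and post-composed with equivariant isomorphisms is again an equivariant epimorphism) gives equivariance, and surjectivity is preserved under composition. For $k=0$ the same composite yields a surjection on path components, using that $\zeta$ surjects on $\pi_0$ and $\tau''$ is a bijection there. Finally, an epimorphism $H_k(T)\to H_k(S)$ immediately forces $\rank(H_k(S))\le\rank(H_k(T))$.

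I do not expect a genuine obstacle at this stage: all of the topological and geometric content — the symmetric triangulation, the CW decomposition of $\abs{\widehat\Lambda}$ adapted to $V''$, the equivariant nerve theorem, and the sandwich lemmas — has been carried out in the earlier sections. The only points requiring care are clerical: choosing a basepoint $*\in T$ and propagating it to $\Phi^{-1}_{\restriction T}(*)\in\Phi^{-1}(T)\subset V''$ and then to $*'=\tau''(\Phi^{-1}_{\restriction T}(*))\in S$ so that the basepoint-shifting squares of Definition~\ref{def:HomotopyEquivariance} genuinely compose, and verifying (trivially) that the nesting of parameters demanded by Lemmas~\ref{thm:Gabrielov5.4} and~\ref{thm:Gabrielov5.6} is compatible with the single condition $0<\epslon_0\ll\delta_0\ll\cdots\ll\epslon_m\ll\delta_m\ll 1$ in the statement.
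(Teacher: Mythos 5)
Your proposal is correct and follows essentially the same route as the paper: the paper's proof is simply $\psi=\tau''\circ\zeta$, invoking Lemma~\ref{thm:EquivariantGabrielov5.3} for $\tau''$ and Lemma~\ref{thm:Gabrielov5.6} for $\zeta$. You make explicit the identification $T\leftrightarrow\Phi^{-1}(T)$ via $\Phi^{-1}_{\restriction T}$ (which the paper suppresses in the proof body but displays in the summary diagram of Subsection~\ref{sect:Summary}) and spell out the basepoint bookkeeping and the compatibility of the parameter orderings, all of which is consistent with the paper.
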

\begin{proof}
This follows from Lemmas \ref{thm:EquivariantGabrielov5.3} and \ref{thm:Gabrielov5.6}, taking
\[
\psi=\tau''\circ \zeta:T\rightarrow S
\]
\end{proof}

\subsection{The Main Theorem: Separable Case}\label{sect:MainTheoremSeparableCase}

The proof of part (ii) of Theorem 1.10 in \cite{gabrielov2009approximation} requires that our family $\{S_\delta\}_{\delta>0}$ have an additional property referred to as separability. In particular, this property holds in the constructible case.

\begin{definition}[\cite{gabrielov2009approximation} Definition 5.7]\label{def:Gabrielov5.7}
For a family $\{S_\delta\}_{\delta>0}$ representing $S$ in $A$ and triangulation $(\Lambda,\Phi)$ of $A$, the pair $(\Lambda,\{S_\delta\}_{\delta>0})$ is called \emph{separable} if we have, for any pair $(\Delta_1,\Delta_2)$ of simplices of $S$ with $\Delta_1$ a subsimplex of $\Delta_2$,
\[
\paren{\clos{\Delta_2\cap \Phi^{-1}(S_\delta)}\cap \Delta_1=\emptyset}\Leftrightarrow \paren{\Delta_1\subset \clos{\Delta_2\setminus \Phi^{-1}(S_\delta)}}
\]
for all sufficiently small $\delta>0$.
\end{definition}

\begin{lemma}[\cite{gabrielov2009approximation} Lemma 5.8]\label{thm:Gabrielov5.8}
In the constructible case, $(\Lambda,\{S_\delta\}_{\delta>0})$ is separable.
\end{lemma}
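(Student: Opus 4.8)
The plan is to reduce this to the corresponding statement in \cite{gabrielov2009approximation}, namely their Lemma 5.8, by verifying that our bounded constructible setting matches theirs closely enough. Recall that in our constructible case we have replaced $S$ with a bounded set by intersecting with $\clos{B(0,r)}$ and adjoining $r^2-(X_1^2+\cdots+X_n^2)$ to the collection $\mc{P}=\{h_1,\ldots,h_s\}$; we take $A=\clos{B(0,r')}$ and we define $S_\delta$ as the union over sign-set tuples $(I_0,I_+,I_-)\in\mc{B}$ of the sets cut out by $\bigwedge_{i\in I_0}(h_i=0)\wedge\bigwedge_{i\in I_+}(h_i\geq\delta)\wedge\bigwedge_{i\in I_-}(h_i\leq-\delta)$, exactly as in Definition \ref{def:ConstructibleCase}. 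The point is that Gabrielov and Vorobjov's argument for separability uses only this local description of the $S_\delta$ as finite unions of sets defined by contracting the strict inequalities among a fixed finite family of continuous functions, together with the triangulation of definable functions. None of our modifications (bounding $S$, choosing $A$ a closed ball rather than the one-point compactification, using the symmetric triangulation of Theorem \ref{thm:SymmetricTriangulationFn}) alter the features their proof relies on.

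Concretely, I would proceed as follows. First, fix the triangulation $(\Lambda,\Phi)$ of $A$ that we constructed at the end of Subsection \ref{sect:Tau}: it arises by triangulating the projection $\rho:A\times[0,1]\to[0,1]$ compatibly with $S'=\bigcup_{\delta\in(0,1)}(S_\delta,\delta)$ via Theorem \ref{thm:SymmetricTriangulationFn}, then restricting to $\rho^{-1}(0)$. Thus $\Phi^{-1}(S_\delta)$ is, up to the homeomorphism $\Phi'$, the slice at height $\delta$ of a set $S'$ that is triangulated so that on each simplex of $\Lambda'$ the height function is affine and $S'$ is a union of simplices. Second, I would invoke the argument of \cite{gabrielov2009approximation} Lemma 5.8 verbatim: for a pair $(\Delta_1,\Delta_2)$ of simplices of $S$ with $\Delta_1$ a subsimplex of $\Delta_2$, one analyzes which functions among $\{h_i\}$ vanish identically on $\Delta_1$ versus $\Delta_2$, and one checks that whether $\Delta_1$ meets $\clos{\Delta_2\cap\Phi^{-1}(S_\delta)}$ for small $\delta$ is governed entirely by sign conditions that are constant on $\Delta_1$; the separability equivalence then falls out. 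Third, I would remark that our extra function $r^2-(X_1^2+\cdots+X_n^2)$ is continuous and enters the collection $\mc{P}$ on the same footing as the $h_i$, so it poses no difficulty, and that the symmetry/equivariance of the triangulation is irrelevant here since separability is a non-equivariant property of the pair $(\Lambda,\{S_\delta\})$.

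The main obstacle, such as it is, is bookkeeping rather than mathematics: one must be careful that our $S_\delta$ is a \emph{union} of sign-set-based pieces (since $S$ need not be a single sign set), and confirm that separability for the union follows from the behavior on each piece, as Gabrielov and Vorobjov handle. Since the piece indexed by $(I_0,I_+,I_-)$ is obtained precisely by replacing each strict inequality $h_i>0$ (resp. $h_i<0$) with $h_i\geq\delta$ (resp. $h_i\leq-\delta$) and leaving the equalities $h_i=0$ untouched, and since the triangulation is adapted to $S'$ and hence compatible with the sign sets of the $h_i$ on the zero level, the closure computations $\clos{\Delta_2\cap\Phi^{-1}(S_\delta)}$ behave as in the original. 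I would therefore state the proof as: ``This is \cite{gabrielov2009approximation} Lemma 5.8; the argument there applies without change to our bounded $\mc{P}$-set $S$ with $\mc{P}$ enlarged by $r^2-(X_1^2+\cdots+X_n^2)$, using the triangulation fixed at the end of Subsection \ref{sect:Tau}, since separability does not involve the group action.''
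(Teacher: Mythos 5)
Your proposal is correct and matches the paper's approach: the paper states this lemma as a direct citation of \cite{gabrielov2009approximation} Lemma 5.8 with no new proof, exactly as you do. Your added justification — that separability is a non-equivariant property of the pair $(\Lambda,\{S_\delta\})$, that the symmetric triangulation from Theorem \ref{thm:SymmetricTriangulationFn} is still a triangulation of the definable-functions type used in the original argument, and that bounding $S$ and adjoining $r^2-(X_1^2+\cdots+X_n^2)$ to $\mc{P}$ leave the constructible-case structure of the $S_\delta$ intact — is precisely the bookkeeping one must do to license the citation, and it is sound.
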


So, for the remainder of the section, we will assume we are in the definable case but that $(\Lambda,\{S_\delta\}_{\delta>0})$ is separable, and all claims made will in particular apply to the constructible case. Provided with separability, we may define the hard/soft relation for a pair $(\Delta_1,\Delta_2)$ of simplices of $S$ with $\Delta_1$ a subsimplex of $\Delta_2$ by saying that $\Delta_1$ is a soft subsimplex of $\Delta_2$ if $\clos{\Delta_2\cap \Phi^{-1}(S_\delta)}\cap \Delta_1=\emptyset$ for all sufficiently small $\delta$, and $\Delta_1$ is a hard subsimplex of $\Delta_2$ otherwise.

Given sequences of parameters $(\epslon^{(j)},\delta^{(j)})=\epslon^{(j)}_0,\delta^{(j)}_0,\ldots, \epslon^{(j)}_m,\delta^{(j)}_m$, we will denote the sets defined relative to these parameters by $V^{(j)}=V(\epslon^{(j)},\delta^{(j)})$ and $T^{(j)}=T(\epslon^{(j)},\delta^{(j)})$. We cite and where necessary adjust a few results from \cite{gabrielov2009approximation} about relative containments of such sets.

\begin{lemma}[\cite{gabrielov2009approximation} Lemma 5.10]\label{thm:Gabrielov5.10}
For
\[
0<\epslon^{(1)}_0\ll\cdots\ll \epslon^{(1)}_i\ll\epslon^{(2)}_i\ll\delta^{(2)}_i\ll\delta^{(1)}_i\ll\cdots\ll\delta^{(1)}_m\ll 1
\]
$\Phi^{-1}(T^{(1)})\subset V^{(2)}$ and $V^{(1)}\subset \Phi^{-1}(T^{(2)})$.
\end{lemma}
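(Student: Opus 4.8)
The plan is to reprove \cite{gabrielov2009approximation} Lemma 5.10 in our setting by observing that Gabrielov and Vorobjov's argument goes through with nothing changed: both of the asserted inclusions concern subsets of $\abs\Lambda$, so equivariance plays no role at all. The only properties of the triangulation that enter are that $(\Lambda,\Phi)$ is compatible with the family $\{S_\delta\}_{\delta>0}$ in the sense used in \cite{gabrielov2009approximation} — which we arranged by taking $(\Lambda,\Phi)$ to be induced on $\rho^{-1}(0)$ by an equivariant triangulation of $\rho\colon A\times[0,1]\to[0,1]$ adapted to $S'=\bigcup_\delta(S_\delta,\delta)$ (Theorem \ref{thm:SymmetricTriangulationFn}) — and that $(\Lambda,\{S_\delta\}_{\delta>0})$ is separable (Lemma \ref{thm:Gabrielov5.8}), which makes the hard/soft marking, hence the cores $C(B)$ of Definition \ref{def:Gabrielov3.3}, well defined. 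The statement is the two-family analogue of the sandwich $V'\subset\Phi^{-1}(T)\subset V''$ in Lemma \ref{thm:Gabrielov5.4}, and I would model the proof on that one.

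First I would reduce each inclusion to a single ``level'' $i$. Unwinding Definitions \ref{def:Gabrielov3.6} and \ref{def:Gabrielov3.7} and collapsing the double union $\bigcup_{B\in\widehat S}\bigcup_{B'\in S_B}$ to $\bigcup_{B\in\widehat S}$ (every $B\in\widehat S$ lies in $S_B$, and every member of an $S_B$ is in $\widehat S$), one has
\[
V^{(j)}=\bigcup_{i=0}^m\ \bigcup_{B\in\widehat S}\ \bigcup_{\clos K\supset B}K_B\bigl(\delta_i^{(j)},\epslon_i^{(j)}\bigr),\qquad T^{(j)}=\bigcup_{i=0}^m S_{\delta_i^{(j)},\epslon_i^{(j)}} .
\]
Since each level of $V^{(j)}$ depends only on that level's parameters, it is enough to prove, for each fixed $i$, that
\[
\Phi^{-1}\bigl(S_{\delta_i^{(1)},\epslon_i^{(1)}}\bigr)\ \subset\ \bigcup_{B\in\widehat S}\bigcup_{\clos K\supset B}K_B\bigl(\delta_i^{(2)},\epslon_i^{(2)}\bigr),\qquad
\bigcup_{B\in\widehat S}\bigcup_{\clos K\supset B}K_B\bigl(\delta_i^{(1)},\epslon_i^{(1)}\bigr)\ \subset\ \Phi^{-1}\bigl(S_{\delta_i^{(2)},\epslon_i^{(2)}}\bigr);
\]
taking unions over $i$ yields $\Phi^{-1}(T^{(1)})\subset V^{(2)}$ and $V^{(1)}\subset\Phi^{-1}(T^{(2)})$. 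Both $K_B(\delta,\epslon)$ and $\Phi^{-1}(S_{\delta,\epslon})$ grow as $\delta$ decreases and as $\epslon$ increases, which is exactly what the ordering $\epslon_i^{(1)}\ll\epslon_i^{(2)}\ll\delta_i^{(2)}\ll\delta_i^{(1)}$ (together with $\epslon_i^{(2)}\ll\delta_i^{(2)}$) provides.

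The substance is local. Fixing a simplex $K$ of $\widehat\Lambda$ and letting $B_K$ be the (possibly empty) simplex of $\widehat S$ with $\clos{B_K}=\clos K\cap\abs{\widehat S}$, compatibility of $\Phi$ with $\{S_\delta\}$ puts $\Phi^{-1}(S_\delta)\cap\clos K$ into the standard graph-and-band normal form used in \cite{gabrielov2009approximation}, and separability pins down, for all small $\delta$, exactly which faces of $B_K$ the closure of $\Phi^{-1}(S_\delta)\cap\clos K$ meets — precisely the data recorded by the cores. This gives the sandwich inclusions of \cite{gabrielov2009approximation} Sections 3--5: $\Phi^{-1}(S_{\delta,\epslon})\cap K$ lies between two unions of sets $K_{B'}(\cdot,\cdot)$ whose parameters differ from $\delta$ (and from $\epslon$) only by fixed constants. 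Summing over the finitely many simplices $K$ and absorbing these constants into the threshold functions witnessing ``$\ll$'' by taking pointwise minima (Notation \ref{def:Gabrielov1.7}) produces the level-$i$ inclusions above. I do not expect a genuine obstacle here: the reduction to single levels is immediate, and the per-simplex sandwich estimates are imported wholesale from \cite{gabrielov2009approximation}, so the only real work is the bookkeeping of checking that one interleaved chain $0<\epslon_0^{(1)}\ll\cdots\ll\epslon_i^{(1)}\ll\epslon_i^{(2)}\ll\delta_i^{(2)}\ll\delta_i^{(1)}\ll\cdots\ll\delta_m^{(1)}\ll 1$ simultaneously forces all of the per-simplex, per-level inclusions — which it does, being a finite minimum of finitely many threshold functions.
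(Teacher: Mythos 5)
Your proposal takes essentially the same approach as the paper: the paper states this lemma with a direct citation to \cite{gabrielov2009approximation} Lemma 5.10 and provides no proof of its own, precisely because — as you correctly observe — both inclusions are set-theoretic statements about subsets of $\abs\Lambda$ in which equivariance plays no role, so Gabrielov and Vorobjov's argument applies verbatim once the triangulation is chosen compatible with $\{S_\delta\}$ (Theorem \ref{thm:SymmetricTriangulationFn}) and separability (Lemma \ref{thm:Gabrielov5.8}) makes the cores well defined. Your additional sketch (the level-by-level reduction using the collapsed union, the monotonicity of $K_B(\delta,\epslon)$ and $\Phi^{-1}(S_{\delta,\epslon})$ in $\delta$ and $\epslon$, and the per-simplex sandwich) is a reasonable reconstruction of the cited argument, but it goes beyond what the paper itself does.
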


\begin{lemma}[\cite{gabrielov2009approximation} Lemma 5.11]\label{thm:Gabrielov5.11}
For
\[
0<\epslon^{(1)}_0\ll\cdots\ll \epslon^{(1)}_i\ll\epslon^{(2)}_i\ll\delta^{(2)}_i\ll\delta^{(1)}_i\ll\cdots\ll\delta^{(1)}_m\ll 1
\]
the inclusion maps $T^{(1)}\hookrightarrow T^{(2)}$ and $V^{(1)}\hookrightarrow V^{(2)}$ are homotopy equivalences.
\end{lemma}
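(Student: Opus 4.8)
The plan is to invoke \cite[Lemma 5.11]{gabrielov2009approximation} directly. The statement concerns only the ordinary (non-equivariant) homotopy type of the inclusions, and the parameter hypotheses are literally those of Lemma \ref{thm:Gabrielov5.10}, so Gabrielov and Vorobjov's argument applies verbatim. The only point to add for our purposes is that, whenever the data defining $S$ are symmetric, the sets $T^{(1)}, T^{(2)}$ (Definition \ref{def:Gabrielov1.8}) and $V^{(1)}, V^{(2)}$ (Proposition \ref{thm:SymmetryV}) are all symmetric, so the inclusion maps are automatically equivariant; this is what lets us feed the lemma into the equivariant assembly of the separable case. No equivariance of the homotopy \emph{inverses} is claimed or needed here.

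For completeness I would recall the shape of the cited argument. One first introduces a finite sequence of parameter tuples $(\epslon^{(1)},\delta^{(1)}), (\epslon^{(2)},\delta^{(2)}), (\epslon^{(3)},\delta^{(3)}),\dots$, interleaved so that each consecutive triple satisfies the hypotheses of Lemmas \ref{thm:Gabrielov5.4} and \ref{thm:Gabrielov5.10}: each $\epslon^{(j)}_i$ is taken slightly larger than $\epslon^{(j-1)}_i$ and each $\delta^{(j)}_i$ much smaller than $\delta^{(j-1)}_i$, all still with $\epslon^{(j)}_i\ll\delta^{(j)}_i$. By the monotonicity of the families $\{S_{\delta,\epslon}\}$ in their parameters (Definition \ref{def:Gabrielov1.1}(i) and the description of the constructible case) and of the sets $K_B(\delta,\epslon)$ (Lemma \ref{thm:UnionK_B}), one obtains genuine nested chains $V^{(1)}\subseteq V^{(2)}\subseteq\cdots$ and $\Phi^{-1}(T^{(1)})\subseteq\Phi^{-1}(T^{(2)})\subseteq\cdots$, which Lemma \ref{thm:Gabrielov5.10} interleaves via $V^{(j)}\subseteq\Phi^{-1}(T^{(j+1)})\subseteq V^{(j+2)}$ and symmetrically. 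One then shows that every skip-one composite inclusion in these chains is a homotopy equivalence (the technical heart of \cite[Lemma 5.11]{gabrielov2009approximation}, exploiting the polyhedral structure of the cells $K_B(\delta,\epslon)$ to slide the parameters monotonically, comparing with the maps $\tau$ of Theorem \ref{thm:EquivariantGabrielov4.8}, and applying the Whitehead theorem, Theorem \ref{thm:Spanier7.6.24}, where needed). Finally one applies the elementary sandwich principle: in a chain $X_1\subseteq X_2\subseteq X_3\subseteq\cdots$ in which every skip-one inclusion $X_j\hookrightarrow X_{j+2}$ is a homotopy equivalence, each single-step inclusion $X_j\hookrightarrow X_{j+1}$ is a homotopy equivalence, since it acquires a left homotopy inverse from $X_{j+1}\hookrightarrow X_{j+3}$ and a right homotopy inverse from $X_{j-1}\hookrightarrow X_{j+1}$.

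The main obstacle in writing this out is the bookkeeping: choosing the interleaved tuples so that all the $\ll$-inequalities required by Lemmas \ref{thm:Gabrielov5.4} and \ref{thm:Gabrielov5.10} at every step remain simultaneously consistent, and checking that the homotopies witnessing the skip-one equivalences compose compatibly along the chain. Since none of this interacts with the group action, we gain nothing and lose nothing by symmetry beyond the free observation above, and there is no reason to reprove the lemma rather than cite it.
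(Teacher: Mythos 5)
Your main plan --- invoke \cite{gabrielov2009approximation} Lemma 5.11 directly, noting only that symmetry of the data makes the inclusions equivariant for free --- is exactly what the paper does; there is no proof in the paper of this lemma, only the citation and the remark that the induced maps are therefore equivariant. On that score you match.

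However, the sketch you added ``for completeness'' misattributes the mechanism of the cited proof. The interleaved chain of parameter tuples together with the sandwich argument (skip-one inclusions are equivalences, hence single-step inclusions are) is what the paper uses in the proof of Theorem \ref{thm:EquivariantGabrielov5.12}, where Lemmas \ref{thm:Gabrielov5.10} and \ref{thm:Gabrielov5.11} are the \emph{inputs}, not the content. The actual argument of Gabrielov--Vorobjov's Lemma 5.11 is by Hardt triviality: the paper itself tells you this in the proof of the immediately following Lemma \ref{thm:EquivariantTtoT'}, which opens ``Gabrielov and Vorobjov in the proof of their Lemma 5.11 use Hardt Triviality to show that $T^{(1)}$ is a strong deformation retract of $T^{(2)}$.'' That is, one trivializes the family $\{T(\epslon,\delta)\}$ over a connected piece of the parameter space containing both $(\epslon^{(1)},\delta^{(1)})$ and $(\epslon^{(2)},\delta^{(2)})$, then slides along a curve in parameter space to build a retraction, rather than running a chain-sandwich argument. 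Since you were only recalling the reference and not relying on the sketch, this does not undermine your proof, but it is a substantive misdescription of the cited lemma that is worth correcting --- especially because the Hardt-triviality mechanism is precisely what the paper then makes equivariant in Lemma \ref{thm:EquivariantTtoT'}.
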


These inclusions therefore induce equivariant isomorphisms of homotopy and homology groups. At least for $T^{(1)}$ and $T^{(2)}$, we will want an equivariant map in the opposite direction.

\begin{lemma}[see \cite{gabrielov2009approximation} Lemma 5.11] \label{thm:EquivariantTtoT'}
For
\[
0<\epslon^{(1)}_0\ll\cdots\ll \epslon^{(1)}_i\ll\epslon^{(2)}_i\ll\delta^{(2)}_i\ll\delta^{(1)}_i\ll\cdots\ll\delta^{(1)}_m\ll 1
\]
there is an equivariant map $T^{(2)}\rightarrow T^{(1)}$ which is a homotopy inverse of the inclusion $T^{(1)}\hookrightarrow T^{(2)}$.
\end{lemma}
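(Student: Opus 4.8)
The plan is to reduce, via the equivariant homeomorphism $\Phi\colon\abs{\Lambda}\to A$, to producing an equivariant map $\Phi^{-1}(T^{(2)})\to\Phi^{-1}(T^{(1)})$ that is a two-sided homotopy inverse of the inclusion $\iota\colon\Phi^{-1}(T^{(1)})\hookrightarrow\Phi^{-1}(T^{(2)})$, and then to transport it back along $\Phi$. The difficulty is that an equivariant homotopy equivalence need not admit an equivariant homotopy inverse; the way around this is to route the backward map through one of the intermediate sets $V^{(j)}$, since (unlike $\Phi^{-1}(T^{(j)})$) each $V^{(j)}$ is a union of cells of the $G$-symmetric polyhedral complex $\mc{C}_{\delta^{(j)},\epslon^{(j)}}$ of Lemma \ref{thm:CWDecompA}, and therefore carries honestly equivariant, piecewise-linear deformations.

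First I would introduce auxiliary parameter sequences $(0)$ and $(3)$ with $(0)\ll(1)\ll(2)\ll(3)$ in the sense of Lemma \ref{thm:Gabrielov5.10}. Monotonicity of $S_{\delta,\epslon}$ in $\delta$ and $\epslon$ together with Lemma \ref{thm:Gabrielov5.10} produces a chain of equivariant inclusions
\[
V^{(0)}\ \subset\ \Phi^{-1}(T^{(1)})\ \subset\ \Phi^{-1}(T^{(2)})\ \subset\ V^{(3)}.
\]
Lemma \ref{thm:Gabrielov5.11} (applied both to consecutive and to skipped parameter sequences) tells us that $\Phi^{-1}(T^{(1)})\hookrightarrow\Phi^{-1}(T^{(2)})$ and $V^{(0)}\hookrightarrow V^{(3)}$ are homotopy equivalences; running the usual two-out-of-three argument along this telescope, exactly as in the proof of Lemma \ref{thm:Gabrielov5.11}, then shows that \emph{every} inclusion in the display is a homotopy equivalence.

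The heart of the argument is to construct a $G$-equivariant strong deformation retraction $R\colon[0,1]\times V^{(3)}\to V^{(3)}$ of $V^{(3)}$ onto $V^{(0)}$. On each piece $K_{B'}(\delta_i^{(3)},\epslon_i^{(3)})$ one rescales barycentric coordinates, transferring weight off $J\setminus I$ onto $I$ and, within $I$, onto the core $C(B')$; this moves the point into $K_{B'}(\delta_i^{(0)},\epslon_i^{(0)})\subset V^{(0)}$ and only makes the defining inequalities of Definition \ref{def:Gabrielov3.6} easier to satisfy, so it stays inside $V^{(3)}$. These local rescalings patch over the cover $\{V_B\}$ according to the overlap pattern recorded in Lemma \ref{thm:UnionK_B} and Proposition \ref{thm:IntersectionVB} — this bookkeeping is essentially that of Gabrielov and Vorobjov in their proof of Lemma \ref{thm:Gabrielov5.11} — and the resulting deformation is piecewise linear in barycentric coordinates. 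Because $G$ acts linearly on $\abs{\widehat{\Lambda}}$, permutes the simplices $K$ and $B$, and preserves cores ($g(C(B))=C(g(B))$ since $S$ is symmetrically marked), $R$ is automatically equivariant. (Alternatively one could bypass $R$: $\iota$ is an equivariant cofibration and a weak equivalence, so the equivariant Whitehead theorem applies once one checks each fixed-point inclusion $(T^{(1)})^{H}\hookrightarrow(T^{(2)})^{H}$, $H\le G$, is a weak equivalence — which follows from Lemma \ref{thm:Gabrielov5.11} applied in the ambient space $(\bb{R}^n)^{H}$ to $S\cap(\bb{R}^n)^{H}$, the whole construction restricting compatibly to the fixed subspace.)

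Finally I would set $\bar r=\bigl(V^{(0)}\hookrightarrow\Phi^{-1}(T^{(1)})\bigr)\circ R(0,-)\circ\bigl(\Phi^{-1}(T^{(2)})\hookrightarrow V^{(3)}\bigr)$, which is equivariant as a composite of equivariant maps. Since every map in sight is a homotopy equivalence and $R(0,-)$ is a genuine retraction onto $V^{(0)}$, a short formal computation gives $\bar r\circ\iota\simeq\id$ (it is a one-sided, hence two-sided, homotopy inverse of the honest homotopy equivalence $V^{(0)}\hookrightarrow\Phi^{-1}(T^{(1)})$), and then $\iota\circ\bar r\simeq\id$ because $\iota$ is itself a homotopy equivalence; transporting $\bar r$ through $\Phi$ yields the desired equivariant map $T^{(2)}\to T^{(1)}$. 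I expect the main obstacle to be precisely the construction of $R$ over the cover $\{V_B\}$ — once it is written linearly in barycentric coordinates, equivariance comes for free.
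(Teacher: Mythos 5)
Your proposal takes a genuinely different route from the paper. The paper's proof applies the equivariant version of Hardt Triviality (Theorem \ref{thm:EquivariantHardtTriviality}) directly to the family $\bs{T}\subset\bb{R}^n\times\bb{R}^{2m+2}$ given by $\rho^{-1}(\epslon,\delta)=T(\epslon,\delta)$ over parameter space: an equivariant trivialization of $\rho$ over a connected piece $A_0$ containing both parameter values, together with a definable curve $\gamma\colon[0,1]\to A_0$, yields equivariant homeomorphisms $\Phi_{t,t'}\colon\rho^{-1}(\gamma(t'))\to\rho^{-1}(\gamma(t))$ which are then assembled into an equivariant strong deformation retraction $F$ of $T^{(2)}$ onto $T^{(1)}$ exactly as in Gabrielov–Vorobjov, with equivariance inherited from the trivialization. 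This sidesteps the sets $V^{(j)}$ entirely and requires no polyhedral combinatorics at this step.

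Your endgame is fine: granted an equivariant strong deformation retraction $R$ of $V^{(3)}$ onto $V^{(0)}$, the chain $V^{(0)}\subset\Phi^{-1}(T^{(1)})\subset\Phi^{-1}(T^{(2)})\subset V^{(3)}$ does hold (Lemma \ref{thm:Gabrielov5.10} applied to $(0),(1)$ and to $(2),(3)$, plus monotonicity of $T$ in the parameters), the two-out-of-three argument makes every inclusion in the chain a homotopy equivalence, and the computation $(\bar r\circ\iota)\circ j_0=j_0$ with $j_0\colon V^{(0)}\hookrightarrow\Phi^{-1}(T^{(1)})$ a homotopy equivalence does give $\bar r\circ\iota\simeq\id$ and hence a two-sided inverse.

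The gap is the construction of $R$ itself, and you have identified it correctly, but I do not think it is as routine as the sketch suggests. First, your claim that the required bookkeeping "is essentially that of Gabrielov and Vorobjov in their proof of Lemma \ref{thm:Gabrielov5.11}" is mistaken: as the paper explicitly records, their Lemma 5.11 is also proved via Hardt triviality, not by barycentric rescaling, so there is no existing bookkeeping to lean on. Second, the local rescaling on a single $K_{B'}(\delta_i^{(3)},\epslon_i^{(3)})$ is not obviously consistent across the many $(B',K,i)$ for which a given point lies in $K_{B'}(\delta_i^{(3)},\epslon_i^{(3)})$; in particular the sets $K_{B'}(\delta_i,\epslon_i)$ for different $i$ with the same $B',K$ overlap, and the cells of $\mc{C}_{\delta^{(3)},\epslon^{(3)}}$ do not refine those of $\mc{C}_{\delta^{(0)},\epslon^{(0)}}$, so $V^{(0)}$ is not a subcomplex of the $(3)$-decomposition. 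Third, shifting weight within $I$ onto $C(B')$ decreases $t_{b_\nu}$ for $b_\nu\in I\setminus C(B')$ and can interact badly with the order constraints $t_{b_\nu}>t_{c_\mu}$; one must check the rescaling never exits $V^{(3)}$ en route. Your alternative via the equivariant Whitehead theorem also needs real verification: one must show each $T^{(j)}$ is a $G$-CW complex (this is available from Theorem \ref{thm:SymmetricTriangulation}) and that for every $H\le G$ the inclusion of fixed-point sets is a weak equivalence, and the claim that "the whole construction restricts compatibly" to $(\bb{R}^n)^H$ is plausible but unproven — the sign-set decomposition of $S\cap(\bb{R}^n)^H$ inside $(\bb{R}^n)^H$ is not literally the restriction of that of $S$, so one has to check that the representing families $\{S_\delta\}$ and $\{S_{\delta,\epslon}\}$ restrict to representing families of $S\cap(\bb{R}^n)^H$. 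Given that the paper has already established equivariant Hardt triviality precisely for this sort of purpose, its proof is both shorter and safer; your route would be an interesting alternative if the retraction $R$ were carried out rigorously, but in its present form the heart of the argument is missing.
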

\begin{proof}
Gabrielov and Vorobjov in the proof of their Lemma 5.11 use Hardt Triviality to show that $T^{(1)}$ is a strong deformation retract of $T^{(2)}$. Equipped with our equivariant version of Hardt Triviality, we demonstrate that the homotopy they construct is equivariant.

Let $\bs{T}\subset \bb{R}^n\times \bb{R}^{2m+2}$ be the union of sets $T(\epslon_0,\delta_0,\ldots,\epslon_m,\delta_m)$ over $0<\epslon_i,\delta_i<1$, and let $\rho:\bs{T}\rightarrow \bb{R}^{2m+2}$ be the projection on the second coordinate. Then $\rho$ is a symmetric function, to which we may apply Theorem \ref{thm:EquivariantHardtTriviality}. We obtain a partition of $\bb{R}^{2m+2}$ into a finite number of definable sets $A_i$ over which $\rho$ is definably trivial. Subdividing further, we may assume each $A_i$ is connected (using the fact that sets definable over $\bb{R}$ have a finite number of connected components). Let $A_0$ be the element of this partition which contains both $(\epslon^{(1)},\delta^{(1)})$ and $(\epslon^{(2)},\delta^{(2)})$ for $0<\epslon^{(1)}_0\ll\cdots\ll \epslon^{(1)}_i\ll\epslon^{(2)}_i\ll\delta^{(2)}_i\ll\delta^{(1)}_i\ll\cdots\ll\delta^{(1)}_m\ll 1$. Then in particular, there is an equivariant map
\[
h:\rho^{-1}(A_0)\rightarrow \rho^{-1}(\epslon^{(2)},\delta^{(2)})\times A_0=T^{(2)}\times A_0
\]

Choose a definable simple curve $\gamma:[0,1]\rightarrow A_0$ with $\gamma(0)=(\epslon^{(2)},\delta^{(2)})$ and $\gamma(1)=(\epslon^{(1)},\delta^{(1)})$. This means $\rho^{-1}(\gamma([0,1]))$ is homeomorphic to $T^{(2)}\times \gamma([0,1])$ via an equivariant map. For any $0\leq t\leq t'\leq 1$, we can use this to define an equivariant homeomorphism $\Phi_{t,t'}:\rho^{-1}(\gamma(t'))\rightarrow \rho^{-1}(\gamma(t))$. After possibly adjusting the point $(\epslon^{(2)},\delta^{(2)})$ to assume that $\rho^{-1}(\gamma(t'))\subset \rho^{-1}(\gamma(t))$ for all $0\leq t\leq t'\leq 1$, Gabrielov and Vorobjov construct the following homotopy $F:T^{(2)}\times [0,1]\rightarrow T^{(2)}$.

Let $(\bs{x},t)\in T^{(2)}\times [0,1]$. If there is a $t'\leq t$ such that $\bs{x}\in \rho^{-1}(\gamma(t'))$ but $\bs{x}\not\in \rho^{-1}(\gamma(t''))$ for any $t''>t'$, let $F(\bs{x},t)=\Phi_{t',t}(\bs{x})$. Otherwise, let $F(\bs{x},t)=\bs{x}$. Since for any $0\leq t'\leq 1$, $\rho^{-1}(\gamma(t'))$ is symmetric, it follows that $F$ is equivariant. Then $F(-,1):T^{(2)}\rightarrow T^{(1)}$ is the desired equivariant homotopy inverse.
\end{proof}

If desired, we could prove that the inclusion $V^{(1)}\hookrightarrow V^{(2)}$ has an equivariant homotopy inverse in a similar manner.

Now, we are ready to relate $T$ and $V$.

\begin{thm}[ref \cite{gabrielov2009approximation} Theorem 5.12]\label{thm:EquivariantGabrielov5.12}
In the separable case, for $0<\epslon_0\ll\delta_0\ll\cdots\ll \epslon_m\ll\delta_m\ll 1$ and every $k\geq 0$, there is an equivariant map $\zeta:T\rightarrow V$ inducing equivariant isomorphisms $\zeta_{\#,k}:\pi_k(T)\rightarrow \pi_k(V)$ and $\zeta_{*,k}:H_k(T)\rightarrow H_k(V)$ (and hence a homotopy equivalence $T\simeq V$).
\end{thm}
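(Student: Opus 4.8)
The plan is to adapt the zig-zag argument of \cite{gabrielov2009approximation} Theorem 5.12, replacing the one map that must run against the natural inclusions by the equivariant homotopy inverse supplied by Lemma \ref{thm:EquivariantTtoT'}. Since $\Phi$ is an equivariant homeomorphism, $\Phi^{-1}$ restricts to an equivariant homeomorphism $T(\epslon_0,\delta_0,\ldots)\to\Phi^{-1}(T(\epslon_0,\delta_0,\ldots))$, so it is enough to produce, after first retracting $T$ equivariantly onto a slightly smaller copy of itself, an equivariant homotopy equivalence into $V$. In particular we cannot simply invert the equivariant maps $\psi_V$ of Lemma \ref{thm:VandNV} or $\tau$ of Theorem \ref{thm:EquivariantGabrielov4.8}, since we have no fully equivariant nerve theorem at our disposal.

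Concretely, write $(\epslon,\delta)=(\epslon_0,\delta_0,\ldots,\epslon_m,\delta_m)$ for the common parameters of $T$ and $V$, and (using Notation \ref{def:Gabrielov1.7}) choose three auxiliary tuples with
\[
(\epslon^{(1)},\delta^{(1)})\ \prec\ (\epslon^{(2)},\delta^{(2)})\ \prec\ (\epslon,\delta)\ \prec\ (\epslon^{(3)},\delta^{(3)}),
\]
where $\prec$ denotes the interleaved relation appearing in the hypotheses of Lemmas \ref{thm:Gabrielov5.10} and \ref{thm:Gabrielov5.11} (smaller $\epslon$'s, larger $\delta$'s at each index); the hypothesis $0<\epslon_0\ll\delta_0\ll\cdots\ll\delta_m\ll 1$ ensures there is enough room, between $(\epslon,\delta)$ and $1$, to fit these tuples while meeting all the interleaved inequalities simultaneously within the ``$\ll$'' formalism. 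Putting $V^{(1)}=V(\epslon^{(1)},\delta^{(1)})$, $T^{(2)}=T(\epslon^{(2)},\delta^{(2)})$ and $T^{(3)}=T(\epslon^{(3)},\delta^{(3)})$, Lemma \ref{thm:Gabrielov5.10} gives the chain of literal inclusions
\[
V^{(1)}\ \subseteq\ \Phi^{-1}(T^{(2)})\ \subseteq\ V\ \subseteq\ \Phi^{-1}(T^{(3)}),
\]
while Lemma \ref{thm:Gabrielov5.11} makes the two skip-one inclusions $V^{(1)}\hookrightarrow V$ and $\Phi^{-1}(T^{(2)})\hookrightarrow\Phi^{-1}(T^{(3)})$ homotopy equivalences.

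Next I would run the usual sandwich. From $V^{(1)}\subseteq\Phi^{-1}(T^{(2)})\subseteq V$ with homotopy-equivalent composite, the inclusion $\iota\colon\Phi^{-1}(T^{(2)})\hookrightarrow V$ is onto on every homotopy group; from $\Phi^{-1}(T^{(2)})\subseteq V\subseteq\Phi^{-1}(T^{(3)})$ with homotopy-equivalent composite, $\iota$ is injective on every homotopy group. Hence $\iota$ is a weak homotopy equivalence, and since both sets are finite unions of cells of a common regular CW decomposition $\mc{C}_{\delta,\epslon}$, Theorem \ref{thm:Spanier7.6.24} (applied componentwise, as is legitimate by the component correspondence) upgrades it to a homotopy equivalence. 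Moreover $\iota$ is equivariant, being the inclusion of one $G$-stable union of cells of $\mc{C}_{\delta,\epslon}$ into another under the linear $G$-action. Finally, Lemma \ref{thm:EquivariantTtoT'} provides an equivariant homotopy inverse $r\colon T\to T^{(2)}$ of the inclusion $T^{(2)}\hookrightarrow T$, and I set
\[
\zeta\ =\ \iota\circ\bigl(\Phi^{-1}_{\restriction T^{(2)}}\bigr)\circ r\ \colon\ T\ \longrightarrow\ T^{(2)}\ \longrightarrow\ \Phi^{-1}(T^{(2)})\ \longrightarrow\ V.
\]
This is a composite of equivariant homotopy equivalences, hence an equivariant homotopy equivalence, so it induces isomorphisms $\zeta_{\#,k}\colon\pi_k(T,\ast)\to\pi_k(V,\zeta(\ast))$ and $\zeta_{*,k}\colon H_k(T)\to H_k(V)$ for all $k\geq 0$, equivariant in the sense of Definitions \ref{def:HomotopyEquivariance} and \ref{def:Equivariance} because $\zeta$ is; in particular $T\simeq V$.

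The genuinely delicate point is architectural rather than computational: every map naturally available among these sets is an inclusion into a larger one, so the whole construction hinges on having the single ``backwards'' equivariant map $r$, which is exactly the equivariant Hardt-triviality retraction of Lemma \ref{thm:EquivariantTtoT'} (resting in turn on Theorem \ref{thm:EquivariantHardtTriviality}). Everything else amounts to bookkeeping of which interleaved parameter inequalities each invocation of Lemmas \ref{thm:Gabrielov5.10} and \ref{thm:Gabrielov5.11} requires and checking they are jointly satisfiable.
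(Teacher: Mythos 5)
Your proposal is correct and follows essentially the same route as the paper: sandwich the inclusion $\iota\colon\Phi^{-1}(T^{(2)})\hookrightarrow V$ between two inclusions that Lemmas~\ref{thm:Gabrielov5.10} and~\ref{thm:Gabrielov5.11} show have homotopy-equivalent composites, deduce that $\iota$ is a homotopy equivalence, and precompose with the equivariant retraction from Lemma~\ref{thm:EquivariantTtoT'}. The only cosmetic difference is in indexing (the paper writes the final map as $T^{(3)}\to V^{(3)}$ using four auxiliary tuples, while you relabel so that the target parameters are the original ones and use three) and your explicit insertion of $\Phi^{-1}_{\restriction T^{(2)}}$, which the paper's notation leaves implicit.
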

\begin{proof}
To show the existence of isomorphisms $\zeta_{\#,k}$ and $\zeta_{*,k}$, \cite{gabrielov2009approximation} employs four different sequences $(\epslon^{(j)},\delta^{(j)})=\epslon_0^{(j)}, \delta_0^{(j)},\ldots, \epslon_m^{(j)}, \delta_m^{(j)}$. Lemma \ref{thm:Gabrielov5.10} gives us the chain of inclusions
\[
V^{(1)}\overset{\iota'}{\hookrightarrow} \Phi^{-1}(T^{(2)})\overset{\iota}{\hookrightarrow} V^{(3)}\overset{\iota''}{\hookrightarrow} \Phi^{-1}(T^{(4)})
\]
for $0<\epslon_0^{(j-1)}\ll\cdots\ll \epslon_i^{(j-1)}\ll\epslon_i^{(j)}\ll\delta_i^{(j)}\ll\delta_i^{(j-1)}\ll\cdots\ll\delta_m^{(j-1)}\ll 1$ (for $j=2,3,4$). The argument in \cite{gabrielov2009approximation} uses the fact that $\iota\circ \iota'$ and $\iota''\circ \iota$ are homotopy equivalences (which follows from Lemma \ref{thm:Gabrielov5.11}) to obtain that $\iota$ induces isomorphisms $\iota_{\#,k}:\pi_k(T^{(2)})\rightarrow \pi_k(V^{(3)})$ and hence also isomorphisms $\iota_{*,k}:H_k(T^{(2)})\rightarrow H_k(V^{(3)})$. Since $\iota$ is the inclusion map, the maps $\iota_
{\#,k}$ and $\iota_{*,k}$ are equivariant. Lemma \ref{thm:EquivariantTtoT'} gives an equivariant map $\eta:T^{(3)}\rightarrow T^{(2)}$ that induces a homotopy equivalence. Let $\zeta=\iota\circ \eta:T^{(3)}\rightarrow V^{(3)}$. Then $\zeta$ is equivariant and if we let
\begin{align*}
    \zeta_{\#,k}:&\pi_k(T^{(3)})\rightarrow \pi_k(V^{(3)})\\
    \zeta_{*,k}:&H_k(T^{(3)})\rightarrow H_k(V^{(3)})
\end{align*}
we have that $\zeta_{\#,k}$ and $\zeta_{*,k}$ are equivariant isomorphisms for all $k\geq 0$.
\end{proof}

Again, though $\zeta$ induces a homotopy equivalence $T^{(3)}\rightarrow V^{(3)}$ and a similar argument exchanging the roles of $T$ and $V$ would produce an equivariant map $V^{(3)}\rightarrow T^{(3)}$ that also induces a homotopy equivalence between these spaces, these maps are not necessarily homotopy inverses of one another.

\begin{cor}[ref \cite{gabrielov2009approximation} Thm 1.10(ii)]\label{thm:EquivariantGabrielov1.10ii}
In the separable (and so, in the constructible) case, for $0<\epslon_0\ll\delta_0\ll\cdots\ll \epslon_m\ll\delta_m\ll 1$, there is an equivariant map $\psi:T\rightarrow S$ inducing equivariant homomorphisms
\begin{align*}
    \psi_{\#,k}&: \pi_k(T,*)\rightarrow \pi_k(S,*')\\
    \psi_{*,k}&: H_k(T)\rightarrow H_k(S)
\end{align*}
which are isomorphisms for $1\leq k\leq m-1$ and epimorphisms for $k=m$. In particular, $\rank H_k(T)=\rank H_k(S)$ and if $m\geq \dim(S)$, $\psi$ induces a homotopy equivalence $T\simeq S$.
\end{cor}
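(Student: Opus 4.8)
The plan is to obtain $\psi$ as the composite of the two equivariant maps already constructed in the separable case, so that essentially all of the work has been done by the preceding theorems. First I would fix a single sequence $0<\epslon_0\ll\delta_0\ll\cdots\ll\epslon_m\ll\delta_m\ll 1$ small enough that the hypotheses of both Theorem \ref{thm:EquivariantGabrielov5.12} and Theorem \ref{thm:EquivariantGabrielov4.8} are satisfied; since each of those hypotheses is of the nested-function form described in Notation \ref{def:Gabrielov1.7}, their conjunction is again of that form (replace each bounding function by the pointwise minimum of the two), so no new parameter conventions are needed. For such a sequence set $T=T(\epslon_0,\delta_0,\ldots,\epslon_m,\delta_m)$ and $V=V(\epslon_0,\delta_0,\ldots,\epslon_m,\delta_m)$. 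Theorem \ref{thm:EquivariantGabrielov5.12} gives an equivariant map $\zeta\colon T\to V$ inducing equivariant isomorphisms on all homotopy and homology groups, and Theorem \ref{thm:EquivariantGabrielov4.8} gives an equivariant map $\tau\colon V\to S$ inducing equivariant isomorphisms in degrees $k\le m-1$ and equivariant epimorphisms in degree $m$. I would then simply define $\psi=\tau\circ\zeta\colon T\to S$.

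The remaining checks are formal. Equivariance of $\psi$ is immediate as a composite of equivariant maps. For basepoints, choose $*\in T$ arbitrarily and set $*''=\zeta(*)\in V$ and $*'=\tau(*'')\in S$; then $\psi_{\#,k}=\tau_{\#,k}\circ\zeta_{\#,k}\colon\pi_k(T,*)\to\pi_k(S,*')$, and the commuting square of Definition \ref{def:HomotopyEquivariance} for $\psi_{\#,k}$ is obtained by pasting the corresponding squares for $\zeta_{\#,k}$ and $\tau_{\#,k}$, the shared object being $\pi_k(V,g(*''))$. Because $\zeta_{\#,k}$ is an isomorphism for every $k\ge 0$ while $\tau_{\#,k}$ is an isomorphism for $k\le m-1$ and an epimorphism for $k=m$, the composite $\psi_{\#,k}$ is an isomorphism for $1\le k\le m-1$ and an epimorphism for $k=m$. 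The identical argument applied to $\psi_{*,k}=\tau_{*,k}\circ\zeta_{*,k}$ yields the homology statement (this also follows from Theorem \ref{thm:Spanier7.5.9}). Consequently $\rank H_k(T)=\rank H_k(S)$ for $1\le k\le m-1$, and when $m\ge\dim(S)$ both $\zeta$ and $\tau$ are homotopy equivalences (the latter by the final clause of Theorem \ref{thm:EquivariantGabrielov4.8}), so $\psi$ is a homotopy equivalence $T\simeq S$ and $\rank H_k(T)=\rank H_k(S)$ in all degrees.

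I do not anticipate a genuine obstacle in this corollary: it is a direct consequence of Theorems \ref{thm:EquivariantGabrielov5.12} and \ref{thm:EquivariantGabrielov4.8}, with the substantive content — equivariant triangulation, the equivariant nerve theorems, equivariant Hardt triviality, and the analysis of the intermediate set $V$ — already established. The only points that require minor care are the bookkeeping of basepoints across the composition, handled above by the canonical choices $*''=\zeta(*)$ and $*'=\tau(*'')$, and the observation that one $\ll$-chain can be chosen to meet the hypotheses of both theorems at once, which is automatic from the formulation of Notation \ref{def:Gabrielov1.7}.
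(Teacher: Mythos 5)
Your proof is correct and takes exactly the same approach as the paper: compose $\zeta\colon T\to V$ from Theorem \ref{thm:EquivariantGabrielov5.12} with $\tau\colon V\to S$ from Theorem \ref{thm:EquivariantGabrielov4.8} and take $\psi=\tau\circ\zeta$. The paper's proof is just the one-line composition; your version adds the routine but worthwhile bookkeeping about merging the two $\ll$-chains, tracking basepoints, and pasting equivariance squares, all of which is sound.
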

\begin{proof}
Take $\psi=\tau\circ \zeta: T\rightarrow S$. The result follows from Theorem \ref{thm:EquivariantGabrielov4.8} and Theorem \ref{thm:EquivariantGabrielov5.12}.
\end{proof}

\subsection{Summary}\label{sect:Summary}
We set out all of the maps involved in constructing $\psi$ in both the definable and separable (so in particular, constructible) cases. In the diagrams, double headed arrows indicate homeomorphisms while pairs of arrows indicate homotopy equivalences. Dashed arrows indicate that we have not demonstrated the existence of an equivariant map in this direction. Hooked arrows indicate inclusion maps.

In the diagrams below, we let $m>0$ and $T=T(\epslon_0,\delta_0,\ldots, \epslon_m,\delta_m)$ for $0<\epslon_0\ll\delta_0\ll\cdots\ll\epslon_m\ll\delta_m\ll 1$. $\Phi$ denotes the homeomorphism of the triangulation $\Phi:\abs{\Lambda}\rightarrow A$ described at the beginning of Subsection \ref{sect:ConstructionV} and refined immediately before Subsection \ref{sect:MainTheoremDefinableCase}.

\noindent\textbf{Definable Case}
Let $V''=V''(\epslon)$ for any $\delta_m\ll \epslon''\ll 1$.
\[
\begin{tikzcd}
T\arrow[r, leftrightarrow, "\Phi^{-1}_{\restriction{T}}"] & \Phi^{-1}(T)\arrow[r, hook, "\zeta","\ref{thm:Gabrielov5.6}"'] &V''\arrow[r, bend left=50, "\psi_{V''}"]\arrow[r, phantom, "\ref{thm:EquivariantNerveTheoremOpen}(ii)" font=\tiny] &\abs{\mc{N}_{V''}}\arrow[l, dashed, bend left=50]
\arrow[r, leftrightarrow, "\xi''","\ref{thm:EquivariantGabrielov5.3}"']& \;
\end{tikzcd}
\]
\[
\begin{tikzcd}
\abs{\Delta(\fpos{\widehat{S}})}\arrow[r, leftrightarrow, "\ref{def:BarycentricRetraction}"'] &
\abs{\br(\widehat{S})}\arrow[r, hook, bend left=50] &\abs{\widehat{S}}=\Phi^{-1}(S)\arrow[l, bend left=50, "\ref{thm:BREquivariance}"']\arrow[r, leftrightarrow, "\Phi_{\restriction{S}}"] & S
\end{tikzcd}
\]

The map $\zeta$ induces epimorphisms of homotopy and homology groups for $k\leq m$.

\noindent\textbf{Separable/Constructible Case}
Let $S_r=S\cap\clos{B(0,r)}$ for some sufficiently large $r$, $V=V(\epslon_0,\delta_0,\ldots, \epslon_m,\delta_m)$, and for a sequence
\[
0<\epslon_0'\ll\cdots\ll \epslon_i'\ll\epslon_i\ll\delta_i\ll\delta_i'\ll\cdots\ll\delta_m'\ll 1
\]
let $T'=T(\epslon_0',\delta_0',\ldots, \epslon_m',\delta_m')$.

\[
\begin{tikzcd}
T\arrow[r, bend left=50, "r","\ref{thm:EquivariantTtoT'}"'] &T'\arrow[l, hook, bend left=50, "\ref{thm:Gabrielov5.11}"']\arrow[r, leftrightarrow, "\Phi^{-1}_{\restriction{T'}}"] & \Phi^{-1}(T')\arrow[r, hook, bend left=50, "\iota", "\ref{thm:Gabrielov5.10}"'] &V\arrow[l, dashed, bend left=50,"\ref{thm:EquivariantGabrielov5.12}"' near start]\arrow[r, "\psi_{V}", "\ref{thm:VandNV}"'] &\abs{\mc{N}_{V}}
\arrow[r, leftrightarrow, "\xi","\ref{thm:NVandNS}"']& \;
\end{tikzcd}
\]
\[
\begin{tikzcd}
\abs{\mc{N}_{\br(\widehat{S}_r)}}\arrow[r, bend left=50]\arrow[r,phantom, "\ref{thm:Hess2.5}" font=\tiny] &
\abs{\br(\widehat{S}_r)}\arrow[l, bend left=50]\arrow[r, hook, bend left=50] &\abs{\widehat{S}_r}=\Phi^{-1}(S_r)\arrow[l, bend left=50, "\ref{thm:BREquivariance}"']\arrow[r, leftrightarrow, "\Phi_{\restriction{S_r}}"] & S_r\arrow[r,hook, bend left=50]&S\arrow[l,dashed, bend left=50, "\ref{thm:ConicStructureAtInfty}"'near start]
\end{tikzcd}
\]

The map $\psi_V$ induces isomorphisms of the homotopy and homology groups of $T$ and $S$ for $0\leq k\leq m-1$ and epimorphisms for $k=m$. If $m\geq \dim(S)$, $\psi_V$ induces a homotopy equivalence.

Making use of the equivariant version of Hardt Triviality and with a bit of care, one may see that the map $S\rightarrow S_r$ is also equivariant. It seems likely that one could construct an equivariant homotopy inverse for $\psi_{V''}$ in the definable case or $\phi_{V}$ for large enough $m$ in the separable case. It might be interesting to investigate the existence of an equivariant homotopy inverse of $\iota$, but that goes far beyond the requirements of our application.

\section{Application to Cohomology of Symmetric Semialgebraic sets}\label{sect:Applications}

Basu and Riener in \cite{basu2021vandermonde} investigate the cohomology of semialgebraic sets defined by symmetric polynomials of bounded degree. They develop an algorithm for computing the first $l$ Betti numbers of such a set $S$, with complexity polynomially bounded in dimension, $n$, and number of symmetric polynomials, $s$. To accomplish this, Basu and Riener develop results on the structure of the cohomology groups of $\mc{P}$-closed semialgebraic sets, which allow for a significant reduction in the number of computations which must be performed. To extend these results to an arbitrary semialgebraic set, they apply the Gabrielov-Vorobjov construction. From the isomorphisms of homology groups, it follows that $S$ and its approximating set have the same Betti numbers (at least up to a chosen degree). However, the structural results rely on decomposing the cohomology spaces as $\s{n}$-modules. The Gabrielov-Vorobjov construction alone does not guarantee that the cohomology spaces of $S$ and its compact approximation have the same $\s{n}$-module structure, but our equivariant version does. Utilizing our new equivariance results, we strengthen a few results of Basu and Riener in \cite{basu2021vandermonde}.

Let $\bb{R}[X_1,\ldots, X_n]^{\s{n}}_{\leq d}$ denote the space of symmetric polynomials over $\bb{R}$ of degree at most $d$ (see \cite{basu2021vandermonde} Notation 4). By $H^k(S)$ we mean the $k$th cohomology space (with rational coefficients) of $S$. If $S$ is a symmetric semialgebraic set, then the action of $\s{n}$ on $S$ induces an action of $\s{n}$ on $H^*(S)$, giving $H^*(S)$ the structure of a finite dimensional $\s{n}$-module. As such, each $H^k(S)$ admits a decomposition into a direct sum of irreducible $\s{n}$-modules
\[
H^k(S)\cong_{\s{n}}\bigoplus_{\lambda\vdash n} m_{k,\lambda}(S)\; \bb{S}^{\lambda}
\]
where the sum is taken over all partitions $\lambda$ of the integer $n$ and $\bb{S}^\lambda$ is the particular irreducible $\s{n}$-module (the Specht module) corresponding to $\lambda$. The integer $m_{k,\lambda}(S)\in \bb{Z}_{\geq 0}$ is called the mulitplicity of $\bb{S}^\lambda$ in $H_k(S)$. The dimension of each $\bb{S}^\lambda$ may be computed (via what is known as the hook length formula), and so by this isotypic decomposition, we have reduced the task of computing the $k$th Betti number $b_k(S)=\dim(H^k(S))$ to that of computing the various mulitpicities $m_{k,\lambda}(S)$. Appendix 6 of \cite{basu2021vandermonde} summarizes the classical results from the represention theory of finite groups pertaining to the above decomposition. For $S$ a $\mc{P}$-closed set, Basu and Riener in \cite{basu2021vandermonde} Theorem 4 prove that the mulitpicities corresponding to partitions whose lengths are too long or too short must be zero. This dramatically reduces the number of multiplicities one needs to compute.

\begin{remark}
Though \cite{basu2021vandermonde} defines the Betti number $b_k(S)$ in terms of cohomology spaces rather than homology groups, in this setting it holds that $H_k(S)\cong_{\s{n}}H^k(S)$, and so the distinction may be set aside (see \cite{basu2020isotypic} Remark 1).
\end{remark}

Let $S$ be a $\mc{P}$-semialgebraic set for some finite $\mc{P}\subset \bb{R}[X_1,\ldots, X_n]^{\s{n}}_{\leq d}$, where $d\geq 2$. For algorithmic reasons, Basu and Riener reset the Gabrielov-Vorobjov results in terms of Puiseux series. Let $\bb{R}\gen{\epslon}$ be the real closed field of algebraic Puiseux series in $\epslon$ with coefficients in $\bb{R}$, and let
\[\bb{R}\gen{\epslon_m,\epslon_{m-1},\ldots,\epslon_0}=\bb{R}\gen{\epslon_m}\gen{\epslon_{m-1}}\cdots\gen{\epslon_0}
\]
(see \cite{basu2021vandermonde} Notation 15). Then in the unique ordering on $\bb{R}\gen{\epslon_m,\ldots, \epslon_0}$, we have $0<\epslon_0\ll\epslon_1\ll \ldots \ll \epslon_m$ (where here $\ll$ indeed denotes `infinitesimally smaller than'). In this phrasing, the equivariant Gabrielov-Vorobjov construction (Theorem \ref{thm:MainTheorem} part (ii)) gives us a $\mc{P'}$-closed and bounded semialgebraic set
\[
S'_m\subset \bb{R}\gen{\delta_m, \epslon_m, \delta_{m-1},\epslon_{m-1},\ldots, \delta_0,\epslon_0}^n
\]
and equivariant homomorphisms $\psi_{\#,k}:\pi_k(S'_m,*)\rightarrow \pi_k(S,*')$ and $\psi_{*,k}:H_k(S'_m)\rightarrow H_k(S)$ which are isomorphisms for $0\leq k\leq m-1$ and epimorphisms for $k=m$. By construction (see Section \ref{sect:GVConstruction}) we know that \[
\mc{P}'\subset\bb{R}\gen{\delta_m,\epslon_m,\ldots, \delta_0,\epslon_0}[X_1,\ldots, X_n]^{\s{n}}_{\leq d}
\]
and if $\mc{P}$ has cardinality $s$, $\mc{P}'$ has cardinality $4m(s+1)$. See \cite{basu2021vandermonde} Section 5.2 for more details on this rephrasing.

Because we have an isomorphism $H^k(S_m')\rightarrow H^k(S)$ for any $0\leq k\leq m-1$ which is equivariant relative to $\s{n}$, we know that these spaces have the same $\s{n}$-module structure and hence the same isotypic decomposition:
\[
\bigoplus_{\lambda\vdash n}m_{k,\lambda}(S)\bb{S}^\lambda\cong_{\s{n}}H^k(S)\cong_{\s{n}}H^k(S_m')\cong_{\s{n}}\bigoplus_{\lambda\vdash n} m_{k,\lambda}(S_m')\bb{S}^{\lambda}
\]
In particular, for each $0\leq k\leq m-1$ and $\lambda\vdash n$, $m_{k,\lambda}(S)=m_{k,\lambda}(S_m')$. This allows us to make the following strengthenings of two of Basu and Riener's theorems in \cite{basu2021vandermonde}.

Let $\Par{k}{S}=\{\lambda\vdash n\mid m_{k,\lambda}(S)\neq 0\}$ (see \cite{basu2021vandermonde} Notation 5).

\begin{cor}[Compare to \cite{basu2021vandermonde} Theorem 4]
Let $d,n\in \bb{Z}_{>0}$, $d\geq 2$, and let $S\subset \bb{R}^n$ be a $\mc{P}$-semialgebraic set with $\mc{P}\subset \bb{R}[X_1,\ldots, X_n]^{\s{n}}_{\leq d}$. Then, for all $\lambda \vdash n$,
\begin{enumerate}[(a)]
    \item
    \[
    m_{k,\lambda}=0 \text{ for } k\leq \length(\lambda)-2d+1
    \]
    or equivalently
    \[
    \max_{\lambda\in \Par{k}{S}}\length(\lambda) < k+2d -1
    \]
    \item
    \[
    m_{k,\lambda}=0 \text{ for } k\geq n-\length(^t\lambda)+d+1
    \]
    or equivalently
    \[
    \max_{\lambda\in \Par{k}{S}} \length(^t\lambda)<n-k+d+1
    \]
\end{enumerate}
\end{cor}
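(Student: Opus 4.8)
The plan is to derive this corollary from the corresponding statement of Basu and Riener for $\mathcal{P}$-closed sets by means of the equivariant construction of Theorem~\ref{thm:MainTheorem}. First I would apply Corollary~\ref{thm:EquivariantGabrielov1.10ii} (equivalently, part (b) of Theorem~\ref{thm:MainTheorem}) to $S$ with the collection $\mathcal{P}\subset\bb{R}[X_1,\ldots,X_n]^{\s{n}}_{\leq d}$. Because $d\geq 2$, the remarks in Subsection~\ref{sect:Boundedness} and following Definition~\ref{def:ConstructibleCase} guarantee that the resulting collection $\mathcal{P}'$ again consists of symmetric polynomials of degree at most $d$ (the only auxiliary function introduced, $r^2-(X_1^2+\cdots+X_n^2)$, has degree $2$). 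This yields, for each $m>0$, a $\mathcal{P}'$-closed and bounded semialgebraic set $S_m'$ over an appropriate real closed extension field, together with an equivariant map $\psi:S_m'\to S$ inducing $\s{n}$-equivariant isomorphisms on cohomology $H^k(S_m')\to H^k(S)$ for $1\leq k\leq m-1$; for $k=0$ the equivariant bijection between connected components of $S_m'$ and $S$ (see the discussion of connected components in Section~\ref{sect:GVConstruction} and \cite{gabrielov2009approximation} Lemma~1.9) gives $H^0(S_m')\cong_{\s{n}} H^0(S)$ as well, and the base field change does not affect any of this, as recorded in the paragraphs preceding the corollary.

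Next I would invoke \cite{basu2021vandermonde} Theorem~4 for the $\mathcal{P}'$-closed semialgebraic set $S_m'$. Since $\mathcal{P}'$ consists of symmetric polynomials of degree at most $d$ and Theorem~4 holds over any real closed field, we obtain $m_{k,\lambda}(S_m')=0$ for $k\leq\length(\lambda)-2d+1$ and for $k\geq n-\length({}^t\lambda)+d+1$, for every $\lambda\vdash n$.

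Combining these, the equivariant cohomology isomorphism of the first step gives $H^k(S)\cong_{\s{n}}H^k(S_m')$ as $\s{n}$-modules for $0\leq k\leq m-1$, hence equal isotypic decompositions and in particular $m_{k,\lambda}(S)=m_{k,\lambda}(S_m')$ in that range. Finally, as $m$ may be taken arbitrarily large, for any fixed $k$ we choose $m\geq k+1$ and conclude that $m_{k,\lambda}(S)=0$ in the two asserted ranges. The equivalent reformulations in terms of $\max_{\lambda\in\Par{k}{S}}\length(\lambda)$ and $\max_{\lambda\in\Par{k}{S}}\length({}^t\lambda)$ then follow exactly as in \cite{basu2021vandermonde}.

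I expect no serious obstacle: the argument is essentially the assembly of the equivariant construction with an existing theorem. The only points requiring care are bookkeeping ones --- verifying that the degree bound $d$ is genuinely preserved in passing to $\mathcal{P}'$ (which is where the hypothesis $d\geq 2$ is used) and that \cite{basu2021vandermonde} Theorem~4, though perhaps stated over $\bb{R}$, applies verbatim over the real closed extension field in which $S_m'$ lives, which is immediate since the statement is semialgebraic and the multiplicities are invariant under real closed extension.
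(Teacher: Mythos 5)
Your proof is correct and takes essentially the same route as the paper. The one small variation: the paper fixes $m=\dim(S)$ once and for all, so that $\psi$ is an equivariant homotopy equivalence and $m_{k,\lambda}(S)=m_{k,\lambda}(S_m')$ holds uniformly for all $k\geq 0$; you instead let $m$ depend on $k$ (taking $m\geq k+1$), which also works but requires the separate observation about $k=0$ that you supply via the connected component correspondence. Both of the bookkeeping points you flag --- preservation of the degree bound using $d\geq 2$, and invariance under extension of the real closed base field --- are exactly the ones the paper relies on.
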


Note that Theorem 4 of \cite{basu2021vandermonde} required $S$ to be $\mathcal{P}$-closed. The proof of Theorem 4 in \cite{basu2021vandermonde} already uses the techniques discussed in Subsection \ref{sect:Boundedness} to replace an arbitrary $\mc{P}$-closed semialgebraic set by a bounded one that is equivariantly homotopy equivalent to the original set. We make use of the equivariant Gabrielov-Vorobjov construction to replace $S$ where $S'_m$ for $m=\dim(S)$. Since then $m_{k,\lambda}(S)=m_{k,\lambda}(S_m')$ for all $k\geq 0$, the result follows from applying \cite{basu2021vandermonde} Theorem 4 to $S_m'$.

\begin{cor}[Compare to Theorem 3 of \cite{basu2021vandermonde}]
Let $D$ be an ordered domain contained in $\bb{R}$, and let $l,d\geq 0$. There exists an algorithm which takes as input a finite set $\mathcal{P}\subset D[X_1,\ldots, X_n]^{\s{n}}_{\leq d}$ and a $\mathcal{P}$-formula $\mathcal{F}$, and computes the multiplicities $m_{k,\lambda}(S)$ for each $0\leq k\leq l$ and $\lambda\vdash n$, as well as the Betti numbers $b_k(S)$ for $0\leq k\leq l$, where $S$ is the realization of $\mathcal{F}$ in $\bb{R}^n$.

The complexity of this algorithm, measured by the number of arithmetic operations in $D$, is bounded by $(snd)^{2^{O(d+l)}}$. If $D=\bb{Z}$ and the bit-sizes of the coefficients of the input are bounded by $\tau$, then the bit-complexity of the algorithm is bounded by $(\tau s n d)^{2^{O(l+d)}}$.
\end{cor}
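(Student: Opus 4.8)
The plan is to reduce the general $\mc{P}$-formula case to the $\mc{P}$-closed case already treated in \cite{basu2021vandermonde} Theorem 3, using the equivariant construction to manufacture a compact $\mc{P}'$-closed set carrying the same isotypic data up to degree $l$. Put $m=l+1$. As recorded in the discussion preceding this statement, Theorem \ref{thm:MainTheorem}(b) (equivalently Corollary \ref{thm:EquivariantGabrielov1.10ii}) produces, over the Puiseux tower $\bb{R}\gen{\delta_m,\epslon_m,\ldots,\delta_0,\epslon_0}$, a bounded $\mc{P}'$-closed semialgebraic set $S'_m$ with $\mc{P}'$ an $\s{n}$-invariant collection of symmetric polynomials of degree at most $d'=\max(d,2)$ and with $\card(\mc{P}')=4m(s+1)=4(l+1)(s+1)$, together with equivariant isomorphisms $H^k(S'_m)\cong_{\s{n}}H^k(S)$ for $0\le k\le m-1=l$; hence $m_{k,\lambda}(S)=m_{k,\lambda}(S'_m)$ and $b_k(S)=b_k(S'_m)$ for all $0\le k\le l$ and all $\lambda\vdash n$. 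So it suffices to exhibit an algorithm computing the multiplicities and Betti numbers of the \emph{compact $\mc{P}'$-closed} set $S'_m$ within the stated complexity.

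The algorithm is: first, from the input $\mc{P}$-formula $\mc{F}$ produce a $\mc{P}'$-closed formula for $S'_m$. After pushing all negations in $\mc{F}$ onto its atoms, $S_{\delta_j,\epslon_j}$ is defined by the conjunction of $\mc{F}[\,h>0\mapsto h\ge\delta_j,\ h=0\mapsto(-\epslon_j\le h\le\epslon_j),\ \ldots\,]$ (with the analogous monotone replacements for the atoms $h<0$, $h\ge 0$, $h\le 0$, $h\ne 0$) and the clause $\bigwedge_{h\in\mc{P}}\bigl(h\le-\delta_j\ \vee\ -\epslon_j\le h\le\epslon_j\ \vee\ h\ge\delta_j\bigr)$, and $S'_m$ is the union of these over $0\le j\le m$; this is a $\mc{P}'$-closed formula of length $O(m\,\length(\mc{F}))$ requiring no enumeration of realizable sign conditions (cf.\ Definition \ref{def:ConstructibleCase}, \cite{gabrielov2009approximation}, and \cite{basu2021vandermonde} Section 5.2, where the rephrasing over the Puiseux tower is carried out). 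Second, run \cite{basu2021vandermonde} Theorem 3 on $S'_m$ over $\bb{R}\gen{\delta_m,\ldots,\epslon_0}$: it returns the $m_{k,\lambda}(S'_m)$ for $0\le k\le l$, $\lambda\vdash n$, and the Betti numbers $b_k(S'_m)=\sum_{\lambda}m_{k,\lambda}(S'_m)\dim\bb{S}^\lambda$. By the previous corollary only the $\lambda\in\Par{k}{S}$ with $\length(\lambda)<k+2d-1$ occur, of which there are at most $n^{O(l+d)}$, and each $\dim\bb{S}^{\lambda}$ is computed by the hook length formula, so this last step costs nothing against the main bound. By the transfer above these outputs are exactly the $m_{k,\lambda}(S)$ and $b_k(S)$ for $0\le k\le l$.

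For the complexity, \cite{basu2021vandermonde} Theorem 3 applied to $S'_m$ costs $(s'nd')^{2^{O(d'+l)}}$ arithmetic operations in $D$, with $s'=4(l+1)(s+1)$ and $d'=\max(d,2)$. Since $s'\le 2^{O(l)}s$ and $d'\le 2\max(d,1)$, one has $s'nd'\le 2^{O(l)}snd$ and $2^{O(d'+l)}=2^{O(d+l)}$, whence $(s'nd')^{2^{O(d'+l)}}\le (snd)^{2^{O(d+l)}}\cdot 2^{2^{O(d+l)}}\le (snd)^{2^{O(d+l)}}$, the asserted bound; the costs of the syntactic transformation and of the hook length computations are negligible beside this. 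When $D=\bb{Z}$, the only new coefficients introduced by the construction are $\pm\epslon_j$, $\pm\delta_j$ and $r^2$, so the bit-sizes of the integer data grow by a bounded factor while the orders in the infinitesimals stay bounded, and the same manipulation turns the bound $(\tau s'nd')^{2^{O(l+d')}}$ of \cite{basu2021vandermonde} Theorem 3 into $(\tau snd)^{2^{O(l+d)}}$.

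The main obstacle is not topological: equivariance of the Gabrielov--Vorobjov maps, the one genuinely new input, is supplied by Theorem \ref{thm:MainTheorem}, and everything else is bookkeeping built on \cite{basu2021vandermonde}. The real point requiring care is the last, quantitative one --- that replacing $S$ by the considerably larger presentation $S'_m$ (roughly $4(l+1)$ times as many defining polynomials, a tower of infinitesimal extensions, and a possibly enlarged degree bound) does not push the complexity outside $(snd)^{2^{O(d+l)}}$. As above this works out precisely because the overhead is merely exponential in $l$ while the target is doubly exponential in $l$, mirroring the estimate Basu and Riener already carry through in the $\mc{P}$-closed case; one also has to note, as they do in Section 5.2 of \cite{basu2021vandermonde}, that their algorithm runs unchanged over the real closed field $\bb{R}\gen{\delta_m,\ldots,\epslon_0}$.
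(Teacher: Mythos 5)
Your proof is correct and takes essentially the same approach as the paper, though by a somewhat more circuitous route. The paper's argument is considerably more economical: it observes that Algorithm 3 of \cite{basu2021vandermonde} (the algorithm underlying their Theorem 3) \emph{already} forms the Gabrielov--Vorobjov approximation $S'$ internally and computes the multiplicities $m_{k,\lambda}(S')$ at intermediate steps (lines 15 and 19); the only new ingredient supplied here is Theorem \ref{thm:MainTheorem}(b), which identifies $m_{k,\lambda}(S')$ with $m_{k,\lambda}(S)$, so the corollary follows just by modifying Algorithm 3 to output those already-computed intermediate quantities (setting to $0$ those killed by the preceding corollary). No fresh complexity analysis is required because the complexity of Algorithm 3 has already been established. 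You instead build $S'_m$ explicitly and then feed it to the $\mc{P}$-closed machinery of \cite{basu2021vandermonde} as a black box, which forces you to re-derive the overhead in passing from $(s,d)$ to $(s',d')=(4(l+1)(s+1),\max(d,2))$; your estimate that the singly-exponential-in-$l$ inflation is absorbed by the doubly-exponential target bound is right, but is work the paper's presentation avoids. One imprecision to flag: you write that running ``\cite{basu2021vandermonde} Theorem 3 on $S'_m$ \ldots{} returns the $m_{k,\lambda}(S'_m)$,'' but as the paper notes, the \emph{statement} of that theorem only promises Betti numbers; the multiplicities are an internal product of Algorithm 3, so your appeal is really to the internals of the algorithm rather than to the theorem as stated. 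This is exactly what the paper makes explicit, so your argument is sound in substance, just loosely attributed.
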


Note that Theorem 3 of \cite{basu2021vandermonde} only guaranteed the existence of an algorithm computing the first $l+1$ Betti numbers of $S$. The algorithm in question appears as Algorithm 3 in \cite{basu2021vandermonde}. Since we now have that $m_{k,\lambda}(S)=m_{k,\lambda}(S')$, the multiplicities computed in lines 15 and 19 of Algorithm 3 in \cite{basu2021vandermonde} are in fact the multiplicities for our original semialgebraic set. We need only assign a value of 0 to those multiplicities $m_{k,\lambda}$ with $\length(\lambda)>k+2d-1$ and then output the multiplicities as well as the Betti numbers.

\printbibliography
\end{document}